\newtheorem{theorem}{Theorem}[section]
\newtheorem{lemma}{Lemma}[section]
\newtheorem{proposition}{Proposition}[section]
\theoremstyle{definition}
\theoremstyle{remark}
\numberwithin{equation}{section}
\def\f{\frac}
\def\hf1{^\f{1}{1-\xi^2}}
\def\be{\begin{equation}}
\def\ee{\end{equation}}
\def\bs{\begin{split}}
\def\es{\end{split}}
\def\ba{\begin{align}}
\def\ea{\end{align}}
\title[stochastic Landau-Lifshitz-Bloch equation]
{Invariant measures and
their limiting behavior  of the
 Landau-Lifshitz-Bloch equation in unbounded domains}
\author[D. Huang]{Daiwen Huang}
\address{Institute of Applied Physics and Computational Mathematics, Beijing 100088,  China}
\email{huang\_daiwen@iapcm.ac.cn}
\author[Z. Qiu]{Zhaoyang Qiu}
\address{School of Applied Mathematics, Nanjing University of Finance and Economics, Nanjing, 210046, China.}
\email{zhqmath@163.com}
\author[B. Wang]{Bixiang Wang}
\address{Department of Mathematics, New Mexico Institute of Mining and Technology, Socorro, NM 87801, USA.}
\email{bwang@nmt.edu.}
\keywords{stochastic Landau-Lifshitz-Bloch equation, unbounded domain, well-posedness, invariant measure, limiting behavior, tail-ends estimates}
\subjclass[2020]{35Q35, 76D05, 35R60, 60F10}
\date{\today}
\begin{document}
\baselineskip=1.2\baselineskip
\begin{abstract}
This paper deals with
the existence and limiting
behavior of invariant measures
of the stochastic  Landau-Lifshitz-Bloch equation
driven by linear multiplicative noise
and additive noise
defined in the entire space
$\mathbb{R}^d$ for $d=1,2$,
which  describes the phase spins in   ferromagnetic materials around the Curie temperature.
We first establish the
existence and uniqueness
of solutions by a domain expansion method.
We then prove   the existence of invariant measures  by the weakly
Feller argument.
In the case $d=1$,
we show the uniform
tightness of the set of all
invariant measures of the
stochastic equation, and prove
any limit of a sequence
of invariant measures of the
perturbed equation must be
an invariant measure of the
limiting system.
The
  cut-off arguments, stopping time techniques and   uniform tail-ends
   estimates of solutions are developed to overcome the difficulty
   caused by   the high-order
   nonlinearity and  the non-compactness of Sobolev embeddings in unbounded domains.
\end{abstract}

\maketitle
 \section{Introduction}
The Landau-Lifshitz-Bloch (LLB) equation is
a model  for simulating the heat-assisted magnetic recording in   ferromagnetic materials around the Curie temperature,  which is derived from the Landau-Lifshitz-Gilbert equation at low temperatures and the Ginzburg-Landau theory of phase
transitions. And the LLB equation explains the light-induced demagnetization with powerful femtosecond lasers, see, e.g., \cite{A1,ga} for the physical backgrounds.
Let  $\mathscr{D}$ be a domain
in $\mathbb{R}^d$ with $d\le 2$.
Then
a specific form of the LLB  equation
in
$\mathscr{D}$
 is given by
$$\frac{\partial \mathbf{u}}{\partial t}=\gamma \mathbf{u}\times \mathrm{H}_{e\!f\!f}+ \mathrm{H}_{e\!f\!f},$$
where the effective field $\mathrm{H}_{e\!f\!f}$ has the form
$$\mathrm{H}_{e\!f\!f}=\Delta\mathbf{u}-(1+\kappa|\mathbf{u}|^2)\mathbf{u},$$
and $\mathbf{u}=({u}_1, {u}_2, {u}_3)$ is the average spin polarization, $\gamma>0$ represents the gyromagnetic ratio and $\kappa>0$ is a constant related to the temperature.
Without loss of generality,
 we will assume $\kappa=\gamma=1$.
  The notation $\times$ denotes the vector cross product, and $|\cdot|$ is the Euclidean norm of $\mathbb{R}^3$. Then the deterministic LLB equation can be written more precisely as
\begin{eqnarray*}
d\mathbf{u}=\Delta \mathbf{u} dt+ \mathbf{u}\times \Delta \mathbf{u}dt-(1+|\mathbf{u}|^{2})\mathbf{u}dt.
\end{eqnarray*}
For the deterministic LLB equation,  Le\cite{le}
first  proved the existence of weak solutions in a smooth bounded domain using the Galerkin  approximation scheme and the method of compactness.
Then Guo et al. \cite{guo} established
 the well-posedness of strong solutions of the equation on a finite-dimensional
  closed Riemannian manifold.
  In this paper, we  will consider
  the stochastic LLB equation
   driven by a linear multiplicative noise
   and additive noise.

\subsection{Motivations}
  In the   ferromagnetic
  theory,  a significant issue  is to describe the transition between different equilibrium states induced by thermal fluctuation, which was proposed initially by
   N\'{e}el \cite{N1}.    The random fluctuation is
  often introduced into the equation by perturbing the effective field  $\mathrm{H}_{e\!f\!f}$ using the Gaussian noise, see \cite{ga1};
  that is,
  $\mathrm{H}_{e\!f\!f}$
  is replaced
   by $\mathrm{H}_{e\!f\!f}+\frac{\circ d W}{dt}$ where $W(t) =\sum_{k =1}^\infty
   \mathbf{f}_k W_k(t)$ is a   Wiener noise defined in a  complete
   filtered probability space $(\Omega,\mathcal{F}, \{\mathcal{F}_t\}_{t\geq 0}, \mathrm{P})$,   $\{\mathbf{f}_k\}_{k= 1}^\infty$ is a sequence of functions satisfying
\begin{align}\label{1.1}
\sum_{k =1}^\infty
\|\mathbf{f}_k\| _{W^{1,\infty}(\mathbb{R}^d,
\mathbb{R}^3
)\cap H^1(\mathbb{R}^d, \mathbb{R}^3)} <\infty,
\end{align}
and $\{W_k\}_{k\geq 1}$ is a sequence  of independent real-valued Wiener processes.

Consider the case where
  the ferromagnet fills in the entire space
  $\mathbb{R}^d$, then
  the stochastic LLB  equation  in
  $\mathbb{R}^d$
  reads as
\be\label{equ1.1a}
d\mathbf{u}=\Delta \mathbf{u} dt+ \mathbf{u}\times \Delta \mathbf{u}dt-(1+|\mathbf{u}|^{2})\mathbf{u}dt+\sum_{k= 1}^\infty
(\mathbf{u}\times \mathbf{f}_k+\mathbf{f}_k)\circ d W_k,
\ee
which is driven by
a linear multiplicative noise and additive noise
in the sense of
the Stratonovich differential.
 As explained   in \cite{eva},
such noise  can deduce a Boltzmann distribution valid for the full range of temperatures.

Note that the
  Stratonovich  stochastic equation
  \eqref{equ1.1a} is equivalent to the
  It\^{o} equation in $\mathbb{R}^d$:
\begin{equation}
\label{equ1.1}
d\mathbf{u}=\Delta \mathbf{u} dt+ \mathbf{u}\times \Delta \mathbf{u}dt-(1+|\mathbf{u}|^{2})\mathbf{u}dt
+\frac{1}{2}\sum_ {k= 1}^\infty
(\mathbf{u}\times \mathbf{f}_k)\times \mathbf{f}_kdt
+\sum_ {k= 1}^\infty(\mathbf{u}\times \mathbf{f}_k+\mathbf{f}_k)d W_k,
\end{equation}
which is supplemented with
   the   initial data:
\begin{align*}
\mathbf{u}(0, x)=\mathbf{u}_0(x),
\quad x\in \mathbb{R}^d.
\end{align*}

Due to its physical importance and mathematical challenges,
  the stochastic
LLB  model  has been studied recently
by several authors.
When the   equation
is defined in a bounded domain $\mathscr{D}$,
the   existence of martingale   solutions
as well as the existence of a unique strong solutions
of
\eqref{equ1.1} with
initial data
in  $H^1(\mathscr{D})$
were proved by  Jiang et al. \cite{W1} and   Brze\'{z}niak et al. \cite{B1} respectively.
In addition, the existence of invariant
measures of \eqref{equ1.1}
was proved in \cite{B1}
by the weakly Feller method.
When the equation
is driven by a transport noise,
the existence and the regularity
of solutions  were established by
 Qiu et al. \cite{q1}. Brze\'{z}niak et al. \cite{B5} considered the large deviations and transitions between
equilibria for stochastic Landau-Lifshitz-Gilbert equation.
 Note that in all these articles,
 the stochastic equation is defined in
 a bounded domain in $\mathbb{R}^d$ or the torus. As far as the authors
 are aware, it seems that there is no
 result available in the literature
 on the well-posedness and
 invariant  measures of \eqref{equ1.1}
 when the domain is unbounded.
 The main goal of the present paper
 is to deal with this problem and
  study the existence and the limiting
 behavior of invariant measures
 of \eqref{equ1.1} defined in
 the entire space $\mathbb{R}^d$.
  The reader is referred to
   \cite{ben, ben2, CG, Da2, F1, gao,
 Ha, H1}
 and \cite{bm, bos, bmo, cb,JH, k1, k3, m1, bx3, b}
 for the well-posedness and invariant measures of
 stochastic partial differential equations
 defined in bounded domains
 and unbounded domains, respectively.
 It is worth mentioning that
 the  main difficulty
in the case of unbounded domains
 lies in the fact
that Sobolev embeddings
are no longer compact which introduces
an essential obstacle for establishing
the tightness of  distributions
of a family  of solutions.

\subsection{Main results}
As mentioned above,
the main purpose of this
paper is to
investigate the existence and
limiting behavior of
invariant measures
of \eqref{equ1.1}.
To that end, we first   establish the existence and uniqueness of   solutions
of \eqref{equ1.1}
in $\mathbb{R}^d$ for $d=1,2$,
which is stated below.

\begin{theorem}\label{t3}
Let
$ (\Omega,\mathcal{F}, \{\mathcal{F}_t\}_{t\geq 0}, \mathrm{P}, W) $  be a given
stochastic basis.
If \eqref{1.1} holds and
    $\mathbf{u}_0\in H^1(\mathbb{R}^d,
    \mathbb{R}^3)$ with  $d=1,2$,
   then the stochastic LLB equation \eqref{equ1.1} admits a unique global   solution $\mathbf{u}$ in the following sense:

${\rm (i)}$. The  process $\mathbf{u}$ is
weakly continuous,   $H^1(\mathbb{R}^d,
    \mathbb{R}^3)$-valued,  $\mathcal{F}_t$-progressively measurable such that
\begin{align}\label{1.3}\mathbf{u}\in L^p(\Omega; L^\infty(0,T; H^1(\mathbb{R}^d,
    \mathbb{R}^3))\cap L^2(0,T;H^2(\mathbb{R}^d,
    \mathbb{R}^3) )),  \quad \forall \ p\ge 1,
    \end{align}
    and
    \be\label{1.3a}
    \mathbf{u} \times
    \Delta \mathbf{u}
    \in
L^{pr}(\Omega; L^r(0,T; L^2
(\mathbb{R}^d,
    \mathbb{R}^3))),
    \quad
    \forall \ p\ge1, \ r\in[1,2).
    \ee

${\rm (ii)}$.  For all $t\in [0,T]$, the following equality holds in $L^2(\mathbb{R}^d,
\mathbb{R}^3 )$, $\mathrm{P}$-almost surely,
\begin{align}\label{3.1}
&\mathbf{u}(t)=\mathbf{u}_0+\int_{0}^{t}\Delta \mathbf{u}(s)ds+\int_{0}^{t}\mathbf{u}(s)\times \Delta \mathbf{u}(s)ds\nonumber\\
&\qquad-\int_{0}^{t}(1+|\mathbf{u}(s)|^{2})\mathbf{u}(s)ds
+\frac{1}{2}\sum_{k= 1}^\infty
\int_{0}^{t}(\mathbf{u}(s)\times \mathbf{f}_k)\times \mathbf{f}_kds
+\sum_ {k= 1}^\infty\int_{0}^{t}(\mathbf{u}(s)\times \mathbf{f}_k+\mathbf{f}_k)d W_k .
\end{align}
\end{theorem}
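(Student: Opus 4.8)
The plan is to combine a \emph{domain expansion} (invading domains) scheme with a pathwise uniqueness argument, using the known bounded-domain theory as a black box. First I fix an increasing sequence of smooth bounded domains $\mathscr{D}_n\subset\mathbb{R}^d$ with $\bigcup_{n}\mathscr{D}_n=\mathbb{R}^d$, and on each $\mathscr{D}_n$ I solve the LLB system with the restricted noise, homogeneous Neumann boundary conditions and initial datum $\mathbf{u}_0|_{\mathscr{D}_n}$; a unique strong solution $\mathbf{u}_n$ exists by the bounded-domain result of Brze\'{z}niak et al.\ \cite{B1}. The crux is then a priori estimates \emph{uniform in $n$}. Applying It\^o's formula to $\|\mathbf{u}_n\|_{L^2}^2$ and to $\|\nabla\mathbf{u}_n\|_{L^2}^2$ and using the pointwise orthogonality identities $(\mathbf{u}\times\Delta\mathbf{u})\cdot\mathbf{u}=0$ and $(\mathbf{u}\times\Delta\mathbf{u})\cdot\Delta\mathbf{u}=0$ --- so that the gyromagnetic term cancels in both energy balances --- together with the dissipativity of the cubic term, the Burkholder--Davis--Gundy inequality and the summability \eqref{1.1}, I would obtain
\[
\mathbb{E}\sup_{t\in[0,T]}\|\mathbf{u}_n(t)\|_{H^1}^{2p}
+\mathbb{E}\Big(\int_0^T\|\mathbf{u}_n(t)\|_{H^2}^2\,dt\Big)^{p}
\le C\big(p,T,\|\mathbf{u}_0\|_{H^1}\big),
\]
with $C$ independent of $n$, which yields \eqref{1.3}. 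Estimate \eqref{1.3a} then follows by writing $\|\mathbf{u}_n\times\Delta\mathbf{u}_n\|_{L^2}\le\|\mathbf{u}_n\|_{L^\infty}\|\Delta\mathbf{u}_n\|_{L^2}$ and interpolating $\|\mathbf{u}_n\|_{L^\infty}\le C\|\mathbf{u}_n\|_{H^1}^{1-\varepsilon}\|\mathbf{u}_n\|_{H^2}^{\varepsilon}$ through $H^{1+\varepsilon}(\mathbb{R}^d)\hookrightarrow L^\infty$; combining the $L^\infty_tH^1$ and $L^2_tH^2$ bounds gives integrability in time up to any $r<2$, the loss of the endpoint $r=2$ being exactly the two-dimensional obstruction recorded in \eqref{1.3a}.

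The main difficulty is the passage $n\to\infty$: since Sobolev embeddings on $\mathbb{R}^d$ are not compact, the uniform bounds above do not by themselves yield a strongly convergent subsequence, and strong convergence is indispensable for the high-order term $\mathbf{u}_n\times\Delta\mathbf{u}_n$ and the cubic term $|\mathbf{u}_n|^2\mathbf{u}_n$. To overcome this I would establish \emph{uniform tail-ends estimates}: testing the equation against $\mathbf{u}_n$ multiplied by a smooth weight supported in $\{|x|>R\}$ (again the gyromagnetic term drops by orthogonality, the cutoff commutators being controlled by $R^{-1}$ times the global $H^1$ bound) gives, for every $\varepsilon>0$, some $R>0$ with
\[
\sup_n\mathbb{E}\int_0^T\int_{|x|>R}\big(|\mathbf{u}_n|^2+|\nabla\mathbf{u}_n|^2\big)\,dx\,dt<\varepsilon .
\]
This uniform smallness at infinity, combined with the \emph{local} Aubin--Lions--Simon compactness on each fixed bounded $\mathscr{D}_m$ (the required fractional time regularity coming from the stochastic integral), upgrades weak convergence to strong convergence in $L^2(0,T;L^2(\mathbb{R}^d))$ and furnishes tightness of the laws of $\{\mathbf{u}_n\}$ in the relevant path space.

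With tightness in hand I would invoke the Skorokhod--Jakubowski representation to obtain almost surely convergent copies, and pass to the limit in every term of \eqref{3.1}: the strong $L^2$ convergence plus the tail control identify the nonlinear limits, while the linear and stochastic terms converge by the uniform moment bounds. This produces a solution enjoying the regularity \eqref{1.3}--\eqref{1.3a} and the weak $H^1$-continuity asserted in part (i).

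It remains to prove \emph{pathwise uniqueness}, which also upgrades the construction to a unique strong solution on the prescribed basis $(\Omega,\mathcal{F},\{\mathcal{F}_t\}_{t\ge0},\mathrm{P},W)$ via the Gy\"ongy--Krylov characterization of convergence in probability (applied to pairs of approximations, whose joint laws are tight by the same tail estimates). Given two solutions $\mathbf{u}^{(1)},\mathbf{u}^{(2)}$ with the same data, I set $\mathbf{w}=\mathbf{u}^{(1)}-\mathbf{u}^{(2)}$ and apply It\^o to $\|\mathbf{w}\|_{L^2}^2$. The gyromagnetic difference splits as $\mathbf{w}\times\Delta\mathbf{u}^{(1)}+\mathbf{u}^{(2)}\times\Delta\mathbf{w}$; the first piece is orthogonal to $\mathbf{w}$ and vanishes, while after integration by parts the second reduces to $-\int\nabla\mathbf{w}:(\mathbf{w}\times\nabla\mathbf{u}^{(2)})$, bounded by $\|\nabla\mathbf{w}\|_{L^2}\|\mathbf{w}\|_{L^4}\|\nabla\mathbf{u}^{(2)}\|_{L^4}$ and absorbed into the dissipation $\|\nabla\mathbf{w}\|_{L^2}^2$ up to a factor $C(1+\|\mathbf{u}^{(2)}\|_{H^2}^2)\|\mathbf{w}\|_{L^2}^2$ via Gagliardo--Nirenberg and Young's inequality; the cubic and noise differences are handled analogously. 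A stochastic Gronwall argument with the $\mathrm{P}$-a.s.\ integrable weight $\|\mathbf{u}^{(2)}(t)\|_{H^2}^2$ (finite by \eqref{1.3}) then forces $\mathbf{w}\equiv0$, completing the proof.
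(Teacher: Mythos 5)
Your proposal follows essentially the same route as the paper: domain expansion with the bounded-domain theory of \cite{B1} as a black box, uniform moment and tail-ends estimates to compensate for the loss of compact Sobolev embeddings, Aubin--Lions type local compactness plus Skorokhod--Jakubowski to pass to the limit, the Gagliardo--Nirenberg interpolation for \eqref{1.3a}, and an $L^2$-Gronwall pathwise uniqueness argument upgraded to a strong solution (the paper via Yamada--Watanabe rather than Gy\"{o}ngy--Krylov). The only substantive point you gloss over is how to regard $\mathbf{u}_n$, defined only on $\mathscr{D}_n$, as an element of a fixed path space over $\mathbb{R}^d$ without destroying the $H^2$ regularity at $\partial\mathscr{D}_n$; the paper handles this by introducing the auxiliary processes $\mathbf{v}^n=\theta_n\mathbf{u}^n$ (a smooth cutoff supported well inside $\mathscr{D}_n$) before extending by zero and proving tightness for these instead.
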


In the case of bounded domains\cite{B1,W1},
the existence of solutions
of the stochastic LLB equation
was proved
by   the finite-dimensional approximation scheme
and the compactness argument,
where the compactness of
Sobolev embeddings was used
to show the tightness of approximate solutions.
Since we consider the stochastic
equation \eqref{equ1.1}
in the unbounded domain
$\mathbb{R}^d$ and the Sobolev embeddings
are no longer compact
in unbounded domains, the arguments
of
  \cite{B1,W1} do not apply to
 our case.
   In this paper, we will adopt a domain expansion method to show the existence of   solutions
   of \eqref{equ1.1}.
   More precisely,
   we first consider the initial-boundary value problem of \eqref{equ1.1}, and  a sequence of approximate
   solutions  $\mathbf{u}^n$ is obtained
   defined in the  bounded domain
   $\mathscr{D}_n=\{x\in \mathbb{R}^d:
   |x|<n\}$, then derive the uniform
   estimates of approximate solutions. We emphasize that in an effort to pass the limit, a sequence of auxiliary processes
          is constructed in the entire space $\mathbb{R}^d$ and finally
   take the limit of  the sequence of new processes
   to obtain a solution of \eqref{equ1.1}
   defined  in  $\mathbb{R}^d$.
     Moreover,
     in order to pass to the
     limit of approximate solutions,
     we need to establish the
     tightness of distributions
     of the sequence, for which we must
     overcome the difficulty
     caused by the non-compactness
     of Sobolev embeddings
     in  $\mathbb{R}^d$.
     We will apply the idea of
     uniform tail-ends estimates
     to solve the problem by
     showing that the approximate solutions
     are uniformly small on the complement
     of $\mathscr{D}_n$ when
     $n$ is sufficiently large
     as in
        \cite{b,bx3}
        for the   reaction-diffusion equation.
          By
          the uniform smallness
          of the tails of solutions
          in $L^2(\mathbb{R}^d)
          \setminus \mathscr{D}_n$
          (see Lemma \ref{est_v} (iv))
          and the compactness
          of embeddings in
          bounded domains,
          we   then  prove the tightness of
          distributions of the
          approximate sequence
          (see Lemma \ref{tight1}).
          Finally, we show   the limit
          of the approximate
          solutions  is a   martingale solution
          of the stochastic
          LLB equation
          in $\mathbb{R}^d$,
          which along with   the pathwise  uniqueness of solutions and the
           Yamada-Watanabe theorem
           implies the   existence of a
           unique  strong  solution.

With the well-posedness in hand,
we then investigate
the existence and uniform tightness
of invariant measures of \eqref{equ1.1}
with respect to noise intensity. More precisely,
we consider the following equation
with a parameter $\varepsilon
\in [0,1]$:
\begin{align}\label{1.3*}
&d\mathbf{u}^\varepsilon=\Delta \mathbf{u}^\varepsilon dt+ \mathbf{u}^\varepsilon\times \Delta \mathbf{u}^\varepsilon dt-(1+|\mathbf{u}^\varepsilon|^{2})\mathbf{u}^\varepsilon dt+\varepsilon\sum_{k=1}^\infty(\mathbf{u}^\varepsilon \times \mathbf{f}_k+\mathbf{f}_k)\circ d W_k,
\end{align}
in $\mathbb{R}^d$,
where $\varepsilon$
represents the intensity of noise.

The main result of the paper on the
invariant measures of
the stochastic LLB equation
\eqref{1.3*} is presented  below.

\begin{theorem}\label{th1}
If \eqref{1.1} holds, then:

{\rm (i)}. For $d=1,2$,  the stochastic equation
\eqref{equ1.1} has at least one invariant
measure in $H^1(\mathbb{R}^d, \mathbb{R}^3)$.

{\rm  (ii)}. For $d=1$, the set  $\bigcup_{\varepsilon
\in [0,1]} \mathcal{I}^\varepsilon$
is tight in $H^1(\mathbb{R}, \mathbb{R}^3)$
where
$\mathcal{I}^\varepsilon$ is the
collection of all invariant measures of
\eqref{1.3*} corresponding to $\varepsilon$.

Furthermore,
for any  $\varepsilon_n \to \varepsilon_0\in [0,1]$
 and
$\mu^{\varepsilon_n}
\in \mathcal{I}^{\varepsilon_n}$,
there exists a subsequence
$\{\mu^{\varepsilon_{n_k}}\}_{k=1}^\infty$
of
$\{\mu^{\varepsilon_{n}}\}_{n=1}^\infty$
and
 $\mu^{\varepsilon_0}
\in \mathcal{I}^{\varepsilon_0}$
such that
$
\mu^{\varepsilon_{n_k}}
\to
\mu^{\varepsilon_0}$ weakly.
 \end{theorem}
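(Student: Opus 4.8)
The plan is to handle all three assertions within the Krylov--Bogoliubov framework, combining uniform-in-time energy bounds, the weak Feller property of the transition semigroup, and uniform tail-ends estimates to circumvent the non-compactness of the embedding $H^1(\mathbb{R}^d)\hookrightarrow L^2(\mathbb{R}^d)$. For part (i), fix an initial datum (say $\mathbf{u}_0=0$) and form the time-averaged measures $\mu_T(\cdot)=\frac{1}{T}\int_0^T P_t(\mathbf{u}_0,\cdot)\,dt$, where $P_t$ is the Markov semigroup generated by \eqref{equ1.1}. Applying It\^o's formula to $\|\mathbf{u}(t)\|_{H^1}^2$ and absorbing the cubic term and the cross product $\mathbf{u}\times\Delta\mathbf{u}$ against the dissipation, I would first derive a uniform bound $\sup_{t\ge 0}\mathbb{E}\|\mathbf{u}(t)\|_{H^1}^2\le C$ together with the averaged estimate $\sup_{T\ge 1}\frac1T\int_0^T\mathbb{E}\|\mathbf{u}(t)\|_{H^2}^2\,dt\le C$. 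On each ball $\mathscr{D}_n$ the compact embedding $H^2(\mathscr{D}_n)\hookrightarrow H^1(\mathscr{D}_n)$ turns the $H^2$ bound into relative compactness, while a uniform-in-time tail-ends estimate (of the type established in Lemma \ref{est_v}(iv)) shows that for every $\eta>0$ there is $R$ with $\sup_{t\ge 0}\mathbb{E}\int_{|x|>R}(|\mathbf{u}(t)|^2+|\nabla\mathbf{u}(t)|^2)\,dx<\eta$. A cut-off argument then yields tightness of $\{\mu_T\}$ in $H^1(\mathbb{R}^d)$, and the weak Feller property makes any weak limit point an invariant measure.

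For the uniform tightness in part (ii), I would repeat these estimates for \eqref{1.3*}, tracking the dependence on $\varepsilon\in[0,1]$. Since the noise enters with the bounded factor $\varepsilon\le 1$, both the uniform energy bound and the tail-ends estimate can be made independent of $\varepsilon$; here the restriction $d=1$ is used to exploit the embedding $H^1(\mathbb{R})\hookrightarrow L^\infty(\mathbb{R})$, which is what delivers the control of the cubic nonlinearity needed uniformly in $\varepsilon$. Testing these bounds against an arbitrary $\mu^\varepsilon\in\mathcal{I}^\varepsilon$ and using its stationarity converts them into $\varepsilon$-uniform moment and tail bounds on $\mu^\varepsilon$ itself, namely $\sup_{\varepsilon\in[0,1]}\int_{H^1}\|\mathbf{w}\|_{H^1}^2\,d\mu^\varepsilon(\mathbf{w})\le C$ and $\sup_{\varepsilon\in[0,1]}\int_{H^1}\int_{|x|>R}(|\mathbf{w}|^2+|\nabla\mathbf{w}|^2)\,dx\,d\mu^\varepsilon<\eta$. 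These identify a single compact subset of $H^1(\mathbb{R})$ carrying each $\mu^\varepsilon$ up to mass $\eta$, so $\bigcup_{\varepsilon\in[0,1]}\mathcal{I}^\varepsilon$ is tight.

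For the limiting behavior, tightness and Prokhorov's theorem give a subsequence with $\mu^{\varepsilon_{n_k}}\to\mu^{\varepsilon_0}$ weakly for some probability measure $\mu^{\varepsilon_0}$ on $H^1(\mathbb{R})$. To identify $\mu^{\varepsilon_0}$ as invariant for the limiting equation, I would pass to the limit in the stationarity identity $\int P^{\varepsilon_{n_k}}_t\varphi\,d\mu^{\varepsilon_{n_k}}=\int\varphi\,d\mu^{\varepsilon_{n_k}}$, valid for every $\varphi\in C_b(H^1(\mathbb{R}))$ and $t>0$, using the splitting
\[
\Big|\int P^{\varepsilon_{n_k}}_t\varphi\,d\mu^{\varepsilon_{n_k}}-\int P^{\varepsilon_0}_t\varphi\,d\mu^{\varepsilon_0}\Big|
\le \int\big|P^{\varepsilon_{n_k}}_t\varphi-P^{\varepsilon_0}_t\varphi\big|\,d\mu^{\varepsilon_{n_k}}
+\Big|\int P^{\varepsilon_0}_t\varphi\,d(\mu^{\varepsilon_{n_k}}-\mu^{\varepsilon_0})\Big|.
\]
The second term vanishes because $P^{\varepsilon_0}_t\varphi\in C_b(H^1(\mathbb{R}))$ by the weak Feller property and $\mu^{\varepsilon_{n_k}}\to\mu^{\varepsilon_0}$ weakly. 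For the first term I would prove continuous dependence on the noise intensity: an energy estimate for the difference $\mathbf{u}^{\varepsilon_n}-\mathbf{u}^{\varepsilon_0}$ issued from a common initial datum $\mathbf{w}$ gives $\mathbf{u}^{\varepsilon_n}(t;\mathbf{w})\to\mathbf{u}^{\varepsilon_0}(t;\mathbf{w})$ in $H^1(\mathbb{R})$ in probability, hence $P^{\varepsilon_n}_t\varphi(\mathbf{w})\to P^{\varepsilon_0}_t\varphi(\mathbf{w})$ pointwise; localizing to a compact set on which the tight family $\{\mu^{\varepsilon_{n_k}}\}$ carries all but $\eta$ of the mass and invoking uniform integrability then forces the first term to zero.

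The hardest points are the two uniformity requirements. First, making the tail-ends estimate uniform in time and in $\varepsilon$ despite the quasilinear term $\mathbf{u}\times\Delta\mathbf{u}$ and the cubic nonlinearity is the crux, and it is precisely this estimate that confines part (ii) to $d=1$. Second, in the limiting step one must interchange the two limits ``$n\to\infty$ in the measure'' and ``$\varepsilon_n\to\varepsilon_0$ in the semigroup''; closing this requires the convergence $P^{\varepsilon_n}_t\varphi\to P^{\varepsilon_0}_t\varphi$ to be strong enough (uniform on the compact sets provided by tightness) to integrate against the moving measures $\mu^{\varepsilon_{n_k}}$, which is where the continuous-dependence estimate and the uniform tightness must be used in tandem.
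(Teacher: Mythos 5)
Your treatment of part (ii) is essentially the paper's: uniform-in-$\varepsilon$ energy and $H^1(\mathbb{R})$ tail-ends estimates (Lemmas \ref{ues1aa} and \ref{tail_3aa}), transferred to the invariant measures through stationarity (Proposition \ref{tigu1}), followed by Prokhorov's theorem and identification of the limit via the convergence in probability of $\mathbf{u}^{\varepsilon}$ to $\mathbf{u}^{\varepsilon_0}$ in $H^1(\mathbb{R})$ uniformly over bounded sets of initial data (Lemma \ref{ucsp}), combined with the norm-topology Feller property (Lemma \ref{fp}) so that $P^{\varepsilon_0}_t\varphi\in C_b(H^1(\mathbb{R}))$ can be integrated against the weakly convergent measures. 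That part of your plan is sound and matches the paper's route (the paper outsources the final two-limit interchange to the references it cites, but the mechanism is the one you describe).

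The gap is in part (i) for $d=2$. Your route to existence is the classical Krylov--Bogoliubov scheme in the \emph{norm} topology of $H^1(\mathbb{R}^d)$, and it hinges on applying It\^{o}'s formula to $\|\mathbf{u}(t)\|_{H^1}^2$ (to get $\sup_{t\ge 0}\mathrm{E}\|\mathbf{u}(t)\|_{H^1}^2\le C$) and on a uniform-in-time tail-ends estimate in $H^1$. Neither is available for $d=2$: by \eqref{1.3a} the term $\mathbf{u}\times\Delta\mathbf{u}$ lies in $L^r(0,T;L^2(\mathbb{R}^2))$ only for $r<2$, so the drift lacks the $L^2_tL^2_x$ regularity needed to justify the $H^1$ energy identity \eqref{3.1wanb}; the paper is explicit that the formal computation cannot be made rigorous in two dimensions, and this is exactly why Lemma \ref{tail_3} (the $H^1$ tail estimate) and all of Section 4 are restricted to $d=1$. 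Moreover, Lemma \ref{est_v}(iv), which you invoke for the tails, is an $L^2$ estimate on a finite horizon with the radius $M$ depending on $T$, not a time-uniform $H^1$ estimate. The paper sidesteps all of this for $d=1,2$ by proving tightness of the time-averaged measures only in the bounded-weak topology of $H^1$ --- which requires nothing beyond the moment bound \eqref{est_v 4} and no tail estimates at all --- and by replacing norm continuity of $\mathbf{P}_t\varphi$ with the sequential $bw$-Feller property obtained from Lemma \ref{sj2}, so that the Maslowski--Seidler version of Krylov--Bogoliubov applies (Theorem \ref{eim_wk}). To repair your argument for $d=2$ you would either have to justify the $H^1$ It\^{o} formula (which the paper cannot do) or switch to this weak-topology framework.
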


 Note that Theorem \ref{th1} (ii)
 implies that any limit of a sequence
 of invariant measures of \eqref{1.3*}
 must be an invariant measure
 of the limiting equation.

 Based on the
 uniform tail-ends estimates of solutions
 in $L^2(\Omega, C([0,T]; L^2(\mathbb{R}^d, \mathbb{R}^3)))$,
 we will prove Theorem \ref{th1} (i)
 for the existence of invariant measures
 of \eqref{equ1.1}
 by the weakly Feller method as in
\cite{B1, Mas}.

The most difficult
part of the paper is to prove
Theorem \ref{th1} (ii), and
currently we only obtain  that
result  for $d=1$.
For $d=2$,
Theorem \ref{th1} (ii) remains open.

Next, we explain why the proof
of
Theorem \ref{th1} (ii) is so
challenging.
In order to establish the uniform
tightness
of the set
 $\bigcup_{\varepsilon
\in [0,1]} \mathcal{I}^\varepsilon$
in $H^1(\mathbb{R}^d, \mathbb{R}^3)$,
we must derive the uniform
tail-ends estimates of solutions
in    $H^1(\mathbb{R}^d, \mathbb{R}^3)$
rather than in $L^2(\mathbb{R}^d, \mathbb{R}^3)$, because the
uniform tail-ends estimates
in $L^2(\mathbb{R}^d, \mathbb{R}^3)$
are not sufficient in this case.
To obtain the uniform
tail-ends estimates of solutions
in    $H^1(\mathbb{R}^d, \mathbb{R}^3)$,
we need  to use the energy equation
of solutions in  $H^1(\mathbb{R}^d, \mathbb{R}^3)$.
But,  the  energy equation
of solutions in  $H^1(\mathbb{R}^d, \mathbb{R}^3)$ is not available for $d=2$.
To understand this issue, let's rewrite
\eqref{3.1}  as
\begin{align}\label{3.1wan}
 \mathbf{u}(t)=\mathbf{u}_0  +
 \int_{0}^{t}
\mathbf{F}  (\mathbf{u}(s)) ds
+\sum_ {k= 1}^\infty\int_{0}^{t}(\mathbf{u}(s)\times \mathbf{f}_k+\mathbf{f}_k)d W_k ,
\end{align}
where
$$
 \mathbf{F}  (\mathbf{u} )
 =
\Delta \mathbf{u} + \mathbf{u} \times \Delta \mathbf{u}
 - (1+|\mathbf{u} |^{2})\mathbf{u}
+\frac{1}{2}\sum_{k= 1}^\infty
 (\mathbf{u} \times \mathbf{f}_k)\times \mathbf{f}_k .
$$

The high-order nonlinear term
$\mathbf{u} \times \Delta \mathbf{u} $
has lowest regularity among all terms
in $\mathbf{F}  (\mathbf{u} )  $,
and hence  it dominates the regularity of
$\mathbf{F}  (\mathbf{u} )  $.
 If  we assume
 \be\label{3.1wana}
 \mathbf{u} \times \Delta \mathbf{u}
\in L^2(0,T; L^2(\mathbb{R}^d, \mathbb{R}^3)),
\quad  \text{P-almost surely},
\ee
then by
\eqref{1.3} we find  that
$  \mathbf{F} (\mathbf{u} )
\in L^2(0,T; L^2(\mathbb{R}^d, \mathbb{R}^3))$
$\mathrm{P}$-almost surely,
and hence by   \eqref{1.3}
and  \cite[Theorem 3.2]{kry1981}
 we infer that
 $\mathbf{u}
 \in C([0,T]; H^1  (\mathbb{R}^d, \mathbb{R}^3
))$
$\mathrm{P}$-almost surely,
and  $\mathbf{u} $ satisfies the
energy equation in
$H^1  (\mathbb{R}^d, \mathbb{R}^3
 )$:
 $$
 \|\mathbf{u} (t)\|^2_{H^1  (\mathbb{R}^d, \mathbb{R}^3
)}
 =
\|\mathbf{u}_0\|^2_{H^1  (\mathbb{R}^d, \mathbb{R}^3
)}
+2
\int_0^t
( (I-\Delta) \mathbf{u} (s),
\mathbf{F} ( \mathbf{u} (s) ))
_ {L^2 (\mathbb{R}^d, \mathbb{R}^3
)} ds
$$
 \be\label{3.1wanb}
+
 \sum_ {k= 1}^\infty\int_{0}^{t}
 \| \mathbf{u}(s)\times \mathbf{f}_k+\mathbf{f}_k
 \|^2_{H^1  (\mathbb{R}^d, \mathbb{R}^3
)} ds
+2
\sum_ {k= 1}^\infty\int_{0}^{t}(
(I-\Delta) \mathbf{u} (s),
 \mathbf{u}(s)\times \mathbf{f}_k+\mathbf{f}_k
 )_{L^2(\mathbb{R}^d,
 \mathbb{R}^3)} d W_k,
\ee
 for all $t\in [0,T]$, $\mathrm{P}$-almost surely.

 Unfortunately, by \eqref{1.3a} we find that
 for $d=2$,
 $\mathbf{u} \times \Delta \mathbf{u}
\in L^r(0,T; L^2(\mathbb{R}^d, \mathbb{R}^3))
$
only for $r<2$
but
not for $r=2$,  P-almost surely, and hence
the condition
\eqref{3.1wana} is not satisfied
for $d=2$.
Consequently, we cannot
obtain
 the energy equation \eqref{3.1wanb}
for $\mathbf{u} $
in
$H^1  (\mathbb{R}^2, \mathbb{R}^3)$,
and thus we are unable to
establish the uniform
tail-ends estimates of solutions
in $H^1  (\mathbb{R}^2, \mathbb{R}^3)$.
We may formally derive the
uniform
tail-ends estimates
in $H^1  (\mathbb{R}^2, \mathbb{R}^3)$,
but those calculations cannot be justified
due to the lower regularity of
 $\mathbf{F} (\mathbf{u})$.

Nevertheless, if $d=1$, then  we have
the embedding $H^1(\mathbb{R}, \mathbb{R}^3)\hookrightarrow L^\infty(\mathbb{R}, \mathbb{R}^3)$, and hence by \eqref{1.3}
we infer that
$
\mathbf{u} \times \Delta \mathbf{u}
\in L^2(0,T; L^2(\mathbb{R} , \mathbb{R}^3))$,
 {P-almost surely};
 that is,
 $\mathbf{u} \times \Delta \mathbf{u}$
  satisfies \eqref{3.1wana} for $d=1$.
  As a result,
  we find that $\mathbf{u}
 \in C([0,T]; H^1  (\mathbb{R} , \mathbb{R}^3
))$
$\mathrm{P}$-almost surely,
and  $\mathbf{u} $ satisfies the
energy equation   in
$H^1  (\mathbb{R} , \mathbb{R}^3
 )$.
 This is why we are able to
 obtain the
    uniform
tail-ends estimates of solutions
in $H^1  (\mathbb{R} , \mathbb{R}^3)$,
but not
in $H^1  (\mathbb{R}^2, \mathbb{R}^3)$
(see Lemma \ref{tail_3}).

  On the other hand,
  in order to prove
 a limit of a sequence
 of invariant measures of \eqref{1.3*}
 is   an invariant measure
 of the limiting equation, we must
 establish the
    convergence
    of solutions  in probability
    in $H^1  (\mathbb{R}^d , \mathbb{R}^3)$.
    Again, since the solutions
    have  no sufficient regularity
    for $d=2$, we are unable to
    obtain   the
    convergence
    of solutions  in probability
    in $H^1  (\mathbb{R}^2 , \mathbb{R}^3)$.
    However, if $d=1$, by
    the embedding $H^1(\mathbb{R}, \mathbb{R}^3)\hookrightarrow L^\infty(\mathbb{R}, \mathbb{R}^3)$,
    the
    convergence
    of solutions  in probability
    in $H^1  (\mathbb{R}  , \mathbb{R}^3)$
    can be established
    (see Lemma \ref{ucsp}).

    We remark that
    by the idea of uniform tail-ends
 estimates, we
 can also   prove the existence
 of invariant measures
 of \eqref{equ1.1a}
 for $d=1$
 by the Feller property
 of the Markov semigroup
 with respect to the strong topology
 instead of the weak topology
 (see Lemma \ref{fp}), which
  provides  an alternative
  approach to show the existence
  of invariant measures for $d=1$.

The rest of this paper is organized as follows. In section 2, we establish the existence and uniqueness of global   solutions by the domain expansion method. In section 3, we prove the existence of invariant measures
by the weakly  Feller method.
In the last section,
we prove Theorem \ref{th1}
and the existence of invariant measures
for $d=1$ by the Feller property of
solutions with respect to the
strong topology.

For later sections, we
  recall the following
Skorokhod-Jakubowski representation theorem
from \cite{bm, jak1}.
\begin{proposition}
\label{prop_sj}
 Suppose  $X$
 is  a topological space, and there
  exists a sequence
of continuous functions $g_n: X\rightarrow \mathbb{R}$ that separates points of $X$.
If    $\{\mu_n\}_{n=1}^\infty$
is a tight sequence of
   probability measures
  on $(X, \mathcal{B} (X) )$, then
  there exists a subsequence
  $\{\mu_{n_k}\}_{k=1}^\infty$
  of  $\{\mu_n\}_{n=1}^\infty$,
    a probability space $(
  \widetilde{
  \Omega},
  \widetilde{\mathcal{F}},
  \widetilde{\mathrm{P}})$, $X$-valued random variables
   $v_{ k}$  and $v$    such that
   the law of $v_{ k}$ is
   $\mu_{n_k}$ for all $k\in \mathbb{N}$ and
      $v_{ k} \rightarrow v$  $\widetilde{
  \mathrm{P}}$-almost surely
   as $k \rightarrow \infty$.
\end{proposition}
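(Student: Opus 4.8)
The plan is to transfer the problem to a metric setting, where the classical Skorokhod representation theorem is available, and then to transport the almost sure representation back to $X$. First, since a countable family of continuous real functions separating points renders $X$ Hausdorff, and since replacing each $g_n$ by the bounded continuous function $h_n := \arctan g_n$ preserves both continuity and the separation property, I would define $\Phi := (h_n)_{n\ge 1} : X \to Y$, where $Y := [-\tfrac{\pi}{2},\tfrac{\pi}{2}]^{\mathbb{N}}$. Then $\Phi$ is continuous and injective, while $Y$, being a countable product of compact metric spaces, is itself a compact, hence Polish, metric space. I would also record the key topological fact that on any compact $K\subseteq X$ the map $\Phi|_K : K \to \Phi(K)$ is a continuous bijection from a compact space onto a Hausdorff space and is therefore a homeomorphism; equivalently, the original topology of $X$ and the metrizable topology induced by $\{h_n\}$ coincide on compact subsets of $X$.

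Next I would push the measures forward, setting $\nu_n := \Phi_\ast \mu_n$ on $Y$. As $Y$ is compact, the family $\{\nu_n\}$ is automatically tight, so Prokhorov's theorem yields a subsequence $\{\nu_{n_k}\}$ converging weakly to some probability measure $\nu$ on $Y$. The tightness hypothesis on $\{\mu_n\}$ enters at this point: for each $j$ I would fix a compact $C_j\subseteq X$ with $\sup_n \mu_n(X\setminus C_j) < 2^{-j}$, taken increasing in $j$. Then $D_j := \Phi(C_j)$ is compact (hence closed) in $Y$ and $\nu_{n_k}(D_j)\ge \mu_{n_k}(C_j) > 1-2^{-j}$, so by the portmanteau theorem $\nu(D_j)\ge \limsup_k \nu_{n_k}(D_j)\ge 1-2^{-j}$. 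Hence $\nu$ is concentrated on the $\sigma$-compact Borel set $S := \bigcup_j D_j \subseteq \Phi(X)$, and every $\nu_{n_k}$ lives on $\Phi(X)$. Applying the classical Skorokhod representation theorem to $\nu_{n_k}\to\nu$ on the Polish space $Y$ furnishes a probability space $(\widetilde{\Omega},\widetilde{\mathcal F},\widetilde{\mathrm P})$ and $Y$-valued random variables $\xi_k,\xi$ with $\mathrm{law}(\xi_k)=\nu_{n_k}$, $\mathrm{law}(\xi)=\nu$, and $\xi_k\to\xi$ $\widetilde{\mathrm P}$-almost surely in $Y$.

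I would then define the candidate representation on $X$ by inverting $\Phi$. Patching the homeomorphisms $\Phi|_{C_j}$ over the $\sigma$-compact set $S$ shows that $\Psi := \Phi^{-1} : S \to X$ is Borel measurable, so setting $v_k := \Psi(\xi_k)$ and $v := \Psi(\xi)$ gives well-defined $X$-valued random variables, since $\xi_k\in\Phi(X)$ and $\xi\in S$ almost surely. Because $\Psi\circ\Phi = \mathrm{id}_X$, the law of $v_k$ is exactly $\mu_{n_k}$ and the law of $v$ is the corresponding pull-back of $\nu$, as required.

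The heart of the argument, and the only place where the possible non-metrizability of $X$ genuinely intervenes, is upgrading the almost sure convergence $\xi_k\to\xi$ in the metric space $Y$ to almost sure convergence $v_k\to v$ in the original topology of $X$. This is delicate precisely because $\Psi$ is continuous only on each compact $D_j$ and not globally, so a sequence converging to $\xi$ in $Y$ may a priori approach it from outside every fixed compact, where no control on $\Psi$ is available. The resolution rests on exploiting the uniform tightness once more, together with the coincidence of the two topologies on compacta established in the first step, to show that $\widetilde{\mathrm P}$-almost surely the convergence of $\xi_k$ is compatible with the finer topology pulled back by $\Psi$; this is exactly the content of Jakubowski's topological refinement of the Skorokhod theorem carried out in \cite{jak1} (see also \cite{bm}). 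I expect this last passage to be the principal difficulty, whereas the embedding, the concentration of $\nu$ on a $\sigma$-compact subset of $\Phi(X)$, and the measurability of $\Psi$ are routine.
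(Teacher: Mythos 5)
The paper does not prove this proposition at all: it is recalled verbatim from \cite{jak1} (see also \cite{bm}), so there is no internal argument to compare yours against. Your reduction steps are fine as far as they go --- replacing $g_n$ by $\arctan g_n$, embedding $X$ into the compact metrizable cube $Y$ via $\Phi=(h_n)$, noting that $\Phi$ restricted to any compact set is a homeomorphism onto its image, pushing the measures forward, extracting a weakly convergent subsequence by Prokhorov, showing $\nu$ and the $\nu_{n_k}$ concentrate on the $\sigma$-compact set $S=\bigcup_j D_j$, and checking that $\Psi=\Phi^{-1}$ is Borel on $S$ with $\Psi_*\nu_{n_k}=\mu_{n_k}$. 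These are indeed the routine parts.

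The gap is exactly at the step you flag, and it is not a detail: the route ``apply the classical Skorokhod theorem in $Y$, then pull back by $\Psi$'' does not work, and no further appeal to tightness repairs it. The classical representation gives $\xi_k(\omega)\to\xi(\omega)$ in $Y$ for a.e.\ $\omega$, but $\Psi$ is continuous only on each compact $D_j$, not at the point $\xi(\omega)$ along arbitrary sequences in $S$. Uniform tightness yields $\widetilde{\mathrm{P}}(\xi_k\notin D_j)\le 2^{-j}$ \emph{for each fixed} $k$; summed over the infinitely many $k$ this gives no control, and Borel--Cantelli does not apply, so there is no event of full (or even positive prescribed) measure on which the whole sequence $(\xi_k(\omega))_k$ eventually stays in one $D_j$. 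Consequently one cannot conclude $v_k(\omega)\to v(\omega)$ in $X$. Deferring this point to ``Jakubowski's topological refinement'' is circular, because that refinement \emph{is} the proposition being proved. The missing idea in \cite{jak1} is that one does not use the classical Skorokhod theorem as a black box: the representing variables are constructed directly on $([0,1],\mathcal{B},\lambda)$ by a nested-partition argument, after first passing to a subsequence along which the conditional laws restricted to each $K_{1/m}$ also converge; the construction forces, for each $m$, a single set of measure at least $1-1/m$ on which \emph{all} the $v_k(\omega)$ and $v(\omega)$ lie in the common compact $K_{1/m}$, where your observation that the two topologies coincide then legitimately upgrades metric convergence to convergence in $\tau$. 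Without that simultaneous confinement, built into the construction rather than deduced afterwards, the final implication fails.
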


\section{Existence and uniqueness of solutions}

In this section, we
prove  the global well-posedness of strong   solutions
 of  \eqref{equ1.1}
 defined in $\mathbb{R}^d$ with
   $d=1,2$. Due to the non-compactness of
   the standard Sobolev embeddings in  unbounded
    domains, we adopt a domain expansion
   argument
   to
   show  the existence of solutions
   of \eqref{equ1.1}.
   Specifically, we first consider the approximate solutions of equation \eqref{equ1.1}
   defined  in a bounded domain $\mathscr{D}_n=\{x\in \mathbb{R}^d: |x|< n\}$
   for every
   $n\in \mathbb{N}$, then
   take the limit of these
   approximate
   solutions as  $n\rightarrow\infty$
   to obtain a solution
   of \eqref{equ1.1}
    defined in  the entire space $\mathbb{R}^d$.

\subsection{Construction of approximate solutions}
In this subsection, we will  define
a sequence of approximate solutions
for \eqref{equ1.1}.
 We first introduce the Sobolev spaces
that will be used in the sequel.
Let $\mathscr{D}$
be an open set in
$\mathbb{R}^d$
with $d=1,2$.
Given $p\ge 1$, let
$$
L^p(\mathscr{D}, \mathbb{R}^3)
=\left \{
\mathbf{u}: \mathscr{D}
\to \mathbb{R}^3:
\int_{\mathscr{D}}
|\mathbf{u} (x)|^p dx <\infty
\right \} .
$$
The norm of
$
L^p(\mathscr{D}, \mathbb{R}^3)
$ is  denoted by
$\| \cdot \|_{L^p(\mathscr{D}, \mathbb{R}^3) }$.
In particular,
for $p=2$, the inner
product
and the norm of
$
L^2(\mathscr{D}, \mathbb{R}^3)
$
are written as
$(\cdot, \cdot)_{L^2(\mathscr{D}, \mathbb{R}^3)}$
and $\| \cdot \|_{L^2(\mathscr{D}, \mathbb{R}^3)}$,
respectively.
For every   $k\in \mathbb{N}$
and  $p \ge 1$, let
$$
W^{k,p}(\mathscr{D},
\mathbb{R} ^3 )
=\left \{
\mathbf{u}
\in L^p
( \mathscr{D},
  \mathbb{R}^3) :
  \partial ^\alpha
  \mathbf{u}
  \in
  L^p
  ( \mathscr{D},
  \mathbb{R}^3),
  \ |\alpha | \le k
 \right \}.
 $$
 The norm of
 $W^{k,p}(\mathscr{D},
 \mathbb{R} ^3 )$
 is denoted by
 $\|\cdot \|_{W^{k,p}(\mathscr{D},
 	\mathbb{R} ^3 )}$.
 	Let
$W_0^{k,p}(\mathscr{D},
\mathbb{R}^3 )$
be the closure of $C_0^\infty(\mathscr{D}, \mathbb{R}^3)$ with respect to
 the topology of $W^{k,p}(\mathscr{D}, \mathbb{R}^3)$.
   For  $p=2$, we
   write  $H^k(\mathscr{D},
   \mathbb{R}^3)
   =  W^{k,2}(\mathscr{D} ,
   \mathbb{R} ^3
   )$
    which is a Hilbert space
    with inner product $(\cdot,\cdot)_{H^k(
  \mathscr{D},
  \mathbb{R} ^3)}=\sum\limits_{|\alpha|\leq k}(\partial^\alpha \cdot, \partial^\alpha \cdot)_{L^2(\mathscr{D}, \mathbb{R}^3)}$.
  Similarly, we also
  denote  $W_0^{k,2}(\mathscr{D} ,
   \mathbb{R} ^3
   )$ by
    $H^k_0 (\mathscr{D},
   \mathbb{R}^3)$.
    If no confusion occurs, we will use
   $L^p (\mathscr{D} )$
   for  $L^p (\mathscr{D},
\mathbb{R} ^3 )$, and use  similar notations
  for other spaces.

 Let $Y $ be a  separable  Hilbert space
 with inner product $ (\cdot, \cdot )_Y $.
 Denote by
   $Y_w$ the    space
   $Y$
endowed with  weak topology.  Given $T>0$
let
$$
C([0,T]; Y_w)=
\left \{
\mathbf{u}: \mathbf{u} \ \text{is  a  continuous  function  from }  [0,T]
\ \text{to}\ Y_w \right \},
$$
which is equipped  with the following topology:
$\mathbf{u}_n \to \mathbf{u}$ in
$C([0,T]; Y_w)$  if for every $y\in Y$,
$\lim\limits_{n\to \infty}
\sup\limits_{t\in [0,T]}
|(\mathbf{u}_n(t) - \mathbf{u}(t), y)_Y|
=0$.
Given $R>0$, let
$C([0,T]; Y^R_w)$ be a subset of
$C([0,T]; Y_w)$ given by
$$
C([0,T]; Y^R_w)=
\left \{
\mathbf{u}\in C([0,T]; Y_w): \sup_{t\in [0,T]}
\| \mathbf{u}(t)\|_Y
\le R    \right \}.
$$
Since the weak topology of
$Y$ on the  closed ball
of radius $R$ centered at the origin
is metrizable, we see that
$C([0,T]; Y^R_w)$ is a complete metric space.

To construct approximate solutions
for \eqref{equ1.1}, we choose a smooth
function $\theta : \mathbb{R}^d
\to \mathbb{R}$ such that
$0\le \theta (x) \le 1$
for all $x\in \mathbb{R}^d$ and
\begin{equation}\label{cutoff}
\theta (x)=\left\{\begin{array}{ll}
\!\!0, ~ \text{ if  } |x|\geq {\frac 34} ,\\
\!\!1,~ \text{ if  } |x|\leq \frac{1}{2}.
\end{array}\right.
\end{equation}
Given $n\in \mathbb{N}$,
let
$\theta_n (x) = \theta  \left ({\frac xn}
\right )$ for all $x\in \mathbb{R}^d$.

For every $n\in \mathbb{N}$,
consider the solution
$ \mathbf{u}^n$ of the  equation
defined  in    $\mathscr{D}_n
=\{x\in \mathbb{R}^d: |x|
<n \}$:
\begin{equation}\label{e1*}
\left\{\begin{array}{ll}
\!\!d\mathbf{u}^n=\Delta \mathbf{u}^n dt+\mathbf{u}^n\times \Delta \mathbf{u}^ndt-(1+|\mathbf{u}^n|^{2})\mathbf{u}^ndt+ \sum_{k= 1}^\infty(\mathbf{u}^n \times \mathbf{f}_k+\mathbf{f}_k)\circ d W_k,\\
\!\!\mathbf{u}^n|_{\partial\mathscr{D}_n}=0,\\
\!\!\mathbf{u}^n(0,x)=\theta_n
(x)\mathbf{u}_0(x), \quad x\in \mathbb{R}^d,
\end{array}\right.
\end{equation}
where $\mathbf{u}_0
\in H^1(\mathbb{R}^d)$.

By   \cite{B1},
we know that if   \eqref{1.1} holds,
then
 for every
 $n\in \mathbb{N}$ and
 $ \mathbf{u}_0
\in H^1(\mathbb{R}^d)$,
problem
\eqref{e1*}  has
 a unique solution
 $\mathbf{u}^n$ which is a
 weakly continuous $H^1(\mathscr{D}_n)$-valued progressively measurable
 stochastic process such that
 for every $T>0$
 and $p\ge 2$,
 $$
 \mathbf{u}^n\in L^p(\Omega;
 L^\infty(0,T; H_0^1(\mathscr{D}_n))\cap L^2(0,T;H^2(\mathscr{D}_n) ))  .
 $$
 The uniform estimates of the sequence
 $\{\mathbf{u}^n\}_{n=1}^\infty$
 are collected in the following lemma
 which can be found in
    \cite{B1}.

    \begin{lemma}\label{est1}
    Let     \eqref{1.1} hold
    and  $ \mathbf{u}_0
\in H^1(\mathbb{R}^d)$.
Then the
    sequence
 $\{\mathbf{u}^n\}_{n=1}^\infty$
 of solutions of \eqref{e1*} has the properties:

 {\rm (i)}. For every $p\ge 1$,
  $R>0$ and $T>0$,
 there exists a positive  number
 $C_1=C_1(p,R,T) $ independent of
  $n\in
 \mathbb{N}$ such that
 for all  $ \mathbf{u}_0
 \in H^1(\mathbb{R}^d)$
 with  $ \|\mathbf{u}_0\|_{
   H^1(\mathbb{R}^d)}
\le R $,
\begin{align} \label{1.50}
\mathrm{E}\left(\sup_{s\in[0,T]}
\|\mathbf{u}^n(s)\|^{2p}_{H_0^1(\mathscr{D}_n)}
\right)
+\mathrm{E}\left(\int_{0}^{T}
\|\mathbf{u}^n(s)\|^2_{H^2(\mathscr{D}_n)}ds
\right)^p
\leq C_1,
\end{align}
and
\begin{align} \label{1.50a}
\mathrm{E}
\left(\int_{0}^{T}\|
\mathbf{u}^n(s)
\times
\Delta \mathbf{u}^n(s)
\|^{\frac 43}
_{L^2 (\mathscr{D}_n)}ds
\right)^p
\leq C_1.
\end{align}

{\rm (ii)}.  For every
 $R>0$ and $T>0$,
 there exists a positive  number
 $C_2=C_2( R,T) $ independent of
  $n\in
 \mathbb{N}$ such that
 for all  $ \mathbf{u}_0
 \in H^1(\mathbb{R}^d)$
 with  $ \|\mathbf{u}_0\|_{
   H^1(\mathbb{R}^d)}
\le R $,
 \begin{equation} \label{1.50b}
  \mathrm{E}
  \left (
  \| \mathbf{u}^n\|^{16}
  _{ W^{{\frac 3{16}},  16 }
  	(0,T; L^2 (\mathscr{D}_n) )}
  \right )
  \le C_2.
  \end{equation}

  {\rm (iii)}.   For every
  $R>0$,
 there exists a positive  number
 $C_3=C_3(R) $ independent of
  $n\in
 \mathbb{N}$ such that
 for all  $t\ge 0$ and  $ \mathbf{u}_0
 \in H^1(\mathbb{R}^d)$
 with  $ \|\mathbf{u}_0\|_{
   H^1(\mathbb{R}^d)}
\le R $,
\begin{align} \label{1.50c}
\mathrm{E}\left(
\|\mathbf{u}^n(t)\|^2_{H_0^1(\mathscr{D}_n)}
\right)
+  \int_{0}^{t}
\mathrm{E} \left (
\|\mathbf{u}^n(s)\|^2_{H^2(\mathscr{D}_n)}
\right ) ds
\leq C_3 + C_3 t.
\end{align}
  \end{lemma}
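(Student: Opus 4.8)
Since the bounded‑domain theory invoked just before the statement already provides, for each fixed $n$, a solution $\mathbf{u}^n$ with the regularity $L^p(\Omega;L^\infty(0,T;H^1_0(\mathscr{D}_n))\cap L^2(0,T;H^2(\mathscr{D}_n)))$, It\^o's formula may be applied rigorously to the energy functionals below, and the entire task reduces to making the resulting estimates uniform in $n$. Two structural facts make this possible. First, the initial data satisfy $\|\theta_n\mathbf{u}_0\|_{H^1(\mathbb{R}^d)}\le C\|\mathbf{u}_0\|_{H^1(\mathbb{R}^d)}\le CR$ with $C=1+\|\nabla\theta\|_{L^\infty}$ independent of $n$, because $\|\nabla\theta_n\|_{L^\infty}=n^{-1}\|\nabla\theta\|_{L^\infty}$ by \eqref{cutoff}. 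Second, the coefficients $\mathbf{f}_k$ restricted to $\mathscr{D}_n$ obey the summable bound \eqref{1.1} uniformly in $n$. Throughout I would extend $\mathbf{u}^n$ by zero outside $\mathscr{D}_n$, so that $\mathbf{u}^n\in H^1_0(\mathscr{D}_n)$ becomes an element of $H^1(\mathbb{R}^d)$ of the same norm; this lets me use Gagliardo--Nirenberg and Agmon inequalities with the fixed $\mathbb{R}^d$ constants and, crucially, avoids the Poincar\'e inequality, whose constant on $\mathscr{D}_n$ degenerates as $n\to\infty$.

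For part (i) I would first apply It\^o's formula to $\|\mathbf{u}^n(t)\|^2_{L^2}$. The cross term drops since $\mathbf{u}^n\times\Delta\mathbf{u}^n\perp\mathbf{u}^n$ pointwise; integration by parts (legitimate as $\mathbf{u}^n|_{\partial\mathscr{D}_n}=0$) turns $\Delta\mathbf{u}^n$ into the dissipation $-\|\nabla\mathbf{u}^n\|^2_{L^2}$ and the cubic term into $-\|\mathbf{u}^n\|^4_{L^4}$. The Stratonovich form is what renders the noise harmless: the It\^o correction $\sum_k\|\mathbf{u}^n\times\mathbf{f}_k+\mathbf{f}_k\|^2_{L^2}$ cancels exactly against the drift correction $\sum_k((\mathbf{u}^n\times\mathbf{f}_k)\times\mathbf{f}_k,\mathbf{u}^n)$ down to the constant source $\sum_k\|\mathbf{f}_k\|^2_{L^2}$, while the martingale part reduces to $\sum_k(\mathbf{f}_k,\mathbf{u}^n)\,dW_k$ by the same orthogonality. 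Testing the drift against $-\Delta\mathbf{u}^n$ gives the $H^1$ identity: the cross term vanishes again by $\mathbf{u}^n\times\Delta\mathbf{u}^n\perp\Delta\mathbf{u}^n$, the leading term yields $-\|\Delta\mathbf{u}^n\|^2_{L^2}$, and a further integration by parts shows the cubic term is favorable, $(|\mathbf{u}^n|^2\mathbf{u}^n,-\Delta\mathbf{u}^n)=\int|\mathbf{u}^n|^2|\nabla\mathbf{u}^n|^2+\tfrac12\int|\nabla(|\mathbf{u}^n|^2)|^2\ge0$; the $H^1$‑level noise terms are controlled using $\mathbf{f}_k\in W^{1,\infty}\cap H^1$ from \eqref{1.1}. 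Taking the supremum in time, estimating the martingale by Burkholder--Davis--Gundy and closing with Gronwall, then raising to the $p$‑th power, yields \eqref{1.50} with $C_1$ independent of $n$. Finally \eqref{1.50a} follows from $\|\mathbf{u}^n\times\Delta\mathbf{u}^n\|_{L^2}\le\|\mathbf{u}^n\|_{L^\infty}\|\Delta\mathbf{u}^n\|_{L^2}\lesssim\|\mathbf{u}^n\|^{1/2}_{L^2}\|\mathbf{u}^n\|^{3/2}_{H^2}$, valid in $d\le2$ by the Agmon inequality, so that $\|\mathbf{u}^n\times\Delta\mathbf{u}^n\|^{4/3}_{L^2}\lesssim\|\mathbf{u}^n\|^{2/3}_{L^2}\|\mathbf{u}^n\|^2_{H^2}$; the exponent $\tfrac43$ is forced by the need to match the integrable power $2$ of $\|\mathbf{u}^n\|_{H^2}$ supplied by \eqref{1.50}, and this is precisely where $d\le2$ is essential.

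For part (ii) I would split $\mathbf{u}^n(t)-\mathbf{u}^n(s)$, read off from the integral form of \eqref{e1*}, into its drift and stochastic parts and estimate each in $W^{3/16,16}(0,T;L^2)$. After the bounds of (i) the drift integral is H\"older continuous in time with values in $L^2$, using the $L^2_tH^2$ control of \eqref{1.50} together with the cross‑product bound \eqref{1.50a}, hence lies in this fractional space; the stochastic integral is handled by the standard fractional‑in‑time regularity estimate for It\^o integrals combined with Burkholder--Davis--Gundy, both fed by \eqref{1.50}. The numerology $3/16,\,16$ is chosen to stay within the regularity the available bounds furnish while still yielding the time‑compactness needed later. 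Part (iii) uses the same $H^1$ energy identity as (i) but after taking expectations---so the martingale term disappears---without a supremum in time; the resulting differential inequality $\frac{d}{dt}\mathrm{E}\|\mathbf{u}^n(t)\|^2_{H^1}+\mathrm{E}\|\mathbf{u}^n(t)\|^2_{H^2}\le C$, whose right‑hand side is the constant source $\sum_k\|\mathbf{f}_k\|^2_{H^1}$, integrates directly to the linear‑in‑$t$ bound \eqref{1.50c}. The one obstacle common to all three parts, and the only genuinely delicate point, is to certify that no constant secretly depends on the domain size $n$; this is secured by the zero‑extension device together with the uniform control of the initial data $\theta_n\mathbf{u}_0$ and of the restricted coefficients $\mathbf{f}_k|_{\mathscr{D}_n}$ recorded at the outset.
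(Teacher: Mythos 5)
The paper does not actually prove this lemma: it simply records the estimates and attributes them to \cite{B1}, where they are obtained for a fixed bounded domain via a Galerkin scheme built on eigenfunctions of the Dirichlet Laplacian. Your proposal reconstructs the underlying energy argument directly, and its real added value is that it isolates the one issue the paper leaves implicit, namely why the constants can be taken independent of $n$: the uniform bound $\|\theta_n\mathbf{u}_0\|_{H^1}\le (1+\|\nabla\theta\|_{L^\infty})\|\mathbf{u}_0\|_{H^1}$, the $n$-independence of the noise coefficients, and the use of functional inequalities whose constants do not see the domain. The cancellation structure you describe ($\mathbf{u}\times\Delta\mathbf{u}\perp\mathbf{u}$ and $\perp\Delta\mathbf{u}$, the Stratonovich correction collapsing to $\sum_k\|\mathbf{f}_k\|_{L^2}^2$, the sign of the cubic term after integration by parts) matches what the paper itself uses later in Lemmas \ref{tail_1} and \ref{ues1}, and the exponent bookkeeping for \eqref{1.50a} and \eqref{1.50b} is correct.

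One technical point deserves repair. The zero extension of a function in $H^2(\mathscr{D}_n)\cap H^1_0(\mathscr{D}_n)$ is generally \emph{not} in $H^2(\mathbb{R}^d)$ (the normal derivative need not vanish on $\partial\mathscr{D}_n$), so for $d=2$ you cannot import the whole-space Agmon inequality $\|\mathbf{u}\|_{L^\infty}\lesssim\|\mathbf{u}\|_{L^2}^{1/2}\|\mathbf{u}\|_{H^2}^{1/2}$ by extending by zero, which is what your derivation of \eqref{1.50a} relies on. The fix is standard: use the Dirichlet form of Agmon's inequality $\|\mathbf{u}\|_{L^\infty(\mathscr{D})}^2\le C\|\mathbf{u}\|_{L^2(\mathscr{D})}\|\Delta\mathbf{u}\|_{L^2(\mathscr{D})}$ for $\mathbf{u}\in H^2\cap H^1_0$ on a planar domain, whose constant is domain-independent, or employ a Stein-type extension operator whose norm is uniform over the balls $\mathscr{D}_n$ for $n\ge 1$. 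With that substitution the argument goes through; the zero-extension device remains perfectly adequate for the $H^1$-level inequalities (the $L^4$ Gagliardo--Nirenberg bound and the one-dimensional Agmon inequality).
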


     Next, we derive the uniform estimates
     on the tails of
       the sequence
     $\{\mathbf{u}^n\}_{n=1}^\infty$, which
     will be used to overcome
     the non-compactness of Sobolev
     embeddings in unbounded domains.

\begin{lemma}\label{tail_1}
If \eqref{1.1} holds,
then for every   $\varepsilon>0$,
$T>0$  and
 $\mathbf{u}_0
 \in H^1
 (\mathbb{R}^d)$,
 there exists
 $M=M(\varepsilon,
 T,  \mathbf{u}_0) \ge 1$
 such that
 for all $m\ge M$
    and $n\in \mathbb{N} $,
$$
 \mathrm{E}
\left ( \sup_{t\in [0,T]}
 \int_{m<|x| <n }
 |
 {\mathbf{ u}}^n(t,
\mathbf{u}_0 )(x) |^2 dx
\right )  <\varepsilon.
$$
\end{lemma}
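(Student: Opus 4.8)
The plan is to use the classical cut-off (truncation) technique adapted to the stochastic setting, exploiting the dissipativity of the equation and the algebraic structure of the cross-product nonlinearity. First I would fix a smooth function $\rho:\R\to[0,1]$ with $\rho(s)=0$ for $s\le \frac14$ and $\rho(s)=1$ for $s\ge 1$, and set $\rho_m(x)=\rho(|x|^2/m^2)$, so that $\rho_m\equiv 1$ on $\{|x|\ge m\}$, $\rho_m\equiv 0$ on $\{|x|\le m/2\}$, and $|\na\rho_m|\le Cm^{-1}$, $|\D\rho_m|\le Cm^{-2}$. Since $\mathbf{u}^n$ vanishes outside $\mathscr{D}_n$, we have $\int_{m<|x|<n}|\mathbf{u}^n(t)|^2\,dx\le \int_{\R^d}\rho_m|\mathbf{u}^n(t)|^2\,dx=:\Phi_m^n(t)$, so it suffices to bound $\mathrm{E}\sup_{t\in[0,T]}\Phi_m^n(t)$ uniformly in $n$. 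To this end I would apply It\^o's formula to the quadratic functional $\mathbf{v}\mapsto\int\rho_m|\mathbf{v}|^2\,dx$ along the It\^o form of \eqref{e1*}, which is justified by the regularity $\mathbf{u}^n\in L^2(0,T;H^2(\mathscr{D}_n))$ from Lemma \ref{est1}.

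The heart of the argument is the term-by-term analysis of the resulting identity, in which two cancellations occur. Integration by parts (the boundary terms vanish because $\mathbf{u}^n|_{\partial\mathscr{D}_n}=0$) gives $2(\rho_m\mathbf{u}^n,\D\mathbf{u}^n)_{L^2}=-2\int\rho_m|\na\mathbf{u}^n|^2\,dx+\int\D\rho_m|\mathbf{u}^n|^2\,dx$; the first term is nonpositive and discarded, while the second is controlled by $Cm^{-2}\|\mathbf{u}^n\|^2_{L^2}$. The high-order nonlinear term disappears entirely since $(\mathbf{u}^n,\mathbf{u}^n\times\D\mathbf{u}^n)=0$ pointwise, so $2(\rho_m\mathbf{u}^n,\mathbf{u}^n\times\D\mathbf{u}^n)_{L^2}=0$; this pointwise orthogonality is precisely what makes the $L^2$ tail estimate tractable, in contrast to the $H^1$ estimate discussed in the introduction. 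The cubic term $-2\int\rho_m(1+|\mathbf{u}^n|^2)|\mathbf{u}^n|^2\,dx$ is nonpositive and discarded. Finally, by the triple-product (Lagrange) identity the Stratonovich-to-It\^o drift correction contributes $-\sum_k\int\rho_m|\mathbf{u}^n\times\mathbf{f}_k|^2\,dx$, which cancels exactly the $|\mathbf{u}^n\times\mathbf{f}_k|^2$ part of the quadratic-variation term $\sum_k\int\rho_m|\mathbf{u}^n\times\mathbf{f}_k+\mathbf{f}_k|^2\,dx$ (here $(\mathbf{u}^n\times\mathbf{f}_k,\mathbf{f}_k)=0$), leaving only the harmless forcing $\sum_k\int\rho_m|\mathbf{f}_k|^2\,dx$. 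The martingale term reduces to $2\sum_k\int_0^t\int\rho_m(\mathbf{u}^n,\mathbf{f}_k)\,dx\,dW_k$, again because $(\mathbf{u}^n,\mathbf{u}^n\times\mathbf{f}_k)=0$.

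Collecting these, dropping the nonpositive terms and using $\rho_m|\theta_n\mathbf{u}_0|^2\le|\mathbf{u}_0|^2\mathbf{1}_{\{|x|>m/2\}}$ for the initial data, I obtain
\be
\Phi_m^n(t)\le \int_{|x|>m/2}|\mathbf{u}_0|^2\,dx+\frac{C}{m^2}\int_0^t\|\mathbf{u}^n(s)\|^2_{L^2}\,ds+t\sum_k\int_{|x|>m/2}|\mathbf{f}_k|^2\,dx+2\sum_k\int_0^t\int\rho_m(\mathbf{u}^n,\mathbf{f}_k)\,dx\,dW_k.
\ee
Taking $\sup_{t\in[0,T]}$ and then expectation, the first three terms are handled by the $n$-uniform moment bound $\mathrm{E}\sup_{[0,T]}\|\mathbf{u}^n\|^2_{L^2}\le C$ of Lemma \ref{est1} together with the tail smallness of $\mathbf{u}_0\in L^2$ and of $\sum_k\|\mathbf{f}_k\|^2_{L^2}<\infty$ (from \eqref{1.1}), all tending to $0$ as $m\to\infty$ uniformly in $n$. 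The main obstacle is the stochastic integral, which I would control by the Burkholder--Davis--Gundy inequality and Cauchy--Schwarz: $\mathrm{E}\sup_t|\,\cdot\,|\le C\,\mathrm{E}(\int_0^T\sum_k(\int\rho_m(\mathbf{u}^n,\mathbf{f}_k))^2\,ds)^{1/2}\le C\gamma_m\,\mathrm{E}(\int_0^T\|\mathbf{u}^n\|^2_{L^2}\,ds)^{1/2}$, where $\gamma_m^2=\sum_k\int_{|x|>m/2}|\mathbf{f}_k|^2\to0$ and the last expectation is bounded uniformly in $n$ by Lemma \ref{est1}; hence this term is $O(\gamma_m)\to0$. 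Since every contribution is bounded uniformly in $n$ by a quantity vanishing as $m\to\infty$, one can choose $M=M(\varepsilon,T,\mathbf{u}_0)$ (depending on $\mathbf{u}_0$ through its $L^2$-tails and through $\|\mathbf{u}_0\|_{H^1}$) so that the right-hand side is $<\varepsilon$ for all $m\ge M$ and $n\in\mathbb{N}$, which gives the claim.
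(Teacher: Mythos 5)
Your proof is correct and follows essentially the same route as the paper: a cut-off weight supported in $\{|x|>m/2\}$, It\^{o}'s formula for the weighted $L^2$ norm, the pointwise orthogonality $(\mathbf{u}^n\times\Delta\mathbf{u}^n)\cdot\mathbf{u}^n=0$ to kill the high-order term, BDG for the martingale, and the $n$-uniform moments of Lemma \ref{est1}. The only (harmless) difference is that you exploit the exact cancellation $((\mathbf{u}\times\mathbf{f}_k)\times\mathbf{f}_k,\mathbf{u})=-|\mathbf{u}\times\mathbf{f}_k|^2$ against the quadratic-variation term and keep the BDG bound in product form, which lets you skip the Gronwall step the paper uses; the paper itself applies this same cancellation later in Lemma \ref{tail_2}.
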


\begin{proof}
Let
$\phi (x) =1-\theta (x)$ for all
$x\in \mathbb{R}^d$, where $\theta$ is
the smooth function given by \eqref{cutoff}.
Given $m\in \mathbb{N}$, denote by
   $\phi_m(x)=\phi(\frac{x}{m})$.
   For all $n, m \in \mathbb{N}$,
   by
     It\^{o}'s formula,  we  get
     for all $r, t\in [0,T]$
     with $r\le t$, $\mathrm{P}$-almost surely,
  \begin{align}\label{tail_1 p1}
& \|\phi_m\mathbf{u}^n
(r)
\|_{L^2
(\mathscr{D}_n) }^2
-
\|\phi_m\mathbf{u}^n
(0)
\|_{L^2
(\mathscr{D}_n) }^2\nonumber\\
&=2\int_0^r ( \Delta \mathbf{u}^n
(s), \phi^2_m\mathbf{u}^n(s ) )_{L^2
(\mathscr{D}_n) } ds
+2
\int_0^r
( \mathbf{u}^n(s)\times \Delta \mathbf{u}^n
(s), \phi^2_m\mathbf{u}^n(s) )_{L^2
(\mathscr{D}_n) } ds\nonumber\\
&\quad-2
\int_0^r
((1+|\mathbf{u}^n(s)|^{2})\mathbf{u}^n(s),
\phi^2 _m\mathbf{u}^n(s) )_{L^2
(\mathscr{D}_n) }ds
+2\sum_{k=1}^\infty
\int_0^r
(\phi_m \mathbf{f}_k  , \phi_m\mathbf{u}^n(s) )_{L^2
(\mathscr{D}_n)}
dW_k
\nonumber\\
&\quad+\sum_{k=1}^\infty
\int_0^r
\|\phi_m (\mathbf{u}^n(s) \times \mathbf{f}_k+\mathbf{f}_k)\|_{L^2
(\mathscr{D}_n) }^2ds
+\sum_{k=1}^\infty\int_0^r
(
(\mathbf{u}^n(s) \times \mathbf{f}_k)\times \mathbf{f}_k, \phi^2_m\mathbf{u}^n(s))_{L^2
(\mathscr{D}_n) }ds,
\end{align}
where we  have used the
identity  $(a\times b, a)=0$ to cancel the multiplicative noise.

For the first
term  on the right-hand side of
\eqref{tail_1 p1} we have for all $r\le t$,
\begin{align}\label{tail_1 p2}
&2\int_0^r ( \Delta \mathbf{u}^n (s), \phi_m^2
\mathbf{u}^n(s) )_{L^2
(\mathscr{D}_n) }ds
=-2\int_0^r (\nabla \mathbf{u}^n(s), \nabla
(\phi^2_m \mathbf{u}^n(s) ))_{L^2
(\mathscr{D}_n) } ds \nonumber\\
&= -2\int_0^r (\nabla \mathbf{u}^n
(s), \phi^2_m
\nabla\mathbf{u}^n(s) )_{L^2
(\mathscr{D}_n) } ds
-
\frac{4}{m}
\int_0^r
(\nabla\mathbf{u}^n (s) , \phi_m
\nabla \phi({x/m}) \mathbf{u}^n(s) )_{L^2
(\mathscr{D}_n) } ds \nonumber\\
 & \leq \frac{c_1}{m}
\int_0^t
\|\mathbf{u}^n (s) \|_{H^1_0
 (\mathscr{D}_n) }^2 ds,
\end{align}
where  $c_1>0$ is
a constant independent of both $n$ and  $m$.
For  the second and third
terms on the right-hand side of
\eqref{tail_1 p1} we have
\begin{align}\label{tail_1 p3}
2( \mathbf{u}^n
(s) \times \Delta \mathbf{u}^n (s), \phi^2_m
\mathbf{u}^n(s))_{L^2
(\mathscr{D}_n) }=2(\mathbf{u}^n
(s) \times (\phi^2_m \Delta \mathbf{u}^n (s) ), \mathbf{u}^n
(s) )_{L^2
(\mathscr{D}_n) }=0,
\end{align}
and
\begin{align}
-2
((1+|\mathbf{u}^n(s)|^{2})\mathbf{u}^n(s),
\phi^2 _m\mathbf{u}^n(s) )_{L^2
(\mathscr{D}_n) }=-2\|\phi_m\mathbf{u}^n(s)\|^2_{L^2
(\mathscr{D}_n) }-2(|\mathbf{u}^n(s)|^{2}, \phi^2 _m|\mathbf{u}^n(s)|^2)_{L^2
(\mathscr{D}_n) }\leq 0.
\end{align}
For the stochastic term
on the right-hand side of  \eqref{tail_1 p1},
by the BDG inequality we get
$$
2
\mathrm{E}
\left (\sup_{0\le r\le t}
\left |\int_0^r
\sum_{k=1}^\infty
 (\phi_m \mathbf{f}_k  , \phi_m\mathbf{u}^n(s) )_{L^2
(\mathscr{D}_n) }
dW_k \right |
\right )
\le 2 c_2
\mathrm{E}
\left (
\int_0^t
\sum_{k=1}^\infty
 \left |
 (\phi_m \mathbf{f}_k  , \phi_m\mathbf{u}^n(s) )_{L^2
(\mathscr{D}_n) }
 \right |^2  ds
\right )^{\frac 12}
$$
\be\label{tail_1 p3a}
\le
c_2 \sum_{k=1}^\infty
 \| \phi_m \mathbf{f}_k\|^2_{L^2(\mathbb{R}^d) }
 + c_2
 \int_0^t
 \mathrm{E}
\left ( \| \phi_m\mathbf{u}^n(s)
\|^2 _{L^2
(\mathscr{D}_n) }
\right )
ds.
\ee

 For the last two terms on the right-hand side
 of \eqref{tail_1 p1} we have
 for all $0\le r\le t$,
 \begin{align}\label{tail_1 p4}
& \sum_{k=1}^\infty
\int_0^r
\|\phi_m (\mathbf{u}^n(s) \times \mathbf{f}_k+\mathbf{f}_k)\|_{L^2
(\mathscr{D}_n) }^2ds
+\sum_{k=1}^\infty\int_0^r
(
(\mathbf{u}^n(s) \times \mathbf{f}_k)\times \mathbf{f}_k, \phi^2_m\mathbf{u}^n(s))_{L^2
(\mathscr{D}_n) }ds
   \nonumber\\
&\leq
2 \sum_{k=1}^\infty\int_0^t \|(\phi_m
\mathbf{u}^n(s) )\times \mathbf{f}_k\|
_{L^2(\mathscr{D}_n)}^2 ds +
2t \sum_{k=1}^\infty\|\phi_m
\mathbf{f}_k\|_{L^2(\mathscr{D}_n)}^2
 \nonumber\\
&\quad +\sum_{k=1}^\infty
\int_0^t \|((\phi_m
\mathbf{u}^n (s) )\times \mathbf{f}_k)\times \mathbf{f}_k\|
_{L^2(\mathscr{D}_n)}
\|\phi_m \mathbf{u}^n (s) \|_{L^2
(\mathscr{D}_n)
}ds \nonumber\\
&\leq 3 \sum_{k=1}^\infty\|\mathbf{f}_k\|^2_{L^\infty
(\mathbb{R} ^d) }
\int_0^t \|\phi_m
\mathbf{u}^n(s) \|_{L^2
(\mathscr{D}_n)
}^2 ds
+ 2t \sum_{k=1}^\infty\|\phi_m \mathbf{f}_k\|
_{L^2(\mathscr{D}_n) }^2 .
\end{align}

It follows from
  \eqref{tail_1 p1}-\eqref{tail_1 p4} that
  for all $t\in [0,T]$ and $n, m \in \mathbb{N}$,
$$
 \mathrm{E}
 \left (\sup_{0\le r \le t}
  \|\phi_m \mathbf{u}^n (r)
 \|_{L^2(\mathscr{D}_n)
 }^2
 \right )
 \leq
 \mathrm{E}
 \left (
 \|\phi_m \mathbf{u}^n (0)
 \|_{L^2(\mathscr{D}_n)
 }^2
 \right )
 $$
 $$
 +\left (c_2
 +
  3\sum_{k=1}^\infty\|\mathbf{f}_k\|^2_{L^\infty
(\mathbb{R} ^d) }
\right )
\int_0^t
 \mathrm{E}
 \left (
 \|\phi_m
\mathbf{u}^n (s) \|_{L^2
(\mathscr{D}_n)
}^2
\right ) ds
$$
\begin{equation}\label{tail_1 p5}
 + (2T + c_2) \sum_{k=1}^\infty\|\phi_m \mathbf{f}_k\|
_{L^2(\mathbb{R}^d)  }^2
 +
 \frac{c_1}{m}
 \int_0^t
 \mathrm{E}
 \left (
 \|\mathbf{u}^n (s) \|_{H_0^1(\mathscr{D}_n) }^2
 \right ) ds.
 \end{equation}

By \eqref{1.50}  and \eqref{tail_1 p5}
we find that there exists
$c_2 = c_2 (T, \mathbf{u}_0)>0$ such that
for all $t\in [0,T]$
and $n, m \in \mathbb{N}$,
$$
 \mathrm{E}
 \left (\sup_{0\le r \le t}
  \|\phi_m \mathbf{u}^n (r)
 \|_{L^2(\mathscr{D}_n)
 }^2
 \right )
 \leq
 \mathrm{E}
 \left (
 \|\phi_m \mathbf{u}^n (0)
 \|_{L^2(\mathscr{D}_n)
 }^2
 \right )
 $$
 $$
 +\left (c_2
 +
  3\sum_{k=1}^\infty\|\mathbf{f}_k\|^2_{L^\infty
(\mathbb{R} ^d) }
\right )
\int_0^t
 \mathrm{E}
 \left (\sup_{0\le r\le s}
  \|\phi_m
\mathbf{u}^n (r) \|_{L^2
(\mathscr{D}_n)
}^2
\right ) ds
$$
\begin{equation}\label{tail_1 p6}
 + (2T + c_2) \sum_{k=1}^\infty\|\phi_m \mathbf{f}_k\|
_{L^2(\mathbb{R}^d)  }^2
 +
 \frac{c_1 c_2}{m}.
 \end{equation}
 By \eqref{tail_1 p6}
  and Gronwall's lemma we obtain that
  for all $t\in [0,T]$
and $n, m \in \mathbb{N}$,
\be\label{tail_1 p7}
 \mathrm{E}
 \left (\sup_{0\le r\le t}
 \|\phi_m \mathbf{u}^n (r)
 \|_{L^2(\mathscr{D}_n)
 }^2
 \right )
 \leq
 \left (
 \|\phi_m \mathbf{u}_0
 \|_{L^2(\mathbb{R}^d )
 }^2
 + (2T+ c_2)  \sum_{k=1}^\infty\|\phi_m \mathbf{f}_k\|
_{L^2(\mathbb{R}^d)  }^2
 +
 \frac{c_1c_2}{m}
 \right )
 e^{ c_3 T
 },
 \ee
where $c_3
=   c_2 + 3\sum_{k=1}^\infty\|\mathbf{f}_k\|^2_{L^\infty
(\mathbb{R} ^d) }
 $.
 Note that as $m \to \infty$,
 $$
\|\phi_m \mathbf{u}_0
 \|_{L^2(\mathbb{R}^d )
 }^2
 + (2T+ c_2)
  \sum_{k=1}^\infty\|\phi_m \mathbf{f}_k\|
_{L^2(\mathbb{R}^d)  }^2
$$
$$
 \le
\int_{|x|>{\frac 12 m}}
 | \mathbf{u}_0 (x)|^2 dx
   + (2T+c_2) \int_{|x|>{\frac 12 m}}
    \sum_{k=1}^\infty|\mathbf{f}_k (x) |^2 dx
    \to 0,
  $$
which along with \eqref{tail_1 p7}
shows that  for every $\varepsilon>0$,
there exists $M=M(\varepsilon, T, \mathbf{u}_0)\ge 1$,
such that
for all $m\ge M$, $n\in \mathbb{N}$ and $t\in [0,T]$,
$$
\mathrm{E} \left (\sup_{0\le r\le t}
\int_{ {\frac 34 m}< |x|< n}
 | \mathbf{u}^n (r, \mathbf{u} _0) (x)|^2 dx
 \right )
 \le
 \mathrm{E}
 \left (\sup_{0\le r\le t}
 \|\phi_m \mathbf{u}^n (r, \mathbf{u}_0 )
 \|_{L^2(\mathscr{D}_n)
 }^2
 \right )
 <\varepsilon,
$$
as desired.
 \end{proof}

 Let
 $\mathbf{v}^n(t,x)
 = \theta_n (x) \mathbf{u}^n(t,x )$
 for all $n\in \mathbb{N}$,
 $t\ge 0$  and $x\in
 \mathscr{D}_n$.
 We then extend
 $\mathbf{v}^n(t,x)$
 for $x\in
  \mathbb{R}^d\setminus\mathscr{D}_n$
 by zero extension, and  thus consider
 $\mathbf{v}^n(t,x)$
 as a function defined   for all $t\ge 0$ and
     $x\in \mathbb{R}^d$.
     By \eqref{cutoff} we find that
     the sequence
     $\{\mathbf{v}^n\}_{n=1}^\infty$
     enjoys the uniform estimates
     as established by Lemmas \ref{est1}
     and \ref{tail_1}
     which are presented below.

      \begin{lemma}\label{est_v}
    Let     \eqref{1.1} hold
    and  $ \mathbf{u}_0
\in H^1(\mathbb{R}^d)$.
Then the
    sequence
 $\{\mathbf{v}^n\}_{n=1}^\infty$
   has the properties:

 {\rm (i)}. For every $p\ge 1$,
  $R>0$ and $T>0$,
 there exists a positive  number
 $C_1=C_1(p,R,T) $ independent of
  $n\in
 \mathbb{N}$ such that
 for all  $ \mathbf{u}_0
 \in H^1(\mathbb{R}^d)$
 with  $ \|\mathbf{u}_0\|_{
   H^1(\mathbb{R}^d)}
\le R $,
\begin{align} \label{est_v 1}
\mathrm{E}\left(\sup_{s\in[0,T]}
\|\mathbf{v}^n(s, \mathbf{u}_0 )\|^{2p}_{H^1 (\mathbb{R}^d) }
\right)
+\mathrm{E}\left(\int_{0}^{T}
\|\mathbf{v}^n(s, \mathbf{u}_0 )\|^2_{H^2 (\mathbb{R}^d) }ds
\right)^p
\leq C_1,
\end{align}
and
\begin{align} \label{est_v 2}
\mathrm{E}
\left(\int_{0}^{T}\|
\mathbf{v}^n(s, \mathbf{u}_0 )
\times
\Delta \mathbf{v}^n(s, \mathbf{u}_0 )
\|^{\frac 43}
_{L^2  (\mathbb{R}^d) }ds
\right)^p
\leq C_1.
\end{align}

{\rm  (ii)}.  For every
 $R>0$ and $T>0$,
 there exists a positive  number
 $C_2=C_2(R,T) $ independent of
  $n\in
 \mathbb{N}$ such that
 for all  $ \mathbf{u}_0
 \in H^1(\mathbb{R}^d)$
 with  $ \|\mathbf{u}_0\|_{
   H^1(\mathbb{R}^d)}
\le R $,
 \begin{equation} \label{est_v 3}
  \mathrm{E}
  \left (
  \| \mathbf{v}^n (\cdot, \mathbf{u}_0  )\|^{16}
  _{ W^{{\frac 3{16}} , 16 }
  	(0,T; L^2  (\mathbb{R}^d)  )}
  \right )
  \le C_2.
  \end{equation}

  {\rm (iii)}.   For every
  $R>0$,
 there exists a positive  number
 $C_3=C_3(R) $ independent of
  $n\in
 \mathbb{N}$ such that
 for all  $t\ge 0$ and  $ \mathbf{u}_0
 \in H^1(\mathbb{R}^d)$
 with  $ \|\mathbf{u}_0\|_{
   H^1(\mathbb{R}^d)}
\le R $,
\begin{align} \label{est_v 4}
\mathrm{E}\left(
\|\mathbf{v}^n(t, \mathbf{u}_0   )\|^2_{H^1 (\mathbb{R}^d)}
\right)
+  \int_{0}^{t}
\mathrm{E} \left (
\|\mathbf{v}^n(s, \mathbf{u}_0  )\|^2_{H^2 (\mathbb{R}^d) }
\right ) ds
\leq C_3 + C_3 t.
 \end{align}

{\rm (iv)}.  For every   $\varepsilon>0$,
$T>0$  and
 $\mathbf{u}_0
 \in H^1
 (\mathbb{R}^d)$,
 there exists
 $M=M(\varepsilon,
 T,  \mathbf{u}_0) \ge 1$
 such that
 for all $m\ge M$
     and $n\in \mathbb{N} $,
\be\label{est_v 5}
 \mathrm{E}
\left ( \sup_{t\in [0,T]}
 \int_{ |x| >m  }
 |
 {\mathbf{ v}}^n(t,
\mathbf{u}_0 )(x) |^2 dx
\right )  <\varepsilon.
\ee
  \end{lemma}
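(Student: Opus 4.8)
The plan is to derive all of \eqref{est_v 1}--\eqref{est_v 5} by transferring the already-established bounds on $\mathbf{u}^n$ (Lemma \ref{est1}) and on its tails (Lemma \ref{tail_1}) through the multiplication $\mathbf{u}^n \mapsto \theta_n\mathbf{u}^n = \mathbf{v}^n$, using that this operation is bounded in the relevant norms uniformly in $n$. The basic observation is that, writing $\partial^\alpha\theta_n(x) = n^{-|\alpha|}(\partial^\alpha\theta)(x/n)$, we have $0 \le \theta_n \le 1$ and $\|\partial^\alpha\theta_n\|_{L^\infty(\mathbb{R}^d)} \le \|\partial^\alpha\theta\|_{L^\infty(\mathbb{R}^d)}$ for every $n \ge 1$ and every multi-index $\alpha$; moreover, by \eqref{cutoff}, $\theta_n$ is supported in $\{|x| < \tfrac34 n\}$, which is compactly contained in $\mathscr{D}_n$, so the zero extension of $\mathbf{v}^n$ to $\mathbb{R}^d$ does not alter its Sobolev norms.

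For \eqref{est_v 1} and \eqref{est_v 4} I would apply the Leibniz rule, $\nabla(\theta_n\mathbf{u}^n) = \theta_n\nabla\mathbf{u}^n + (\nabla\theta_n)\mathbf{u}^n$ and $\Delta(\theta_n\mathbf{u}^n) = \theta_n\Delta\mathbf{u}^n + 2\,\nabla\theta_n\cdot\nabla\mathbf{u}^n + (\Delta\theta_n)\mathbf{u}^n$, and invoke the uniform bounds on $\theta_n$ and its first two derivatives to obtain $\|\mathbf{v}^n\|_{H^1(\mathbb{R}^d)} \le C\|\mathbf{u}^n\|_{H^1_0(\mathscr{D}_n)}$ and $\|\mathbf{v}^n\|_{H^2(\mathbb{R}^d)} \le C\|\mathbf{u}^n\|_{H^2(\mathscr{D}_n)}$ with $C$ independent of $n$; then \eqref{est_v 1} and \eqref{est_v 4} follow at once from \eqref{1.50} and \eqref{1.50c}. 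Estimate \eqref{est_v 3} is even simpler: since $\theta_n$ does not depend on $t$ and $|\theta_n| \le 1$, the Gagliardo--Slobodeckij seminorm of $\mathbf{v}^n$ in $W^{3/16,16}(0,T;L^2)$ is bounded termwise by that of $\mathbf{u}^n$, because $\|\theta_n(\mathbf{u}^n(t)-\mathbf{u}^n(s))\|_{L^2} \le \|\mathbf{u}^n(t)-\mathbf{u}^n(s)\|_{L^2}$, so \eqref{est_v 3} is immediate from \eqref{1.50b}.

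The only genuinely delicate step is the cross-product estimate \eqref{est_v 2}, where the main difficulty is that $\Delta$ does not commute with multiplication by $\theta_n$. Expanding gives
\[
\mathbf{v}^n \times \Delta\mathbf{v}^n = \theta_n^2\,(\mathbf{u}^n \times \Delta\mathbf{u}^n) + 2\theta_n\big(\mathbf{u}^n \times (\nabla\theta_n\cdot\nabla\mathbf{u}^n)\big) + \theta_n(\Delta\theta_n)\,(\mathbf{u}^n \times \mathbf{u}^n),
\]
and the crucial point is that the last, most singular, term vanishes identically because $\mathbf{u}^n \times \mathbf{u}^n = 0$. The leading term obeys $\|\theta_n^2(\mathbf{u}^n \times \Delta\mathbf{u}^n)\|_{L^2} \le \|\mathbf{u}^n \times \Delta\mathbf{u}^n\|_{L^2}$ and therefore inherits \eqref{1.50a} directly. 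The surviving commutator term involves only first derivatives of $\mathbf{u}^n$ tested against the bounded multiplier $\nabla\theta_n$, and I would bound its $L^2$-norm by $C\|\mathbf{u}^n\|_{H^1}\|\mathbf{u}^n\|_{H^2}$ using the Sobolev embeddings valid for $d \le 2$ (with constants uniform in $n$, obtained from the zero extension of the $H^1_0(\mathscr{D}_n)$-function $\mathbf{u}^n$); the required $L^{4/3}$-in-time and $L^p(\Omega)$ integrability then follows from \eqref{1.50} by Hölder's inequality in $t$ and in $\Omega$. This is the step I expect to require the most care.

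Finally, \eqref{est_v 5} is an immediate corollary of Lemma \ref{tail_1}: since $\theta_n$ is supported in $\{|x| < \tfrac34 n\}$ and $0 \le \theta_n \le 1$, for every $m \ge 1$, $n \in \mathbb{N}$ and $t \in [0,T]$ we have
\[
\int_{|x|>m}|\mathbf{v}^n(t,\mathbf{u}_0)(x)|^2\,dx = \int_{m<|x|<n}\theta_n(x)^2\,|\mathbf{u}^n(t,\mathbf{u}_0)(x)|^2\,dx \le \int_{m<|x|<n}|\mathbf{u}^n(t,\mathbf{u}_0)(x)|^2\,dx,
\]
so taking the supremum over $t$, then the expectation, and choosing the same $M = M(\varepsilon,T,\mathbf{u}_0)$ furnished by Lemma \ref{tail_1} yields \eqref{est_v 5}.
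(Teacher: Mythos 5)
Your proposal is correct and follows exactly the route the paper takes: the paper in fact omits the proof entirely, asserting only that by \eqref{cutoff} the sequence $\{\mathbf{v}^n\}_{n=1}^\infty$ inherits the uniform estimates of Lemmas \ref{est1} and \ref{tail_1}, which is precisely the transfer-through-the-cutoff argument you spell out. Your treatment of \eqref{est_v 2} — the expansion of $\mathbf{v}^n\times\Delta\mathbf{v}^n$, the cancellation $\mathbf{u}^n\times\mathbf{u}^n=0$, and the commutator bound via the $d\le 2$ Sobolev embedding — supplies the only detail that is not completely routine.
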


\subsection{Tightness of laws of approximate solutions}

In this subsection, we establish the
tightness of  the
sequence
of
distributions of
$\{\mathbf{v}^n
(\cdot, \mathbf{u}_0)
\}_{n=1}^\infty$,
which is given below.

\begin{lemma} \label{tight1}
If \eqref{1.1} holds,
then
 the
sequence
of
distributions of
$\{\mathbf{v}^n
(\cdot, \mathbf{u}_0)
\}_{n=1}^\infty$
 is tight in  the space $C([0,T];L^2 (\mathbb{R}^d))\cap C([0,T];H_w^1 (\mathbb{R}^d))\cap L_w^2(0,T;H^2 (\mathbb{R}^d))$,
 where
 $L_w^2(0,T;H^2 (\mathbb{R}^d))$ is the space
 $L^2(0,T;H^2 (\mathbb{R}^d))$ endowed with the weak topology.
\end{lemma}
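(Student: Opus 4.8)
The plan is to produce, for every $\eta>0$, a single set $K_\eta$ which is compact in the intersection space and which carries almost all the mass of the laws, in the sense that $\mathrm{P}(\mathbf{v}^n\in K_\eta)\ge 1-\eta$ uniformly in $n\in\mathbb{N}$; tightness then follows. I would cut $K_\eta$ out by the a priori estimates of Lemma \ref{est_v}: the $L^\infty(0,T;H^1)$ and $L^2(0,T;H^2)$ bounds from (i), the fractional-in-time bound in $W^{3/16,16}(0,T;L^2)$ from (ii), and -- decisively -- the uniform tail bound from (iv). Since the topology of $C([0,T];L^2(\mathbb{R}^d))\cap C([0,T];H_w^1(\mathbb{R}^d))\cap L_w^2(0,T;H^2(\mathbb{R}^d))$ is the join of the three constituent topologies, a set that is closed and compact in each of the three factors is compact in the intersection (the diagonal embedding into the product is closed, and a closed subset of a product of compacts is compact). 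Hence it suffices to verify compactness of $K_\eta$ factor by factor and to bound $\mathrm{P}(\mathbf{v}^n\notin K_\eta)$ by Markov's inequality applied to the moment estimates of Lemma \ref{est_v}.

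The two weak-topology factors are routine. For $L_w^2(0,T;H^2(\mathbb{R}^d))$, a closed ball of the separable Hilbert space $L^2(0,T;H^2(\mathbb{R}^d))$ is weakly compact and metrizable, hence compact in the weak topology; the bound $\mathrm{E}\big(\int_0^T\|\mathbf{v}^n\|^2_{H^2}\,ds\big)^p\le C_1$ from Lemma \ref{est_v}(i) with Markov controls the exceptional probability. For $C([0,T];H_w^1(\mathbb{R}^d))$ I would use the Arzel\`a--Ascoli criterion in the weak topology, as in \cite{B1}: a family bounded in $L^\infty(0,T;H^1)$ and equicontinuous as maps into $L^2$ is relatively compact in $C([0,T];H_w^1)$. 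Boundedness is Lemma \ref{est_v}(i), while the equicontinuity comes from the embedding $W^{3/16,16}(0,T;L^2)\hookrightarrow C^{0,1/8}([0,T];L^2)$ -- valid because $\tfrac{3}{16}\cdot 16=3>1$ and $\tfrac{3}{16}-\tfrac{1}{16}=\tfrac18$ -- so that Lemma \ref{est_v}(ii) forces uniform H\"older continuity in time with values in $L^2$, and testing against a dense set of $H^{-1}$ upgrades this to weak-$H^1$ equicontinuity.

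The strong factor $C([0,T];L^2(\mathbb{R}^d))$ is the main obstacle, because $H^1\hookrightarrow L^2$ is not compact on $\mathbb{R}^d$. Time-equicontinuity is again provided by the $C^{0,1/8}$ regularity above, so by Arzel\`a--Ascoli it remains to show that the family $\{\,\mathbf{v}^n(t):t\in[0,T],\ n\in\mathbb{N}\,\}$ (for $\mathbf{v}^n$ ranging over $K_\eta$) is relatively compact in $L^2(\mathbb{R}^d)$. For this I would invoke the Fr\'echet--Kolmogorov--Riesz theorem: the $L^\infty(0,T;H^1)$ bound of Lemma \ref{est_v}(i) gives uniform $L^2$-boundedness and uniform translation-equicontinuity $\sup_t\|\mathbf{v}^n(t,\cdot+h)-\mathbf{v}^n(t,\cdot)\|_{L^2}\le |h|\,\sup_t\|\mathbf{v}^n(t)\|_{H^1}\to0$ as $h\to0$, and the uniform tail estimate of Lemma \ref{est_v}(iv) supplies the missing ingredient $\sup_t\int_{|x|>m}|\mathbf{v}^n(t,x)|^2\,dx\to0$ as $m\to\infty$. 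It is exactly here that (iv) substitutes for the failed Rellich compactness, localizing matters to the balls $\mathscr{D}_m$ where the $H^1$ bound does yield compactness; this is the crux of the argument.

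To assemble the proof I would let $K_\eta$ be the closed set defined by prescribing upper bounds $R_1,R_2,R_3$ on the $L^\infty(0,T;H^1)$, $W^{3/16,16}(0,T;L^2)$ and $L^2(0,T;H^2)$ norms together with tail bounds $\sup_t\int_{|x|>m}|\cdot|^2\,dx\le\delta_m$ over a countable sequence of radii $m$, with $\delta_m\to0$. Choosing $R_1,R_2,R_3$ large and the $\delta_m$ suitably small, Lemma \ref{est_v}(i),(ii),(iv) and Markov's inequality give $\mathrm{P}(\mathbf{v}^n\notin K_\eta)<\eta$ uniformly in $n$; the three compactness verifications show $K_\eta$ is compact in the intersection space, completing the proof of tightness.
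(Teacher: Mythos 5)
Your proposal is correct and follows essentially the same route as the paper: a high-probability set cut out by the moment bounds of Lemma \ref{est_v}(i)--(ii) together with the uniform tail estimate (iv), Chebyshev/Markov to control the exceptional probability, and factor-by-factor compactness via Arzel\`a--Ascoli, with the tail estimate substituting for the failure of Rellich compactness on $\mathbb{R}^d$. The only (cosmetic) difference is that for the fixed-time precompactness in $L^2(\mathbb{R}^d)$ you invoke Fr\'echet--Kolmogorov--Riesz (translation equicontinuity from the $H^1$ bound plus tail decay), whereas the paper combines the compact embedding $H^1(\mathscr{D}_{m_0})\hookrightarrow L^2(\mathscr{D}_{m_0})$ with a finite $\eta$-cover argument on the tails; these are interchangeable.
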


\begin{proof}
By  \eqref{est_v 1}, \eqref{est_v 3}   and   Chebyshev's
 inequality, we deduce that for
 every
 $T>0$ and  $\mathbf{u}_0\in H^1
 (\mathbb{R}^d)$,
\begin{align*}
&\mathrm{P}\left(\sup_{t\in [0,T]}\|
 \mathbf{v}^n(t, \mathbf{u}_0)
  \|^2_{H^1(\mathbb{R}^d) }
  +\int_{0}^{T}\| \mathbf{v}^n(t, \mathbf{u}_0)
  \|^2_{H^2(\mathbb{R}^d)}dt
+\| \mathbf{v}^n(t, \mathbf{u}_0) \|^2_{W^{{
\frac 3{16}}, 16}(0,T; L^2
(\mathbb{R}^d)
)}> R\right)\nonumber\\
&\leq \frac{1}{R}\mathrm{E}
 \left(\sup_{t\in [0,T]}\|
 \mathbf{v}^n(t, \mathbf{u}_0)
  \|^2_{H^1(\mathbb{R}^d) }
  +\int_{0}^{T}\| \mathbf{v}^n(t, \mathbf{u}_0)
  \|^2_{H^2(\mathbb{R}^d)}dt
+\| \mathbf{v}^n(t, \mathbf{u}_0) \|^2_{W^{{
\frac 3{16}}, 16}(0,T; L^2
(\mathbb{R}^d)
)} \right)
  \nonumber\\
&\leq \frac{c_1}{R} ,
\end{align*}
where $c_1=c_1 (T, \mathbf{u}_0)>0$.
Therefore,  for every $\varepsilon>0$,
there exists $
 R(\varepsilon)>0$ such that
 \be  \label{tight1 p1}
 \mathrm{P}\left(\sup_{t\in [0,T]}\|
 \mathbf{v}^n(t, \mathbf{u}_0)
  \|^2_{H^1(\mathbb{R}^d) }
  +\int_{0}^{T}\| \mathbf{v}^n(t, \mathbf{u}_0)
  \|^2_{H^2(\mathbb{R}^d)}dt
+\| \mathbf{v}^n(t, \mathbf{u}_0) \|^2_{W^{{
\frac 3{16}}, 16}(0,T; L^2
(\mathbb{R}^d)
)}
>  R (\varepsilon) \right)
<{\frac 12} \varepsilon.
\ee

On the other hand,
by \eqref{est_v 5}, for
every $\varepsilon>0$, $T>0$
and $m\in \mathbb{N}$,  we find that
there exists   $N_m \in \mathbb{N}$ depending
only on
$m$,
  $T$ and $ \varepsilon$
  such that
  for all  $n\in \mathbb{N}$,
\begin{align}\label{tight1 p2}
\mathrm{E}\left(
\sup_{t\in [0,T]}
\int_{|x|>{\frac 12} N_m}
 \left |
 \mathbf{v}^n(t, \mathbf{u}_0) (x)
 \right |^2dx\right)\leq \frac{\varepsilon}{2^{2m+1}}.
\end{align}
By \eqref{tight1 p2}
and Chebyshev's inequality we obtain
that  for all $n\in \mathbb{N}$,
\begin{align*}
&\mathrm{P}
\left(\sup_{t\in [0,T]} \int_{|x|>{\frac 12} N_m}
 \left |
 \mathbf{v}^n(t, \mathbf{u}_0) (x)
 \right |^2dx
  > \frac{1}{2^m}\right)
  \leq \frac{\varepsilon}{2^{m+1}},
\end{align*}
and hence   for all $n\in \mathbb{N}$,
\begin{align}\label{tight1 p3}
\mathrm{P}\left(\bigcup_{m=1}^\infty
\left\{\sup_{t\in [0,T]}
\int_{|x|>{\frac 12} N_m}
 \left |
 \mathbf{v}^n(t, \mathbf{u}_0) (x)
 \right |^2dx
 > \frac{1}{2^m}
  \right\} \right )
 \leq \sum_{m=1}^\infty\frac{\varepsilon}{2^{m+1}}
 \leq \frac{\varepsilon}{2}.
\end{align}
Recall that
$\phi (x) =1-\theta (x)$ and
$\phi_m (x)= \phi
\left (\frac xm
\right )$ for all $x\in \mathbb{R}^d$.
By \eqref{tight1 p3} we get
that  for all    $n\in \mathbb{N}$,
 \begin{align}\label{tight1 p4}
 \mathrm{P}
 \left(
 \left\{\sup_{t\in [0,T]}  \|\phi_{N_m}
 \mathbf{v}^n(t, \mathbf{u}_0)
  \|_{L^2(\mathbb{R} ^d) }
  ^2\leq \frac{1}{2^m}, ~{\rm for~ all}~ m\in \mathbb{N}  \right\}\right)\geq 1-\frac{\varepsilon}{2}.
\end{align}

Define the sets
\begin{align}
&\mathcal{S}_1^\varepsilon=\left\{
{\mathbf{v}}: \sup_{t\in [0,T]}\|
{\mathbf{v}}(t)\|^2_{H^1(\mathbb{R} ^d)
}
+\int_{0}^{T}\|
{\mathbf{v}}(t)\|^2_{H^2(\mathbb{R} ^d)
}dt
+\| {\mathbf{v}}\|^2_{W^{{\frac 3{16}}, 16
}(0,T; L^2(\mathbb{R} ^d)
)}\leq R(\varepsilon)\right\},\label{tight1 p5}\\
&\mathcal{S}_2^\varepsilon=\left\{
{\mathbf{v}}: \sup_{t\in [0,T]} \|\phi_{N_m}
 {\mathbf{v}}(t)\|_{L^2
(\mathbb{R} ^d)
}^2\leq \frac{1}{2^m}, ~{\rm for~ all}~ m\in \mathbb{N}
 \right\},\label{tight1 p6}
\end{align}
and
$$\mathcal{S}^\varepsilon=\mathcal{S}_1^\varepsilon\bigcap \mathcal{S}_2^\varepsilon.$$
It follows from \eqref{tight1 p1}, \eqref{tight1 p4}-\eqref{tight1 p6} that for all $n\in \mathbb{N}$,
$$
\mathrm{P}(
\mathbf{v}^n(t, \mathbf{u}_0)
 \in \mathcal{S}^\varepsilon)> 1-\varepsilon.$$

It remains to verify that the set $\mathcal{S}^\varepsilon$ is compact in $$C([0,T];L^2
(\mathbb{R}^d ))\cap C([0,T];H_w^1
(\mathbb{R}^d))\cap L_w^2(0,T;H^2
(\mathbb{R}^d)).$$
Note that
every  topology of
$C([0,T];L^2
(\mathbb{R}^d ) )$,
$  C([0,T];H_w^1
(\mathbb{R}^d))$
and $   L_w^2(0,T;H^2
(\mathbb{R}^d))$
is metrizable on  $\mathcal{S}^\varepsilon$.
The precompactness
of $\mathcal{S}^\varepsilon$
 in $  L_w^2(0,T;H^2(\mathbb{R}^d))$ follows
 directly from its    boundedness
 in $  L^2(0,T;H^2(\mathbb{R}^d))$.

 We now  show that
 the set  $\mathcal{S}^\varepsilon$ is precompact in
  $C([0,T];L^2(\mathbb{R}^d) )$ by the
   Arzela-Ascoli theorem
 for which    we need to
 check  that
  $\mathcal{S}^\varepsilon$ is
  equicontinuous  in $C([0,T];L^2(\mathbb{R}^d) )$
  and
 for every $t\in [0,T]$,
 the set $
 \{\mathbf{v}(t):   \mathbf{v} \in
 \mathcal{S}^\varepsilon
 \} $
  is precompact in $L^2
  (\mathbb{R}^d)$.
  The  equicontinuity
  of
  $\mathcal{S}^\varepsilon$
   in $C([0,T];L^2(\mathbb{R}^d) )$
  follows from the embedding
  $  W^{{
\frac 3{16}}, 16}(0,T; L^2
(\mathbb{R}^d) )
\hookrightarrow
 C([0,T];L^2 (\mathbb{R}^d))$.

On the other hand, by \eqref{tight1 p6}
we see that
for every  $\eta>0$, there exists $m_0
\in \mathbb{N}$ such that for all
$ \mathbf{v} \in \mathcal{S}^\varepsilon$,
\begin{align}\label{tight1 p7}
\sup_{t\in [0,T]}
\|\phi_{N_{m_0}}  \mathbf{v}  (t)\|_{L^2
(\mathbb{R}^d) }^2<  \frac{\eta^2}{16}.
\end{align}
Note that for every $t\in[0,T]$,
 the set $\{ \mathbf{v}  (t)
 :   \mathbf{v}  \in \mathcal{S}^\varepsilon\}$
 is bounded in $H^1(\mathbb{R}^d)$.
 By the compactness of
 the  embedding
 $H^1 (\mathscr{D}_{m_0})
 \hookrightarrow
 L^2
 (\mathscr{D}_{m_0})$, we see that
 the set
 $\{
 (1-  \phi_{N_{m_0}})  \mathbf{v}  (t):
 \mathbf{v}  \in \mathcal{S}^\varepsilon\}$
 is precompact in
 $L^2
 (\mathscr{D}_{m_0})$,
 and hence
  it has a finite open cover of
  radius
  ${\frac 14} \eta $ in  $L^2
 (\mathscr{D}_{m_0})$,
  which along
  with \eqref{tight1 p7}
  implies that
  the set $\{ \mathbf{v}  (t)
 :   \mathbf{v}  \in \mathcal{S}^\varepsilon\}$
 has
  a finite open cover of
  radius
  $  \eta $ in  $L^2
 (\mathbb{R}^d )$.
   Due to the arbitrariness of $\eta>0$, we
   infer that
   the set $\{ \mathbf{v}  (t)
 :   \mathbf{v}  \in \mathcal{S}^\varepsilon\}$
   is precompact in   $L^2
 (\mathbb{R}^d )$,
 and hence
 the set  $\mathcal{S}^\varepsilon$ is precompact in
  $C([0,T];L^2(\mathbb{R}^d) )$ by  the Arzela-Ascoli theorem.

 We finally prove the set
   $\mathcal{S}^\varepsilon$ is precompact in
  $C([0,T];H^1_w (\mathbb{R}^d) )$.
  Let $\{\mathbf{z}_n\}_{n=1}^\infty$
  be a sequence in  $\mathcal{S}^\varepsilon$.
  We will prove  $\{\mathbf{z}_n\}_{n=1}^\infty$
  has a convergent subsequence
  in $C([0,T]; H^1_w(\mathbb{R}^d) )$.
  Since $\mathcal{S}^\varepsilon$ is precompact
  in $C([0,T];L^2(\mathbb{R}^d) )$,
  we know that
  there exists a subsequence
  $\{ \mathbf{z}_{n_k}\}_{k=1}^\infty$
  of $\{\mathbf{z}_n\}_{n=1}^\infty$
  and $ \mathbf{z} \in  C([0,T];L^2(\mathbb{R}^d) )$
  such
  that $ \mathbf{z}_{n_k} \to  \mathbf{z}$
  in $C([0,T];L^2(\mathbb{R}^d) )$.
  By the construction of
   $\mathcal{S}^\varepsilon$, we infer that
  $\mathbf{z}\in   \mathcal{S}^\varepsilon$, and
   for every $\psi\in H^2(\mathbb{R}^d)$,
   $$
    \sup_{t\in [0,T]}
    \left |
    ( \mathbf{z}_{n_k}(t) - \mathbf{z} (t), \psi)_{H^1(\mathbb{R}^d)}
    \right |
   = \sup_{t\in [0,T]}
    \left | ( \mathbf{z}_{n_k}(t)- \mathbf{z}(t),
    (I-\Delta) \psi)_{L^2(\mathbb{R}^d)}
    \right |
   $$
   $$
   \le
    \sup_{t\in [0,T]}
     \|  \mathbf{z} _{n_k}(t)- \mathbf{z} (t)\|_{L^2(\mathbb{R}^d)}
     \|
    (I-\Delta) \psi\|_{L^2(\mathbb{R}^d)}
    \to 0.
   $$
   By the
   density of
   $H^2(\mathbb{R}^d)$
   in $H^1(\mathbb{R}^d)$
   and the uniform boundedness of
   $\mathcal{S}^\varepsilon$ in
   $H^1(\mathbb{R}^d)$, we find that
   for every $\psi\in H^1(\mathbb{R}^d)$,
   $$
    \sup_{t\in [0,T]}
    \left |
    ( \mathbf{z}_{n_k}(t) - \mathbf{z} (t), \psi)_{H^1(\mathbb{R}^d)}
    \right | \to 0,
    $$
    and thus
     $ \mathbf{z}_{n_k} \to  \mathbf{z}$
  in $C([0,T]; H^1_w(\mathbb{R}^d) )$, as desired.
\end{proof}

By Lemma \ref{tight1} and   the Skorokhod-Jakubowski representation theorem, we obtain the following result.

\begin{lemma}
\label{sj1}
If  \eqref{1.1} holds, then
there exist a   stochastic basis
 $(\widetilde{\Omega}, \widetilde{\mathcal{F}},
  \{\widetilde{\mathcal{F}}_t\}
  _{t\ge 0},  \widetilde{\mathrm{P}})$,
 random variables
 $(\widetilde{\mathbf{v}}, \widetilde{W} )$
 and
  $(\widetilde{\mathbf{v}}^n, \widetilde{W}^n)$,
  $n\in \mathbb{N}$,
   defined on $(\widetilde{\Omega}, \widetilde{\mathcal{F}},
    \{\widetilde{\mathcal{F}}_t\}
  _{t\ge 0},  \widetilde{\mathrm{P}})$ such that
   in the space $(C([0,T];L^2
   (\mathbb{R}^d)
   )\cap C([0,T];H_w^1(\mathbb{R}^d))
   \cap L_w^2(0,T;H^2(\mathbb{R}^d)))
    \times C([0,T];\mathbb{R}^\infty)$:

{\rm (i)}. The law  of $(\widetilde{\mathbf{v}}^n, \widetilde{W}^n)$ is
the same as    $( {\mathbf{v}}^n, W)$
for every $n\in \mathbb{N}$.

{\rm (ii)}.   $(\widetilde{\mathbf{v}}^n, \widetilde{W}^n)\rightarrow (\widetilde{\mathbf{v}}, \widetilde{W} )$,
 $\widetilde{\mathrm{P}}$-almost surely.

{\rm (iii)}.  The random variables
$ \widetilde{\mathbf{v}}$ and
 $\widetilde{\mathbf{v}}^n$ share the same
 uniform estimates  with
 $  {\mathbf{v}^n}$
 as
 given by Lemma \ref{est_v}.
     In particular,
for every  $n\in \mathbb{N}$,
$T>0$ and $p\ge 1$,
\begin{align}\label{sj1 1}
&\widetilde{\mathrm{E}}\left(\sup_{t\in[0,T]}\|\widetilde{\mathbf{v}}^n(t)\|^{2p}_{H^1
(\mathbb{R}^d)
}\right)
+\widetilde{\mathrm{E}}\left(\int_{0}^{T}\|\widetilde{\mathbf{v}}^n(t)\|^2_{H^2
(\mathbb{R}^d)
}dt \right )^p
\leq C_1,
\end{align}
and
\begin{align}\label{sj1 2}
&\widetilde{\mathrm{E}}\left(\sup_{t\in[0,T]}\|\widetilde{\mathbf{v}} (t)\|^{2p}_{H^1
(\mathbb{R}^d)
}\right)
+\widetilde{\mathrm{E}}\left(\int_{0}^{T}\|\widetilde{\mathbf{v}} (t)\|^2_{H^2
(\mathbb{R}^d)
}dt \right )^p
\leq C_1,
\end{align}
where $C_1>0$ is the same
number as in \eqref{est_v 1}.

{\rm (iv)}.
$\widetilde{\mathbf{v}}^n\rightarrow \widetilde{\mathbf{v}}$ in $L^2(\widetilde{\Omega}; L^2(0,T;H^1
 (\mathbb{R}^d)
))$.
 \end{lemma}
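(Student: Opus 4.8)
The plan is to combine the tightness from Lemma \ref{tight1} with the Skorokhod--Jakubowski representation theorem (Proposition \ref{prop_sj}), and then to promote the resulting almost sure convergence to the quantitative statements (iii) and (iv). First I would verify that the joint laws of $(\mathbf{v}^n, W)$ are tight on the product space $X=(C([0,T];L^2(\mathbb{R}^d)) \cap C([0,T];H_w^1(\mathbb{R}^d)) \cap L_w^2(0,T;H^2(\mathbb{R}^d))) \times C([0,T];\mathbb{R}^\infty)$. Since $W$ is the \emph{same} driving noise for every $n$, its law is a single Borel measure on the Polish space $C([0,T];\mathbb{R}^\infty)$, hence tight; combined with the tightness of the laws of $\mathbf{v}^n$ from Lemma \ref{tight1}, one chooses compact sets in each factor whose complements have probability $<\varepsilon/2$ uniformly in $n$, and their product is a compact set of $X$ carrying mass $>1-\varepsilon$, so the joint laws are tight on $X$.

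To invoke Proposition \ref{prop_sj} I would check that $X$ carries a countable family of continuous functions separating its points. On the metrizable factor $C([0,T];L^2(\mathbb{R}^d))$ this is standard; on $C([0,T];H^1_w(\mathbb{R}^d))$ the maps $\mathbf{v}\mapsto (\mathbf{v}(t),\psi)_{H^1(\mathbb{R}^d)}$ with $t$ and $\psi$ ranging over countable dense subsets of $[0,T]$ and $H^1(\mathbb{R}^d)$ separate points; on $L_w^2(0,T;H^2(\mathbb{R}^d))$ the maps $\mathbf{v}\mapsto \int_0^T(\mathbf{v}(t),\xi(t))_{H^2(\mathbb{R}^d)}\,dt$ with $\xi$ in a countable dense subset separate points; and on $C([0,T];\mathbb{R}^\infty)$ the coordinate evaluations separate points. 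The union of these countable families separates points of $X$, so Proposition \ref{prop_sj} furnishes $(\widetilde\Omega,\widetilde{\mathcal F},\widetilde{\mathrm P})$, variables $(\widetilde{\mathbf{v}}^n,\widetilde{W}^n)$ with the same laws as $(\mathbf{v}^n,W)$, and a limit $(\widetilde{\mathbf{v}},\widetilde{W})$ with $(\widetilde{\mathbf{v}}^n,\widetilde{W}^n)\to(\widetilde{\mathbf{v}},\widetilde{W})$ $\widetilde{\mathrm P}$-almost surely; these are exactly (i) and (ii), and equipping $\widetilde\Omega$ with the augmented filtration generated by these variables completes the stochastic basis.

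For (iii), the bound on $\widetilde{\mathbf{v}}^n$ is immediate from (i): the functionals $\mathbf{v}\mapsto \sup_{t}\|\mathbf{v}(t)\|_{H^1(\mathbb{R}^d)}^{2p}$ and $\mathbf{v}\mapsto\big(\int_0^T\|\mathbf{v}(t)\|_{H^2(\mathbb{R}^d)}^2\,dt\big)^p$ are Borel on $X$ (each being lower semicontinuous for the relevant weak topology, so a countable sup of measurable maps), whence equality of laws transfers \eqref{est_v 1} verbatim with the \emph{same} constant $C_1$, giving \eqref{sj1 1}. For the limit $\widetilde{\mathbf{v}}$ I would use weak lower semicontinuity together with Fatou's lemma: from the $C([0,T];H^1_w)$-convergence, $\widetilde{\mathbf{v}}^n(t)\rightharpoonup\widetilde{\mathbf{v}}(t)$ in $H^1(\mathbb{R}^d)$ for each $t$, so $\|\widetilde{\mathbf{v}}(t)\|_{H^1}\le\liminf_n\|\widetilde{\mathbf{v}}^n(t)\|_{H^1}$ and hence $\sup_t\|\widetilde{\mathbf{v}}(t)\|_{H^1}^{2p}\le\liminf_n\sup_t\|\widetilde{\mathbf{v}}^n(t)\|_{H^1}^{2p}$; the analogous bound in $L^2_w(0,T;H^2)$ controls the $H^2$ term, and Fatou applied to the $\widetilde{\mathrm E}$-expectation yields \eqref{sj1 2} from \eqref{sj1 1}.

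The hard part will be (iv), since the Skorokhod topology delivers strong convergence only in $C([0,T];L^2)$ while the $H^1$ and $H^2$ convergence is merely weak. To manufacture strong convergence in $L^2(0,T;H^1)$ I would exploit the interpolation inequality $\|w\|_{H^1(\mathbb{R}^d)}^2\le c\,\|w\|_{L^2(\mathbb{R}^d)}\|w\|_{H^2(\mathbb{R}^d)}$, which after Cauchy--Schwarz gives
\[
\int_0^T\|\widetilde{\mathbf{v}}^n-\widetilde{\mathbf{v}}\|_{H^1(\mathbb{R}^d)}^2\,dt
\le c\Big(\int_0^T\|\widetilde{\mathbf{v}}^n-\widetilde{\mathbf{v}}\|_{L^2(\mathbb{R}^d)}^2\,dt\Big)^{\frac12}
\Big(\int_0^T\|\widetilde{\mathbf{v}}^n-\widetilde{\mathbf{v}}\|_{H^2(\mathbb{R}^d)}^2\,dt\Big)^{\frac12}.
\]
Here the first factor tends to $0$ $\widetilde{\mathrm P}$-almost surely by the $C([0,T];L^2)$-convergence, while the second factor is bounded in $n$ for almost every fixed $\omega$ because weakly convergent sequences in $L^2(0,T;H^2)$ are bounded; thus the left side tends to $0$ $\widetilde{\mathrm P}$-almost surely. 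Finally, since by (iii) the family $\big\{\int_0^T\|\widetilde{\mathbf{v}}^n-\widetilde{\mathbf{v}}\|_{H^1}^2\,dt\big\}_n$ is bounded in $L^p(\widetilde\Omega)$ for every $p\ge1$, it is uniformly integrable, and Vitali's convergence theorem upgrades the almost sure convergence to convergence in $L^1(\widetilde\Omega)$, which is precisely (iv). The delicate point throughout is that the $H^2$-control is available only in an averaged (weak) sense, so the interpolation must be married to the pathwise boundedness of weakly convergent sequences and to the uniform integrability from (iii), rather than to any pathwise $H^2$ bound.
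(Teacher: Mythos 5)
Your proposal is correct and follows essentially the same route as the paper: tightness of the joint laws plus the Skorokhod--Jakubowski theorem with the same countable separating functionals, transfer of the moment bounds by equality of laws and lower semicontinuity, and the interpolation $\|w\|_{H^1}^2\le \|w\|_{L^2}\|w\|_{H^2}$ combined with Vitali's theorem for (iv). The only (immaterial) difference is the order of operations in (iv): the paper first upgrades to convergence in $L^2(\widetilde\Omega;C([0,T];L^2(\mathbb{R}^d)))$ via Vitali and then applies Cauchy--Schwarz at the level of expectations, whereas you interpolate pathwise, use the pathwise boundedness of the weakly convergent $H^2$-sequence, and invoke Vitali at the very end.
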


\begin{proof}
We  will
verify that all conditions
of
Proposition  \ref{prop_sj}
are fulfilled.
We first show that
there exists a sequence of   continuous real-valued
functions
defined in each of the space
    appearing in
$$(C([0,T];L^2(\mathbb{R}^d))
\cap C([0,T];H_w^1(\mathbb{R}^d))\cap
 L_w^2(0,T;H^2(\mathbb{R}^d) ))
   \times C([0,T];\mathbb{R}^\infty),$$
which separate  points.
Such  continuous functions
indeed exist in
  $C([0,T];L^2(\mathbb{R}^d))$
  and  $C([0,T];\mathbb{R}^\infty)$
  since they are both   separable Banach
spaces.

Since
$ L ^2(0,T;H^2(\mathbb{R}^d))$
is a separable Hilbert space,
any countable dense subset
of $ \left ( L ^2(0,T;H^2(\mathbb{R}^d))
\right )^*$
forms a sequence
of continuous
functions
on $ L _w^2(0,T;H^2(\mathbb{R}^d))$
that separate points.

 Let $\{\psi_n\}_{n=1}^\infty$
 be a dense subset of $H^1(\mathbb{R}^d) $
 and $\{r_j\}_{j=1}^\infty$
 be the set of all rational numbers in $[0,T]$.
 Given $n, j\in \mathbb{N}$,
 define
 $g_{n,j}:     C([0,T];H_w^1(\mathbb{R}^d)
 ) \to \mathbb{R}$
  by
  $$g_{n,j}( \xi)=(\xi (r_j ),  \psi_n   )_{H^1(\mathbb{R}^d)},
  \quad \forall \ \xi \in
   C([0,T];H_w^1(\mathbb{R}^d)
 ).
 $$
 Then
 $g_{n,j}$ is continuous
 on $
 C([0,T];H_w^1(\mathbb{R}^d)
 )$ and
  the sequence
 $\{ g_{n,j}\}_{n,j=1}^\infty$
 separates  the points of
 $C([0,T];H_w^1(\mathbb{R}^d)
 )$.

  On the other hand,
  by
  Lemma \ref{tight1} we see that
  the distributions
  of the sequence
  $\{(\mathbf{v}^n, W)\}_{n=1}^\infty$
  are  tight in
   $(C([0,T];L^2(\mathbb{R}^d))\cap
   C([0,T];H_w^1(\mathbb{R}^d))
   \cap L_w^2(0,T;H^2(\mathbb{R}^d)))
   \times C([0,T];\mathbb{R}^\infty)$,
   and hence
   by  Proposition
   \ref{prop_sj},
   there exist a
   subsequence
   of $\{(\mathbf{v}^n, W)\}_{n=1}^\infty$
   (which is not relabeled),
   a
    probability space $(\widetilde{\Omega}, \widetilde{\mathcal{F}}, \widetilde{\mathrm{P}})$,
 random variables
 $(\widetilde{\mathbf{v}}, \widetilde{W} )$
 and
  $(\widetilde{\mathbf{v}}^n, \widetilde{W}^n)$,
  $n\in \mathbb{N}$,
   defined on $(\widetilde{\Omega}, \widetilde{\mathcal{F}}, \widetilde{\mathrm{P}})$
   such that (i) and (ii) are fulfilled.
   Then the uniform estimates of \eqref{sj1 1}
   and \eqref{sj1 2} follows from
   (i) and Lemma \ref{est_v}
   immediately.

     Finally, we show (iv).
 By (ii), we know
 that  $\widetilde{\mathbf{v}}^n\rightarrow \widetilde{\mathbf{v}}$ in $C([0,T];L^2(\mathbb{R}^d) )$,
   $\widetilde{\mathrm{P}}$-almost
 surely.
 Then, by \eqref{sj1 1}-\eqref{sj1 2}
   and Vitali's theorem, we
   find that  $\widetilde{\mathbf{v}}^n\rightarrow \widetilde{\mathbf{v}}$ in
   $L^2(\widetilde{\Omega}; C([0,T];L^2(\mathbb{R}^d)))$,
   and hence
\begin{align*}
&\widetilde{\mathrm{E}}\int_{0}^{T}\|\widetilde{\mathbf{v}}^n(t)- \widetilde{\mathbf{v}}(t)\|_{H^1(\mathbb{R}^d)}^2dt=\widetilde{\mathrm{E}}
\int_{0}^{T}(\widetilde{\mathbf{v}}^n(t)
- \widetilde{\mathbf{v}}(t),
(I-\Delta) (\widetilde{\mathbf{v}}^n(t)- \widetilde{\mathbf{v}}(t)))_{L^2 (\mathbb{R}^d) }dt\nonumber\\
&\leq \widetilde{\mathrm{E}}\int_{0}^{T}\|\widetilde{\mathbf{v}}^n(t)- \widetilde{\mathbf{v}}(t)\|_{H^2(\mathbb{R}^d)
}\|\widetilde{\mathbf{v}}^n(t)
- \widetilde{\mathbf{v}}(t)\|_{L^2(\mathbb{R}^d)}dt\nonumber\\
&\leq\widetilde{ \mathrm{E}}\left(\sup_{t\in [0,T]}\|\widetilde{\mathbf{v}}^n(t)- \widetilde{\mathbf{v}}(t)\|_{L^2
(\mathbb{R}^d)
}\int_{0}^{T}\|\widetilde{\mathbf{v}}^n
(t)
- \widetilde{\mathbf{v}}(t)\|_{H^2
(\mathbb{R}^d)
}dt\right)
\nonumber\\
  &\leq
  T^{\frac 12} \left[\widetilde{\mathrm{E}}\left(\sup_{t\in [0,T]}\|\widetilde{\mathbf{v}}^n(t)- \widetilde{\mathbf{v}}(t)\|^2
  _{L^2
  (\mathbb{R}^d)
  }\right)\right]^\frac{1}{2}
  \left[\widetilde{\mathrm{E}}
  \int_{0}^{T}
   \|\widetilde{\mathbf{v}}^n(t)
- \widetilde{\mathbf{v}}(t)\|^2_{H^2
(\mathbb{R}^d)
}dt
   \right]^\frac{1}{2}
   \nonumber\\
  &\leq
  T^{\frac 12} \left[\widetilde{\mathrm{E}}\left(\sup_{t\in [0,T]}\|\widetilde{\mathbf{v}}^n(t)- \widetilde{\mathbf{v}}(t)\|^2
  _{L^2
  (\mathbb{R}^d)
  }\right)\right]^\frac{1}{2}
  \left[2
  \widetilde{\mathrm{E}}
  \int_{0}^{T}
   (\|\widetilde{\mathbf{v}}^n (t)\|^2_{H^2
(\mathbb{R}^d)
}
   +
 \|\widetilde{\mathbf{v}}(t)\|^2_{H^2
(\mathbb{R}^d)
}
)
dt
   \right]^\frac{1}{2}
   \to 0,
\end{align*}
that is,
$\widetilde{\mathbf{v}}^n\rightarrow \widetilde{\mathbf{v}}$ in $L^2(\widetilde{\Omega}; L^2(0,T;H^1
 (\mathbb{R}^d)
))$, which
 completes the proof.
\end{proof}

\subsection{Passage to
	 the limit
of approximate solutions}
We are now in a position
to complete the proof of
Theorem \ref{t3} by
showing  the process
$\widetilde{\mathbf{v}}$
of the limit of the
sequence
$\{
\widetilde{\mathbf{v}}^n
\}_{n=1}^\infty$
is a solution of \eqref{equ1.1}.

{\bf Proof of Theorem  \ref{t3}}.
If
$\varphi\in C_0^\infty(\mathbb{R}^d)$,
then by \eqref{e1*} we have,
for all $t\in [0,T]$
and $n\in \mathbb{N}$,
 $$
  (\mathbf{u}^n
  (t), \varphi )_{L^2
  (\mathscr{D}_n) }
=  (\mathbf{u}^n
(0), \varphi )_{L^2
	(\mathscr{D}_n) }
   +
  \int_0^t
  (\Delta \mathbf{u}^n
  (s), \varphi)_{L^2
  	(\mathscr{D}_n) } ds
  $$
  $$
  +
  \int_0^t
  ( \mathbf{u}^n
  (s) \times \Delta \mathbf{u}^n
  (s), \varphi )
  _{L^2
  	(\mathscr{D}_n) }
   ds
   -
  \int_0^t
  \left (
  (1+|\mathbf{u}^n (s) |^{2})\mathbf{u}^n
  (s),
  \varphi \right )_{L^2
  	(\mathscr{D}_n) } ds
  $$
  \be\label{ps 0}
  +
  {\frac 12}
  \sum_{k=1}^\infty
  \int_0^t
  \left (
  (\mathbf{u}^n
  (s)  \times \mathbf{f}_k)
  \times \mathbf{f}_k ,
  \varphi
  \right )_{L^2
  	(\mathscr{D}_n) } ds
  +
  \sum_{k=1}^\infty
   \int_0^t
  (\mathbf{u}^n
  (s)  \times \mathbf{f}_k
  +\mathbf{f}_k, \varphi )
  _{L^2
  	(\mathscr{D}_n) }
   d W_k.
  \ee
  Since
  $\varphi$ has a compact
  support in $\mathbb{R}^d$,
  there exists
  $n_0\in \mathbb{N}$ such that
  $\varphi (x)
  =0$
  for all
  $x\in
  \mathbb{R}^d
  \setminus  \mathscr{D}_{n_0}$.
  Therefore,
  we have
  for all $n\ge n_0$,
   \be\label{ps 1}
   (\mathbf{u}^n
  (t), \varphi )_{L^2
  	(\mathscr{D}_n) }
  =\int_{\mathscr{D}_{n_0
  	}}
   \mathbf{u}^n
  (t, x)  \varphi (x)
  dx.
 \ee
  Recall that
   $\mathbf{v}^n
  (t,x)
  =\theta_n (x)
   \mathbf{u}^n
  (t,x)$
  and $\theta_n (x)
  =1$
  for $|x|\le {\frac 12}n$,
  we know that
    $\mathbf{v}^n
  (t,x)
  =
  \mathbf{u}^n
  (t,x)$
  for all
  $t\ge 0$
  and
   $ |x|\le {\frac 12}n$, and
   hence
   for all $n \ge 2 n_0$,
   \be\label{ps 1a}
   \int_{\mathscr{D}_{n_0
   }}
   \mathbf{u}^n
   (t, x)  \varphi (x)
   dx
   =
   \int_{\mathscr{D}_{n_0
   }}
   \mathbf{v}^n
   (t, x)  \varphi (x)
   dx
   =
   \int_{
   	\mathbb{R}^d}
   \mathbf{v}^n
   (t, x)  \varphi (x)
   dx.
   \ee
   Notice that
   $\mathbf{v}^n
   (t, x)$
   is defined for all
   $x\in \mathbb{R}^d$,
   while
    $\mathbf{u}^n
   (t, x)$
   is defined only
   for
   $x\in \mathscr{D}_n$.
   By
  \eqref{ps 1}-\eqref{ps 1a} we get,
   for all $n \ge 2 n_0$,
    \be\label{ps 2}
   (\mathbf{u}^n
   (t), \varphi )_{L^2
   	(\mathscr{D}_n) }
   =(\mathbf{v }^n
   (t), \varphi )_{L^2
   	 (\mathbb{R}^d)  }.
  \ee
  By the same reason,
  for all $n\ge 2n_0$,
  the inner product
  in
  ${L^2
  	(\mathscr{D}_{n} ) }
  $
  of every term
  in \eqref{ps 0}
  can be replaced
  by the inner product
  in
  ${L^2
  	( \mathbb{R}^d ) }
  $,
  and the variable
   $\mathbf{u}^n$
   can be replaced by
    $\mathbf{v}^n$.
    Consequently,
    for
    all $t \in [0,T]$
    and $n\ge 2n_0$, we  have,
    $$
    (\mathbf{v}^n
    (t), \varphi )_{L^2
   ( \mathbb{R}^d ) }
    =  (\theta_n \mathbf{u}_0
    , \varphi )_{L^2
    	( \mathbb{R}^d) }
    +
    \int_0^t
    (\Delta \mathbf{v}^n
    (s), \varphi)_{L^2
    	( \mathbb{R}^d) } ds
    $$
    $$
    +
    \int_0^t
    ( \mathbf{v}^n
    (s) \times \Delta \mathbf{v}^n
    (s), \varphi )
    _{L^2
    	( \mathbb{R}^d) }
    ds
    -
    \int_0^t
    \left (
    (1+|\mathbf{v}^n (s) |^{2})\mathbf{v}^n
    (s),
    \varphi \right )_{L^2
    	( \mathbb{R}^d) } ds
    $$
    \be\label{ps 3}
    +
    {\frac 12}
   \sum_{k=1}^\infty
    \int_0^t
    \left (
    (\mathbf{v}^n
    (s)  \times \mathbf{f}_k)
    \times \mathbf{f}_k,
    \varphi
    \right )_{L^2
    	( \mathbb{R}^d ) } ds
    +
    \sum_{k=1}^\infty
    \int_0^t
    (\mathbf{v}^n
    (s)  \times \mathbf{f}_k
    +\mathbf{f}_k, \varphi )
    _{L^2
    	( \mathbb{R}^d) }
    d W_k.
    \ee
    By Lemma \ref{sj1} (i),
    we see that
        $(\widetilde{\mathbf{v}}^n, \widetilde{W}^n)$ has the same law
        as      $( {\mathbf{v}}^n, W)$
    for every $n\in \mathbb{N}$.
    Then by the argument of
    \cite{cww, ww}, we find that
      $(\widetilde{\mathbf{v}}^n, \widetilde{W}^n)$
      also satisfies
      system \eqref{ps 3}.
      From now on, we will
      deal with the
      variables
        $(\widetilde{\mathbf{v}}^n, \widetilde{W}^n)$
        instead of
         $( {\mathbf{v}}^n, W)$,
         and for the
         sake of  simplicity
        in notation,
         we  will
         use
        $ {\mathbf{v}}^n $
        for both
        the variables
          $ {\mathbf{v}}^n $
          and
            $ \widetilde{\mathbf{v}}^n $,
            and use  $  {W}^n $
            for  $\widetilde{W}^n $.

    By Lemma \ref{sj1} (iv) we infer that
    for every
    $\psi \in
    L^\infty(\Omega\times [0,T])$,
    $$
    \mathrm{E}
    \left (
    \int_0^T
    \psi (t)
    \left (
    \int_0^t
    \left (
    (\Delta \mathbf{v}^n
    (s), \varphi)_{L^2
    	( \mathbb{R}^d) }
    	-
    	(\Delta \mathbf{v}
    (s), \varphi)_{L^2
    	( \mathbb{R}^d) }
    	\right )
    	 ds
    \right ) dt
    \right )
    $$
    $$
    \le
    \mathrm{E}
    \left (
    \int_0^T
    \psi (t)
    \left (
    \int_0^t
    \| \nabla \mathbf{v}^n
    (s) -\nabla \mathbf{v}
    (s)\| _{L^2
    	( \mathbb{R}^d) }
    	\| \nabla \varphi\|_{L^2
    	( \mathbb{R}^d) }
    	 ds
    \right ) dt
    \right )
    $$
 $$
    \le T
    \|\psi\|_{L^\infty
    (\Omega\times [0,T])}
    	\|\nabla  \varphi\|_{L^2
    	( \mathbb{R}^d) }
    \mathrm{E}
    \left (
    \int_0^T
    \| \nabla \mathbf{v}^n
    (s) -\nabla \mathbf{v}
    (s)\| _{L^2
    	( \mathbb{R}^d) }
    	 ds
    \right )
    $$
  \be\label{ps 4}
    \le T^{\frac 32}
    \|\psi\|_{L^\infty
    (\Omega\times [0,T])}
    	\|\nabla  \varphi\|_{L^2
    	( \mathbb{R}^d) }
     \|   \mathbf{v}^n
    -  \mathbf{v}
     \| _{L^2(\Omega, L^2(0,T; H^1
    	( \mathbb{R}^d) )) }
    	\to 0,
     \ee
 as $n\to \infty$.

 For the third term on the right-hand side
 of \eqref{ps 3}, we have for all
 $t\in [0,T]$,
\begin{align*}
&
\left |
\int_{0}^{t}(\mathbf{v}^n(s)\times \Delta \mathbf{v}^n(s)-\mathbf{v}(s)\times \Delta \mathbf{v}(s), \varphi)
_{L^2
(\mathbb{R}^d)}
ds
\right |
\nonumber\\
&=\left |
\int_{0}^{t}((\mathbf{v}^n(s)-\mathbf{v}(s))\times \Delta \mathbf{v}^n(s)+\mathbf{v}(s)\times \Delta (\mathbf{v}^n(s)-\mathbf{v}(s)), \varphi)
_{L^2
(\mathbb{R}^d)}
ds
\right |
\nonumber\\
& \le
\left |
\int_{0}^{t}((\mathbf{v}^n(s)-\mathbf{v}(s))\times \Delta \mathbf{v}^n(s), \varphi)
_{L^2
(\mathbb{R}^d)}
ds \right |\nonumber\\
&\quad +
\left |
\int_{0}^{t}(\nabla\mathbf{v}(s)\times \nabla(\mathbf{v}^n(s)-\mathbf{v}(s)), \varphi)
_{L^2
(\mathbb{R}^d)}
+
(\mathbf{v}(s)\times \nabla(\mathbf{v}^n(s)-\mathbf{v}(s)), \nabla\varphi)
_{L^2
(\mathbb{R}^d)}
ds
\right |
\nonumber\\
&\leq
\|\varphi \|_{L^\infty
(\mathbb{R}^d)}
\int_{0}^{t}\|\mathbf{v}^n(s)-\mathbf{v}(s)\|
_{L^2
(\mathbb{R}^d)}
 \|\Delta \mathbf{v}^n(s)\|
 _{L^2
(\mathbb{R}^d)}
 ds\nonumber\\
&\quad+
\|\varphi \|_{L^\infty
(\mathbb{R}^d)}
\int_{0}^{t}
\|\nabla \mathbf{v}(s)\|
_{L^2
(\mathbb{R}^d)}
\|\nabla \mathbf{v}^n(s)-
\nabla \mathbf{v}(s)\| _{L^2
(\mathbb{R}^d)}ds \nonumber\\
 &\quad
 +
 \|\nabla \varphi
 \|_{L^\infty
(\mathbb{R}^d)}
\int_0^t
\| \mathbf{v}(s)\|_{L^2
(\mathbb{R}^d)}
  \|\nabla \mathbf{v}^n(s)-
  \nabla \mathbf{v}(s)\|
  _{L^2
(\mathbb{R}^d)} ds\nonumber\\
&\leq c_1
\left(\int_{0}^{T}\|\mathbf{v}^n(t)-\mathbf{v}(t)\|^2
_{H^1
(\mathbb{R}^d)}
 dt\right)^\frac{1}{2}
 \left(\int_{0}^{T}
 \left (
 \| \mathbf{v}^n(t)\|^2
 _{H^2
(\mathbb{R}^d)}
+
\| \mathbf{v} (t)\|^2
 _{H^1
(\mathbb{R}^d)}
\right )
 dt\right)^\frac{1}{2},
\end{align*}
where
$c_1>0$ depends only
on $\varphi$.
Then by Lemma \ref{sj1} (iii)-(iv)
we obtain that
for any
$\psi
\in L^\infty
(\Omega \times [0,T])$,
 \begin{align*}
 &
 \mathrm{E}
 \left (
 \int_{0}^{T}
 \psi (t)
 \left (
 \int_{0}^{t}(\mathbf{v}^n(s)\times \Delta \mathbf{v}^n(s)-\mathbf{v}(s)\times \Delta \mathbf{v}(s), \varphi)
_{L^2
(\mathbb{R}^d)}
ds
\right ) dt
\right ) \nonumber\\
&
\le
\|\psi\|_{L^\infty
(\Omega
\times [0,T])}
\mathrm{E}
\left (
\int_{0}^{T}\left|\int_{0}^{t}(\mathbf{v}^n(s)\times \Delta \mathbf{v}^n(s)-\mathbf{v}(s)\times \Delta \mathbf{v}(s), \varphi)ds\right|dt
\right )\nonumber\\
&\leq   c_1 T
\|\psi\|_{L^\infty
(\Omega
\times [0,T])}
\mathrm{E}
\left (
 \left(\int_{0}^{T}\|\mathbf{v}^n(t)-\mathbf{v}(t)\|^2
_{H^1
(\mathbb{R}^d)}
 dt\right)^\frac{1}{2}
 \left(\int_{0}^{T}
 \left (
 \| \mathbf{v}^n(t)\|^2
 _{H^2
(\mathbb{R}^d)}
+
\| \mathbf{v} (t)\|^2
 _{H^1
(\mathbb{R}^d)}
\right )
 dt\right)^\frac{1}{2}
 \right )
 \end{align*}
\be\label{ps 5}
\leq
 c_2
\| \mathbf{v}^n -\mathbf{v} \|
_{L^2
(\Omega, L^2(0,T;
 H^1
(\mathbb{R}^d) ))}
  \left (
 \| \mathbf{v}^n \|
 _{L^2(\Omega, L^2(0,T;
 H^2
(\mathbb{R}^d) ))}
+
\| \mathbf{v}  \|
_{L^2(\Omega, L^2(0,T;
 H^1
(\mathbb{R}^d) ))}
\right )
 \rightarrow 0,
\ee
as $n\to \infty$, where $c_2
=  c_1 T
\|\psi\|_{L^\infty
(\Omega
\times [0,T])}$.

 For the fourth term on the right-hand
 side of \eqref{ps 3}, by
 the embedding
 $H^1
(\mathbb{R}^d)
\hookrightarrow
L^4
(\mathbb{R}^d)$, we have
 $$
\left |
\int_{0}^{t}(|\mathbf{v}^n(s)|^{2}\mathbf{v}^n(s)-|\mathbf{v}(s)|^{2}\mathbf{v}(s), \varphi)_{L^2(\mathbb{R}^d)}ds
\right |
$$
$$
\leq
{\frac 32}
\|\varphi\|_{L^\infty
(\mathbb{R}^d)}
\int_{0}^{t}
\left (
\|\mathbf{v}^n(s)\|^2_{L^4
(\mathbb{R}^d)}
+
\|\mathbf{v}(s)\|^2_{L^4
(\mathbb{R}^d)}
\right )
\|\mathbf{v}^n(s)-\mathbf{v}(s)\|
_{L^2
(\mathbb{R}^d)} ds
 $$
 \be\label{ps 6}\leq
{\frac 32}T^{\frac 12}
\|\varphi\|_{L^\infty
(\mathbb{R}^d)}
\left (\sup_{s\in [0,T]}
\|\mathbf{v}^n(s)\|^2_{H^1
(\mathbb{R}^d)}
+ \sup_{s\in [0,T]}
\|\mathbf{v}(s)\|^2_{H^1
(\mathbb{R}^d)}
\right )
\|\mathbf{v}^n -\mathbf{v} \|
_{L^2(0,T; L^2
(\mathbb{R}^d))} .
 \ee
By
\eqref{ps 6}
and Lemma \ref{sj1} (iii)-(iv)
we obtain that
for any
$\psi
\in L^\infty
(\Omega \times [0,T])$,
 $$
 \mathrm{E}
 \left ( \int_0^T
 \psi (t)
 \left  (
\int_{0}^{t}(|\mathbf{v}^n(s)|^{2}\mathbf{v}^n(s)-|\mathbf{v}(s)|^{2}\mathbf{v}(s), \varphi)_{L^2(\mathbb{R}^d)}ds
\right ) dt
\right )
 $$
 $$
 \le
 \|\psi\|_{L^\infty
 (\Omega \times [0,T]}
 \mathrm{E}
 \left ( \int_0^T
 \left  |
\int_{0}^{t}(|\mathbf{v}^n(s)|^{2}\mathbf{v}^n(s)-|\mathbf{v}(s)|^{2}\mathbf{v}(s), \varphi)_{L^2(\mathbb{R}^d)}ds
\right |dt
\right )
 $$
 $$
 \le  c_3
 \mathrm{E}
 \left (
 \left (\sup_{s\in [0,T]}
\|\mathbf{v}^n(s)\|^2_{H^1
(\mathbb{R}^d)}
+ \sup_{s\in [0,T]}
\|\mathbf{v}(s)\|^2_{H^1
(\mathbb{R}^d)}
\right )
\|\mathbf{v}^n -\mathbf{v} \|
_{L^2(0,T; L^2
(\mathbb{R}^d))}
\right )
$$
\be\label{ps 7}
 \le  c_3
 \left (
\|\mathbf{v}^n \|^2_{
L^4(\Omega,
L^\infty
(0,T; H^1
(\mathbb{R}^d) ))}
+
\|\mathbf{v}  \|^2_{
L^4(\Omega,
L^\infty
(0,T; H^1
(\mathbb{R}^d) ))}
\right )
\|\mathbf{v}^n -\mathbf{v} \|
_{L^2(\Omega, L^2(0,T; L^2
(\mathbb{R}^d)))}
\to 0,
\ee
as $n\to \infty$, where
 $c_3
 = {\frac 32} T^{\frac 32}
\|\varphi\|_{L^\infty
(\mathbb{R}^d)}
 \|\psi\|_{L^\infty
 (\Omega \times [0,T])}$.

 Similarly,
 for the fifth term on the right-hand side
 of
 \eqref{ps 3} we have
 for any
$\psi
\in L^\infty
(\Omega \times [0,T])$,
\be\label{ps 8-2}
 \mathrm{E}
 \left ( \int_0^T
 \psi (t)
 \left  (
\sum_{k=1}^\infty
\int_{0}^{t}
(
(\mathbf{v}^n(s)
\times f_k)
\times f_k
- (\mathbf{v} (s)
\times f_k)
\times f_k, \ \varphi)_{L^2(\mathbb{R}^d)} ds
\right ) dt
\right )
\to 0,
\ee
 as $n\to \infty$.

 We next consider the convergence of
 the last term on the
 right-hand side
 of \eqref{ps 3}.
 By  \eqref{1.1} and Lemma \ref{sj1} (iv), we see
 that for almost all  $\omega \in \Omega$,
\begin{align*}
\sum_{k=1}^\infty\int_{0}^{t}(\mathbf{v}^n(s)\times \mathbf{f}_k-\mathbf{v}(s)\times \mathbf{f}_k,
\ \varphi)
_{L^2(\mathbb{R}^d)}
ds \rightarrow 0,
\quad \text{as } n \to \infty,
\end{align*}
in $L^2(0,T; \mathbb{R})$, which along
with    \cite[Lemma 2.1]{ANR} shows that
\begin{align}\label{ps 8}
\sum_{k=1}^\infty\int_{0}^{t}(( \mathbf{v}^n(s)\times
 \mathbf{f}_k+ \mathbf{f}_k) , \varphi)
 _{L^2(\mathbb{R}^d)} dW^n_k
 \rightarrow \sum_{k=1}^\infty\int_{0}^{t} ((\mathbf{v}(s)\times \mathbf{f}_k+\mathbf{f}_k)  , \varphi)_{L^2(\mathbb{R}^d)}
 dW_k ,
\end{align}
in $L^2(0,T)$ in probability
 as $n\to \infty$.
Moreover, by \eqref{sj1 1} and \eqref{1.1} we have
\begin{align*}
&\sup_{n\geq 1}\mathrm{E}
\left (
\int_{0}^{T}
(( \mathbf{v}^n(t)\times \mathbf{f}_k+
\mathbf{f}_k), \varphi)^4dt
\right )
<\infty,
\end{align*}
which together with \eqref{ps 8}
  yields
  \begin{align}\label{ps 9}
\sum_{k=1}^\infty\int_{0}^{t}(( \mathbf{v}^n(s)\times
 \mathbf{f}_k+ \mathbf{f}_k) , \varphi)
 _{L^2(\mathbb{R}^d)} dW^n_k
 \rightarrow \sum_{k=1}^\infty\int_{0}^{t} ((\mathbf{v}(s)\times \mathbf{f}_k+\mathbf{f}_k)  , \varphi)_{L^2(\mathbb{R}^d)}
 dW_k ,
\end{align}
 in $L^2(\Omega\times [0,T])$.
 Therefore, for any
 $\psi\in L^\infty
 (\Omega \times [0,T])$ we get
 from \eqref{ps 9} that
$$
 \mathrm{E}\left (
 \int_0^T
 \psi (t)
 \left (
\sum_{k=1}^\infty\int_{0}^{t}(( \mathbf{v}^n(s)\times
 \mathbf{f}_k+ \mathbf{f}_k) , \varphi)
 _{L^2(\mathbb{R}^d)} dW^n_k
 \right ) dt \right )
 $$
  \begin{align}\label{ps 10}
 \rightarrow
 \mathrm{E}\left (
 \int_0^T
 \psi (t)
 \left (
 \sum_{k=1}^\infty\int_{0}^{t} ((\mathbf{v}(s)\times \mathbf{f}_k+\mathbf{f}_k)  , \varphi)_{L^2(\mathbb{R}^d)}
 dW_k
 \right )dt
 \right ).
\end{align}

Taking the limit of \eqref{ps 3} as
$n\to \infty$,
by \eqref{ps 4}-\eqref{ps 5},
\eqref{ps 7}-\eqref{ps 8-2}, \eqref{ps 10}
and
Lemma \ref{sj1} (iv), we obtain that
for any
 $\psi\in L^\infty
 (\Omega \times [0,T])$
 and $\varphi \in C_0^\infty (\mathbb{R}^d)$,
$$
   \mathrm{E}\left (
 \int_0^T
 \psi (t)
   (\mathbf{v}
    (t), \varphi )_{L^2
   ( \mathbb{R}^d ) } dt
   \right )
    =
 \mathrm{E}
 \left ( \int_{0}^{T}
 \psi (t)
 (\mathbf{u}_0, \varphi)_{L^2
   ( \mathbb{R}^d ) } dt
 \right )
 $$
$$
  +\mathrm{E}
  \left (
  \int_{0}^{T} \psi (t) \left(\int_{0}^{t}(\Delta\mathbf{v}(s)+\mathbf{v}(s)\times \Delta \mathbf{v}(s)-(1+|\mathbf{v}(s)|^{2})\mathbf{v}(s), \varphi)
  _{L^2
   ( \mathbb{R}^d ) }
  ds\right)dt
  \right ) $$
  $$+\frac{1}{2}
\mathrm{E}
\left (
  \int_{0}^{T} \psi (t)
  \left (
  \sum_{k=1}^\infty\int_{0}^{t}((\mathbf{v}(s)\times \mathbf{f}_k)\times \mathbf{f}_k, \varphi)
  _{L^2(\mathbb{R})}
  ds\right )dt \right )
  $$
 \be\label{ps 11}
 +
\mathrm{E}
\left (
  \int_{0}^{T} \psi (t)
  \left (
\sum_{k=1}^\infty
\int_{0}^{t}((\mathbf{v}(s)\times \mathbf{f}_k+\mathbf{f}_k)  , \varphi)
_{L^2
   ( \mathbb{R}^d ) }
dW_k \right ) dt
\right ) .
\ee
By  the arbitrariness of
  $\psi\in L^\infty
 (\Omega \times [0,T])$,
 $\varphi \in C_0^\infty (\mathbb{R}^d)$
 and   the density of $  C_0^\infty(\mathbb{R}^d)$ in
 $L^2(\mathbb{R}^d)$,
 we infer
 from \eqref{ps 11}
  that
   $\mathbf{v}$
   satisfies   \eqref{3.1},
   and thus   $(\Omega,\mathcal{F}, \{\mathcal{F}_t\}_{t\geq 0}, \mathrm{P}, W, \mathbf{v})$
   is a global martingale solution of the stochastic equation \eqref{equ1.1}.

   Note that
   \eqref{1.3} follows from
   the regularity of
   $\widetilde{\mathbf{v}}$ given by \eqref{sj1 2}.
   In addition, by \eqref{est_v 2} we have
   $$
   \textrm{E}
   \left (
   \int_0^T
   \| \mathbf{v}(s, \mathbf{u}_0)
   \times
   \Delta
   \mathbf{v}(s, \mathbf{u}_0)
   \|^{\frac 43}_{L^2
   (\mathbb{R}^d)}ds
   \right )^p
   <\infty,\quad \forall \  p\ge 1,
   $$
   which shows that
   \eqref{1.3a} is true
   for $r\in [1, {\frac 43}]$.
   Next, we prove
   \eqref{1.3a} is  also true
   for $r\in [{\frac 43}, 2)$.
   We just consider the case
   $d=2$ since $d=1$ is simpler.

    Given
 $r\in
  [{\frac 43}, 2)$ and $d=2$,
 let $a={\frac 2r} -1$
 and $q={\frac 2a}-2$.
 Since  $r\in [{\frac 43}, 2)$, we have
 $a\in (0, {\frac 12}]$
 and $q\ge 2$.
 Recall the following
 Gagliardo-Nirenberg  inequality
 for $d=2$:
 \be\label{est1a p1}
 \|\mathbf{v}\|_{L^\infty
 ({\mathbb{R}^2}) }
 \le c_1 \| \Delta
 \mathbf{v}\|_{L^2({\mathbb{R}^2})}^a
  \|
 \mathbf{v}\|_{L^q
 ({\mathbb{R}^2})}^{1-a},
 \ee
 where $c_1>0$ is a constant.
 Since $H^1(\mathbb{R}^2)
 \hookrightarrow L^q
 (\mathbb{R}^2)$ for any $
 q\ge 2$, by \eqref{est1a p1} we get
 \be\label{est1a p2}
 \|\mathbf{v}\|_{L^\infty
 ({\mathbb{R}^2}) }
 \le c_2 \| \Delta
 \mathbf{v}\|_{L^2({\mathbb{R}^2})}^a
  \|
 \mathbf{v}\|_{H^1
 ({\mathbb{R}^2})}^{1-a},
 \ee
 where $c_2>0$ is a constant.
 Then by \eqref{est1a p2}
we have
$$
\int_0^T
\|\mathbf{v} (s, \mathbf{u}_0)
\times
\Delta \mathbf{v} (s, \mathbf{u}_0)   \|^r_{ L^2(\mathbb{R}^2)}
ds
\le
\int_0^T
\| \mathbf{v} (s, \mathbf{u}_0) \|^r_{
L^\infty (\mathbb{R}^2)
}
\|
\Delta  \mathbf{v} (s, \mathbf{u}_0) \|^r_{
L^2(\mathbb{R}^2)
}
ds
 $$
 $$
 \le
 c_2^r
\int_0^T
\|\mathbf{v} (s, \mathbf{u}_0)
  \|^{(1-a)r}_{H^1
 (\mathbb{R}^2) }
\|
\Delta \mathbf{v} (s, \mathbf{u}_0)   \|^{(a+1)r} _{L^2( \mathbb{R}^2)}
ds
\le
 c_2^r
\sup_{s\in [0,T]}
\|
\mathbf{v} (s, \mathbf{u}_0)
\|^{(1-a)r}_{H^1
 ( \mathbb{R}^2)}\int_0^T
\|
\Delta  \mathbf{v} (s, \mathbf{u}_0) \|^2 _{L^2 ( \mathbb{R}^2)}
ds ,
 $$
 which along with \eqref{1.3}
 shows that for any $p\ge 1$,
 $$
 \textrm{E}
 \left (
\int_0^T
\|\mathbf{v} (s, \mathbf{u}_0)
\times
\Delta \mathbf{v} (s, \mathbf{u}_0)   \|^r_{ L^2(\mathbb{R}^2)}
ds
\right )^p
$$
 $$
 \le
 c_2^{pr}
 \textrm{E}
 \left (
\sup_{s\in [0,T]}
\|
\mathbf{v} (s, \mathbf{u}_0)
\|^{(1-a)pr}_{H^1
 ( \mathbb{R}^2)}
 \left ( \int_0^T
 \|
\Delta  \mathbf{v} (s, \mathbf{u}_0) \|^2 _{L^2 ( \mathbb{R}^2)}
ds
\right )^p
\right )
 $$
  $$
 \le
 c_2^{pr}
  \left ( \textrm{E}
  \left (
 \sup_{s\in [0,T]}
\|
\mathbf{v} (s, \mathbf{u}_0)
\|^{2(1-a)pr}_{H^1
 ( \mathbb{R}^2)}
 \right )
 \right )^{\frac 12}
  \left (
 \textrm{E} \left (
 \left ( \int_0^T
 \|
\Delta  \mathbf{v} (s, \mathbf{u}_0) \|^2 _{L^2 ( \mathbb{R}^2)}
ds
\right )^{2p} \right )
\right )^{\frac 12} <\infty,
 $$
and hence \eqref{1.3a}
is valid for all $p\ge 1$
and $r\in [1, 2)$.

  Finally
  the pathwise uniqueness of
   solutions of
\eqref{equ1.1}
follows from the continuity of solutions
with respect to initial data
in $L^2(\mathbb{R}^d)$
which is established in
   Lemma \ref{conin} below.
   This
  along with
the Yamada-Watanabe theorem
completes the proof of
 Theorem \ref{t3}.

Next,  we
  prove  the continuity of solutions of
 \eqref{equ1.1}
 in initial
data
in $L^2(\mathbb{R}^d)$ which will be
used to establish the Feller
property of the Markov semigroup associated
with \eqref{equ1.1}.

\begin{lemma}\label{conin}
Suppose \eqref{1.1} holds,
 $T>0$
and  $\mathbf{u}_{0},
\mathbf{u}_{0,n} \in H^1(\mathbb{R}^d)$
such  that
$\mathbf{u}_{0,n}
\to \mathbf{u}_{0}$
in $H^1(\mathbb{R}^d)$.
Then
the solutions
   $\mathbf{u} (\cdot, \mathbf{u}_{0,n})$
and
$\mathbf{u} (\cdot, \mathbf{u}_{0})$
of \eqref{equ1.1}  satisfy, for any
$p\ge 1$,
  $$
   \lim_{n\to \infty}
   \mathrm{E}
   \left (\sup_{t\in [0,T]}
   \|  \mathbf{u} ( t, \mathbf{u}_{0,n})-
   \mathbf{u} ( t, \mathbf{u}_{0})
     \|^p_{ L^2(\mathbb{R}^d) }
   \right )
=0.
  $$
 \end{lemma}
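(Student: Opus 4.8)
The plan is to derive a \emph{pathwise} differential inequality for the $L^2$-norm of the difference of the two solutions, and then to convert it into the stated convergence by a stopping time argument that copes with the fact that only polynomial, rather than exponential, moments of the solutions are available through \eqref{1.3}. Write $\mathbf{u}_1=\mathbf{u}(\cdot,\mathbf{u}_{0,n})$ and $\mathbf{u}_2=\mathbf{u}(\cdot,\mathbf{u}_0)$, and set $\mathbf{w}=\mathbf{u}_1-\mathbf{u}_2$. Subtracting the two copies of \eqref{3.1}, the additive part $\mathbf{f}_k$ of the noise cancels, so that $\mathbf{w}(0)=\mathbf{u}_{0,n}-\mathbf{u}_0$ and
\[
d\mathbf{w}=\Delta\mathbf{w}\,dt+\big(\mathbf{u}_1\times\Delta\mathbf{u}_1-\mathbf{u}_2\times\Delta\mathbf{u}_2\big)\,dt-\big[(1+|\mathbf{u}_1|^2)\mathbf{u}_1-(1+|\mathbf{u}_2|^2)\mathbf{u}_2\big]\,dt+\tfrac12\sum_{k=1}^\infty(\mathbf{w}\times\mathbf{f}_k)\times\mathbf{f}_k\,dt+\sum_{k=1}^\infty(\mathbf{w}\times\mathbf{f}_k)\,dW_k.
\]
By the regularity \eqref{1.3}--\eqref{1.3a} of Theorem \ref{t3}, both solutions lie in $L^2(0,T;H^2(\mathbb{R}^d))$ with drift in $L^1(0,T;L^2(\mathbb{R}^d))$, whence $\mathbf{w}\in C([0,T];L^2(\mathbb{R}^d))$ $\mathrm{P}$-a.s. and I may apply It\^o's formula to $\|\mathbf{w}(t)\|_{L^2(\mathbb{R}^d)}^2$.

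The next step is to exploit the cancellations. Splitting $\mathbf{u}_1\times\Delta\mathbf{u}_1-\mathbf{u}_2\times\Delta\mathbf{u}_2=\mathbf{w}\times\Delta\mathbf{u}_1+\mathbf{u}_2\times\Delta\mathbf{w}$, the pairing $(\mathbf{w},\mathbf{w}\times\Delta\mathbf{u}_1)_{L^2}=0$, while the identity $\mathbf{a}\cdot(\mathbf{b}\times\mathbf{c})=\mathbf{c}\cdot(\mathbf{a}\times\mathbf{b})$ together with integration by parts gives $(\mathbf{w},\mathbf{u}_2\times\Delta\mathbf{w})_{L^2}=-\int_{\mathbb{R}^d}\nabla\mathbf{w}:(\mathbf{w}\times\nabla\mathbf{u}_2)\,dx$, since the $\nabla\mathbf{w}\times\mathbf{u}_2$ contribution vanishes. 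The Laplacian produces $-2\|\nabla\mathbf{w}\|_{L^2}^2$, and the monotonicity of $\mathbf{x}\mapsto|\mathbf{x}|^2\mathbf{x}$ renders the cubic contribution nonpositive. Crucially, because $(\mathbf{w},\mathbf{w}\times\mathbf{f}_k)_{L^2}=0$ the stochastic integral vanishes identically, and the identity $(\mathbf{a},(\mathbf{a}\times\mathbf{f})\times\mathbf{f})=-|\mathbf{a}\times\mathbf{f}|^2$ shows the It\^o correction exactly cancels the quadratic variation. Consequently the resulting estimate is purely pathwise:
\[
\frac{d}{dt}\|\mathbf{w}(t)\|_{L^2}^2\le -2\|\nabla\mathbf{w}\|_{L^2}^2+2\left|\int_{\mathbb{R}^d}\nabla\mathbf{w}:(\mathbf{w}\times\nabla\mathbf{u}_2)\,dx\right|.
\]
For $d=2$ I bound the last term using $\|\mathbf{w}\|_{L^4}\le C\|\mathbf{w}\|_{L^2}^{1/2}\|\nabla\mathbf{w}\|_{L^2}^{1/2}$ and $\|\nabla\mathbf{u}_2\|_{L^4}\le C\|\mathbf{u}_2\|_{H^1}^{1/2}\|\mathbf{u}_2\|_{H^2}^{1/2}$, followed by Young's inequality to absorb $\|\nabla\mathbf{w}\|_{L^2}^2$ (the case $d=1$ is easier via $H^1(\mathbb{R})\hookrightarrow L^\infty(\mathbb{R})$). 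This yields $\tfrac{d}{dt}\|\mathbf{w}\|_{L^2}^2\le C\,g(t)\,\|\mathbf{w}\|_{L^2}^2$ with $g(t)=\|\mathbf{u}_2(t)\|_{H^1}^2\|\mathbf{u}_2(t)\|_{H^2}^2$, so Gronwall's lemma gives the pathwise bound $\sup_{s\in[0,t]}\|\mathbf{w}(s)\|_{L^2}^2\le\|\mathbf{w}(0)\|_{L^2}^2\exp\big(C\int_0^t g\,ds\big)$.

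The main obstacle is precisely that $\exp\big(C\int_0^T g\,ds\big)$ need not be integrable, since \eqref{1.3} only provides polynomial moments of $\sup_t\|\mathbf{u}_2\|_{H^1}$ and of $\int_0^T\|\mathbf{u}_2\|_{H^2}^2\,ds$. To circumvent this I introduce the stopping time $\tau_R=\inf\big\{t\ge0:\sum_{i=1,2}\big(\|\mathbf{u}_i(t)\|_{H^1}^2+\int_0^t\|\mathbf{u}_i(s)\|_{H^2}^2\,ds\big)>R\big\}\wedge T$. On $[0,\tau_R]$ one has $\int_0^t g\,ds\le R^2$, so the Gronwall bound gives $\mathrm{E}\big(\sup_{s\in[0,\tau_R]}\|\mathbf{w}(s)\|_{L^2}^p\big)\le\|\mathbf{w}(0)\|_{L^2}^p\,e^{pCR^2/2}$, which tends to $0$ as $n\to\infty$ for each fixed $R$ because $\|\mathbf{w}(0)\|_{L^2}\le\|\mathbf{u}_{0,n}-\mathbf{u}_0\|_{H^1}\to0$. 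Splitting $\mathrm{E}\big(\sup_{[0,T]}\|\mathbf{w}\|_{L^2}^p\big)$ over $\{\tau_R=T\}$ and $\{\tau_R<T\}$, I control the first piece by the above and the second by the Cauchy–Schwarz inequality together with the uniform moment bound $\mathrm{E}\big(\sup_{[0,T]}\|\mathbf{w}\|_{L^2}^{2p}\big)\le C$ (from \eqref{1.3}, as $\|\mathbf{u}_{0,n}\|_{H^1}$ is bounded) and the Chebyshev estimate $\mathrm{P}(\tau_R<T)\le C/R$. This produces $\limsup_{n\to\infty}\mathrm{E}\big(\sup_{[0,T]}\|\mathbf{w}\|_{L^2}^p\big)\le C R^{-1/2}$, and letting $R\to\infty$ finishes the proof.
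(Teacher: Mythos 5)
Your proof is correct, and its core --- the It\^{o} formula for $\|\mathbf{w}\|_{L^2}^2$, the cancellation of $(\mathbf{w},\mathbf{w}\times\Delta\mathbf{u}_1)$, the integration by parts reducing $(\mathbf{w},\mathbf{u}_2\times\Delta\mathbf{w})$ to a term involving $\nabla\mathbf{u}_2$, the monotonicity of the cubic nonlinearity, the exact cancellation of the It\^{o} correction against the quadratic variation, and the Gagliardo--Nirenberg/Gronwall step yielding the pathwise bound $\sup_{s\le t}\|\mathbf{w}(s)\|_{L^2}^2\le\|\mathbf{w}(0)\|_{L^2}^2\exp\bigl(C\int_0^t g\,ds\bigr)$ --- coincides with the paper's argument (compare \eqref{conin p1}--\eqref{conin p5}). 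The only divergence is in how the non-integrable exponential Gronwall factor is handled. The paper observes that the factor is finite $\mathrm{P}$-a.s., deduces $\|\mathbf{w}\|_{C([0,T];L^2)}\to 0$ almost surely, and then upgrades to $L^p(\Omega)$ convergence via uniform integrability of $\{\mathbf{w}_n\}$ in $L^p(\Omega;C([0,T];L^2))$ together with Vitali's theorem. You instead localize with the stopping time $\tau_R$, on whose domain the exponential is deterministically bounded by $e^{CR^2}$, and then split the expectation over $\{\tau_R=T\}$ and $\{\tau_R<T\}$, controlling the latter by Cauchy--Schwarz, the uniform $2p$-moment bound from \eqref{1.3}, and the Chebyshev estimate $\mathrm{P}(\tau_R<T)\le C/R$. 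Both mechanisms are standard and equally rigorous; the Vitali route is slightly shorter once a.s.\ convergence is in hand, while your stopping-time route has the advantage of producing an explicit quantitative rate in terms of $\|\mathbf{u}_{0,n}-\mathbf{u}_0\|_{H^1}$ and $R$, and is the same device the paper itself later deploys in Lemmas \ref{fp} and \ref{ucsp}.
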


 \begin{proof}
 Let
 $\mathbf{u}_n (t)
 =\mathbf{u} (t, \mathbf{u}_{0,n})$,
 $\mathbf{u}  (t)
 =\mathbf{u} (t, \mathbf{u}_{0 })$,
 and
 $
 \mathbf{v}_n (t)
 = \mathbf{u} (t, \mathbf{u}_{0,n})
 -
 \mathbf{u} (t, \mathbf{u}_{0})$
 for $t\in [0,T]$.
 Then by \eqref{equ1.1} we  get
 $$
 d \mathbf{v}_n
 =
 \left (\Delta  \mathbf{v} _n
 + \mathbf{v}_n \times  \Delta  \mathbf{u}  _n
 +
 \mathbf{u}    \times
 \Delta \mathbf{v}_n
 -\mathbf{v} _n
 -\left (
 |\mathbf{u}  _n |^2
 \mathbf{u}  _n -
 |\mathbf{u}   |^2
 \mathbf{u}
 \right )
 \right ) dt
 $$
 \be\label{coin p0}
 +{\frac 12}
 \sum_{k=1}^\infty
  (\mathbf{v}_n  \times
 \mathbf{f}  _k)
 \times \mathbf{f} _k  dt
 +\sum_{k=1}^\infty
 (\mathbf{v} _n \times
 \mathbf{f}  _k ) dW_k.
\ee
 By It\^{o}'s formula we obtain
\be\label{conin p1}
 d \| \mathbf{v}_n\|^2_{L^2(\mathbb{R}^d)}
+ 2
 \left (
   \| \nabla \mathbf{v}_n\|^2_{L^2(\mathbb{R}^d)}
+
 \| \mathbf{v}_n\|^2_{L^2(\mathbb{R}^d)}
 \right ) dt
$$
$$=2   \left ( \mathbf{v}_n,
 (\mathbf{u}   \times
 \Delta \mathbf{v} _n  )
 \right )_{L^2(\mathbb{R}^d)} dt
 -2  \left (
 \mathbf{v} _n
 ,
  \left (
 |\mathbf{u}  _n |^2
 \mathbf{u}  _n -
 |\mathbf{u}   |^2
 \mathbf{u}
 \right )
 \right ) _{L^2(\mathbb{R}^d)}dt.
 \ee
 Note that
 $$ -2  \left ( \mathbf{v} _n
,
  \left (
 |\mathbf{u} _n |^2
 \mathbf{u}  _n -
 |\mathbf{u}  |^2
 \mathbf{u}
 \right )
 \right )_{L^2(\mathbb{R}^d)}
 = -2 \int_{\mathbb{R}^d} (\mathbf{u}  _n
 -
 \mathbf{u}  )
 \cdot
  \left (
 |\mathbf{u}  _n |^2
 \mathbf{u}  _n -
 |\mathbf{u}   |^2
 \mathbf{u}
 \right ) dx \le 0.
 $$
 Then by \eqref{conin p1} we get
 for all $t\in [0,T]$, $\mathrm{P}$-almost surely,
 $$
 \| \mathbf{v}_n (t)\|^2_{L^2(\mathbb{R}^d)}
 +  2 \int_0^t \| \nabla \mathbf{v
 } _n(s) \|^2_{L^2(\mathbb{R}^d)} ds
 +2\int_0^t
 \| \mathbf{v}_n (s) \|^2_{L^2(\mathbb{R}^d)}ds
 $$
 \be\label{conin p2}
 \le
 \| \mathbf{v} _n(0)\|^2_{L^2(\mathbb{R}^d)}
 + 2 \int_0^t  \left ( \mathbf{v}_n (s),
 (\mathbf{u}   (s)  \times
 \Delta \mathbf{v}_n  (s)   )
 \right ) _{L^2(\mathbb{R}^d)} ds.
  \ee
  For the last term on the right-hand side
  of  \eqref{conin p2} we will use the following inequality to control it
     \be\label{conin p2a}
  \|  \mathbf{v} \|_{L^4(\mathbb{R}^d)}
  \le c_1
  \| \nabla  \mathbf{v} \|^{\frac d4}
  _{L^2(\mathbb{R}^d)}\|    \mathbf{v} \|^{\frac {4-d} {4}}
  _{L^2(\mathbb{R}^d)},
  \quad \forall \ \mathbf{v} \in H^1(\mathbb{R}^d),
  \ee
   where
 $c_1>0$  is a    constant.
 Then by \eqref{conin p2a} and Young's inequality we get
  $$
  2 \int_0^t  \left ( \mathbf{v}_n (s),
 (\mathbf{u}    (s)  \times
 \Delta \mathbf{v}_n  (s)   )
 \right ) _{L^2(\mathbb{R}^d)} ds
 =
 -
 2 \int_0^t  \left ( \mathbf{v}_n (s),
 (\nabla \mathbf{u}    (s)  \times
 \nabla  \mathbf{v} _n (s)   )
 \right ) _{L^2(\mathbb{R}^d)} ds
 $$
 $$
 \le
 2 \int_0^t \| \mathbf{v}_n (s)\|_{L^4(\mathbb{R}^d)}
 \|\nabla \mathbf{u}    (s)\|_{L^4(\mathbb{R}^d)}
 \|
 \nabla  \mathbf{v}_n  (s)   \|
   _{L^2(\mathbb{R}^d)} ds
 $$
 $$
 \le 2
c_1^2  \int_0^t \| \mathbf{v} _n(s)\|^{\frac {4-d}{4}}
_{L^2(\mathbb{R}^d)}
 \| \mathbf{u}   (s)\|^{\frac {4-d}{4}}_{H^1(\mathbb{R}^d)}
 \| \mathbf{u}    (s)\|^{\frac d4}_{H^2(\mathbb{R}^d)}
 \|
 \nabla  \mathbf{v} _n (s)   \|^{1+\frac d4}
   _{L^2(\mathbb{R}^d)} ds
 $$
  \be\label{conin p3}
 \le \int_0^t \|
 \nabla  \mathbf{v}  _n (s)   \|^2
   _{L^2(\mathbb{R}^d)} ds
   +
c_2  \int_0^t
 \| \mathbf{u}   (s)\|^2_{H^1(\mathbb{R}^d)}
 \| \mathbf{u}   (s)\|^{\frac{2d}{4-d}}_{H^2(\mathbb{R}^d)}
 \| \mathbf{v}_n (s)\|^2
   _{L^2(\mathbb{R}^d)} ds,
   \ee
   where $c_2>0$  is an absolute constant.
 By \eqref{conin p2} and \eqref{conin p3} we get
 for all $t\in [0,T]$, $\mathrm{P}$-almost surely,
 $$
 \| \mathbf{v}_n (t)\|^2_{L^2(\mathbb{R}^d)}
 +    \int_0^t \| \nabla \mathbf{v
 } _n(s) \|^2_{L^2(\mathbb{R}^d)} ds
 +2\int_0^t
 \| \mathbf{v} _n(s) \|^2_{L^2(\mathbb{R}^d)}ds
 $$
 \be\label{conin p4}
 \le
 \| \mathbf{v}_n (0)\|^2_{L^2(\mathbb{R}^d)}
 +
c_2  \int_0^t
 \| \mathbf{u}   (s)\|^2_{H^1(\mathbb{R}^d)}
 \| \mathbf{u}    (s)\|^{\frac{2d}{4-d}}_{H^2(\mathbb{R}^d)}
 \| \mathbf{v}_n  (s)\|^2
   _{L^2(\mathbb{R}^d)} ds.
  \ee
  By \eqref{conin p4}
  and Gronwall's inequality we get
  for all $t\in [0,T]$, $\mathrm{P}$-almost surely,
 $$
 \| \mathbf{v} _n(t)\|^2_{L^2(\mathbb{R}^d)}
 \le
 \| \mathbf{v}_n (0)\|^2_{L^2(\mathbb{R}^d)}
 e^{c_2 \int_0^t
  \| \mathbf{u}    (s)\|^2_{H^1(\mathbb{R}^d)}
 \| \mathbf{u}    (s)\|^{\frac{2d}{4-d}}_{H^2(\mathbb{R}^d)}
 ds}
 $$
 \be\label{conin p5}
 \le
 \| \mathbf{v} _n (0)\|^2_{L^2(\mathbb{R}^d)}
e^{c_2
 \sup_{s\in [0,T]} \| \mathbf{u}    (s)\|^2_{H^1(\mathbb{R}^d)}\int_0^T
  \| \mathbf{u}   (s)\|^{\frac{2d}{4-d}}_{H^2(\mathbb{R}^d)}
 ds}
 = c_3
 \| \mathbf{v}_n  (0)\|^2_{L^2(\mathbb{R}^d)},
  \ee
  where
  $$
  c_3=
  e^{c_2
 \sup_{s\in [0,T]} \| \mathbf{u}    (s)\|^2_{H^1(\mathbb{R}^d)}\int_0^T
  \| \mathbf{u}    (s)\|^{\frac{2d}{4-d}}_{H^2(\mathbb{R}^d)}
 ds}.
  $$
  By \eqref{sj1 2} we know  that
   $c_3<\infty$,
  $\mathrm{P}$-almost surely.
  Then by \eqref{conin p5} we see that
  $\mathrm{P}$-almost surely,
  $$\lim_{n\to \infty} \| \mathbf{v}_n  \|_{C([0,T]; L^2(\mathbb{R}^d))}
  =0.
  $$

  On the other hand, by
  \eqref{sj1 2} we see that
  if
   $\mathbf{u}_{0,n} \to
  \mathbf{u}_{0 }$ in
  $H^1(\mathbb{R}^d)$,
  then the sequence
  $\{ \mathbf{v} _n\}_{n=1}^\infty$  is uniformly integrable
  in $L^p(\Omega, C([0,T]; L^2(\mathbb{R}^d)))$
  for any $p \ge 1$,
  and hence by Vitali's theorem, we get
  for all $p\ge 1$,
  $$
   \lim_{n\to \infty} \textrm {E}
   \left (
   \| \mathbf{v} _n \|^p_{C([0,T]; L^2(\mathbb{R}^d))}
   \right )
   =0  ,
  $$  which completes the proof.
   \end{proof}


%
%

\section{Existence of invariant measures}
In this section, we
prove  the existence  of invariant measures of the Markov
 semigroup
 associated with the solution
 operators
 of the  stochastic equation \eqref{equ1.1}.
 Given $t\ge 0$
 and $ \mathbf{u}_0 \in H^1(\mathbb{R}^d)$,
 denote by
  $
  \mathbf{P}_t(  \mathbf{u}_0, \cdot)$ the transition probability
  given by
$$\mathbf{P}_t(  \mathbf{u}_0, \mathcal{O})
=\mathrm{P}(\mathbf{u}(t,   \mathbf{u}_0 )\in \mathcal{O}),
\quad \forall \ \mathcal{O}\in \mathcal{B}(H^1(\mathbb{R}^d)) ,$$
  where $\mathbf{u}(t,   \mathbf{u}_0)$ is the   solution of   \eqref{equ1.1} starting from the initial point $  \mathbf{u}_0$.

For any bounded Borel function $\varphi\in \mathcal{B}_b(H^1(\mathbb{R}^d))$, define the Markov transition semigroup
\begin{align*}
(\mathbf{P}_t\varphi)(  \mathbf{u}_0)=\mathrm{E}[\varphi(\mathbf{u}(t,   \mathbf{u}_0))],~
   \mathbf{u}_0\in H^1(\mathbb{R}^d).
\end{align*}
A probability measure $\mu$ on $\mathcal{B}(H^1(\mathbb{R}^d))$ is an invariant measure  of
$\{\mathbf{P}_t\}_{t\ge 0}$
if
$$\int_{H^1(\mathbb{R}^d)}\mathbf{P}_t\varphi d\mu=\int_{H^1(\mathbb{R}^d)}\varphi d\mu,~~ {\rm for~ all}~t\geq 0, ~\varphi\in \mathcal{B}_b(H^1(\mathbb{R}^d)).$$

We will apply the weakly Feller method
\cite{Mas} to establish the existence
of invariant measures of
$\{\mathbf{P}_t\}_{t\ge 0}$ in $H^1(\mathbb{R}^d)$,
for which we need the  following convergence result
 as  given by   Lemma \ref{sj1}.

\begin{lemma}
\label{sj2}
Suppose \eqref{1.1} holds,
$\mathbf{u}_0, \mathbf{u}_{0,n} \in H^1(\mathbb{R}^d)$
such that
$\{\mathbf{u}_{0,n}\}_{n=1}^\infty$ weakly converges
to $\mathbf{u}_0$ in  $ H^1(\mathbb{R}^d)$.
Let
$\mathbf{u}^n(\cdot, \mathbf{u}_{0,n})$ be the unique strong solution of \eqref{equ1.1} with respect to initial data $\mathbf{u}_{0,n}$.
Then
there exist a   stochastic basis
 $(\widetilde{\Omega}, \widetilde{\mathcal{F}},
  \{\widetilde{\mathcal{F}}_t\}
  _{t\ge 0},  \widetilde{\mathrm{P}})$,
 random variables
 $(\widetilde{\mathbf{u}}, \widetilde{W} )$
 and
  $(\widetilde{\mathbf{u}}^n, \widetilde{W}^n)$,
  $n\in \mathbb{N}$,
   defined on $(\widetilde{\Omega}, \widetilde{\mathcal{F}},
    \{\widetilde{\mathcal{F}}_t\}
  _{t\ge 0},  \widetilde{\mathrm{P}})$ such that
   in the space $(C([0,T];L^2
   (\mathbb{R}^d)
   )\cap C([0,T];H_w^1(\mathbb{R}^d))
   \cap L_w^2(0,T;H^2(\mathbb{R}^d)))
    \times C([0,T];\mathbb{R}^\infty)$,
    up to a subsequence:

{\rm (i)}. The law  of $(\widetilde{\mathbf{u}}^n, \widetilde{W}^n)$ is
the same as    $( {\mathbf{u}}^n, W)$
for every $n\in \mathbb{N}$.

{\rm (ii)}.   $(\widetilde{\mathbf{u}}^n, \widetilde{W}^n)\rightarrow (\widetilde{\mathbf{u}}, \widetilde{W} )$,
 $\widetilde{\mathrm{P}}$-almost surely.

{\rm (iii)}.
$(\widetilde{\Omega}, \widetilde{\mathcal{F}},
  \{\widetilde{\mathcal{F}}_t\}
  _{t\ge 0},
  \widetilde{\mathbf{u}}, \widetilde{W},  \widetilde{\mathrm{P}})$
  is a martingale solution of \eqref{equ1.1} with initial law $\delta_{\mathbf{u}_0}$.
 \end{lemma}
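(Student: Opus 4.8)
The plan is to follow the same three-step scheme used to establish Lemma~\ref{sj1} and to prove Theorem~\ref{t3}, now with the sequence of initial data $\{\mathbf{u}_{0,n}\}$ playing the role previously played by the truncations $\theta_n\mathbf{u}_0$. Since $\mathbf{u}_{0,n}$ converges weakly to $\mathbf{u}_0$ in $H^1(\mathbb{R}^d)$, the sequence is bounded, say $\sup_n\|\mathbf{u}_{0,n}\|_{H^1(\mathbb{R}^d)}\le R<\infty$. The constants in Lemma~\ref{est1}, and hence in Lemma~\ref{est_v}, depend on the initial data only through such a bound $R$, so the solutions $\mathbf{u}^n(\cdot,\mathbf{u}_{0,n})$ of \eqref{equ1.1} inherit uniform bounds in $L^p(\Omega;L^\infty(0,T;H^1)\cap L^2(0,T;H^2))$, the fractional-in-time estimate \eqref{est_v 3}, and the bound \eqref{est_v 2} for $\mathbf{u}^n\times\Delta\mathbf{u}^n$. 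First I would record these, together with a uniform tail-ends estimate of the type \eqref{est_v 5}, obtained by repeating the It\^o-plus-cut-off computation of Lemma~\ref{tail_1} for these full-space solutions; the only new feature there is the initial contribution $\|\phi_m\mathbf{u}_{0,n}\|_{L^2(\mathbb{R}^d)}^2$ in the Gronwall bound \eqref{tail_1 p7}.

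Second, with these uniform estimates in hand I would reproduce the argument of Lemma~\ref{tight1} to show that the laws of $\{(\mathbf{u}^n,W)\}_{n=1}^\infty$ are tight in $(C([0,T];L^2)\cap C([0,T];H_w^1)\cap L_w^2(0,T;H^2))\times C([0,T];\mathbb{R}^\infty)$. Precompactness in the two weak topologies follows from the uniform $H^1$- and $H^2$-bounds and the weak compactness of balls; precompactness in $C([0,T];L^2(\mathbb{R}^d))$ is obtained, exactly as in Lemma~\ref{tight1}, from the fractional-in-time bound (equicontinuity), the uniform tail estimate, and the compactness of the embedding $H^1(\mathscr{D}_m)\hookrightarrow L^2(\mathscr{D}_m)$ on balls. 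Proposition~\ref{prop_sj} then yields, along a subsequence, the stochastic basis and random variables $(\widetilde{\mathbf{u}},\widetilde W)$, $(\widetilde{\mathbf{u}}^n,\widetilde W^n)$ with the equal-law property (i) and the almost sure convergence (ii).

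Third, I would identify $\widetilde{\mathbf{u}}$ as a martingale solution. Because $(\widetilde{\mathbf{u}}^n,\widetilde W^n)$ has the same law as $(\mathbf{u}^n,W)$, it satisfies the tested weak formulation \eqref{ps 3} against any $\varphi\in C_0^\infty(\mathbb{R}^d)$, and the passage to the limit is carried out term by term precisely as in \eqref{ps 4}--\eqref{ps 11}: the linear terms use the convergence in $C([0,T];H_w^1)$; the quadratic term $\mathbf{u}^n\times\Delta\mathbf{u}^n$ and the cubic term $|\mathbf{u}^n|^2\mathbf{u}^n$ are handled on the compact support of $\varphi$ through the strong $L^2$-convergence combined with the uniform $H^2$-bound, via the estimates underlying \eqref{ps 5} and \eqref{ps 7}; and the stochastic integral converges as in \eqref{ps 8}--\eqref{ps 10}. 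This gives \eqref{3.1} for $\widetilde{\mathbf{u}}$, which is (iii). The initial law is $\delta_{\mathbf{u}_0}$: since $\widetilde{\mathbf{u}}^n(0)=\mathbf{u}_{0,n}$ $\widetilde{\mathrm{P}}$-almost surely and $\widetilde{\mathbf{u}}^n(0)\to\widetilde{\mathbf{u}}(0)$ in $H_w^1$, while $\mathbf{u}_{0,n}\rightharpoonup\mathbf{u}_0$, uniqueness of weak limits forces $\widetilde{\mathbf{u}}(0)=\mathbf{u}_0$ almost surely.

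The step I expect to be the main obstacle is the uniform tail-ends estimate. In Lemma~\ref{sj1} the data $\theta_n\mathbf{u}_0$ satisfy $\|\phi_m\theta_n\mathbf{u}_0\|_{L^2(\mathbb{R}^d)}\le\|\phi_m\mathbf{u}_0\|_{L^2(\mathbb{R}^d)}\to0$ uniformly in $n$, so the initial contribution in \eqref{tail_1 p7} is controlled automatically; here only weak $H^1$-convergence of $\mathbf{u}_{0,n}$ is assumed, and weak convergence by itself does not make the $L^2$-tails $\|\phi_m\mathbf{u}_{0,n}\|_{L^2(\mathbb{R}^d)}$ uniformly small in $n$, since mass may escape to infinity. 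Securing the uniform smallness of these initial tails is exactly what legitimizes the strong $C([0,T];L^2)$ component of the convergence in (ii); once it is in place, the tightness and identification steps are routine adaptations of the proofs of Lemma~\ref{tight1} and Theorem~\ref{t3}, and the weak-topology parts, including the identification of the initial law, require only the weak convergence already assumed.
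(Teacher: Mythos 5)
Your route is the same as the paper's: the paper's entire proof of this lemma is the observation that a weakly convergent sequence is bounded in $H^1(\mathbb{R}^d)$, so the uniform estimates of Section 2 apply to $\{\mathbf{u}^n(\cdot,\mathbf{u}_{0,n})\}$, after which it defers to ``the same argument as Lemma \ref{sj1}'' with all details omitted. Your first two paragraphs and your third paragraph reproduce exactly that scheme, and the identification of the initial law via uniqueness of weak limits is fine.

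The obstacle you flag in your final paragraph is, however, a genuine gap, and you leave it open rather than closing it. The Gronwall bound \eqref{tail_1 p7} controls the tails of the solution by $\|\phi_m\mathbf{u}^n(0)\|_{L^2(\mathbb{R}^d)}^2 e^{c_3T}$ plus terms that are small uniformly in $n$; in Lemma \ref{tail_1} the initial data are $\theta_n\mathbf{u}_0$, so $\|\phi_m\theta_n\mathbf{u}_0\|_{L^2}\le\|\phi_m\mathbf{u}_0\|_{L^2}\to0$ uniformly in $n$, whereas here the initial data are $\mathbf{u}_{0,n}$ and weak $H^1$-convergence does not prevent mass from escaping to infinity (take $\mathbf{u}_{0,n}(\cdot)=\psi(\cdot-n)\rightharpoonup0$ with $\|\phi_m\mathbf{u}_{0,n}\|_{L^2}=\|\psi\|_{L^2}$ for $n\gg m$). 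Without a uniform tail bound on the data, the analogue of \eqref{est_v 5} fails, the set $\mathcal{S}_2^\varepsilon$ in \eqref{tight1 p6} cannot be arranged to have large probability, and tightness in the \emph{strong} space $C([0,T];L^2(\mathbb{R}^d))$ is simply false for such sequences; note also that the paper's proof of precompactness in $C([0,T];H^1_w(\mathbb{R}^d))$ is itself routed through precompactness in $C([0,T];L^2(\mathbb{R}^d))$, so that component does not survive unmodified either. To actually prove the lemma one must either (a) weaken the topology in the statement, replacing $C([0,T];L^2(\mathbb{R}^d))$ by $C([0,T];L^2_{loc}(\mathbb{R}^d))$ or dropping it, prove tightness in $C([0,T];H^1_w(\mathbb{R}^d))$ directly by an Arzel\`a--Ascoli argument against test functions $\psi\in C_0^\infty$ (using the equation and the uniform $H^2$ bound for equicontinuity of $t\mapsto(\mathbf{u}^n(t),\psi)_{H^1}$, and local compactness of $H^1(\mathscr{D}_m)\hookrightarrow L^2(\mathscr{D}_m)$), and then localize the passage to the limit in \eqref{ps 4}--\eqref{ps 10} to the compact support of $\varphi$ --- which suffices for the $bw$-Feller application in Theorem \ref{eim_wk} --- or (b) add a hypothesis of uniform decay of the tails of $\mathbf{u}_{0,n}$. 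As written, your proposal (like the paper's one-line proof) does not supply either fix.
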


 \begin{proof}
 Since $\{\mathbf{u}_{0,n}\}_{n=1}^\infty$ weakly converges
to $\mathbf{u}_0$ in  $ H^1(\mathbb{R}^d)$, we find that
the sequence $\{\mathbf{u}_{0,n}\}_{n=1}^\infty$
is bounded in $H^1(\mathbb{R}^d)$, and
thus all the uniform estimates
established in the previous section
are valid for the sequence
$\{\mathbf{u}^n(\cdot, \mathbf{u}_{0,n})\}_{n=1}^\infty$.
Then   the desired result follows
from the same argument as
Lemma \ref{sj1}. The details are omitted here.
 \end{proof}

Next, we show the existence
of invariant measures
of \eqref{equ1.1}.

\begin{theorem}\label{eim_wk}
 If \eqref{1.1} holds,  then
 the stochastic equation
 \eqref{equ1.1} has an invariant measure
 in $H^1(\mathbb{R}^d)$.
 \end{theorem}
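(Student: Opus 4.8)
The plan is to apply the weakly Feller version of the Krylov--Bogoliubov argument, following \cite{Mas, B1}, carrying out the entire compactness analysis in the \emph{weak} topology of $H^1(\mathbb{R}^d)$, where bounded sets are already precompact; this is precisely what circumvents the non-compactness of the Sobolev embeddings. Fix any initial datum $\mathbf{u}_0\in H^1(\mathbb{R}^d)$ and, for $T\ge 1$, introduce the time averages
\begin{align*}
\mu_T(\mathcal{O})=\frac{1}{T}\int_0^T \mathbf{P}_t(\mathbf{u}_0,\mathcal{O})\,dt,\qquad \mathcal{O}\in\mathcal{B}(H^1(\mathbb{R}^d)).
\end{align*}

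First I would establish tightness of $\{\mu_T\}_{T\ge1}$ with respect to the weak topology of $H^1(\mathbb{R}^d)$. Since the solution $\mathbf{u}(t,\mathbf{u}_0)$ inherits the uniform bound of Lemma \ref{est_v}(iii) through Lemma \ref{sj1}(iii), namely $\mathrm{E}\|\mathbf{u}(t,\mathbf{u}_0)\|^2_{H^1(\mathbb{R}^d)}+\int_0^t\mathrm{E}\|\mathbf{u}(s,\mathbf{u}_0)\|^2_{H^2(\mathbb{R}^d)}\,ds\le C_3+C_3t$, the Ces\`aro average controls the $H^2$-moment uniformly:
\begin{align*}
\int_{H^1(\mathbb{R}^d)}\|\mathbf{w}\|^2_{H^2(\mathbb{R}^d)}\,d\mu_T(\mathbf{w})=\frac{1}{T}\int_0^T\mathrm{E}\|\mathbf{u}(s,\mathbf{u}_0)\|^2_{H^2(\mathbb{R}^d)}\,ds\le 2C_3,\qquad T\ge1.
\end{align*}
For $R>0$ the set $K_R=\{\mathbf{w}:\|\mathbf{w}\|_{H^2(\mathbb{R}^d)}\le R\}$ is bounded in $H^2(\mathbb{R}^d)$, hence bounded and weakly sequentially closed in $H^1(\mathbb{R}^d)$, and therefore weakly compact by reflexivity; Chebyshev's inequality then gives $\mu_T(H^1(\mathbb{R}^d)\setminus K_R)\le 2C_3/R^2$ uniformly in $T$, which is the required tightness in the weak topology. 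Note that, in contrast to the strong topology, no tail-ends estimate enters at this stage, because weak precompactness of bounded sets is automatic.

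Next I would verify the sequentially weakly Feller property of $\{\mathbf{P}_t\}$, i.e.\ that $\mathbf{P}_t$ maps the space $\mathrm{SC}_b(H^1(\mathbb{R}^d))$ of bounded sequentially weakly continuous functions into itself. This is exactly the content of Lemma \ref{sj2}: given $\mathbf{u}_{0,n}\rightharpoonup\mathbf{u}_0$ weakly in $H^1(\mathbb{R}^d)$, the Skorokhod--Jakubowski construction produces copies $\widetilde{\mathbf{u}}^n\to\widetilde{\mathbf{u}}$, $\widetilde{\mathrm{P}}$-almost surely in $C([0,T];H^1_w(\mathbb{R}^d))$, with $\widetilde{\mathbf{u}}$ a martingale solution issued from $\delta_{\mathbf{u}_0}$. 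By the pathwise uniqueness from Theorem \ref{t3} (via Lemma \ref{conin}) and the Yamada--Watanabe theorem, the law of $\widetilde{\mathbf{u}}(t)$ coincides with that of $\mathbf{u}(t,\mathbf{u}_0)$, while the law of $\widetilde{\mathbf{u}}^n(t)$ equals that of $\mathbf{u}(t,\mathbf{u}_{0,n})$. Hence for $\varphi\in \mathrm{SC}_b(H^1(\mathbb{R}^d))$ the $\widetilde{\mathrm{P}}$-almost sure weak convergence $\widetilde{\mathbf{u}}^n(t)\rightharpoonup\widetilde{\mathbf{u}}(t)$ together with dominated convergence yields
\begin{align*}
(\mathbf{P}_t\varphi)(\mathbf{u}_{0,n})=\widetilde{\mathrm{E}}\,\varphi(\widetilde{\mathbf{u}}^n(t))\longrightarrow \widetilde{\mathrm{E}}\,\varphi(\widetilde{\mathbf{u}}(t))=(\mathbf{P}_t\varphi)(\mathbf{u}_0),
\end{align*}
so that $\mathbf{P}_t\varphi\in \mathrm{SC}_b(H^1(\mathbb{R}^d))$; the Markov property of $\{\mathbf{P}_t\}$ needed here follows from the uniqueness of solutions.

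With tightness in the weak topology and the weakly Feller property in hand, I would conclude by the weakly Feller Krylov--Bogoliubov theorem of Maslowski--Seidler \cite{Mas} (as used in \cite{B1} for bounded domains): any weak limit point $\mu$ of $\{\mu_{T_n}\}$, which exists along a subsequence by Prokhorov's theorem on the metrizable, weakly compact balls of the separable space $H^1(\mathbb{R}^d)$, is invariant for $\{\mathbf{P}_t\}_{t\ge0}$; since the weak and norm Borel $\sigma$-algebras coincide on $H^1(\mathbb{R}^d)$, $\mu$ is a genuine invariant Borel probability measure. The main obstacle, and the point where the unbounded-domain difficulty is truly felt, is the verification of the weakly Feller property: it cannot be obtained from ordinary strong-topology stability estimates, because the embeddings are not compact, and instead rests entirely on the tightness of the laws of the approximating solutions established through the uniform tail-ends estimates of Lemma \ref{est_v}(iv), which feed into the Skorokhod--Jakubowski representation underlying Lemma \ref{sj2}.
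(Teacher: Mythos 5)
Your proposal is correct and follows essentially the same route as the paper: tightness of the time-averaged measures in the bounded-weak topology of $H^1(\mathbb{R}^d)$ from the uniform bound \eqref{est_v 4}, the sequentially weakly Feller property from Lemma \ref{sj2} (together with pathwise uniqueness and Yamada--Watanabe to identify the limit law), and then the Maslowski--Seidler theorem. Your write-up merely fills in details that the paper leaves as a sketch, including the correct observation that the tail-ends estimates enter only through Lemma \ref{sj2} and not through the tightness of the averaged measures.
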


  \begin{proof}
 By Lemma \ref{sj2}, one can verify
 that the Markov semigroup
 $\{\mathbf{P}_t\}_{t\ge 0}$
 is  $bw$-Feller  in $H^1(\mathbb{R}^d)$
 in the sense that
 if
 $\{\mathbf{u}_{0,n}\}_{n=1}^\infty$
 is convergent
to $\mathbf{u}_0$ in  $ H^1(\mathbb{R}^d)$ weakly,
and
$\varphi: H^1(\mathbb{R}^d)
\to \mathbb{R}$ is bounded and weakly
continuous,
then  for all $t\ge 0$, $\mathbf{P}_t \varphi: H^1(\mathbb{R}^d)
\to \mathbb{R}$ is bounded and weakly continuous function, that is,
 $$
 (\mathbf{P}_t \varphi)
 (\mathbf{u}_{0,n})
 \to
 (\mathbf{P}_t \varphi)
 (\mathbf{u}_{0 }),
 \quad \text{as } n \to \infty.
 $$
 Moreover, the tightness of time averaged measures defined by \eqref{avet1} on $(H^1(\mathbb{R}^d), bw)$ could be obtained directly by the estimate \eqref{est_v 4}.
  Then, we obtain the existence of at least one
 invariant measure for \eqref{equ1.1} following the Maslowski-Seidler
theorem, see, e.g., \cite{B1, Mas}  for more  details.
 \end{proof}

 In the next section, we study the
 limit of invariant measures
 of \eqref{equ1.1} as the noise
 intensity $\varepsilon$ approaches $\varepsilon_0\in [0,1]$.

\section{Limiting behavior of invariant measures}

In this section, we investigate
the uniform tightness of the collection
of all invariant measures
of \eqref{1.3*} for   $\varepsilon\in [0,1]$
in the one-dimensional case.
More precisely,
we will
show the union
$\bigcup\limits_{\varepsilon \in [0,1]}
\mathcal{I}_\varepsilon$
is tight in $H^1(\mathbb{R})$
where
$\mathcal{I}_\varepsilon$
is the set of all invariant
measures of \eqref{1.3*}
corresponding to $\varepsilon$.
To that end, we must establish
the uniform tail-ends estimates of solutions
in $H^1(\mathbb{R})$ in
order to overcome the difficulty
introduced by the non-compactness of
the  Sobolev embeddings
 in unbounded domains.
 Furthermore, by the idea of uniform tail-ends
 estimates, we will provide an alternative
 method to prove the existence
 of invariant measures
 of \eqref{1.3*}
 by the Feller property
 of the Markov semigroup
 with respect to the strong topology
 instead of the weak topology.

%
%
%
%



We first establish the following uniform estimates of solutions   which will be used  for the forthcoming tail-ends arguments.

\begin{lemma} \label{ues1}
If \eqref{1.1}  holds,
then for every
$\mathbf{u}_0 \in H^1(\mathbb{R})$,
the solution $\mathbf{u}
(t,  \mathbf{u}_0)$ of \eqref{equ1.1} satisfies, for all
$t\ge 0$,
 $$
\mathrm{E}
\left (
 \|
 \mathbf{u} (t,  \mathbf{u}_0) \|^2_{H^1(\mathbb{R})}
 \right )
 +
  \int_0^t e^{s-t}  \mathrm{E}
\left ( \|
 \mathbf{u} (s,  \mathbf{u}_0) \|^2_{H^2(\mathbb{R})}
 +
 \int_{\mathbb{R}}
 |\mathbf{u} (s,  \mathbf{u}_0)|^2
 |\nabla \mathbf{u} (s,  \mathbf{u}_0)|^2  dx
 \right )
 ds
 $$
\be\label{ues1 1}
 \le
 Ce^{-t} \|
 \mathbf{u} _0  \|^2_{H^1(\mathbb{R})}
 +C,
\ee
 and
 \be\label{ues1 2}
  \int_0^t
  \mathrm{E}
\left ( \|
 \mathbf{u} (s,  \mathbf{u}_0) \|^2_{H^2(\mathbb{R})}
 +
 \int_{\mathbb{R}}
 |\mathbf{u} (s,  \mathbf{u}_0)|^2
 |\nabla \mathbf{u} (s,  \mathbf{u}_0)|^2  dx
 \right )
 ds
 \le
 C  \|
 \mathbf{u} _0  \|^2_{H^1(\mathbb{R})}
 +Ct,
 \ee
  where $C>0$ is a constant  depending
only on $\{\mathbf{f}_k\}_{k=1}^\infty$.
\end{lemma}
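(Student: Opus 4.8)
The plan is to establish the dissipative differential inequality
\[
\frac{d}{dt}\mathrm{E}\,\|\mathbf{u}(t)\|_{H^1(\mathbb{R})}^2 + \mathrm{E}\,\|\mathbf{u}(t)\|_{H^1(\mathbb{R})}^2 + c\,\mathrm{E}\Big(\|\mathbf{u}(t)\|_{H^2(\mathbb{R})}^2 + \int_{\mathbb{R}}|\mathbf{u}|^2|\nabla\mathbf{u}|^2\,dx\Big)\le C,
\]
with $c>0$ and $C>0$ depending only on $\{\mathbf{f}_k\}$, and then to deduce both \eqref{ues1 1} and \eqref{ues1 2} from it. Once the inequality is in hand, multiplying by $e^t$ and integrating on $[0,t]$ gives $e^t\mathrm{E}\|\mathbf{u}(t)\|_{H^1}^2+c\int_0^t e^s\mathrm{E}(\cdots)\,ds\le\|\mathbf{u}_0\|_{H^1}^2+C(e^t-1)$; dividing by $e^t$ yields \eqref{ues1 1}, while integrating the inequality directly (without the weight) yields \eqref{ues1 2}.

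First I would justify that for $d=1$ the solution lies in $C([0,T];H^1(\mathbb{R}))$ and obeys the $H^1$ energy equation \eqref{3.1wanb}. This is exactly the point stressed in the introduction: since $H^1(\mathbb{R})\hookrightarrow L^\infty(\mathbb{R})$, the regularity \eqref{1.3} forces $\mathbf{u}\times\Delta\mathbf{u}\in L^2(0,T;L^2(\mathbb{R}))$, so $\mathbf{F}(\mathbf{u})\in L^2(0,T;L^2(\mathbb{R}))$ and \cite[Theorem 3.2]{kry1981} applies. Taking expectations in \eqref{3.1wanb} kills the martingale term (its integrability is guaranteed by \eqref{sj1 2}), leaving $\frac{d}{dt}\mathrm{E}\|\mathbf{u}\|_{H^1}^2=2\,\mathrm{E}\big((I-\Delta)\mathbf{u},\mathbf{F}(\mathbf{u})\big)_{L^2}+\mathrm{E}\sum_k\|\mathbf{u}\times\mathbf{f}_k+\mathbf{f}_k\|_{H^1}^2$.

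Next I would compute the drift and extract the dissipation. The gyromagnetic contribution vanishes identically, since $(I-\Delta)\mathbf{u}\cdot(\mathbf{u}\times\Delta\mathbf{u})=\mathbf{u}\cdot(\mathbf{u}\times\Delta\mathbf{u})-\Delta\mathbf{u}\cdot(\mathbf{u}\times\Delta\mathbf{u})=0$ pointwise. The diffusion and the cubic term, after integrating by parts in $(\Delta\mathbf{u},|\mathbf{u}|^2\mathbf{u})$, produce the good terms $-2\|\Delta\mathbf{u}\|_{L^2}^2-4\|\nabla\mathbf{u}\|_{L^2}^2-2\|\mathbf{u}\|_{L^2}^2-2\int|\mathbf{u}|^4\,dx-2\int|\mathbf{u}|^2|\nabla\mathbf{u}|^2\,dx-4\int|\mathbf{u}\cdot\nabla\mathbf{u}|^2\,dx$. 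For the noise I would exploit the pointwise skew-symmetry of $\mathbf{v}\mapsto\mathbf{v}\times\mathbf{f}_k$: the Stratonovich correction $\sum_k((I-\Delta)\mathbf{u},(\mathbf{u}\times\mathbf{f}_k)\times\mathbf{f}_k)_{L^2}$ and the It\^o term $\sum_k\|\mathbf{u}\times\mathbf{f}_k+\mathbf{f}_k\|_{H^1}^2$ cancel at leading order. At the $L^2$ level the identity $\mathbf{u}\cdot((\mathbf{u}\times\mathbf{f}_k)\times\mathbf{f}_k)=-|\mathbf{u}\times\mathbf{f}_k|^2$ together with $(\mathbf{u}\times\mathbf{f}_k,\mathbf{f}_k)=0$ leaves only the constant $\sum_k\|\mathbf{f}_k\|_{L^2}^2$; at the gradient level the analogous leading term $-\|\partial_x\mathbf{u}\times\mathbf{f}_k\|^2$ cancels $\|\partial_x\mathbf{u}\times\mathbf{f}_k\|^2$, so that only residual terms carrying at least one factor $\nabla\mathbf{f}_k$ survive.

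The main obstacle is closing the inequality, i.e.\ dominating these surviving noise terms by the dissipation. The residuals are bilinear in $\{\mathbf{u},\nabla\mathbf{u}\}$ with coefficients controlled, via the $1$D embedding $H^1\hookrightarrow L^\infty$ and Cauchy--Schwarz, by $\sum_k\|\mathbf{f}_k\|_{W^{1,\infty}(\mathbb{R})\cap H^1(\mathbb{R})}$, which is finite by \eqref{1.1}. I would absorb their $\|\nabla\mathbf{u}\|_{L^2}^2$ parts into $-2\|\Delta\mathbf{u}\|_{L^2}^2$ through the interpolation $\|\nabla\mathbf{u}\|_{L^2}^2\le\eta\|\Delta\mathbf{u}\|_{L^2}^2+C_\eta\|\mathbf{u}\|_{L^2}^2$, and balance the remaining $\|\mathbf{u}\|_{L^2}^2$ contribution against the linear damping $-2\|\mathbf{u}\|_{L^2}^2$. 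This accounting is precisely where the finiteness in \eqref{1.1} is used, and it is also the reason the $H^1$ estimate is available only in dimension one (the energy equation \eqref{3.1wanb} itself fails for $d=2$). With the residuals absorbed, what remains is a clean dissipative bound of the displayed form, from which \eqref{ues1 1} and \eqref{ues1 2} follow by the two Gronwall computations described above.
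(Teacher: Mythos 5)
Your computation of the drift (the pointwise vanishing of the gyromagnetic term against $(I-\Delta)\mathbf{u}$, the good terms produced by the diffusion and the cubic nonlinearity, and the cancellation of the leading Stratonovich/It\^{o} noise contributions) matches the paper's, and your justification of the $H^1$ energy identity via $H^1(\mathbb{R})\hookrightarrow L^\infty(\mathbb{R})$ and \cite[Theorem 3.2]{kry1981} is exactly the argument the paper gives in its introduction. The gap is in the closing step. After the cancellations, the surviving noise terms at the gradient level are bounded (as in the paper's \eqref{ues1 p3}--\eqref{ues1 p4}) by $\|\nabla\mathbf{u}\|_{L^2}^2+c_1\|\mathbf{u}\|_{L^2}^2+c_2$ with $c_1$ of the order of $\bigl(\sum_k\|\mathbf{f}_k\|^2_{W^{1,\infty}}\bigr)^2$. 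Condition \eqref{1.1} makes $c_1$ finite, but not $\le 2$, so your proposal to ``balance the remaining $\|\mathbf{u}\|_{L^2}^2$ contribution against the linear damping $-2\|\mathbf{u}\|_{L^2}^2$'' fails whenever the noise is not small: in the single combined inequality the net coefficient of $\mathrm{E}\|\mathbf{u}\|^2_{L^2}$ on the dissipative side is $2-c_1$, which is negative in general, so the claimed bound $\frac{d}{dt}\mathrm{E}\|\mathbf{u}\|^2_{H^1}+\mathrm{E}\|\mathbf{u}\|^2_{H^1}+c\,\mathrm{E}(\cdots)\le C$ does not follow. (Your interpolation $\|\nabla\mathbf{u}\|^2\le\eta\|\Delta\mathbf{u}\|^2+C_\eta\|\mathbf{u}\|^2$ only aggravates this, since it shifts more mass onto the $\|\mathbf{u}\|^2_{L^2}$ coefficient; in any case it is unnecessary, as $-4\|\nabla\mathbf{u}\|^2$ of direct damping is already available.)

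The paper avoids this by a two-stage argument: the $L^2$ It\^{o} identity is exactly closed (the multiplicative noise cancels completely at that level, leaving only the constant $\sum_k\|\mathbf{f}_k\|^2_{L^2}$), Gronwall gives $\mathrm{E}\|\mathbf{u}(t)\|^2_{L^2}+\int_0^te^{s-t}\mathrm{E}\|\mathbf{u}(s)\|^2_{L^2}\,ds\le e^{-t}\|\mathbf{u}_0\|^2_{L^2}+C$, and this bound is then substituted as a known forcing into the Gronwall computation for $\mathrm{E}\|\nabla\mathbf{u}(t)\|^2_{L^2}$, whose right-hand side contains $c_1\int_0^te^{s-t}\mathrm{E}\|\mathbf{u}(s)\|^2_{L^2}\,ds$. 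Your argument can be repaired either this way, or by running your single inequality for the weighted functional $\|\mathbf{u}\|^2_{L^2}+\lambda\|\nabla\mathbf{u}\|^2_{L^2}$ with $\lambda\le 1/c_1$, which is equivalent to $\|\mathbf{u}\|^2_{H^1}$ up to constants depending only on $\{\mathbf{f}_k\}$ and therefore still yields \eqref{ues1 1} and \eqref{ues1 2}; but as written the absorption step does not go through.
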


\begin{proof}
By It\^{o}'s formula, we get from \eqref{equ1.1} that
for  all $t>0$, $\mathrm{P}$-almost surely,
$$
d\| \mathbf{u} (t)\|^2_{L^2(\mathbb{R})}
+2  \left ( \|\nabla  \mathbf{u} (t)\|^2_{L^2(\mathbb{R})}
+   \|  \mathbf{u} (t) \|^2_{L^2(\mathbb{R})}
+  \|  \mathbf{u} (t) \|^4_{L^4(\mathbb{R})}
\right ) dt
$$
$$
= \sum_{k=1}^\infty
\|\mathbf{f}_k\|^2_{L^2(\mathbb{R})} dt
+ 2\sum_{k=1}^\infty
( \mathbf{u}(t), \mathbf{f}_k) _{L^2(\mathbb{R})}  dW_k,
$$
and hence
\be\label{ues1 p1a}
{\frac d{dt}}
 \textrm{E}  \left (
\| \mathbf{u} (t)\|^2_{L^2(\mathbb{R})}
\right )
+2   \textrm{E}  \left (
\|\nabla  \mathbf{u} (t)\|^2_{L^2(\mathbb{R})}
 +
\|  \mathbf{u} (t) \|^2_{L^2(\mathbb{R})}
  +  \|  \mathbf{u} (t) \|^4_{L^4(\mathbb{R})}
\right )
= \sum_{k=1}^\infty
\|\mathbf{f}_k\|^2_{L^2(\mathbb{R})}.
 \ee
 By Gronwall's inequality, we
 obtain
for  all $t \ge 0$, $\mathrm{P}$-almost surely,
$$
\textrm{E}  \left (
\| \mathbf{u} (t)\|^2_{L^2(\mathbb{R})}
\right )
+
\int_0^t  e^{s-t} \textrm{E}  \left (
2 \|\nabla  \mathbf{u} (s)\|^2_{L^2(\mathbb{R})}
 +
\|  \mathbf{u} (s) \|^2_{L^2(\mathbb{R})}
  +  2 \|  \mathbf{u} (s) \|^4_{L^4(\mathbb{R})}
\right )ds
$$
\be\label{ues1 p1}
\leq e^{-t}
\| \mathbf{u}_0 \|^2_{L^2(\mathbb{R})}
+
 \sum_{k=1}^\infty
 \|\mathbf{f}_k\|^2_{L^2(\mathbb{R})}.
\ee

 Note that
$$((\nabla \mathbf{u}\times \mathbf{f}_k)\times \mathbf{f}_k, \nabla\mathbf{u})=-(\nabla \mathbf{u}\times \mathbf{f}_k,\nabla \mathbf{u}\times \mathbf{f}_k),$$
$$((\mathbf{u}\times \nabla\mathbf{f}_k)\times \mathbf{f}_k, \nabla\mathbf{u})=-(\nabla \mathbf{u}\times \mathbf{f}_k, \mathbf{u}\times \nabla\mathbf{f}_k),$$
hence, by It\^{o}'s formula, we
 also have
for  all $t>0$, $\mathrm{P}$-almost surely,
$$
d \| \nabla
 \mathbf{u} (t) \|^2_{L^2(\mathbb{R})}
 +2
 \left (
 \| \Delta
 \mathbf{u} (t) \|^2_{L^2(\mathbb{R})}
 +
 \| \nabla
 \mathbf{u} (t) \|^2_{L^2(\mathbb{R})}
 +\int_{\mathbb{R}}
 |\mathbf{u} (t)|^2
 |\nabla \mathbf{u} (t)|^2  dx
 \right ) dt
 $$
 $$
 +
  4
 \int_{\mathbb{R}}
 |\mathbf{u} (t)
  \nabla \mathbf{u} (t)|^2  dxdt
  =2\sum_{k=1}^\infty
  \left (
  \nabla \mathbf{u} (t),
  \mathbf{u} (t)
  \times \nabla \mathbf{f}_k
  +\nabla \mathbf{f} _k
  \right ) dW_k
 $$
$$
+\sum_{k=1}^\infty
\left ( \nabla \mathbf{u} (t),
\  ( \mathbf{u} (t) \times   \mathbf{f} _k )
\times \nabla \mathbf{f} _k
+  \mathbf{f} _k \times \nabla \mathbf{f}_k
\right ) _{L^2(\mathbb{R})} dt
$$
$$
+\sum_{k=1}^\infty
\left (
 \mathbf{u} (t) \times  \nabla  \mathbf{f} _k
+  \nabla \mathbf{f} _k ,\
 \nabla \mathbf{u} (t) \times
  \mathbf{f} _k
  +   \mathbf{u} (t) \times
  \nabla \mathbf{f} _k
  +  \nabla \mathbf{f}_k
\right ) _{L^2(\mathbb{R})} dt,
$$
and then
$$
{\frac d{dt}}
\textrm{E}
\left (
 \| \nabla
 \mathbf{u} (t) \|^2_{L^2(\mathbb{R})}
 \right )
 +2\textrm{E}
\left (
 \| \Delta
 \mathbf{u} (t) \|^2_{L^2(\mathbb{R})}
 +
 \| \nabla
 \mathbf{u} (t) \|^2_{L^2(\mathbb{R})}
 +
 \int_{\mathbb{R}}
 |\mathbf{u} (t)|^2
 |\nabla \mathbf{u} (t)|^2  dx
  \right )
 $$
$$
\le
\sum_{k=1}^\infty\textrm{E}
\left (
 \nabla \mathbf{u} (t),
\  ( \mathbf{u} (t) \times   \mathbf{f} _k )
\times \nabla \mathbf{f} _k
+  \mathbf{f} _k \times \nabla \mathbf{f}_k
\right ) _{L^2(\mathbb{R})}
$$
\be\label{ues1 p2}
+\sum_{k=1}^\infty \textrm{E}
\left (
 \mathbf{u} (t) \times  \nabla  \mathbf{f} _k
+  \nabla \mathbf{f} _k ,\
 \nabla \mathbf{u} (t) \times
  \mathbf{f} _k
  +   \mathbf{u} (t) \times
  \nabla \mathbf{f} _k
  +  \nabla \mathbf{f}_k
\right ) _{L^2(\mathbb{R})}  .
\ee

For the first term on the right-hand side of
\eqref{ues1 p2} we have
$$
\sum_{k=1}^\infty\textrm{E}
\left (
 \nabla \mathbf{u} (t),
\  ( \mathbf{u} (t) \times   \mathbf{f} _k )
\times \nabla \mathbf{f} _k
+  \mathbf{f} _k \times \nabla \mathbf{f}_k
\right ) _{L^2(\mathbb{R})}
$$
$$
\le
\sum_{k=1}^\infty\textrm{E}
\left (
\|\mathbf{f} _k \|
_{L^\infty(\mathbb{R})}
 \|  \nabla \mathbf{f} _k \|_{L^\infty(\mathbb{R})}
\| \nabla \mathbf{u} (t)\|_{L^2(\mathbb{R})}
\| \mathbf{u} (t) \|_{L^2(\mathbb{R})}
\right )
$$
$$
+
\sum_{k=1}^\infty
\textrm{E}
\left (
\|\mathbf{f} _k \|
_{L^\infty(\mathbb{R})}
 \|  \nabla \mathbf{f} _k \|_{L^2(\mathbb{R})}
\| \nabla \mathbf{u} (t)\|_{L^2(\mathbb{R})}
\right )
$$
$$
\le
{\frac 12}
\textrm{E}
\left (
\|  \nabla \mathbf{u} (t)  \|^2_{L^2(\mathbb{R})}
\right )
+ \left (
\sum_{k=1}^\infty
 \|\mathbf{f} _k \|^2
_{W^{1,\infty}(\mathbb{R})}
\right )^2
\textrm{E}
\left (
\|  \mathbf{u} (t)\|^2_{L^2(\mathbb{R})}
\right )
$$
\be\label{ues1 p3}
+  \left (
\sum_{k=1}^\infty
 \|\mathbf{f} _k \|
_{L^{ \infty}(\mathbb{R})}
\|\nabla \mathbf{f} _k \|
_{L^{ 2}(\mathbb{R})}
\right )^2.
\ee

 For the last term on the right-hand side of
\eqref{ues1 p2} we have
$$
\sum_{k=1}^\infty \textrm{E}
\left (
 \mathbf{u} (t) \times  \nabla  \mathbf{f} _k
+  \nabla \mathbf{f} _k ,\
 \nabla \mathbf{u} (t) \times
  \mathbf{f} _k
  +   \mathbf{u} (t) \times
  \nabla \mathbf{f} _k
  +  \nabla \mathbf{f}_k
\right ) _{L^2(\mathbb{R})}
$$
$$
\le
\sum_{k=1}^\infty \textrm{E}
\left (
\|\mathbf{f} _k\|_{L^\infty(\mathbb{R})}
 \|\nabla \mathbf{f} _k\|_{L^\infty (\mathbb{R})}
 \|\mathbf{u}(t) \|_{L^2(\mathbb{R})}
 \|\nabla \mathbf{u}(t)  \|_{L^2(\mathbb{R})}
\right )
$$
$$
+
\sum_{k=1}^\infty \textrm{E}
\left (
\|\mathbf{f} _k\|_{L^\infty(\mathbb{R})}
 \|\nabla \mathbf{f} _k\|_{L^2 (\mathbb{R})}
 \|\nabla \mathbf{u}(t) \|_{L^2(\mathbb{R})}
 +
 \|\nabla \mathbf{f} _k\|^2_{L^\infty (\mathbb{R})}
  \|\mathbf{u}(t) \|^2_{L^2(\mathbb{R})}
\right )
$$
$$
+ 2
\sum_{k=1}^\infty \textrm{E}
\left (
\|\nabla \mathbf{f} _k\|_{L^\infty(\mathbb{R})}
 \|\nabla \mathbf{f} _k\|_{L^2 (\mathbb{R})}
 \|  \mathbf{u}(t) \|_{L^2(\mathbb{R})}
 +
 \|\nabla \mathbf{f} _k\|^2_{L^2 (\mathbb{R})}
\right )
$$
$$
\le
{\frac 12}
\textrm{E}
\left (
\|  \nabla \mathbf{u} (t)  \|^2_{L^2(\mathbb{R})}
\right )
+ 2 \left (
\sum_{k=1}^\infty
 \|\mathbf{f} _k \|^2
_{W^{1,\infty}(\mathbb{R})}
\right )^2
\textrm{E}
\left (
\|  \mathbf{u} (t)\|^2_{L^2(\mathbb{R})}
\right )
$$
\be\label{ues1 p4}
+  2 \left (
\sum_{k=1}^\infty
 \|\mathbf{f} _k \|
_{L^{ \infty}(\mathbb{R})}
\|\nabla \mathbf{f} _k \|
_{L^{ 2}(\mathbb{R})}
\right )^2
+\textrm{E}
\left (
\|  \mathbf{u} (t)\|^2_{L^2(\mathbb{R})}
\right )
+  2
\sum_{k=1}^\infty
 \|\nabla \mathbf{f} _k \|^2
_{L^{ 2}(\mathbb{R})} .
\ee

It follows from
\eqref{ues1 p2}-\eqref{ues1 p4} that
$$
{\frac d{dt}}
\textrm{E}
\left (
 \| \nabla
 \mathbf{u} (t) \|^2_{L^2(\mathbb{R})}
 \right )
 + \textrm{E}
\left (
 2\| \Delta
 \mathbf{u} (t) \|^2_{L^2(\mathbb{R})}
 +
 \| \nabla
 \mathbf{u} (t) \|^2_{L^2(\mathbb{R})}
 +
2\int_{\mathbb{R}}
  |\mathbf{u} (t)|^2
 |\nabla \mathbf{u} (t)|^2  dx
 \right )
 $$
 \be\label{ues1 p4a}
\le c_1
\textrm{E}
\left (
  \|\mathbf{u} (t) \|^2
 _{L^2(\mathbb{R})}
 \right )
 + c_2,
\ee
 where $c_1$ and $c_2$
 are positive numbers depending
 only on $\{\mathbf{f}_k\}_{k=1}^\infty$.
By Gronwall's inequality we get for all
$t\ge 0$,
$$
\textrm{E}
\left (
 \| \nabla
 \mathbf{u} (t) \|^2_{L^2(\mathbb{R})}
 \right )
 +
  2\int_0^t e^{s-t}  \textrm{E}
\left ( \| \Delta
 \mathbf{u} (s) \|^2_{L^2(\mathbb{R})}
 +\int_{\mathbb{R}}
 |\mathbf{u} (s)|^2
 |\nabla \mathbf{u} (s)|^2  dx
 \right )
 ds
 $$
 \be\label{ues1 p5}
\le   e^{-t}
 \| \nabla
 \mathbf{u} _0  \|^2_{L^2(\mathbb{R})}
   +
 c_1 \int_0^t
 e^{s-t}
\textrm{E}
\left (
  \|\mathbf{u} (s) \|^2
 _{L^2(\mathbb{R})}
 \right ) ds
 + c_2.
\ee
By \eqref{ues1 p1} and \eqref{ues1 p5} we get
for all $t\ge 0$,
$$
\textrm{E}
\left (
 \| \nabla
 \mathbf{u} (t) \|^2_{L^2(\mathbb{R})}
 \right )
 +
 2 \int_0^t e^{s-t}  \textrm{E}
\left ( \| \Delta
 \mathbf{u} (s) \|^2_{L^2(\mathbb{R})}
 +
 \int_{\mathbb{R}}
 |\mathbf{u} (s)|^2
 |\nabla \mathbf{u} (s)|^2  dx
 \right )
 ds
 $$
 $$
 \le
 c_3 e^{-t} \|
 \mathbf{u} _0  \|^2_{H^1(\mathbb{R})}
 +c_3,
 $$
where $c_3>0$ depends
only on $\{\mathbf{f}_k\}_{k=1}^\infty$,
which along with \eqref{ues1 p1}
yields \eqref{ues1 1}.

Next, we prove \eqref{ues1 2}.
 By integrating  \eqref{ues1 p1a}
on $(0,t)$ we     obtain
\be\label{ues1 p6}
2  \int_0^t  \textrm{E}  \left (
\|\nabla  \mathbf{u} (s)\|^2_{L^2(\mathbb{R})}
 +
\|  \mathbf{u} (s) \|^2_{L^2(\mathbb{R})}
 \right ) ds
\le
 \textrm{E}  \left (
\| \mathbf{u} _0 \|^2_{L^2(\mathbb{R})}
\right )
 + t \sum_{k=1}^\infty
\|\mathbf{f}_k\|^2_{L^2(\mathbb{R})}.
 \ee
 On the other hand,
 by integrating \eqref{ues1 p4a}
 on $(0,t)$ we get
 $$
   2\int_0^t  \textrm{E}
\left (
 \| \Delta
 \mathbf{u} (s) \|^2_{L^2(\mathbb{R})}
 +
 \int_{\mathbb{R}}
  |\mathbf{u} (s)|^2
 |\nabla \mathbf{u} (s)|^2  dx
 \right )  ds
 $$
 \be\label{ues1 p7}
\le
\textrm{E}
\left (
 \| \nabla
 \mathbf{u} _0  \|^2_{L^2(\mathbb{R})}
 \right )
 +
c_1  \int_0^t
\textrm{E}
\left (
  \|\mathbf{u} (s) \|^2
 _{L^2(\mathbb{R})}
 \right ) ds
 + c_2 t.
\ee
It follows from \eqref{ues1 p6}-\eqref{ues1 p7}
that for all $t\ge 0$,
$$
   \int_0^t  \textrm{E}
\left (
  \|
 \mathbf{u} (s) \|^2_{H^2(\mathbb{R})}
 +
 \int_{\mathbb{R}}
  |\mathbf{u} (s)|^2
 |\nabla \mathbf{u} (s)|^2  dx
 \right )  ds
\le  c_4
 \|
 \mathbf{u} _0  \|^2_{H^1(\mathbb{R})}
  +
c_4 t   ,
$$
where $c_4>0$ depends
only on $\{\mathbf{f}_k\}_{k=1}^\infty$,
which implies \eqref{ues1 2} and thus
completes the proof.
 \end{proof}

 As an immediate consequence
 of Lemma \ref{ues1}, we
 obtain  the following uniform estimates
 which are useful when studying the
 tightness of the set of all invariant measures
 of \eqref{equ1.1}.

\begin{lemma} \label{ues1a}
If \eqref{1.1}  holds,
then for every
$R>0$, there exists
$T=T(R)>0$ such that
for all $t\ge T$,
 the solution $\mathbf{u}
(t,  \mathbf{u}_0)$ of \eqref{equ1.1}
with $\|\mathbf{u}_0 \|_{ H^1(\mathbb{R})}
  \le R$
 satisfies
 $$
\mathrm{E}
\left (
 \|
 \mathbf{u} (t,  \mathbf{u}_0) \|^2_{H^1(\mathbb{R})}
 \right )
 +
  \int_0^t e^{s-t}  \mathrm{E}
\left ( \|
 \mathbf{u} (s,  \mathbf{u}_0) \|^2_{H^2(\mathbb{R})}
 +
 \int_{\mathbb{R}}
 |\mathbf{u} (s,  \mathbf{u}_0)|^2
 |\nabla \mathbf{u} (s,  \mathbf{u}_0)|^2  dx
 \right )
 ds
 \le
   C,
 $$
 and
 $$
   {\frac 1t} \int_0^t  \mathrm{E}
\left (
  \|
 \mathbf{u} (s,\mathbf{u}_0) \|^2_{H^2(\mathbb{R})}
 +
 \int_{\mathbb{R}}
  |\mathbf{u} (s,\mathbf{u}_0)|^2
 |\nabla \mathbf{u} (s,\mathbf{u}_0)|^2  dx
 \right )  ds
\le
C   ,
$$
where $C>0$ is a constant  depending
only on $\{\mathbf{f}_k\}_{k=1}^\infty$.
\end{lemma}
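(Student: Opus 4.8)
The plan is to read off both inequalities directly from Lemma \ref{ues1}; the only work is to choose the threshold $T=T(R)$ large enough that the initial-data terms on the right-hand sides of \eqref{ues1 1} and \eqref{ues1 2} are absorbed into a constant depending only on $\{\mathbf{f}_k\}_{k=1}^\infty$. Throughout I would use the hypothesis in the form $\|\mathbf{u}_0\|^2_{H^1(\mathbb{R})}\le R^2$.

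First, for the estimate coming from \eqref{ues1 1}, substituting $\|\mathbf{u}_0\|^2_{H^1(\mathbb{R})}\le R^2$ bounds the left-hand side by $Ce^{-t}R^2+C$. Since $xe^{-x}\le e^{-1}<1$ for $x\ge 1$, any choice $T=T(R)\ge R^2$ forces $e^{-t}R^2\le 1$ for every $t\ge T$, so that $Ce^{-t}R^2\le C$ and the left-hand side does not exceed $2C$. For the time-averaged estimate I would start from \eqref{ues1 2}, which after the same substitution gives $\int_0^t(\cdots)\,ds\le CR^2+Ct$; dividing by $t$ and using $t\ge T\ge R^2$ yields $\frac1t\int_0^t(\cdots)\,ds\le \frac{CR^2}{t}+C\le 2C$.

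Taking $T(R)=\max\{1,R^2\}$ meets both requirements at once, and relabelling $2C$ as $C$ finishes the argument. There is no genuine obstacle here; the one point deserving care is to confirm that the final constant depends only on $\{\mathbf{f}_k\}_{k=1}^\infty$ and is independent of $R$, which is exactly what the exponential time decay in \eqref{ues1 1} and the averaging factor $\frac1t$ in the second estimate are designed to provide.
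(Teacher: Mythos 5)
Your argument is correct and is exactly the deduction the paper intends: Lemma \ref{ues1a} is stated there as an immediate consequence of Lemma \ref{ues1}, with the absorption of the initial-data terms via the exponential decay and the $\frac1t$ averaging left implicit. Your choice $T(R)=\max\{1,R^2\}$ and the observation that $xe^{-x}\le e^{-1}$ make that implicit step explicit and are entirely adequate.
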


We now derive the uniform
estimates on the tails of solutions
in $L^2(\mathbb{R})$.

\begin{lemma}\label{tail_2}
If \eqref{1.1} holds,
then for every   $\varepsilon>0$
  and
 $\mathbf{u}_0
 \in H^1
 (\mathbb{R}  )$,
 there exists
 $M=M(\varepsilon,
    \mathbf{u}_0) \ge 1$
 such that
 for all $m\ge M$ and $t\ge 0$,
 the solution
 ${\mathbf{ u}} (t,
\mathbf{u}_0 )$
of \eqref{equ1.1} satisfies
$$
 \mathrm{E}
\left (
 \int_{|x|>m}
 |
 {\mathbf{ u}} (t,
\mathbf{u}_0 )(x) |^2 dx
\right )
  <\varepsilon.
$$
\end{lemma}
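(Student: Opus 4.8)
The plan is to apply It\^{o}'s formula to the weighted energy $\|\phi_m\mathbf{u}(t)\|^2_{L^2(\mathbb{R})}$, where $\phi(x)=1-\theta(x)$ and $\phi_m(x)=\phi(x/m)$ exactly as in Lemma \ref{tail_1}, and to extract from the dissipative structure of \eqref{equ1.1} a differential inequality with a \emph{strictly positive} decay rate, so that the resulting Gronwall estimate holds uniformly in $t\ge 0$ rather than only on a finite interval. Since we are in $\mathbb{R}$ ($d=1$), by Theorem \ref{t3} the solution satisfies $\mathbf{u}\in C([0,T];H^1(\mathbb{R}))$ and $\mathbf{u}\times\Delta\mathbf{u}\in L^2(0,T;L^2(\mathbb{R}))$, so It\^{o}'s formula in $L^2(\mathbb{R})$ is justified exactly as in the derivation of the energy equation \eqref{3.1wanb}.

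First I would compute the drift terms one by one. The term $\mathbf{u}\times\Delta\mathbf{u}$ drops out since $(\phi_m^2\mathbf{u},\mathbf{u}\times\Delta\mathbf{u})=(\phi_m^2(\mathbf{u}\times\Delta\mathbf{u}),\mathbf{u})=0$; integration by parts turns the Laplacian into $-2\|\phi_m\nabla\mathbf{u}\|^2_{L^2(\mathbb{R})}$ plus a commutator bounded by $\frac{C}{m}\|\mathbf{u}\|^2_{H^1(\mathbb{R})}$; and the reaction term gives $-2\|\phi_m\mathbf{u}\|^2_{L^2(\mathbb{R})}-2\int_{\mathbb{R}}\phi_m^2|\mathbf{u}|^4\,dx\le -2\|\phi_m\mathbf{u}\|^2_{L^2(\mathbb{R})}$. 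The crucial point, and the main difference from Lemma \ref{tail_1}, is the exact treatment of the noise: using $(\mathbf{u}\times\mathbf{f}_k)\times\mathbf{f}_k=(\mathbf{u}\cdot\mathbf{f}_k)\mathbf{f}_k-|\mathbf{f}_k|^2\mathbf{u}$ together with the Lagrange identity $|\mathbf{u}\times\mathbf{f}_k|^2=|\mathbf{u}|^2|\mathbf{f}_k|^2-(\mathbf{u}\cdot\mathbf{f}_k)^2$, the It\^{o}--Stratonovich correction $\sum_k(\phi_m^2\mathbf{u},(\mathbf{u}\times\mathbf{f}_k)\times\mathbf{f}_k)$ cancels exactly against the multiplicative part $\sum_k\|\phi_m\,\mathbf{u}\times\mathbf{f}_k\|^2_{L^2(\mathbb{R})}$ of the quadratic variation, while the cross term $2\sum_k\int_{\mathbb{R}}\phi_m^2(\mathbf{u}\times\mathbf{f}_k)\cdot\mathbf{f}_k\,dx$ and the multiplicative part $\sum_k(\phi_m^2\mathbf{u},\mathbf{u}\times\mathbf{f}_k)\,dW_k$ of the stochastic integral both vanish by orthogonality of the cross product. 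Hence the only surviving contribution of the noise to the drift is the $\mathbf{u}$-independent quantity $\sum_k\|\phi_m\mathbf{f}_k\|^2_{L^2(\mathbb{R})}$.

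Taking expectations (the martingale term has zero mean) and discarding the nonnegative terms, I obtain
\begin{equation*}
\frac{d}{dt}\mathrm{E}\|\phi_m\mathbf{u}(t)\|^2_{L^2(\mathbb{R})}+2\,\mathrm{E}\|\phi_m\mathbf{u}(t)\|^2_{L^2(\mathbb{R})}\le \frac{C}{m}\,\mathrm{E}\|\mathbf{u}(t)\|^2_{H^1(\mathbb{R})}+\sum_{k=1}^\infty\|\phi_m\mathbf{f}_k\|^2_{L^2(\mathbb{R})}.
\end{equation*}
The strictly positive coefficient $2$ on the left, absent from the crude estimate in Lemma \ref{tail_1}, comes from the linear damping $-\mathbf{u}$ in the effective field. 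Solving by Gronwall's lemma and invoking the uniform-in-time bound $\sup_{s\ge 0}\mathrm{E}\|\mathbf{u}(s)\|^2_{H^1(\mathbb{R})}\le C(\|\mathbf{u}_0\|^2_{H^1(\mathbb{R})}+1)$ from Lemma \ref{ues1}, I get for all $t\ge 0$
\begin{equation*}
\mathrm{E}\|\phi_m\mathbf{u}(t)\|^2_{L^2(\mathbb{R})}\le \|\phi_m\mathbf{u}_0\|^2_{L^2(\mathbb{R})}+\frac12\sum_{k=1}^\infty\|\phi_m\mathbf{f}_k\|^2_{L^2(\mathbb{R})}+\frac{C(\|\mathbf{u}_0\|^2_{H^1(\mathbb{R})}+1)}{2m}.
\end{equation*}

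Finally, I would let $m\to\infty$. Each term on the right tends to $0$ uniformly in $t$: the last by inspection, while $\|\phi_m\mathbf{u}_0\|^2_{L^2(\mathbb{R})}\le\int_{|x|>m/2}|\mathbf{u}_0|^2\,dx\to 0$ and $\sum_k\|\phi_m\mathbf{f}_k\|^2_{L^2(\mathbb{R})}\le\int_{|x|>m/2}\sum_k|\mathbf{f}_k|^2\,dx\to 0$ by \eqref{1.1} and dominated convergence. Since $\phi_m\equiv 1$ on $\{|x|\ge\frac34 m\}\supset\{|x|>m\}$, this yields $\mathrm{E}\int_{|x|>m}|\mathbf{u}(t,\mathbf{u}_0)|^2\,dx\le \mathrm{E}\|\phi_m\mathbf{u}(t)\|^2_{L^2(\mathbb{R})}<\varepsilon$ for all $t\ge 0$ once $m\ge M(\varepsilon,\mathbf{u}_0)$, as required. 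The main obstacle is precisely the noise cancellation in the second step: a crude bound of the noise as in Lemma \ref{tail_1} produces a coefficient proportional to $\sum_k\|\mathbf{f}_k\|^2_{L^\infty(\mathbb{R})}$ on $\mathrm{E}\|\phi_m\mathbf{u}\|^2_{L^2(\mathbb{R})}$ that may overwhelm the damping constant $2$ and destroy the decay, and with it the uniformity in $t$; exploiting the exact Stratonovich cancellation to leave a clean decay rate is what makes the estimate uniform over all $t\ge 0$.
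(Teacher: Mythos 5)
Your proposal is correct and follows essentially the same route as the paper's proof: It\^{o}'s formula for $\|\phi_m\mathbf{u}\|^2_{L^2(\mathbb{R})}$, the exact cancellation of the It\^{o}--Stratonovich correction against the multiplicative part of the quadratic variation leaving only $\sum_k\|\phi_m\mathbf{f}_k\|^2_{L^2(\mathbb{R})}$, the damping coefficient $2$ from the term $-(1+|\mathbf{u}|^2)\mathbf{u}$, Gronwall with the uniform-in-time bound of Lemma \ref{ues1}, and the vanishing of the tails of $\mathbf{u}_0$ and $\{\mathbf{f}_k\}$ as $m\to\infty$. Your closing remark correctly identifies why the cruder bound of Lemma \ref{tail_1} would not yield uniformity in $t\ge 0$, which is exactly the point of the paper's sharper computation in \eqref{tail_2 p4}.
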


\begin{proof}
As before,  let    $\theta$  be
the cut-off function given by \eqref{cutoff},
$\phi (x) =1-\theta (x)$, and
   $\phi_m(x)=\phi(\frac{x}{m})$
   for
$x\in \mathbb{R} $ and
     $m\in \mathbb{N}$.

    By  \eqref{equ1.1}  and
    It\^{o}'s formula,  we  get
     for all $t\ge 0$, $\mathrm{P}$-almost surely,
  \begin{align}\label{tail_2 p1}
& \|\phi_m\mathbf{u}
(t)
\|_{L^2
(\mathbb{R} ) }^2
-
\|\phi_m\mathbf{u}_0
\|_{L^2
(\mathbb{R} ) }^2\nonumber\\
&=2\int_0^t ( \Delta \mathbf{u}
(s), \phi^2_m\mathbf{u} (s ) )_{L^2
(\mathbb{R} ) } ds
+2
\int_0^t
( \mathbf{u}(s)\times \Delta \mathbf{u}
(s), \phi^2_m\mathbf{u}(s) ) _{L^2
(\mathbb{R} ) }ds\nonumber\\
&\quad-2
\int_0^t
((1+|\mathbf{u}(s)|^{2})\mathbf{u}(s),
\phi^2 _m\mathbf{u}(s) )_{L^2
(\mathbb{R} ) }ds
+2 \sum_{k=1}^\infty
\int_0^t
(\phi_m \mathbf{f}_k  , \phi_m\mathbf{u}(s) )_{L^2
(\mathbb{R} ) }
dW_k
\nonumber\\
&\quad+\sum_{k=1}^\infty
\int_0^t
\|\phi_m (\mathbf{u}(s) \times \mathbf{f}_k+\mathbf{f}_k)\| _{L^2
(\mathbb{R} ) }^2ds
+
\sum_{k=1}^\infty
\int_0^t
(
(\mathbf{u}(s) \times \mathbf{f}_k)\times \mathbf{f}_k, \phi^2_m\mathbf{u}(s))_{L^2
(\mathbb{R} )  }ds.
\end{align}
  By \eqref{tail_2 p1} we get
 for all $t> 0$, $\mathrm{P}$-almost surely,
   \begin{align}\label{tail_2 p2}
&
{\frac d{dt}}
\mathrm{E}
\left (
\|\phi_m\mathbf{u}
(t)
\|_{L^2
(\mathbb{R} ) }^2
\right )
 - 2 \mathrm{E}
\left (
   \Delta \mathbf{u}
(t), \phi^2_m\mathbf{u} (t )
\right )_{L^2
(\mathbb{R} ) }
+
 2  \mathrm{E}
\left (
(1+|\mathbf{u}(t)|^{2})\mathbf{u}(t),
\phi^2 _m\mathbf{u}(t) \right )_{L^2
(\mathbb{R} ) }
 \nonumber\\
&\quad=
\sum_{k=1}^\infty
 \mathrm{E}
\left (
\|\phi_m (\mathbf{u}(t) \times \mathbf{f}_k+\mathbf{f}_k)\| _{L^2
(\mathbb{R} ) }^2
\right )
+
\sum_{k=1}^\infty
 \mathrm{E}
\left (
(\mathbf{u}(t) \times \mathbf{f}_k)\times \mathbf{f}_k, \phi^2_m\mathbf{u}(t) \right )_{L^2
(\mathbb{R} )  } .
\end{align}
 Similar to \eqref{tail_1 p2} we have
 \begin{align}\label{tail_2 p2a}
 2 \mathrm{E}
\left (
   \Delta \mathbf{u}
(t), \phi^2_m\mathbf{u} (t )
\right )_{L^2
(\mathbb{R} ) }
\le
    \frac{c_1}{m}
  \mathrm{E}
\left (
\|\mathbf{u} (t ) \|_{H^1
 (\mathbb{R} ) }^2
 \right ),
\end{align}
where  $c_1>0$ is
a constant independent of $m$.
 For the last two terms on the right-hand side
 of \eqref{tail_2 p2},   we have
 $$\sum_{k=1}^\infty
 \mathrm{E}
\left (
\|\phi_m (\mathbf{u}(t) \times \mathbf{f}_k+\mathbf{f}_k)\| _{L^2
(\mathbb{R} ) }^2
\right )=\sum_{k=1}^\infty
 \mathrm{E}
\left (
\|\phi_m (\mathbf{u}(t) \times \mathbf{f}_k)\| _{L^2
(\mathbb{R} ) }^2+\|\phi_m\mathbf{f}_k\| _{L^2
(\mathbb{R} ) }^2
\right ),$$
 and
 $$\sum_{k=1}^\infty
 \mathrm{E}
\left(\left (
(\mathbf{u}(t) \times \mathbf{f}_k)\times \mathbf{f}_k, \phi^2_m\mathbf{u}(t) \right )_{L^2
(\mathbb{R} )  }\right)
   =
  -\sum_{k=1}^\infty
 \mathrm{E}
\left (
\|\phi_m (\mathbf{u}(t)\times\mathbf{f}_k)\| _{L^2
(\mathbb{R} ) }^2\right).$$
 Taking sum we have
\begin{align}\label{tail_2 p4}
 &\sum_{k=1}^\infty
 \mathrm{E}
\left (
\|\phi_m (\mathbf{u}(t) \times \mathbf{f}_k+\mathbf{f}_k)\| _{L^2
(\mathbb{R} ) }^2
\right )
+
\sum_{k=1}^\infty
 \mathrm{E}
\left(\left (
(\mathbf{u}(t) \times \mathbf{f}_k)\times \mathbf{f}_k, \phi^2_m\mathbf{u}(t) \right )_{L^2
(\mathbb{R} )  }\right)
   =
  \sum_{k=1}^\infty
\|\phi_m \mathbf{f}_k\|
_{L^2(\mathbb{R} ) }^2 .
\end{align}
By
  \eqref{tail_2 p2}-\eqref{tail_2 p4}
  we obtain
  for all $t> 0$ and $ m \in \mathbb{N}$,
\be\label{tail_2 p5}
{\frac d{dt}}
\mathrm{E}
\left (
\|\phi_m\mathbf{u}
(t)
\|_{L^2
(\mathbb{R} ) }^2
\right )
+2
\mathrm{E}
\left (
\|\phi_m\mathbf{u}
(t)
\|_{L^2
(\mathbb{R} ) }^2
\right )
 \leq
  \frac{c_1}{m}
  \mathrm{E}
\left (
\|\mathbf{u} (t ) \|_{H^1
 (\mathbb{R} ) }^2
 \right )
 +
 \sum_{k=1}^\infty
\|\phi_m \mathbf{f}_k\|
_{L^2(\mathbb{R} ) }^2.
\ee
Solving \eqref{tail_2 p5} we get
 for all $t\ge 0$
and $ m \in \mathbb{N}$,
$$
\mathrm{E}
\left (
\|\phi_m\mathbf{u}
(t)
\|_{L^2
(\mathbb{R} ) }^2
\right )
+\int_0^t
e^{s-t}
\mathrm{E}
\left (
\|\phi_m\mathbf{u}
(s)
\|_{L^2
(\mathbb{R} ) }^2
\right ) ds
$$
\be\label{tail2p6}\le
e^{-t}
\mathrm{E}
\left (
\|\phi_m\mathbf{u}_0
\|_{L^2
(\mathbb{R} ) }^2
\right )
  +
  \frac{c_1}{m}\int_0^t
  e^{s-t}
  \mathrm{E}
\left (
\|\mathbf{u} (s) \|_{H^1
 (\mathbb{R} ) }^2
 \right ) ds
 +
 \sum_{k=1}^\infty
\|\phi_m \mathbf{f}_k\|
_{L^2(\mathbb{R} ) }^2.
\ee
By \eqref{tail2p6} and
  Lemma \ref{ues1} we obtain
for all $t\ge 0$
and $ m \in \mathbb{N}$,
$$
\mathrm{E}
\left (
\|\phi_m\mathbf{u}
(t)
\|_{L^2
(\mathbb{R} ) }^2
\right )
+\int_0^t
e^{s-t}
\mathrm{E}
\left (
\|\phi_m\mathbf{u}
(s)
\|_{L^2
(\mathbb{R} ) }^2
\right ) ds
$$
$$
\le
\|\phi_m\mathbf{u}_0
\|_{L^2
(\mathbb{R} ) }^2
  +
  \frac{c_2}{m}
  (1+ \| \mathbf{u}_0
\|_{H^1
(\mathbb{R} ) }^2)
  +
 \sum_{k=1}^\infty
\|\phi_m \mathbf{f}_k\|
_{L^2(\mathbb{R} ) }^2,
$$
where $c_2>0$ depends only on
$\{\mathbf{f}_k\}_{k=1}^\infty$.
Since
$\sum_{k=1}^\infty
\| \mathbf{f}_k\|
_{L^2(\mathbb{R} ) }^2<\infty$, we
see  that
 $$
\|\phi_m \mathbf{u}_0
 \|_{L^2(\mathbb{R} )
 }^2
 +
  \sum_{k=1}^\infty
  \|\phi_m \mathbf{f}_k\|
_{L^2(\mathbb{R})  }^2
 \le
\int_{|x|>{\frac 12 m}}
 | \mathbf{u}_0 (x)|^2 dx
   +   \int_{|x|>{\frac 12 m}}
    \sum_{k=1}^\infty |\mathbf{f}_k (x) |^2 dx
    \to 0,
  $$
  as $m\to \infty$,
which together with \eqref{tail2p6}
indicates  that  for every $\varepsilon>0$,
there exists $M=M(\varepsilon,   \mathbf{u}_0)\ge 1$,
such that
for all $m\ge M$  and $t \ge 0$,
$$
\mathrm{E} \left (
\int_{|x|>  {\frac 34 m} }
 | \mathbf{u}  (t, \mathbf{u} _0) (x)|^2 dx
 \right )
 +\int_0^t
e^{s-t}
\mathrm{E}
\left (
\|\phi_m\mathbf{u}
(s)
\|_{L^2
(\mathbb{R} ) }^2
\right ) ds
$$
\be\label{tail_2 p7}
 \le
 \mathrm{E}
 \left (
 \|\phi_m \mathbf{u}  (t,  \mathbf{u}_0 )
 \|_{L^2(\mathbb{R} )
 }^2
 \right )
 +\int_0^t
e^{s-t}
\mathrm{E}
\left (
\|\phi_m\mathbf{u}
(s)
\|_{L^2
(\mathbb{R} ) }^2
\right ) ds
  <\varepsilon ,
\ee
which completes the proof.
 \end{proof}

 Following the proof of Lemma \ref{tail_2},
 by using Lemma \ref{ues1a}
 instead of Lemma \ref{ues1}, we obtain the
 following uniform tail-ends estimates.

\begin{lemma}\label{tail_2a}
If \eqref{1.1} holds,
then for every   $\varepsilon>0$
and $R>0$,
there exist $T=T(\varepsilon, R)>0$
and
  $M=M(\varepsilon) \ge 1$
 such that
 for all
 $t\ge T$ and
 $m\ge M$,
 the solution
 ${\mathbf{ u}} (t,
\mathbf{u}_0 )$
of \eqref{equ1.1}
with
$\|\mathbf{u}_0\|_{
 H^1
 (\mathbb{R} ) } \le R$,
 satisfies
 $$
 \mathrm{E}
\left (
 \int_{|x|>m}
 |
 {\mathbf{ u}} (t,
\mathbf{u}_0 )(x) |^2 dx
\right )
  <\varepsilon.
$$
\end{lemma}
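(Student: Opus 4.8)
The plan is to rerun the tail-ends argument of Lemma \ref{tail_2} almost verbatim, changing only the ingredient used to control the $H^1$-energy accumulated along the trajectory. Concretely, with $\phi(x)=1-\theta(x)$ and $\phi_m(x)=\phi(x/m)$ as before, It\^o's formula applied to $\|\phi_m\mathbf{u}(t)\|_{L^2(\mathbb{R})}^2$ produces exactly the inequality \eqref{tail2p6}, whose derivation uses no property of the initial datum and which holds for every $t\ge 0$ and $m\in\mathbb{N}$. I would therefore take \eqref{tail2p6} as the starting point and bound its three right-hand terms separately, arranging each to be below $\varepsilon/3$.

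For the middle term, rather than invoking Lemma \ref{ues1} (which produces a constant proportional to $1+\|\mathbf{u}_0\|_{H^1(\mathbb{R})}^2$ and hence forces the spatial threshold to depend on the initial datum), I would use Lemma \ref{ues1a}. Since $\|\mathbf{u}(s)\|_{H^1(\mathbb{R})}\le\|\mathbf{u}(s)\|_{H^2(\mathbb{R})}$, its first estimate gives, for all $t\ge T(R)$,
$$
\int_0^t e^{s-t}\,\mathrm{E}\left(\|\mathbf{u}(s,\mathbf{u}_0)\|_{H^1(\mathbb{R})}^2\right)ds
\le
\int_0^t e^{s-t}\,\mathrm{E}\left(\|\mathbf{u}(s,\mathbf{u}_0)\|_{H^2(\mathbb{R})}^2\right)ds
\le C,
$$
with $C$ depending only on $\{\mathbf{f}_k\}_{k=1}^\infty$, hence uniform over $\|\mathbf{u}_0\|_{H^1(\mathbb{R})}\le R$. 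Consequently the middle term of \eqref{tail2p6} is at most $c_1 C/m$, so I can choose $M_1=M_1(\varepsilon)$, independent of $R$, making it $<\varepsilon/3$ for $m\ge M_1$. For the last term, $\phi_m$ is supported in $\{|x|>m/2\}$, so by \eqref{1.1} $\sum_{k=1}^\infty\|\phi_m\mathbf{f}_k\|_{L^2(\mathbb{R})}^2\le\int_{|x|>m/2}\sum_{k=1}^\infty|\mathbf{f}_k(x)|^2\,dx\to 0$ as $m\to\infty$, giving $M_2=M_2(\varepsilon)$ so that this term is $<\varepsilon/3$ for $m\ge M_2$. Finally, the first term obeys $e^{-t}\,\mathrm{E}(\|\phi_m\mathbf{u}_0\|_{L^2(\mathbb{R})}^2)\le e^{-t}R^2$, which is $<\varepsilon/3$ once $t\ge T_1$ for a threshold $T_1=T_1(\varepsilon,R)$.

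Setting $M=\max\{M_1,M_2\}$ (depending only on $\varepsilon$) and $T=\max\{T_1,T(R)\}$ (depending on $\varepsilon$ and $R$), for all $t\ge T$, $m\ge M$ and $\|\mathbf{u}_0\|_{H^1(\mathbb{R})}\le R$ the three terms sum to less than $\varepsilon$. Since $\phi_m\equiv 1$ on $\{|x|\ge m\}$, I conclude
$$
\mathrm{E}\left(\int_{|x|>m}|\mathbf{u}(t,\mathbf{u}_0)(x)|^2\,dx\right)
\le
\mathrm{E}\left(\|\phi_m\mathbf{u}(t)\|_{L^2(\mathbb{R})}^2\right)
<\varepsilon,
$$
as required.

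The argument contains no genuinely hard step; the only point requiring care is the bookkeeping of constants, and this is precisely where the improvement over Lemma \ref{tail_2} resides. Lemma \ref{ues1a} removes the $\|\mathbf{u}_0\|_{H^1(\mathbb{R})}$ dependence from the energy bound at the price of restricting to $t\ge T(R)$, so I would emphasize tracking which threshold absorbs the dependence on $R$: the spatial cutoff $M$ depends only on $\varepsilon$, whereas the time threshold $T$ inherits the dependence on both $\varepsilon$ and $R$ through the decay term $e^{-t}R^2$ and through the time $T(R)$ after which the uniform energy estimate of Lemma \ref{ues1a} becomes available.
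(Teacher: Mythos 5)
Your proposal is correct and is exactly the argument the paper intends: the paper's own proof is the one-sentence remark "following the proof of Lemma \ref{tail_2}, by using Lemma \ref{ues1a} instead of Lemma \ref{ues1}," and you have filled in precisely that substitution, starting from \eqref{tail2p6} and correctly tracking that the spatial threshold $M$ depends only on $\varepsilon$ while the time threshold $T$ absorbs the dependence on $R$ through the $e^{-t}R^2$ term and the waiting time $T(R)$ of Lemma \ref{ues1a}.
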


 Next, we improve
 Lemma \ref{tail_2} and
   derive the uniform
estimates on the tails of solutions
in $H^1(\mathbb{R})$.

\begin{lemma}\label{tail_3}
If \eqref{1.1} holds,
then for every   $\varepsilon>0$,
  and
 $\mathbf{u}_0
 \in H^1
 (\mathbb{R} )$,
 there exists
 $M=M(\varepsilon,
    \mathbf{u}_0) \ge 1$
 such that
 for all $m\ge M$,
 the solution
 ${\mathbf{ u}} (t,
\mathbf{u}_0 )$
of \eqref{equ1.1} satisfies
$$
 \sup_{t\ge 0}
 \mathrm{E}
\left (
 \int_{|x|>m}
 \left ( |
 {\mathbf{ u}} (t,
\mathbf{u}_0 )(x) |^2
+
|\nabla
 {\mathbf{ u}} (t,
\mathbf{u}_0 )(x) |^2
\right )
dx
\right )  <\varepsilon.
$$
\end{lemma}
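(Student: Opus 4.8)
The plan is to combine the $L^2$ tail-ends estimate already obtained in Lemma \ref{tail_2} with a new tail-ends estimate for the gradient, and the gradient estimate is precisely where the restriction $d=1$ is essential. Since $\phi_m=1-\theta_m$ satisfies $\phi_m\equiv1$ on $\{|x|>\tfrac34 m\}$ and $\mathrm{supp}\,\nabla\phi_m\subset\{\tfrac12 m\le|x|\le\tfrac34 m\}$, it suffices to prove that for every $\varepsilon>0$ there is $M$ with
$$\sup_{t\ge0}\mathrm{E}\Big(\|\phi_m\mathbf{u}(t,\mathbf{u}_0)\|_{L^2(\mathbb{R})}^2+\|\phi_m\nabla\mathbf{u}(t,\mathbf{u}_0)\|_{L^2(\mathbb{R})}^2\Big)<\varepsilon,\qquad m\ge M,$$
and then relabel $m$ exactly as in \eqref{tail_2 p7}. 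The $L^2$-part is furnished by Lemma \ref{tail_2} (applied with $\tfrac12 m$ in place of $m$), so everything reduces to the tail of the gradient. The first thing I would record is that this gradient estimate is legitimate \emph{only because} $d=1$: by $H^1(\mathbb{R})\hookrightarrow L^\infty(\mathbb{R})$ and \eqref{1.3} one has $\mathbf{u}\times\Delta\mathbf{u}\in L^2(0,T;L^2(\mathbb{R}))$ $\mathrm{P}$-a.s., hence $\mathbf{F}(\mathbf{u})\in L^2(0,T;L^2(\mathbb{R}))$, the energy equation \eqref{3.1wanb} holds, and Itô's formula may be applied to $\|\phi_m\nabla\mathbf{u}(t)\|_{L^2(\mathbb{R})}^2$ (equivalently, $\nabla\mathbf{u}$ solves the SPDE obtained by differentiating \eqref{equ1.1} in $x$).

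Next I would run the localized energy computation of Lemma \ref{ues1} with the weight $\phi_m^2$, controlling the terms that are \emph{not} the high-order nonlinearity exactly as there. Testing the diffusion against $\phi_m^2\nabla\mathbf{u}$ produces the good dissipation $-2\|\phi_m\Delta\mathbf{u}\|_{L^2}^2$ plus a commutator $-\int\nabla(\phi_m^2)\,\Delta\mathbf{u}\cdot\nabla\mathbf{u}\,dx$; since $\nabla(\phi_m^2)=2\phi_m\nabla\phi_m$ carries both a factor $\tfrac1m$ and a factor $\phi_m$, this is bounded by $\delta\|\phi_m\Delta\mathbf{u}\|_{L^2}^2+\tfrac{c}{m^2}\|\nabla\mathbf{u}\|_{L^2}^2$, whose first part is absorbed into the dissipation. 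The cubic damping $-\nabla\big((1+|\mathbf{u}|^2)\mathbf{u}\big)$ tested against $\phi_m^2\nabla\mathbf{u}$ gives $-2\|\phi_m\nabla\mathbf{u}\|_{L^2}^2$ plus the manifestly nonpositive term $-2\int\phi_m^2\big(2(\mathbf{u}\cdot\nabla\mathbf{u})^2+|\mathbf{u}|^2|\nabla\mathbf{u}|^2\big)dx\le0$, which is discarded. The noise and Itô-correction terms reproduce, after localization, the bounds \eqref{ues1 p3}--\eqref{ues1 p4} but with $\phi_m\mathbf{f}_k$, $\phi_m\nabla\mathbf{f}_k$ and $\tfrac1m$-commutators, so they contribute $c\,\mathrm{E}\|\phi_m\mathbf{u}\|_{L^2}^2+\tfrac{c}{m}\mathrm{E}\|\mathbf{u}\|_{H^1}^2+c\sum_{k}\|\phi_m\mathbf{f}_k\|_{H^1(\mathbb{R})}^2$, while the stochastic integral has zero expectation.

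The decisive term, and the main obstacle, is the high-order nonlinearity. Differentiating it gives $\nabla\mathbf{u}\times\Delta\mathbf{u}+\mathbf{u}\times\Delta\nabla\mathbf{u}$; tested against $\phi_m^2\nabla\mathbf{u}$ the first summand vanishes pointwise because $(a\times b)\cdot a=0$, while for the second I would use the identity $2(\mathbf{u}\times\Delta\nabla\mathbf{u})\cdot\nabla\mathbf{u}=\partial_x\big[(\mathbf{u}\times\Delta\mathbf{u})\cdot\nabla\mathbf{u}\big]$, valid in one dimension since all lower-order cross terms cancel by orthogonality. Integrating by parts collapses the whole contribution, after accounting for the Itô factor, to the commutator $-\int\partial_x(\phi_m^2)\,(\mathbf{u}\times\Delta\mathbf{u})\cdot\nabla\mathbf{u}\,dx$, which is supported on the annulus $\{\tfrac12 m\le|x|\le\tfrac34 m\}$; pairing the hidden factor $\phi_m$ with $\Delta\mathbf{u}$ and using $H^1\hookrightarrow L^\infty$ it is bounded by $\tfrac{c}{m}\|\phi_m\Delta\mathbf{u}\|_{L^2}\|\mathbf{u}\|_{L^\infty}\|\nabla\mathbf{u}\|_{L^2}\le\delta\|\phi_m\Delta\mathbf{u}\|_{L^2}^2+\tfrac{c}{m^2}\|\mathbf{u}\|_{H^1}^4$. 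After absorbing the $\delta$-part into the dissipation, this term leaves the remainder $\tfrac{c}{m^2}\mathrm{E}\|\mathbf{u}\|_{H^1}^4$, which I would control by the time-uniform fourth-moment bound $\sup_{t\ge0}\mathrm{E}\|\mathbf{u}(t)\|_{H^1}^4\le C(1+\|\mathbf{u}_0\|_{H^1}^4)$, obtained by applying Itô's formula to $\|\mathbf{u}(t)\|_{H^1}^4$ exactly as in Lemma \ref{ues1} (again available only because the $H^1$ energy equation holds for $d=1$). This is precisely the step that breaks down for $d=2$: there $\mathbf{u}\times\Delta\mathbf{u}$ lies only in $L^r(0,T;L^2)$ with $r<2$ by \eqref{1.3a}, so neither the energy equation nor the requisite $H^2$-control is available.

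Collecting everything yields a differential inequality of the form
$$\frac{d}{dt}\mathrm{E}\|\phi_m\nabla\mathbf{u}(t)\|_{L^2}^2+\mathrm{E}\|\phi_m\nabla\mathbf{u}(t)\|_{L^2}^2\le c\,\mathrm{E}\|\phi_m\mathbf{u}(t)\|_{L^2}^2+\frac{c}{m^2}\mathrm{E}\|\mathbf{u}(t)\|_{H^1}^4+\frac{c}{m^2}\mathrm{E}\|\mathbf{u}(t)\|_{H^1}^2+c\sum_{k=1}^\infty\|\phi_m\mathbf{f}_k\|_{H^1(\mathbb{R})}^2.$$
I would then integrate with the factor $e^{t}$ and estimate the right-hand side uniformly in $t\ge0$: the first term gives $\int_0^te^{s-t}\mathrm{E}\|\phi_m\mathbf{u}\|_{L^2}^2\,ds\le\sup_s\mathrm{E}\|\phi_m\mathbf{u}(s)\|_{L^2}^2$, which is $<\varepsilon$ by Lemma \ref{tail_2}; the two middle terms are $O(m^{-2})$ by the fourth- and second-moment bounds; the $\mathbf{f}_k$-tail $\sum_k\|\phi_m\mathbf{f}_k\|_{H^1}^2\to0$ as $m\to\infty$ by \eqref{1.1}; and the initial datum contributes $e^{-t}\|\phi_m\nabla\mathbf{u}_0\|_{L^2}^2\le\|\phi_m\nabla\mathbf{u}_0\|_{L^2}^2\to0$ since $\nabla\mathbf{u}_0\in L^2(\mathbb{R})$. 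Hence $\sup_{t\ge0}\mathrm{E}\|\phi_m\nabla\mathbf{u}(t)\|_{L^2}^2\to0$ as $m\to\infty$, which together with the $L^2$ tail from Lemma \ref{tail_2} and the support property of $\phi_m$ completes the proof.
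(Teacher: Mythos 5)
Your proposal follows the same overall strategy as the paper: apply It\^o's formula to the localized gradient energy $\|\phi_m\nabla\mathbf{u}(t)\|^2_{L^2(\mathbb{R})}$, use the orthogonality $(a\times b)\cdot b=0$ to kill the part of the high-order nonlinearity paired with $\phi_m^2\Delta\mathbf{u}$ so that only a commutator supported on the annulus survives, treat the noise and damping terms as in Lemma \ref{ues1}, solve the resulting differential inequality with the weight $e^{s-t}$, and combine with Lemma \ref{tail_2}. The one place where you genuinely diverge is the estimate of the surviving commutator $\frac{1}{m}\int \phi_m\, (\mathbf{u}\times\Delta\mathbf{u})\cdot\nabla\mathbf{u}\,dx$. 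The paper bounds it by $\frac{c}{m}\|\Delta\mathbf{u}\|_{L^2}\bigl(\int|\mathbf{u}|^2|\nabla\mathbf{u}|^2dx\bigr)^{1/2}\le \frac{c}{2m}\bigl(\|\Delta\mathbf{u}\|^2_{L^2}+\int|\mathbf{u}|^2|\nabla\mathbf{u}|^2dx\bigr)$ and then controls the exponentially weighted time integral of exactly these two quantities via \eqref{ues1 1}; this needs only the second-moment estimates already proved. You instead invoke $H^1(\mathbb{R})\hookrightarrow L^\infty(\mathbb{R})$, absorb $\delta\|\phi_m\Delta\mathbf{u}\|^2_{L^2}$ into the localized dissipation, and are left with $\frac{c}{m^2}\mathrm{E}\|\mathbf{u}(t)\|^4_{H^1}$, which forces you to assert a time-uniform fourth-moment bound $\sup_{t\ge0}\mathrm{E}\|\mathbf{u}(t)\|^4_{H^1}\le C(1+\|\mathbf{u}_0\|^4_{H^1})$ that is nowhere in the paper (Lemma \ref{ues1} gives only second moments). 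That bound is in fact provable by the same energy method --- the cross-product cancellations $(\nabla\mathbf{u},\nabla\mathbf{u}\times\mathbf{f}_k)=0$ and $(\mathbf{u}\times\Delta\mathbf{u},\Delta\mathbf{u})=0$ keep both the drift and the quadratic-variation terms under control when you apply It\^o's formula to $\|\mathbf{u}\|^4_{H^1}$ --- but as written it is an unproven auxiliary lemma, and it is avoidable: the paper's pairing, which keeps the factor $|\mathbf{u}||\nabla\mathbf{u}|$ together as the dissipation quantity $\int|\mathbf{u}|^2|\nabla\mathbf{u}|^2dx$ already controlled in \eqref{ues1 1}, is the more economical route. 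Two minor points: your identity for the second summand should read $(\mathbf{u}\times\nabla\Delta\mathbf{u})\cdot\nabla\mathbf{u}=\partial_x\bigl[(\mathbf{u}\times\Delta\mathbf{u})\cdot\nabla\mathbf{u}\bigr]$ without the factor $2$ (the $2$ enters separately from $d\|\cdot\|^2$), and your discussion of why the computation is legitimate only for $d=1$ correctly mirrors the paper's remarks around \eqref{3.1wana}--\eqref{3.1wanb}.
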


 \begin{proof}
Let
   $\phi_m  $ be
   the same cut-off function as in the proof of Lemma
   \ref{tail_2}.
    By \eqref{equ1.1} and  It\^{o}'s formula,
     we get
\begin{align*}
&d\| \phi_m \nabla\mathbf{u}(t)\|
_{L^2
(\mathbb{R}) }^2\nonumber\\&= -2
\left (    \Delta \mathbf{u}(t),  \text{div}
\left (\phi_m^2 \nabla\mathbf{u}(t)
\right )  \right )_{L^2
(\mathbb{R}) } dt - 2
\left (  \mathbf{u}(t)\times \Delta \mathbf{u}(t),
\text{div}
\left (   \phi_m^2
 \nabla\mathbf{u}(t) \right ) \right ) _{L^2
(\mathbb{R}) } dt \nonumber\\
 &-2(  \phi_m \nabla(1+|\mathbf{u}(t)|^{2})\mathbf{u}(t),
   \phi_m \nabla\mathbf{u}(t))_{L^2
(\mathbb{R}) } dt
 +\sum_{k= 1}^\infty (
 \phi_m
\nabla((\mathbf{u}(t)\times \mathbf{f}_k)\times \mathbf{f}_k),   \phi_m \nabla\mathbf{u}(t))_{L^2
(\mathbb{R}) }dt\nonumber\\
&+2\sum_{k= 1}^\infty
(  \phi_m \nabla(\mathbf{u}(t)\times \mathbf{f}_k+\mathbf{f}_k)  ,   \phi_m \nabla\mathbf{u}(t))_{L^2
(\mathbb{R}) } dW_k
+\sum_{k= 1}^\infty \|  \phi_m \nabla(\mathbf{u}(t)\times \mathbf{f}_k+\mathbf{f}_k)\|_{ _{L^2
(\mathbb{R}) }}^2dt,
 \end{align*}
and hence
for all $t>0$, $\mathrm{P}$-almost surely,
\begin{align}  \label{tail_3 p1}
&{\frac d{dt}}
\textrm{E} \left (
\| \phi_m\nabla\mathbf{u} (t)\|
_{L^2
(\mathbb{R}) }^2
\right )\nonumber\\
 &= -2 \textrm{E} \left (
\left (    \Delta \mathbf{u}(t),  \text{div}
\left ( \phi_m^2 \nabla\mathbf{u}(t)
\right )  \right )_{L^2
(\mathbb{R}) } \right )
- 2\textrm{E} \left (
\left (  \mathbf{u}(t)\times \Delta \mathbf{u}(t),
\text{div}
\left (  \phi_m^2
 \nabla\mathbf{u}(t) \right ) \right ) _{L^2
(\mathbb{R}) } \right )
 \nonumber\\
 &-2\textrm{E} \left ( (  \phi_m \nabla(1+|\mathbf{u}(t)|^{2})\mathbf{u}(t),   \phi_m \nabla\mathbf{u}(t))_{L^2
(\mathbb{R}) } \right )
+\sum_{k= 1}^\infty
\textrm{E} \left (
 \|   \phi_m \nabla(\mathbf{u}(t)\times \mathbf{f}_k+\mathbf{f}_k)\|_{  L^2
(\mathbb{R})  }^2 \right) \nonumber\\
&+\sum_{k= 1}^\infty
 \textrm{E} \left (
 (
 \phi_m
\nabla((\mathbf{u}(t)\times \mathbf{f}_k)\times \mathbf{f}_k),   \phi_m \nabla\mathbf{u}(t))_{L^2
(\mathbb{R}) }
\right ).
  \end{align}

 For the first  term on the right-hand side
 of \eqref{tail_3 p1},
by H\"{o}lder's inequality  we  have
\be\label{tail_3 p2}
-2 \textrm{E} \left (
\left (    \Delta \mathbf{u}(t),  \text{div}
\left ( \phi_m^2 \nabla\mathbf{u}(t)
\right )  \right )_{L^2
(\mathbb{R}) } \right )
\le   -2
\textrm{E} \left (
\| \phi_m \Delta \mathbf{u} (t)\|_{L^2
(\mathbb{R}) }^2
\right ) +
   \frac{c_1}{m}\|\mathbf{u} (t)\|_{H^2(\mathbb{R}) }^2,
\ee
where   $c_1>0 $ is independent of $m$.

For the second   term on the right-hand side
of \eqref{tail_3 p1}
  we have
 \begin{align}\label{tail_3 p3}
 &- 2\textrm{E} \left (
\left (  \mathbf{u}(t)\times \Delta \mathbf{u}(t),
\text{div}
\left (  \phi_m^2
 \nabla\mathbf{u}(t) \right ) \right ) _{L^2
(\mathbb{R}) } \right )\nonumber \\
&= - 2\textrm{E} \left (
\left (  \mathbf{u}(t)\times \Delta \mathbf{u}(t),
  \phi_m^2
 \Delta \mathbf{u}(t)   \right ) _{L^2
(\mathbb{R}) } \right )
  - 2\textrm{E} \left (
\left (  \mathbf{u}(t)\times \Delta \mathbf{u}(t),
 (\nabla  \phi_m^2 )
 \nabla \mathbf{u}(t)   \right ) _{L^2
(\mathbb{R}) } \right )\nonumber \\
&=
  - 2\textrm{E} \left (
\left (  \mathbf{u}(t)\times \Delta \mathbf{u}(t),
 (\nabla  \phi_m^2 )
 \nabla \mathbf{u}(t)   \right ) _{L^2
(\mathbb{R}) } \right )\nonumber \\
&
\le {\frac {c_2}m} \textrm{E} \left (
   \|\Delta \mathbf{u} (t) \|_{L^2
   (\mathbb{R}) }
   \left (
   \int_{\mathbb{R}}
 |\mathbf{u} (t)|^2
 |\nabla \mathbf{u} (t)|^2  dx\right )^{\frac 12}
 \right )  \nonumber\\
 &
 \le
  {\frac {c_2}{2m}} \textrm{E} \left (
   \|\Delta \mathbf{u} (t) \|^2_{L^2
   (\mathbb{R}) }  \right )
   +
     {\frac {c_2}{2m}}\textrm{E} \left (
  \int_{\mathbb{R}}
 |\mathbf{u} (t)|^2
 |\nabla \mathbf{u} (t)|^2  dx  \right ) ,
\end{align}
where $c_2>0$ is independent of $m$.

For the third  term on the right-hand side
of \eqref{tail_3 p1}
  we have
$$
-2
\textrm{E} \left (
(  \phi_m \nabla(1+|\mathbf{u}(t)|^{2})\mathbf{u}(t),
  \phi_m \nabla\mathbf{u}(t))_{L^2
(\mathbb{R}) }
\right )
$$
$$
=
 -2\textrm{E} \left (
 \|  \phi_m   \nabla\mathbf{u} (t) \|_{L^2
 (\mathbb{R}) }^2
+\int_{ \mathbb{R}}
  \phi_m^2 |\mathbf{u} (t) |^2 |\nabla
 \mathbf{u} (t)
  |^2 dx  \right )
  -4 \textrm{E} \left (
  \int_{ \mathbb{R}}
  \phi_m^2 |\mathbf{u} (t)  \nabla
 \mathbf{u} (t)
  |^2 dx
  \right )
  $$
 \be \label{tail_3 p4}
  \le
 -2\textrm{E} \left (
 \|  \phi_m   \nabla\mathbf{u} (t) \|_{L^2
 (\mathbb{R}) }^2
 \right ).
 \ee

 For the last  two terms
 on the right-hand side of \eqref{tail_3 p1} we have
 $$\sum_{k= 1}^\infty
 \textrm{E} \left (
 (
 \phi_m
\nabla((\mathbf{u}(t)\times \mathbf{f}_k)\times \mathbf{f}_k),   \phi_m \nabla\mathbf{u}(t))_{L^2
(\mathbb{R}) }
\right )
  +\sum_{k= 1}^\infty
\textrm{E} \left (
 \|   \phi_m \nabla(\mathbf{u}(t)\times \mathbf{f}_k+\mathbf{f}_k)\|_{  L^2
(\mathbb{R})  }^2 \right )
$$
$$
= \sum_{k= 1}^\infty
 \textrm{E} \left (
 \left (
 \phi_m^2
\nabla  \mathbf{u}(t),
\
(\mathbf{u}(t) \times \mathbf{f}_k)
\times \nabla \mathbf{f}_k
+ \mathbf{f}_k \times \nabla \mathbf{f}_k
\right ) _{L^2
(\mathbb{R}) }
\right )
$$
\be\label{tail_3 p5}
  +\sum_{k= 1}^\infty
\textrm{E} \left (
\left ( \phi_m^2
(
\mathbf{u}(t) \times \nabla  \mathbf{f}_k
+\nabla \mathbf{f}_k), \
\nabla \mathbf{u}(t) \times    \mathbf{f}_k
+
\mathbf{u}(t) \times \nabla  \mathbf{f}_k
+\nabla  \mathbf{f}_k \right )
  _{  L^2
(\mathbb{R})  }
\right ).
\ee
  For the first term on the right-hand side of
\eqref{tail_3 p5} we have
$$
\sum_{k=1}^\infty\textrm{E}
\left (
 \phi_m^2
 \nabla \mathbf{u} (t)  ,
\  ( \mathbf{u} (t) \times   \mathbf{f} _k )
\times \nabla \mathbf{f} _k
+  \mathbf{f} _k \times \nabla \mathbf{f}_k
\right ) _{L^2(\mathbb{R})}
$$
$$
\le
\sum_{k=1}^\infty\textrm{E}
\left (
\|\mathbf{f} _k \|
_{L^\infty(\mathbb{R})}
 \|  \nabla \mathbf{f} _k \|_{L^\infty(\mathbb{R})}
\|  \phi_m \nabla \mathbf{u} (t)\|_{L^2(\mathbb{R})}
\| \phi_m  \mathbf{u} (t) \|_{L^2(\mathbb{R})}
\right )
$$
$$
+
\sum_{k=1}^\infty
\textrm{E}
\left (
\|\nabla \mathbf{f} _k \|
_{L^\infty(\mathbb{R})}
 \|  \phi_m  \mathbf{f} _k \|_{L^2(\mathbb{R})}
\|   \phi_m \nabla \mathbf{u} (t)\|_{L^2(\mathbb{R})}
\right )
$$
$$
\le
{\frac 12}
\textrm{E}
\left (
\|  \phi_m\nabla \mathbf{u} (t)  \|^2_{L^2(\mathbb{R})}
\right )
+ \left (
\sum_{k=1}^\infty
 \|\mathbf{f} _k \|^2
_{W^{1,\infty}(\mathbb{R})}
\right )^2
\textrm{E}
\left (
\|  \phi_m \mathbf{u} (t)\|^2_{L^2(\mathbb{R})}
\right )
$$
 \be\label{tail_3 p5a}
+  \left (
\sum_{k=1}^\infty
 \| \phi_m \mathbf{f} _k \|
_{L^{2}(\mathbb{R})}
\|\nabla \mathbf{f} _k \|
_{L^{ \infty }(\mathbb{R})}
\right )^2.
\ee
 For the  second term on the right-hand side of
\eqref{tail_3 p5} we have
$$
\sum_{k=1}^\infty \textrm{E}
\left (  \phi_m^2
(
 \mathbf{u} (t) \times  \nabla  \mathbf{f} _k
+  \nabla \mathbf{f} _k) ,\
 \nabla \mathbf{u} (t) \times
  \mathbf{f} _k
  +   \mathbf{u} (t) \times
  \nabla \mathbf{f} _k
  +  \nabla \mathbf{f}_k
\right ) _{L^2(\mathbb{R})}
$$
$$
\le
\sum_{k=1}^\infty \textrm{E}
\left (
\|\mathbf{f} _k\|_{L^\infty(\mathbb{R})}
 \|\nabla \mathbf{f} _k\|_{L^\infty (\mathbb{R})}
 \| \phi_m \mathbf{u}(t) \|_{L^2(\mathbb{R})}
 \| \phi_m \nabla \mathbf{u}(t)  \|_{L^2(\mathbb{R})}
\right )
$$
$$
+
\sum_{k=1}^\infty \textrm{E}
\left (
\|\nabla \mathbf{f} _k\|_{L^\infty(\mathbb{R})}
 \| \phi_m   \mathbf{f} _k\|_{L^2 (\mathbb{R})}
 \| \phi_m \nabla \mathbf{u}(t) \|_{L^2(\mathbb{R})}
 +
 \|\nabla \mathbf{f} _k\|^2_{L^\infty (\mathbb{R})}
  \| \phi_m \mathbf{u}(t) \|^2_{L^2(\mathbb{R})}
\right )
$$
$$
+ 2
\sum_{k=1}^\infty \textrm{E}
\left (
\|\nabla \mathbf{f} _k\|_{L^\infty(\mathbb{R})}
 \| \phi_m \nabla \mathbf{f} _k\|_{L^2 (\mathbb{R})}
 \|  \phi_m \mathbf{u}(t) \|_{L^2(\mathbb{R})}
 +
 \| \phi_m \nabla \mathbf{f} _k\|^2_{L^2 (\mathbb{R})}
\right )
$$
$$
\le
{\frac 12}
\textrm{E}
\left (
\|  \phi_m \nabla \mathbf{u} (t)  \|^2_{L^2(\mathbb{R})}
\right )
+ 2 \left (
\sum_{k=1}^\infty
 \|\mathbf{f} _k \|^2
_{W^{1,\infty}(\mathbb{R})}
\right )^2
\textrm{E}
\left (
\|  \phi_m  \mathbf{u} (t)\|^2_{L^2(\mathbb{R})}
\right )
$$
\be\label{tail_3 p5b}
+  2 \left (
\sum_{k=1}^\infty
 \|\nabla \mathbf{f} _k \|
_{L^{ \infty}(\mathbb{R})}
\| \phi_m   \mathbf{f} _k \|
_{L^{ 2}(\mathbb{R})}
\right )^2
+\textrm{E}
\left (
\|  \phi_m  \mathbf{u} (t)\|^2_{L^2(\mathbb{R})}
\right )
+  2
\sum_{k=1}^\infty
 \| \phi_m \nabla \mathbf{f} _k \|^2
_{L^{ 2}(\mathbb{R})} .
\ee
By \eqref{tail_3 p5}-\eqref{tail_3 p5b}
we get
 $$\sum_{k= 1}^\infty
 \textrm{E} \left (
 (
 \phi_m
\nabla((\mathbf{u}(t)\times \mathbf{f}_k)\times \mathbf{f}_k),   \phi_m \nabla\mathbf{u}(t))_{L^2
(\mathbb{R}) }
\right )
  +\sum_{k= 1}^\infty
\textrm{E} \left (
 \|   \phi_m \nabla(\mathbf{u}(t)\times \mathbf{f}_k+\mathbf{f}_k)\|_{  L^2
(\mathbb{R})  }^2 \right )
$$
 $$
\le
\textrm{E}
\left (
\|  \phi_m \nabla \mathbf{u} (t)  \|^2_{L^2(\mathbb{R})}
\right )
+ c_3
\textrm{E}
\left (
\|  \phi_m  \mathbf{u} (t)\|^2_{L^2(\mathbb{R})}
\right )
+
   2
\sum_{k=1}^\infty
 \| \phi_m \nabla \mathbf{f} _k \|^2
_{L^{ 2}(\mathbb{R})}
$$
\be\label{tail_3 p6}
+  3  \left (
\sum_{k=1}^\infty
 \|\nabla \mathbf{f} _k \|^2
_{L^{ \infty}(\mathbb{R})} \right )
\left (
\sum_{k=1}^\infty
\| \phi_m   \mathbf{f} _k \|^2
_{L^{ 2}(\mathbb{R})}\right ) ,
\ee
where $c_3>0$ depends only on
$\{\mathbf{f} _k\}_{k=1}^\infty$.

It follows from
\eqref{tail_3 p1}-\eqref{tail_3 p4} and
 \eqref{tail_3 p6} that
$$
{\frac d{dt}}
\textrm{E}
\left (
 \|   \phi_m \nabla
 \mathbf{u} (t) \|^2_{L^2(\mathbb{R})}
 \right )
 + \textrm{E}
\left (
 2\|  \phi_m \Delta
 \mathbf{u} (t) \|^2_{L^2(\mathbb{R})}
 +
 \|  \phi_m \nabla
 \mathbf{u} (t) \|^2_{L^2(\mathbb{R})}
  \right )
 $$
 $$
\le {\frac {c_4}m}
 \textrm{E}
\left (
  \|\mathbf{u} (t) \|^2
 _{H^2(\mathbb{R})}
 +\int_{\mathbb{R}}
 |\mathbf{u} (t) | ^2
 |\nabla \mathbf{u} (t) |^2 dx
 \right )
  + c_4
\textrm{E}
\left (
\|  \phi_m  \mathbf{u} (t)\|^2_{L^2(\mathbb{R})}
\right )
$$
$$
+
   c_4
\sum_{k=1}^\infty
\left (
\| \phi_m   \mathbf{f} _k \|^2
_{L^{ 2}(\mathbb{R})}
+
 \| \phi_m \nabla \mathbf{f} _k \|^2
_{L^{ 2}(\mathbb{R})}
\right ),
$$
 where $c_4>0$ depends only on
$\{\mathbf{f} _k\}_{k=1}^\infty$.
 Solving the inequality, we get  for all
$t\ge 0$,
$$
\textrm{E}
\left (
 \|  \phi_m \nabla
 \mathbf{u} (t) \|^2_{L^2(\mathbb{R})}
 \right )
 \le  e^{-t}
 \textrm{E}
\left (
 \|  \phi_m \nabla
 \mathbf{u} _0\|^2_{L^2(\mathbb{R})}
 \right )
 $$
 $$
 +  {\frac {c_4}m}
  \int_0^t e^{s-t}
  \textrm{E}
\left (
  \|\mathbf{u} (s) \|^2
 _{H^2(\mathbb{R})}
 +\int_{\mathbb{R}}
 |\mathbf{u} (s) | ^2
 |\nabla \mathbf{u} (s) |^2 dx
 \right ) ds
 $$
\be\label{tail_3 p7}
  + c_4  \int_0^t e^{s-t}
  \textrm{E}
\left (
\|  \phi_m  \mathbf{u} (s)\|^2_{L^2(\mathbb{R})}
\right )  ds
   +
   c_4
\sum_{k=1}^\infty
\left (
\| \phi_m   \mathbf{f} _k \|^2
_{L^{ 2}(\mathbb{R})}
+
 \| \phi_m \nabla \mathbf{f} _k \|^2
_{L^{ 2}(\mathbb{R})}
\right ).
\ee
  By \eqref{tail_3 p7} and
  Lemma \ref{ues1} we obtain
for all $t\ge 0$
and $ m \in \mathbb{N}$,
$$
\textrm{E}
\left (
 \|  \phi_m \nabla
 \mathbf{u} (t) \|^2_{L^2(\mathbb{R})}
 \right )
 \le
 \|  \phi_m \nabla
 \mathbf{u} _0\|^2_{L^2(\mathbb{R})}
  +  {\frac {c_5}m}
 (1+ \| \mathbf{u}_0\|^2_{H^1(\mathbb{R})} )
   $$
\be\label{tail_3 p8}
  + c_4  \int_0^t e^{s-t}
  \textrm{E}
\left (
\|  \phi_m  \mathbf{u} (s)\|^2_{L^2(\mathbb{R})}
\right )  ds
   +
   c_4
\sum_{k=1}^\infty
\left (
\| \phi_m   \mathbf{f} _k \|^2
_{L^{ 2}(\mathbb{R})}
+
 \| \phi_m \nabla \mathbf{f} _k \|^2
_{L^{ 2}(\mathbb{R})}
\right ),
\ee
  where $c_5>0$ depends only on
$\{\mathbf{f}_k\}_{k=1}^\infty$.
By \eqref{1.1} we find that
  $$
\|\phi_m \mathbf{u}_0
 \|_{H^1(\mathbb{R} )
 }^2
 +
 c_4
\sum_{k=1}^\infty
\left (
\| \phi_m   \mathbf{f} _k \|^2
_{L^{ 2}(\mathbb{R})}
+
 \| \phi_m \nabla \mathbf{f} _k \|^2
_{L^{ 2}(\mathbb{R})}
\right )
 $$
 $$
 \le
\int_{|x|>{\frac 12 m}}
 (| \mathbf{u}_0 (x)|^2
 +|\nabla  \mathbf{u}_0 (x)|^2)
  dx
   +   \int_{|x|>{\frac 12 m}}
    \sum_{k=1}^\infty
    (|\mathbf{f}_k (x) |^2
    +
    |\nabla \mathbf{f}_k (x) |^2 )dx
    \to 0,
  $$
  as $m\to \infty$,
which together with \eqref{tail_2 p7}
and
\eqref{tail_3 p8} implies that
 for every $\varepsilon>0$,
there exists $M=M(\varepsilon,   \mathbf{u}_0)\ge 1$,
such that
for all $m\ge M$  and $t \ge 0$,
$$
\mathrm{E} \left (
\int_{|x|>  {\frac 34 m} }
 | \nabla \mathbf{u}  (t, \mathbf{u} _0) (x)|^2 dx
 \right )
  \le
 \mathrm{E}
 \left (
 \|\phi_m \nabla \mathbf{u}  (t,  \mathbf{u}_0 )
 \|_{L^2(\mathbb{R} )
 }^2
 \right )
   <\varepsilon ,
$$
which together with Lemma \ref{tail_2}
completes the proof.
   \end{proof}

   By
   Lemma \ref{tail_2a}
   and   the argument of Lemma \ref{tail_3},
   we also have the following uniform estimate.

\begin{lemma}\label{tail_3a}
If \eqref{1.1} holds,
then for every   $\varepsilon>0$
and $R>0$,
there exist $T=T(\varepsilon, R)>0$
and
  $M=M(\varepsilon) \ge 1$
 such that
 for all
 $t\ge T$ and
 $m\ge M$,
 the solution
 ${\mathbf{ u}} (t,
\mathbf{u}_0 )$
of \eqref{equ1.1}
with
  $\|\mathbf{u}_0\|_{
 H^1
 (\mathbb{R})  } \le R$,
 satisfies
$$
 \mathrm{E}
\left (
 \int_{|x|>m}
 \left ( |
 {\mathbf{ u}} (t,
\mathbf{u}_0 )(x) |^2
+
|\nabla
 {\mathbf{ u}} (t,
\mathbf{u}_0 )(x) |^2
\right )
dx
\right )  <\varepsilon.
$$
\end{lemma}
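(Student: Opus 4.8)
The plan is to run the proof of Lemma \ref{tail_3} essentially verbatim through the derivation of the differential inequality for $\mathrm{E}(\|\phi_m \nabla \mathbf{u}(t)\|^2_{L^2(\mathbb{R})})$, and then to replace the two inputs that are tied to a fixed initial datum---the energy bound of Lemma \ref{ues1} and the $L^2$ tail bound of Lemma \ref{tail_2}---by their uniform counterparts, Lemma \ref{ues1a} and Lemma \ref{tail_2a}. Since the algebraic manipulations \eqref{tail_3 p1}--\eqref{tail_3 p6} never see the initial data, the same computation produces, for all $t\ge 0$, $m\in\mathbb{N}$ and all $\mathbf{u}_0$ with $\|\mathbf{u}_0\|_{H^1(\mathbb{R})}\le R$, the Gronwall estimate (compare \eqref{tail_3 p7})
\begin{align*}
\mathrm{E}\left(\|\phi_m \nabla \mathbf{u}(t)\|^2_{L^2(\mathbb{R})}\right)
&\le e^{-t}\,\mathrm{E}\left(\|\phi_m \nabla \mathbf{u}_0\|^2_{L^2(\mathbb{R})}\right)\\
&\quad+\frac{c_4}{m}\int_0^t e^{s-t}\,\mathrm{E}\left(\|\mathbf{u}(s)\|^2_{H^2(\mathbb{R})}+\int_{\mathbb{R}}|\mathbf{u}(s)|^2|\nabla \mathbf{u}(s)|^2\,dx\right)ds\\
&\quad+c_4\int_0^t e^{s-t}\,\mathrm{E}\left(\|\phi_m \mathbf{u}(s)\|^2_{L^2(\mathbb{R})}\right)ds\\
&\quad+c_4\sum_{k=1}^\infty\left(\|\phi_m \mathbf{f}_k\|^2_{L^2(\mathbb{R})}+\|\phi_m \nabla \mathbf{f}_k\|^2_{L^2(\mathbb{R})}\right),
\end{align*}
where $c_4$ depends only on $\{\mathbf{f}_k\}_{k=1}^\infty$.

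The proof then reduces to bounding these four terms uniformly in $\mathbf{u}_0$. The first is at most $R^2 e^{-t}$, hence $<\varepsilon/4$ once $t\ge T_1(\varepsilon,R)$. For the second, I would invoke Lemma \ref{ues1a}: for $t\ge T(R)$ the weighted integral is bounded by a constant $C$ depending only on $\{\mathbf{f}_k\}_{k=1}^\infty$, so the term is at most $c_4 C/m<\varepsilon/4$ for $m\ge M_1(\varepsilon)$. The fourth term tends to $0$ as $m\to\infty$ by \eqref{1.1}, uniformly in $\mathbf{u}_0$ since it is independent of the datum, so it is $<\varepsilon/4$ for $m\ge M_2(\varepsilon)$. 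Combining the resulting bound on $\mathrm{E}(\|\phi_m\nabla\mathbf{u}(t)\|^2_{L^2(\mathbb{R})})$ with Lemma \ref{tail_2a} for the $L^2$ part of $\mathbf{u}$ and with $\int_{|x|>m}|\nabla \mathbf{u}|^2\,dx\le\|\phi_m \nabla \mathbf{u}\|^2_{L^2(\mathbb{R})}$ then yields the full $H^1$ tail bound.

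The hard part will be the third term, $c_4\int_0^t e^{s-t}\mathrm{E}(\|\phi_m \mathbf{u}(s)\|^2_{L^2(\mathbb{R})})\,ds$: in contrast to Lemma \ref{tail_3}, where Lemma \ref{tail_2} supplied an integrated $L^2$ tail bound valid for all $s$, the uniform estimate of Lemma \ref{tail_2a} holds only for large times $s\ge T(\varepsilon,R)$, so it cannot be fed directly into the integral over all of $[0,t]$. I would resolve this by a time-splitting argument exploiting the exponential weight. Fix $\delta=\varepsilon/(8c_4)$ and let $T_\delta=T(\delta,R)$, $M_3=M(\delta)$ be furnished by Lemma \ref{tail_2a}, so that $\mathrm{E}(\|\phi_m \mathbf{u}(s)\|^2_{L^2(\mathbb{R})})<\delta$ for $s\ge T_\delta$ and $m\ge M_3$. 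On the tail interval,
$$
c_4\int_{T_\delta}^t e^{s-t}\,\mathrm{E}\left(\|\phi_m \mathbf{u}(s)\|^2_{L^2(\mathbb{R})}\right)ds
\le c_4\delta\int_0^t e^{s-t}\,ds\le c_4\delta<\frac{\varepsilon}{8},
$$
while on $[0,T_\delta]$ the crude bound $\mathrm{E}(\|\phi_m \mathbf{u}(s)\|^2_{L^2(\mathbb{R})})\le\mathrm{E}(\|\mathbf{u}(s)\|^2_{H^1(\mathbb{R})})\le CR^2+C$ from Lemma \ref{ues1}, together with $e^{s-t}\le e^{T_\delta-t}$, gives
$$
c_4\int_0^{T_\delta} e^{s-t}\,\mathrm{E}\left(\|\phi_m \mathbf{u}(s)\|^2_{L^2(\mathbb{R})}\right)ds
\le c_4(CR^2+C)\,e^{T_\delta-t},
$$
which is $<\varepsilon/8$ once $t\ge T_2(\varepsilon,R)$. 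Taking $M=\max\{M_1,M_2,M_3\}$, which depends only on $\varepsilon$, and $T=\max\{T_1,T(R),T_2\}$, which depends on $\varepsilon$ and $R$, then simultaneously controls all four terms and completes the proof.
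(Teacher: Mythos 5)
Your proof is correct and is essentially the argument the paper intends: the paper justifies Lemma \ref{tail_3a} only by the remark that it follows from Lemma \ref{tail_2a} and the argument of Lemma \ref{tail_3}, and your write-up is a faithful and complete execution of exactly that, with the constants tracked so that $M$ depends only on $\varepsilon$ and $T$ on $(\varepsilon,R)$ as required. The one point where you genuinely add something is the time-splitting for $c_4\int_0^t e^{s-t}\mathrm{E}\left(\|\phi_m\mathbf{u}(s)\|^2_{L^2(\mathbb{R})}\right)ds$; the paper would instead carry the integrated tail bound directly from the uniformized version of \eqref{tail2p6} in the proof of Lemma \ref{tail_2}, but your splitting, which uses only the stated (large-time, pointwise-in-$t$) conclusion of Lemma \ref{tail_2a} together with the exponential weight and the crude bound from Lemma \ref{ues1} on the initial layer $[0,T_\delta]$, is an equally valid way to close this step (modulo the harmless adjustment that $\|\phi_m\mathbf{u}\|^2_{L^2}$ is controlled by the tail integral over $|x|>m/2$, so one applies Lemma \ref{tail_2a} at radius $m/2$).
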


\subsection{Feller property}
In this section, we establish the Feller
property of the  Markov  transition semigroup
$\{\mathbf{P}_t\}_{t\ge 0}$
in terms of the norm topology
instead of the weak topology
in $H^1
(\mathbb{R})$.

\begin{lemma}\label{fp}
If \eqref{1.1} holds, then
the family $\{\mathbf{P}_t\}_{t\ge 0}$
is Feller  in $H^1(\mathbb{R})$; that is,
 $\mathbf{P}_t \varphi \in C_b (H^1(\mathbb{R}))$
 for every $\varphi \in C_b (H^1(\mathbb{R}))$
 and $t\ge 0$.
 \end{lemma}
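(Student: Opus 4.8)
The plan is to reduce the Feller property to the continuous dependence of solutions on their initial data, measured in the \emph{strong} topology of $H^1(\mathbb{R})$. Since $(\mathbf{P}_t\varphi)(\mathbf{u}_0) = \mathrm{E}[\varphi(\mathbf{u}(t,\mathbf{u}_0))]$ and $|\varphi|\le \|\varphi\|_{C_b}$, boundedness of $\mathbf{P}_t\varphi$ is immediate, and it remains to show that whenever $\mathbf{u}_{0,n}\to\mathbf{u}_0$ in $H^1(\mathbb{R})$ one has $(\mathbf{P}_t\varphi)(\mathbf{u}_{0,n})\to(\mathbf{P}_t\varphi)(\mathbf{u}_0)$. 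I would obtain this from the convergence in probability $\mathbf{u}(t,\mathbf{u}_{0,n})\to\mathbf{u}(t,\mathbf{u}_0)$ in $H^1(\mathbb{R})$: combined with the continuity and boundedness of $\varphi$, the continuous mapping theorem and Vitali's bounded convergence theorem then give $\mathrm{E}[\varphi(\mathbf{u}(t,\mathbf{u}_{0,n}))]\to\mathrm{E}[\varphi(\mathbf{u}(t,\mathbf{u}_0))]$.

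The heart of the argument is therefore the $H^1$-continuity in probability, which (as explained in the introduction) is only available for $d=1$. Write $\mathbf{u}_n=\mathbf{u}(\cdot,\mathbf{u}_{0,n})$, $\mathbf{u}=\mathbf{u}(\cdot,\mathbf{u}_0)$ and $\mathbf{v}_n=\mathbf{u}_n-\mathbf{u}$, which solves the difference equation \eqref{coin p0}. Because $H^1(\mathbb{R})\hookrightarrow L^\infty(\mathbb{R})$ forces $\mathbf{u}_n\times\Delta\mathbf{u}_n\in L^2(0,T;L^2(\mathbb{R}))$ almost surely, the $H^1$ energy equation is legitimate, so I would apply It\^o's formula to $\|\mathbf{v}_n\|_{H^1(\mathbb{R})}^2$, i.e. test the difference equation with $(I-\Delta)\mathbf{v}_n$. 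The decisive simplifications come from the pointwise orthogonality $\mathbf{a}\cdot(\mathbf{a}\times\mathbf{b})=0$: the terms $(\mathbf{v}_n,\mathbf{v}_n\times\Delta\mathbf{u}_n)$, $(\Delta\mathbf{v}_n,\mathbf{u}\times\Delta\mathbf{v}_n)$ and the $\mathbf{v}_n$-component $(\mathbf{v}_n,\mathbf{v}_n\times\mathbf{f}_k)$ of the stochastic term all vanish identically. After integration by parts the only genuinely dangerous contribution is $-(\Delta\mathbf{v}_n,\mathbf{v}_n\times\Delta\mathbf{u}_n)$, which I would bound by $\|\mathbf{v}_n\|_{L^\infty(\mathbb{R})}\|\Delta\mathbf{v}_n\|_{L^2(\mathbb{R})}\|\Delta\mathbf{u}_n\|_{L^2(\mathbb{R})}$ and, via $\|\mathbf{v}_n\|_{L^\infty(\mathbb{R})}\le c\|\mathbf{v}_n\|_{H^1(\mathbb{R})}$ and Young's inequality, absorb $\tfrac14\|\Delta\mathbf{v}_n\|_{L^2(\mathbb{R})}^2$ into the $H^2$-dissipation while leaving $C\|\mathbf{v}_n\|_{H^1(\mathbb{R})}^2\|\mathbf{u}_n\|_{H^2(\mathbb{R})}^2$. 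The cubic difference, the first-order cross terms and the Stratonovich--It\^o correction are controlled in the same way by $C\,g_n(t)\,\|\mathbf{v}_n\|_{H^1(\mathbb{R})}^2$ with $g_n$ almost surely integrable on $[0,T]$.

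This produces the stochastic differential inequality
\begin{align*}
d\|\mathbf{v}_n(t)\|_{H^1(\mathbb{R})}^2 \le g_n(t)\,\|\mathbf{v}_n(t)\|_{H^1(\mathbb{R})}^2\,dt + dM_n(t),
\end{align*}
where $g_n(t)=C\big(1+\|\mathbf{u}_n(t)\|_{H^2(\mathbb{R})}^2+\|\mathbf{u}(t)\|_{H^2(\mathbb{R})}^2\big)$ and $M_n$ is a martingale whose quadratic variation is dominated by $C\int_0^{\cdot}\|\mathbf{v}_n\|_{H^1(\mathbb{R})}^4\,ds$. Since $g_n$ is only almost surely integrable, I would localize with the stopping times
\begin{align*}
\tau_n^R = \inf\Big\{t\ge 0:\ \int_0^t\|\mathbf{u}_n(s)\|_{H^2(\mathbb{R})}^2\,ds + \sup_{s\le t}\|\mathbf{u}_n(s)\|_{H^1(\mathbb{R})}^2 \ge R\Big\}\wedge T,
\end{align*}
and the analogue $\tau^R$ for $\mathbf{u}$. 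On $[0,\tau_n^R\wedge\tau^R]$ the integral of $g_n$ is at most $CR$ and $\|\mathbf{v}_n\|_{H^1(\mathbb{R})}$ is bounded, so the Burkholder--Davis--Gundy inequality (to handle $M_n$) and Gronwall's lemma yield $\mathrm{E}\big(\sup_{t\le\tau_n^R\wedge\tau^R}\|\mathbf{v}_n(t)\|_{H^1(\mathbb{R})}^2\big)\le C_R\|\mathbf{u}_{0,n}-\mathbf{u}_0\|_{H^1(\mathbb{R})}^2\to 0$. The uniform moment bounds of Lemma \ref{ues1} and \eqref{1.3} give $\sup_n\mathrm{P}(\tau_n^R\wedge\tau^R<t)\le C/R$ by Chebyshev's inequality, so letting first $n\to\infty$ and then $R\to\infty$ gives $\mathbf{u}_n(t)\to\mathbf{u}(t)$ in $H^1(\mathbb{R})$ in probability, which completes the proof. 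The step I expect to be the main obstacle is the control of $-(\Delta\mathbf{v}_n,\mathbf{v}_n\times\Delta\mathbf{u}_n)$: it is here that $d=1$ is indispensable, since only $H^1(\mathbb{R})\hookrightarrow L^\infty(\mathbb{R})$ makes $\|\mathbf{v}_n\|_{L^\infty(\mathbb{R})}\|\Delta\mathbf{u}_n\|_{L^2(\mathbb{R})}$ integrable in time and validates the $H^1$ energy equation; the secondary difficulty is the stochastic Gronwall step, where the random unbounded coefficient $g_n$ and the quartic quadratic variation of $M_n$ must be tamed by the stopping-time localization together with the uniform estimates.
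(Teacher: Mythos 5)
Your overall strategy is the same as the paper's: reduce the Feller property to convergence in probability of $\mathbf{u}(t,\mathbf{u}_{0,n})\to\mathbf{u}(t,\mathbf{u}_0)$ in $H^1(\mathbb{R})$, obtain that convergence from an It\^o/energy estimate for the difference $\mathbf{v}_n$ localized by stopping times built from $\sup_s\|\cdot\|_{H^1}^2+\int_0^{\cdot}\|\cdot\|_{H^2}^2\,ds$, and identify $-(\Delta\mathbf{v}_n,\mathbf{v}_n\times\Delta\mathbf{u}_n)$ as the decisive term that forces $d=1$. The cancellations you list and the final Chebyshev decomposition over $\{\tau<t_0\}$ and its complement are exactly what the paper does.

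The one place where your route diverges, and where it has a genuine gap as written, is the Gronwall step. You propose a single $H^1$-level inequality $d\|\mathbf{v}_n\|_{H^1}^2\le g_n\|\mathbf{v}_n\|_{H^1}^2\,dt+dM_n$ with $g_n\sim 1+\|\mathbf{u}_n\|_{H^2}^2+\|\mathbf{u}\|_{H^2}^2$. Stopping makes $\int_0^{\tau}g_n$ bounded but does \emph{not} make $g_n(s)$ pointwise bounded, so after taking expectations one cannot run the classical Gronwall lemma on $t\mapsto\mathrm{E}\bigl(\sup_{r\le t}\|\mathbf{v}_n(r\wedge\tau)\|_{H^1}^2\bigr)$: the term $\mathrm{E}\bigl(\int_0^{t\wedge\tau}g_n\|\mathbf{v}_n\|_{H^1}^2\,ds\bigr)$ is not dominated by a deterministic function times that quantity. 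A pathwise Gronwall is also blocked by the martingale term. One would need a genuinely stochastic Gronwall inequality here, which you invoke only implicitly. The paper avoids this entirely by a two-tier argument: first Lemma \ref{conin} gives the $L^2$ contraction pathwise (there the noise contribution vanishes identically, so Gronwall with the random coefficient is applied $\omega$-by-$\omega$, see \eqref{conin p5}); then, in the $\dot H^1$ estimate, the interpolation \eqref{fp p8} $\|\mathbf{v}\|_{L^\infty}\le c\|\mathbf{v}\|_{L^2}^{1/2}\|\nabla\mathbf{v}\|_{L^2}^{1/2}$ (rather than your cruder $\|\mathbf{v}\|_{L^\infty}\le c\|\mathbf{v}\|_{H^1}$) converts the dangerous term into one that is \emph{linear} in $\sup_s\|\mathbf{v}_n(s)\|_{L^2}$ with the factor $\int_0^{\tau}\|\Delta\mathbf{u}_n\|_{L^2}^2\,ds\le R_\delta$ absorbed deterministically (see \eqref{fp p8b}); the remaining Gronwall coefficient in \eqref{fp p20} is then the deterministic constant $c_3R_\delta^2$, and plain expectation at the stopped time (no BDG) suffices. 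Your proof becomes correct if you either import a stochastic Gronwall lemma or restructure along the paper's lines, feeding the already-established $L^2$ convergence into the gradient estimate.
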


 \begin{proof}
 Let $t_0> 0$,
   $\varphi \in C_b (H^1(\mathbb{R}))$
  and
 $\mathbf{u}_{0},
    \mathbf{u}_{0,n} \in H^1(\mathbb{R})$
    such that   $ \mathbf{u}_{0,n}
    \to \mathbf{u}_{0}$ in
    $ H^1(\mathbb{R})$.
    We only need to show
    $(\mathbf{P}_{t_0} \varphi )
    (\mathbf{u}_{0,n})
    \to (\mathbf{P}_{t_0} \varphi )
    (\mathbf{u}_{0})$; that is,
     \be\label{fp p1}
     \lim_{n\to \infty}
     \textrm{E}
     \left (
     \varphi (\mathbf{u} (t_0, \mathbf{u}_{0,n}))
     \right )
   = \textrm{E}
     \left (
     \varphi (\mathbf{u} (t_0, \mathbf{u}_{0}))
     \right ),
     \ee
     where
     $\mathbf{u} (\cdot, \mathbf{u}_{0,n})$
     and  $\mathbf{u} (\cdot, \mathbf{u}_{0})$
     are solutions of \eqref{equ1.1}
     with initial data
     $ \mathbf{u}_{0,n} $ and
     $ \mathbf{u}_{0 } $, respectively.
     Since   $ \mathbf{u}_{0,n}
    \to \mathbf{u}_{0}$ in
    $ H^1(\mathbb{R})$, we see that
    the sequence
    $\{ \mathbf{u}_{0,n} \}_{n=1}^\infty$
    is bounded in $ H^1(\mathbb{R})$,
    and thus by the uniform estimates of solutions we infer that
    \be\label{fp p2}
    \textrm{E}
    \left (
    \sup_{0\le t\le t_0}
    \|\mathbf{u} (t, \mathbf{u}_{0,n})
    \|^2_{H^1(\mathbb{R})}
    +\int_0^{t_0}
    \| \mathbf{u} (s,
  \mathbf{u}_{0,n})\|_{H^2(\mathbb{R})}^2 ds
     \right )
      \le c ,
     \ee
     and
      \be\label{fp p3}
    \textrm{E}
    \left (
    \sup_{0\le t\le t_0}
    \|\mathbf{u} (t, \mathbf{u}_{0})
    \|^2_{H^1(\mathbb{R})}
    +\int_0^{t_0}
    \| \mathbf{u} (s,
  \mathbf{u}_{0})\|_{H^2(\mathbb{R})}^2 ds
     \right )
      \le c ,
     \ee
     where $c >0$  is independent of $n\in \mathbb{N}$.
     It follows from
     \eqref{fp p2}-\eqref{fp p3}  that
     for every $R>0$,
     $$
     \mathrm{P}
     \left (
    \sup_{0\le t\le t_0}
    \|\mathbf{u} (t, \mathbf{u}_{0,n})
    \|_{H^1(\mathbb{R})} ^2
     +\int_0^{t_0}
    \| \mathbf{u} (s,
  \mathbf{u}_{0,n})\|_{H^2(\mathbb{R})}^2 ds
    >R
     \right )
       \le {\frac {c}{R } },
     $$
     and
      $$
     \mathrm{P}
     \left (
    \sup_{0\le t\le t_0}
    \|\mathbf{u} (t, \mathbf{u}_{0})
    \|_{H^1(\mathbb{R})} ^2
     +\int_0^{t_0}
    \| \mathbf{u} (s,
  \mathbf{u}_{0})\|_{H^2(\mathbb{R})}^2 ds
    >R
     \right )
       \le {\frac {c}{R } }.
       $$
     Therefore,  for every
        $\delta>0$, there exists
        $R_{\delta}>0$ such that
        for all $n\in \mathbb{N}$,
      \be\label{fp p4}
  \mathrm{ P}
     \left (
    \sup_{0\le t\le t_0}
    \|\mathbf{u} (t, \mathbf{u}_{0,n})
    \|_{H^1(\mathbb{R})} ^2
     +\int_0^{t_0}
    \| \mathbf{u} (s,
  \mathbf{u}_{0,n})\|_{H^2(\mathbb{R})}^2 ds
    >R _{\delta}
     \right )
       \le  {\frac 12} \delta,
       \ee
       and
           \be\label{fp p5}
     \mathrm{P}
     \left (
    \sup_{0\le t\le t_0}
    \|\mathbf{u} (t, \mathbf{u}_{0})
    \|_{H^1(\mathbb{R})} ^2
     +\int_0^{t_0}
    \| \mathbf{u} (s,
  \mathbf{u}_{0})\|_{H^2(\mathbb{R})}^2 ds
    >R _{\delta}
     \right )
       \le  {\frac 12} \delta.
       \ee

    Given $n\in \mathbb{N}$, define the stopping
    times:
    $$
    \tau_{1,n}
  =\inf\left\{t \ge 0;  \|\mathbf{u} (t,
  \mathbf{u}_{0,n})
   \|^2_{H^1(\mathbb{R}) } +
   \int_{0}^{t}\| \mathbf{u} (s,
  \mathbf{u}_{0,n})\|_{H^2(\mathbb{R})}^2 ds
  >R _{\delta}   \right\},$$
and
 $$
    \tau_{2,n}
  =\inf\left\{t \ge 0;  \|\mathbf{u} (t,
  \mathbf{u}_{0})
   \|^2_{H^1(\mathbb{R}) } +
   \int_{0}^{t}\| \mathbf{u} (s,
  \mathbf{u}_{0})\|_{H^2(\mathbb{R})}^2 ds
  >R _{\delta}   \right\}.$$
  As usual, $\inf \emptyset =+\infty$.
  Denote by
  $\tau_n =\tau_{1,n} \wedge \tau_{2,n}$.

    Let
 $
 \mathbf{v}_n (t)
 = \mathbf{u} (t, \mathbf{u}_{0,n})
 -
 \mathbf{u} (t, \mathbf{u}_{0})$
 for $t \ge 0 $.
 Then by \eqref{coin p0} we  get
 for all $t\ge 0$, $\mathrm{P}$-almost surely,
 $$
 \|\nabla   \mathbf{v}_n (t)\|^2
 _{L^2(
 \mathbb{R})}
 + 2 \int_0^t
 \|
  \Delta    \mathbf{v}_n (s)\|^2
 _{L^2(
 \mathbb{R})} ds
 +2 \int_0^t
 \|    \nabla  \mathbf{v}_n (s)\|^2
 _{L^2(
 \mathbb{R})} ds
 $$
 $$
 = \|\nabla   \mathbf{v}_{0,n} \|^2
 _{L^2(
 \mathbb{R})}
 -2
 \int_0^t
 (\Delta \mathbf{v}_n (s),
 \mathbf{v}_n (s) \times
 \Delta \mathbf{u} (s, \mathbf{u}_{0,n} ) ) ds
 $$
 $$
 - 2
 \int_0^t
 (\nabla  \mathbf{v}_n (s),
 \nabla (|\mathbf{u} (s, \mathbf{u}_{0,n} )
 |^2\mathbf{u} (s, \mathbf{u}_{0,n} )
 -  |\mathbf{u} (s, \mathbf{u}_{0} )
 |^2\mathbf{u} (s, \mathbf{u}_{0} ) ))
  ds
   $$
   $$
   -\sum_{k=1}^\infty \int_0^t
   (\Delta \mathbf{v}_n (s),
   (\mathbf{v}_n (s)  \times \mathbf{f}_k)
   \times \mathbf{f}_k ) ds
   +
   \sum_{k=1}^\infty \int_0^t
   \|\nabla
   (\mathbf{v}_n (s)  \times \mathbf{f}_k )
   \|^2_{L^2(\mathbb{R})} ds
   $$
  \be\label{fp p6}
   + 2 \sum_{k=1}^\infty\int_0^t
   (\nabla \mathbf{v}_n (s),
    \mathbf{v}_n (s)  \times \nabla \mathbf{f}_k  )
    dW_k.
 \ee
 By \eqref{fp p6} we have
 for all $t\ge 0$,
 $$
 \mathrm{E}
 \left (
 \|\nabla   \mathbf{v}_n (t\wedge \tau_n )\|^2
 _{L^2(
 \mathbb{R})}
 \right )
 + 2\mathrm{E}
 \left (
  \int_0^{t\wedge \tau_n}
 \|
  \Delta    \mathbf{v}_n (s)\|^2
 _{L^2(
 \mathbb{R})} ds \right )
 +2 \mathrm{E}
 \left (
 \int_0^{ t\wedge \tau_n}
 \|    \nabla  \mathbf{v}_n (s)\|^2
 _{L^2(
 \mathbb{R})} ds \right )
 $$
 $$
 = \mathrm{E}
 \left (
 \|\nabla   \mathbf{v}_{0,n} \|^2
 _{L^2(
 \mathbb{R})} \right )
 -2\mathrm{E}
 \left (
 \int_0^{ t\wedge \tau_n}
 (\Delta \mathbf{v}_n (s),
 \mathbf{v}_n (s) \times
 \Delta \mathbf{u} (s, \mathbf{u}_{0,n} ) ) ds
 \right )
 $$
 $$
 - 2\mathrm{E}
 \left (
 \int_0^ { t\wedge \tau_n}
 (\nabla  \mathbf{v}_n (s),
 \nabla (|\mathbf{u} (s, \mathbf{u}_{0,n} )
 |^2\mathbf{u} (s, \mathbf{u}_{0,n} )
 -  |\mathbf{u} (s, \mathbf{u}_{0} )
 |^2\mathbf{u} (s, \mathbf{u}_{0} ) ))
  ds \right )
   $$
   $$
   -\mathrm{E}
 \left (
 \sum_{k=1}^\infty \int_0^ { t\wedge \tau_n}
   (\Delta \mathbf{v}_n (s),
   (\mathbf{v}_n (s)  \times \mathbf{f}_k)
   \times \mathbf{f}_k ) ds
   \right )
   $$
\be\label{fp p7}
   + \mathrm{E}
 \left (
   \sum_{k=1}^\infty \int_0^ { t\wedge \tau_n}
   \|\nabla
   (\mathbf{v}_n (s)  \times \mathbf{f}_k )
   \|^2_{L^2(\mathbb{R})} ds
   \right ).
\ee

   To deal with the
     second term on the right-hand side
 of
 \eqref{fp p7}, we will use
   the following  embedding  inequality:
   \be\label{fp p8}
   \| \mathbf{u}\|_{L^\infty (\mathbb{R})}
   \le c_1 \| \mathbf{u}\|^{\frac 12}_{L^2 (\mathbb{R})}
   \| \nabla \mathbf{u}\|^{\frac 12}_{L^2 (\mathbb{R})},
   \quad \forall \  \mathbf{u} \in H^1  (\mathbb{R}) ,
   \ee
   where $c_1>0$ is an absolute constant.
  Note that  for all $t\in [0, t_0]$,
   $$
   \left | -2\mathrm{E}
 \left (
 \int_0^{ t\wedge \tau_n}
 (\Delta \mathbf{v}_n (s),
 \mathbf{v}_n (s) \times
 \Delta \mathbf{u} (s, \mathbf{u}_{0,n} ) ) ds
 \right ) \right |
 $$
       $$
  \le   2\mathrm{E}
 \left (
 \int_0^{ t\wedge \tau_n}
 \|\Delta \mathbf{v}_n (s)\|_{L^2 (\mathbb{R})}
 \|\mathbf{v}_n (s) \|_{L^\infty (\mathbb{R})}
 \|\Delta \mathbf{u} (s, \mathbf{u}_{0,n} )\|_{L^2 (\mathbb{R})} ds
 \right )
 $$
\be\label{fp p8a}
  \le  {\frac 14}   \mathrm{E}
 \left (
 \int_0^{ t\wedge \tau_n}
 \|\Delta \mathbf{v}_n (s)\|^2_{L^2 (\mathbb{R})}
 ds
 \right )
 +  4 \mathrm{E}
 \left (
 \int_0^{ t\wedge \tau_n}
 \|\mathbf{v}_n (s) \|^2_{L^\infty (\mathbb{R})}
 \|\Delta \mathbf{u} (s, \mathbf{u}_{0,n} )\|^2_{L^2 (\mathbb{R})} ds
 \right )  .
\ee
For the second term on the right-hand side
of \eqref{fp p8a},
 by \eqref{fp p8}
we have for all $t\in [0,t_0]$,
$$
4 \mathrm{E}
 \left (
 \int_0^{ t\wedge \tau_n}
 \|\mathbf{v}_n (s) \|^2_{L^\infty (\mathbb{R})}
 \|\Delta \mathbf{u} (s, \mathbf{u}_{0,n} )\|^2_{L^2 (\mathbb{R})} ds
 \right )
 $$
 $$
  \le   4c_1^2  \mathrm{E}
 \left (
 \int_0^{ t\wedge \tau_n}
 \|\mathbf{v}_n (s) \| _{L^2 (\mathbb{R})}
 \|\nabla \mathbf{v}_n (s) \| _{L^2 (\mathbb{R})}
 \|\Delta \mathbf{u} (s, \mathbf{u}_{0,n} )\|^2_{L^2 (\mathbb{R})} ds
 \right )
 $$
 $$
  \le    4c_1^2 \mathrm{E}
 \left (
 \sup_{s\in [0,t_0]}\|\mathbf{v}_n (s) \| _{L^2 (\mathbb{R})}
 \int_0^{ t\wedge \tau_n}
  \|\nabla \mathbf{v}_n (s) \| _{L^2 (\mathbb{R})}
 \|\Delta \mathbf{u} (s, \mathbf{u}_{0,n} )\|^2_{L^2 (\mathbb{R})} ds
 \right )
 $$
 $$
  \le    4c_1^2  \mathrm{E}
 \left (
 \sup_{s\in [0,t_0]}\|\mathbf{v}_n (s) \| _{L^2 (\mathbb{R})}
 \int_0^{ t\wedge \tau_n}
  (\|  \mathbf{u}  (s,
  \mathbf{u}_{0,n}
     \| _{H^1 (\mathbb{R})}
  +
  \| \mathbf{u}  (s,
  \mathbf{u}_{0}
  ) \| _{H^1(\mathbb{R})}
  )
 \|\Delta \mathbf{u} (s, \mathbf{u}_{0,n} ) \|^2_{L^2 (\mathbb{R})} ds
 \right )
 $$
 $$
  \le    8c_1^2R _{\delta}^{\frac 12}
  \mathrm{E}
 \left (
 \sup_{s\in [0,t_0]}\|\mathbf{v}_n (s) \| _{L^2 (\mathbb{R})}
 \int_0^{ t\wedge \tau_n}
  \|\Delta \mathbf{u} (s, \mathbf{u}_{0,n}  )\|^2_{L^2 (\mathbb{R})} ds
 \right )
 $$
 \be\label{fp p8b}
  \le
  8c_1^2R _{\delta}^{\frac 32}   \mathrm{E}
 \left (
 \sup_{s\in [0,t_0]}\|\mathbf{v}_n (s) \| _{L^2 (\mathbb{R})}
  \right )  .
  \ee
  By \eqref{fp p8a}-\eqref{fp p8b} we get
   for all $t\in [0, t_0]$,
   $$
   \left | -2\mathrm{E}
 \left (
 \int_0^{ t\wedge \tau_n}
 (\Delta \mathbf{v}_n (s),
 \mathbf{v}_n (s) \times
 \Delta \mathbf{u} (s, \mathbf{u}_{0,n} ) ) ds
 \right ) \right |
 $$
 \be\label{fp p9}
  \le  {\frac 14}   \mathrm{E}
 \left (
 \int_0^{ t\wedge \tau_n}
 \|\Delta \mathbf{v}_n (s)\|^2_{L^2 (\mathbb{R})}
 ds
 \right )
 + 8c_1^2R _{\delta}^{\frac 32}   \mathrm{E}
 \left (
 \sup_{s\in [0,t_0]}\|\mathbf{v}_n (s) \| _{L^2 (\mathbb{R})}
  \right )  .
\ee

 For the third term on the right-hand side of \eqref{fp p7}
 we have for all $t\in [0,t_0]$,
 $$
  2\mathrm{E}
 \left (
 \int_0^ { t\wedge \tau_n}
 (\Delta \mathbf{v}_n (s),
  |\mathbf{u} (s, \mathbf{u}_{0,n} )
 |^2\mathbf{u} (s, \mathbf{u}_{0,n} )
 -  |\mathbf{u} (s, \mathbf{u}_{0} )
 |^2\mathbf{u} (s, \mathbf{u}_{0} )  )
  ds \right )
   $$
   $$
   \le
  2\mathrm{E}
 \left (
 \int_0^ { t\wedge \tau_n}
 \left (
 \int_{\mathbb{R}}
 |\Delta \mathbf{v}_n (s)|
 |  \mathbf{v}_n (s)|
 (
  |\mathbf{u} (s, \mathbf{u}_{0,n} )| ^2
 + |\mathbf{u} (s, \mathbf{u}_{0} )
 |^2
 +|\mathbf{u} (s, \mathbf{u}_{0,n} )|  |\mathbf{u} (s, \mathbf{u}_{0} )|
  ) dx\right )
  ds \right )
   $$
 $$
   \le
  3\mathrm{E}
 \left (
 \int_0^ { t\wedge \tau_n}
 \|\Delta \mathbf{v}_n (s)\|_{L^2(\mathbb{R})}
 \| \mathbf{v}_n (s)\|_{L^2(\mathbb{R})}
 (
  \|\mathbf{u} (s, \mathbf{u}_{0,n} )\| ^2
  _{L^\infty
  (\mathbb{R})}
 + \|\mathbf{u} (s, \mathbf{u}_{0} )
 \|^2 _{L^\infty (\mathbb{R})}
  )
  ds \right )
   $$
   $$
   \le
  3c_1^2 \mathrm{E}
 \left (
 \int_0^ { t\wedge \tau_n}
 \|\Delta \mathbf{v}_n (s)\|_{L^2(\mathbb{R})}
 \| \mathbf{v}_n (s)\|_{L^2(\mathbb{R})}
 (
  \|\mathbf{u} (s, \mathbf{u}_{0,n} )\| ^2
  _{H^1
  (\mathbb{R})}
 + \|\mathbf{u} (s, \mathbf{u}_{0} )
 \|^2 _{H^1 (\mathbb{R})}
  )
  ds \right )
   $$
   $$
   \le
  3c_1^2 R _{\delta} \mathrm{E}
 \left (
 \int_0^ { t\wedge \tau_n}
 \|\Delta \mathbf{v}_n (s)\|_{L^2(\mathbb{R})}
 \| \mathbf{v}_n (s)\|_{L^2(\mathbb{R})}
   ds \right )
   $$
      \be\label{fp p10}
   \le
  {\frac 14}  \mathrm{E}
 \left (
 \int_0^ { t\wedge \tau_n}
 \|\Delta \mathbf{v}_n (s)\|^2 _{L^2(\mathbb{R})}
 ds \right )
 +9c_1^4 R^2 _{\delta}
 \mathrm{E}
 \left (
 \int_0^ { t\wedge \tau_n}
 \| \mathbf{v}_n (s)\|^2_{L^2(\mathbb{R})}
   ds \right ).
   \ee

    For the last two terms
    on the right-hand side
    of \eqref{fp p7} we have
   $$
   -\mathrm{E}
 \left (
 \sum_{k=1}^\infty \int_0^ { t\wedge \tau_n}
   (\Delta \mathbf{v}_n (s),
   (\mathbf{v}_n (s)  \times \mathbf{f}_k)
   \times \mathbf{f}_k ) ds
   \right )
   $$
$$
   + \mathrm{E}
 \left (
   \sum_{k=1}^\infty \int_0^ { t\wedge \tau_n}
   \|\nabla
   (\mathbf{v}_n (s)  \times \mathbf{f}_k )
   \|^2_{L^2(\mathbb{R})} ds
   \right )
$$
$$
\le
\sum_{k=1}^\infty
\|\mathbf{f}_k\|^2_{L^\infty (\mathbb{R})}
\mathrm{E}
 \left (  \int_0^ { t\wedge \tau_n}
   \|\Delta
   \mathbf{v}_n (s) \|_{L^2(\mathbb{R})}
   \|
   \mathbf{v}_n (s) \|_{L^2(\mathbb{R})}
   ds
   \right )
   $$
   $$
   +2
   \sum_{k=1}^\infty
\|\mathbf{f}_k\|^2_{W^{1,\infty}  (\mathbb{R})}
\mathrm{E}
 \left (  \int_0^ { t\wedge \tau_n}
   (\|\nabla
   \mathbf{v}_n (s) \|^2_{L^2(\mathbb{R})}
   +
   \|\mathbf{v}_n (s) \|^2_{L^2(\mathbb{R})}
   )
   ds
   \right )
   $$
   \be\label{fp p11}
  \le
  {\frac 14}
  \mathrm{E}
 \left (  \int_0^ { t\wedge \tau_n}
   \|\Delta
   \mathbf{v}_n (s) \|^2_{L^2(\mathbb{R})}
   ds
   \right )
   +
   c_2 \mathrm{E}
 \left (  \int_0^ { t\wedge \tau_n}
   (\|
   \mathbf{v}_n (s) \|^2_{L^2(\mathbb{R})}
   +
   \| \nabla
   \mathbf{v}_n (s) \|^2_{L^2(\mathbb{R})}
   )
   ds
   \right ),
   \ee
      where $c_2>0$ depends only
      on $\{\mathbf{f}_k\}_{k=1}^\infty$.

      It follows from \eqref{fp p7}
      and \eqref{fp p9}-\eqref{fp p11} that
      for all $t\in [0,t_0]$,
       $$
        \mathrm{E}
 \left (
 \| \nabla \mathbf{v}_n (t\wedge \tau_n )\|^2_{L^2(\mathbb{R} )}
 \right )
  \le
  \| \nabla \mathbf{v}_n (0)\|^2_{L^2(\mathbb{R} )}
  $$
 \be\label{fp p12}
  +c_3  R^{\frac 32}
  _\delta  \mathrm{E}
 \left (\sup_{s\in [0, t_0]}\| \mathbf{v}_n  (s)\|
   _{L^2(\mathbb{R})}
     \right )
     +
     c_3 R^2_\delta  \mathrm{E}
 \left (  \int_0^ { t\wedge \tau_n}
   \|
   \mathbf{v}_n (s) \|^2_{H^1 (\mathbb{R})}
   ds
   \right ),
 \ee
       where $c_3>0$ depends only
      on $\{\mathbf{f}_k\}_{k=1}^\infty$.

      On the other hand, by \eqref{conin p4} with $d=1$ we have
      for all $t\in [0,t_0]$,
      $$
      \mathrm{E}
 \left (
 \| \mathbf{v}_n (t\wedge \tau_n )\|^2_{L^2(\mathbb{R} )}
 \right )
  \le
 \| \mathbf{v}_n (0)\|^2_{L^2(\mathbb{R} )}
 $$
 $$
 +
c_4   \mathrm{E}
 \left (
  \int_0^{t\wedge \tau_n}
 \| \mathbf{u}   (s, \mathbf{u} _0 )\|^2_{H^1(\mathbb{R} )}
 \| \mathbf{u}    (s, \mathbf{u} _0 ) \|^\frac{2}{3}_{H^2(\mathbb{R} )}
\| \mathbf{v}_n  (s)\|^2
   _{L^2(\mathbb{R})} ds
   \right )
 $$
 $$
 \le
 \| \mathbf{v}_n (0)\|^2_{L^2(\mathbb{R} )}
  +
c_4  R_\delta  \mathrm{E}
 \left (\sup_{s\in [0, t_0]}\| \mathbf{v}_n  (s)\|^2
   _{L^2(\mathbb{R} )}
  \int_0^{t\wedge \tau_n}
  \| \mathbf{u}    (s, \mathbf{u} _0 ) \|^\frac{2}{3}_{H^2(\mathbb{R} )}
   ds
   \right )
 $$
     \be\label{fp p13}
 \le
 \| \mathbf{v}_n (0)\|^2_{L^2(\mathbb{R} )}
  +
c_4  R^\frac{4}{3}_\delta t^\frac{2}{3}  \mathrm{E}
 \left (\sup_{s\in [0, t_0]}\| \mathbf{v}_n  (s)\|^2
   _{L^2(\mathbb{R} )}
     \right ),
     \ee
where $c_4>0$ is an absolute constant.

By \eqref{fp p12}-\eqref{fp p13} we obtain
   for all $t\in [0,t_0]$,
       $$
        \mathrm{E}
 \left (
 \|   \mathbf{v}_n (t\wedge \tau_n )\|^2_{H^1
 (\mathbb{R} )}
 \right )
  \le
  \|   \mathbf{v}_n (0)\|^2_{H^1(\mathbb{R} )}
  +
    c_3 R^2_\delta  \mathrm{E}
 \left (  \int_0^ { t }
   \|
   \mathbf{v}_n (s\wedge \tau_n ) \|^2_{H^1 (\mathbb{R})}
   ds
   \right )
  $$
 \be\label{fp p20}
  +c_3  R^{\frac 32}
  _\delta  \mathrm{E}
 \left (\sup_{s\in [0, t_0]}\| \mathbf{v}_n  (s)\|
   _{L^2(\mathbb{R})}
     \right )
   +
c_4 R^\frac{4}{3}_\delta t^\frac{2}{3}  \mathrm{E}
 \left (\sup_{s\in [0, t_0]}\| \mathbf{v}_n  (s)\|^2
   _{L^2(\mathbb{R} )}
     \right ) .
     \ee
 Applying Gronwall's inequality to \eqref{fp p20}
 to obtain that for all $t\in[0,t_0]$,
         $$
        \mathrm{E}
 \left (
 \|   \mathbf{v}_n (t\wedge \tau_n )\|^2_{H^1
 (\mathbb{R} )}
 \right )
  \le
  e^{c_3R^2_\delta t}  \|   \mathbf{v}_n (0)\|^2_{H^1(\mathbb{R} )}
  $$
  $$
   +
   e^{c_3R^2_\delta t}
   \left ( c_3R^{\frac 32} _\delta
    \mathrm{E}
 \left (\sup_{s\in [0, t_0]}\| \mathbf{v}_n  (s)\|
   _{L^2(\mathbb{R})}
     \right )
   +
c_4  R^{\frac 43} _\delta t^\frac{2}{3} \mathrm{E}
 \left (\sup_{s\in [0, t_0]}\| \mathbf{v}_n  (s)\|^2
   _{L^2(\mathbb{R} )}
     \right )
     \right )  ,
    $$
    which along with Lemma \ref{conin}
    implies that
        $$
       \lim_{n\to \infty} \mathrm{E}
 \left (
 \|   \mathbf{v}_n (t_0 \wedge \tau_n )\|^2_{H^1
 (\mathbb{R} )}
 \right ) =0,
 $$
 that is,
 \be\label{fp p21}
       \lim_{n\to \infty} \mathrm{E}
 \left (
 \|   \mathbf{u} (t_0 \wedge \tau_n,
 \mathbf{u}_{0,n}
  )
  -
   \mathbf{u} (t_0 \wedge \tau_n,
 \mathbf{u}_{0 }
  )
  \|^2_{H^1
 (\mathbb{R} )}
 \right ) =0.
 \ee

 Given $\eta>0$,
 by \eqref{fp p4}-\eqref{fp p5} we get
 $$
\mathrm{ P} \left (\omega \in \Omega:
  \|   \mathbf{u} (t_0  ,
 \mathbf{u}_{0,n}
  )
  -
   \mathbf{u} (t_0  ,
 \mathbf{u}_{0 }
  )
  \| _{H^1
 (\mathbb{R} )}
 >\eta
 \right )
 $$
 $$
=
 \mathrm{P} \left ( \omega \in \Omega: \tau_n <t_0, \
 \|   \mathbf{u} (t_0  ,
 \mathbf{u}_{0,n}
  )
  -
   \mathbf{u} (t_0  ,
 \mathbf{u}_{0 }
  )
  \| _{H^1
 (\mathbb{R} )}
 >\eta \right )
 $$
 $$
 +
 \mathrm{P} \left ( \omega \in \Omega: \tau_n  \ge t_0,  \
 \|   \mathbf{u} (t_0  ,
 \mathbf{u}_{0,n}
  )
  -
   \mathbf{u} (t_0  ,
 \mathbf{u}_{0 }
  )
  \| _{H^1
 (\mathbb{R} )}
 >\eta
 \right )
 $$
 $$
\le \delta
+
 \mathrm{P} \left ( \omega \in \Omega: \tau_n  \ge t_0,  \
 \|   \mathbf{u} (t_0  ,
 \mathbf{u}_{0,n}
  )
  -
   \mathbf{u} (t_0  ,
 \mathbf{u}_{0 }
  )
  \| _{H^1
 (\mathbb{R} )}
 >\eta
 \right )
 $$
 $$
\le \delta
+
 \mathrm{P }\left ( \omega \in \Omega:
 \|   \mathbf{u} (t_0 \wedge \tau_n ,
 \mathbf{u}_{0,n}
  )
  -
   \mathbf{u} (t_0\wedge \tau_n  ,
 \mathbf{u}_{0 }
  )
  \| _{H^1
 (\mathbb{R} )}
 >\eta
 \right )
 $$
 \be\label{fp p22}
\le \delta
+ \eta^{-2}
 \mathrm{E}
 \left (
 \|   \mathbf{u} (t_0 \wedge \tau_n ,
 \mathbf{u}_{0,n}
  )
  -
   \mathbf{u} (t_0\wedge \tau_n  ,
 \mathbf{u}_{0 }
  )
  \|^2 _{H^1
 (\mathbb{R} )}
  \right ) .
  \ee
 First letting $n\to \infty$ and then $\delta \to 0$
 in \eqref{fp p22}, we get
 for every  $\eta>0$,
\be\label{fp p23}
 \lim_{n\to \infty}
 \mathrm{P} \left (\omega \in \Omega:
  \|   \mathbf{u} (t_0  ,
 \mathbf{u}_{0,n}
  )
  -
   \mathbf{u} (t_0  ,
 \mathbf{u}_{0 }
  )
  \| _{H^1
 (\mathbb{R} )}
 >\eta
 \right ) =0,
\ee
 that is,
  $\mathbf{u} (t_0  ,
 \mathbf{u}_{0,n}
  )
  \to  \mathbf{u} (t_0  ,
 \mathbf{u}_{0 }
  )$ in $H^1(\mathbb{R})$ in probability.
  Then \eqref{fp p1}
  follows from \eqref{fp p23},
  the fact $\varphi\in C_b(H^1(\mathbb{R}))$
  and the dominated convergence theorem.
   \end{proof}

\subsection{Tightness of time averaged measures}
 In this subsection, we  prove the tightness of
the time averaged measures
 $\mu_n$,  $n\in \mathbb{N}$, which is defined by
 \be\label{avet1}
 \mu_n
 (\Gamma )
 ={\frac 1n}
 \int_0^n
 \mathrm{P}(\mathbf{u} (t, \mathbf{u}_0)\in \Gamma) dt,
 \ee
 where $\Gamma \in \mathcal{B} (H^1(\mathbb{R}))$,
 and $\mathbf{u} (\cdot, \mathbf{u}_0)$ is the
 solution of \eqref{equ1.1}  with initial data
 $  \mathbf{u}_0  \in H^1(\mathbb{R})$.

\begin{lemma}\label{ave_1}
If \eqref{1.1} holds, then the   sequence
$\{\mu_n\}_{n=1}^\infty$ given by
\eqref{avet1} is   tight  in  $H^1(\mathbb{R})$.
\end{lemma}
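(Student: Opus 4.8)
The plan is to verify the standard tightness criterion: for each $\varepsilon>0$ I will construct a set $K_\varepsilon\subset H^1(\mathbb{R})$ that is compact in $H^1(\mathbb{R})$ and satisfies $\mu_n(K_\varepsilon)\geq 1-\varepsilon$ for all $n\in\mathbb{N}$. Because Sobolev embeddings are not compact on $\mathbb{R}$, compactness of $K_\varepsilon$ cannot follow from an $H^1$-bound alone; instead $K_\varepsilon$ will be cut out by two conditions, namely a uniform $H^2$-bound (which yields \emph{local} compactness through the compact embedding $H^2(\mathscr{D}_m)\hookrightarrow H^1(\mathscr{D}_m)$ on each ball) together with a uniform decay of the $H^1$-mass at infinity (which controls the tails). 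These are precisely the two quantities already estimated in Lemma \ref{ues1} and Lemma \ref{tail_3}.

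First I would invoke Lemma \ref{ues1}: dividing the bound \eqref{ues1 2} by $n$ gives, uniformly in $n\ge 1$,
\[
\frac 1n\int_0^n \mathrm{E}\left(\|\mathbf{u}(s,\mathbf{u}_0)\|^2_{H^2(\mathbb{R})}\right)\,ds\leq \frac{C}{n}\|\mathbf{u}_0\|^2_{H^1(\mathbb{R})}+C\leq C_1,
\]
so by Chebyshev's inequality $\mu_n(\{\mathbf{v}:\|\mathbf{v}\|^2_{H^2(\mathbb{R})}>R\})\leq C_1/R$, and I pick $R=R(\varepsilon)$ making this less than $\varepsilon/2$. Next, for each $k\in\mathbb{N}$ Lemma \ref{tail_3} furnishes $m_k\ge 1$ with
\[
\sup_{t\geq 0}\mathrm{E}\left(\int_{|x|>m_k}\left(|\mathbf{u}(t,\mathbf{u}_0)|^2+|\nabla\mathbf{u}(t,\mathbf{u}_0)|^2\right)dx\right)<\frac{\varepsilon}{2^{2k+1}};
\]
averaging in time and applying Chebyshev once more yields
\[
\mu_n\left(\left\{\mathbf{v}:\int_{|x|>m_k}\left(|\mathbf{v}|^2+|\nabla\mathbf{v}|^2\right)dx>2^{-k}\right\}\right)\leq \frac{\varepsilon}{2^{k+1}},
\]
and summing over $k$ bounds the union by $\varepsilon/2$. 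Setting
\[
K_\varepsilon=\left\{\mathbf{v}\in H^1(\mathbb{R}):\ \|\mathbf{v}\|^2_{H^2(\mathbb{R})}\leq R,\ \int_{|x|>m_k}\left(|\mathbf{v}|^2+|\nabla\mathbf{v}|^2\right)dx\leq 2^{-k}\ \text{for all }k\in\mathbb{N}\right\}
\]
then gives $\mu_n(K_\varepsilon)\geq 1-\varepsilon$ for all $n$, where I use that $\mathbf{v}\mapsto\|\mathbf{v}\|_{H^2(\mathbb{R})}$ is lower semicontinuous and each tail functional is continuous on $H^1(\mathbb{R})$, so $K_\varepsilon$ is a closed (hence Borel) subset of $H^1(\mathbb{R})$.

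The main obstacle, and the heart of the argument, is to prove that $K_\varepsilon$ is relatively compact in $H^1(\mathbb{R})$. Given any sequence $\{\mathbf{z}_j\}_{j=1}^\infty\subset K_\varepsilon$, it is bounded in $H^2(\mathbb{R})$, hence bounded in $H^2(\mathscr{D}_m)$ for every $m$; the compactness of $H^2(\mathscr{D}_m)\hookrightarrow H^1(\mathscr{D}_m)$ together with a diagonal extraction over $m$ produces a subsequence $\{\mathbf{z}_{j_l}\}_{l=1}^\infty$ converging in $H^1(\mathscr{D}_m)$ for every $m\in\mathbb{N}$, to some limit $\mathbf{z}$. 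The uniform tail bound built into $K_\varepsilon$ shows that $\int_{|x|>m_k}(|\mathbf{z}_{j_l}|^2+|\nabla\mathbf{z}_{j_l}|^2)\,dx\le 2^{-k}$ for all $l$, so splitting
\[
\|\mathbf{z}_{j_l}-\mathbf{z}_{j_{l'}}\|^2_{H^1(\mathbb{R})}\leq \|\mathbf{z}_{j_l}-\mathbf{z}_{j_{l'}}\|^2_{H^1(\mathscr{D}_{m_k})}+\int_{|x|>m_k}\left(\cdots\right)dx
\]
and choosing $k$ large to make the tail part small, then $l,l'$ large to make the $H^1(\mathscr{D}_{m_k})$ part small, shows $\{\mathbf{z}_{j_l}\}$ is Cauchy in $H^1(\mathbb{R})$. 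Thus $K_\varepsilon$ is precompact, and being closed it is compact; since $\mu_n(K_\varepsilon)\ge 1-\varepsilon$ for all $n$, the sequence $\{\mu_n\}_{n=1}^\infty$ is tight in $H^1(\mathbb{R})$. I expect the compactness step to be where all the work lies, as it is exactly here that the $H^2$-regularity and the uniform $H^1$-tail estimate must be combined to circumvent the failure of compact embeddings on the whole line.
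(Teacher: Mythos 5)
Your proof is correct and follows essentially the same route as the paper: both rest on the time-averaged $H^2$ bound of Lemma \ref{ues1} (via \eqref{ues1 2}) and the uniform $H^1$ tail estimate of Lemma \ref{tail_3}, combined through Chebyshev's inequality and a union bound over the dyadic tail radii. The only difference is cosmetic packaging of the compact set --- you impose a single global $H^2$ constraint on the whole function and prove compactness by diagonal extraction, whereas the paper splits $\mathbf{u}$ into a compactly supported piece lying in an $H^2$-ball and a small $H^1$ remainder and takes an intersection of $2^{-m}$-neighborhoods of compact sets --- but both yield the same totally bounded set and the same measure estimate.
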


\begin{proof}
Note that, there exists a constant $c_1>0$ depending only
on
$ \mathbf{u}_0 $ such that for all $n\in \mathbb{N}$,
\be\label{ave_1 p1}
{\frac 1n}
\int_0^n
\mathrm{E}
\left (\|\mathbf{u} (t, \mathbf{u}_0) \|^2_{H^2(\mathbb{R})}
\right ) dt \le c_1  .
\ee
On the other hand, by Lemma \ref{tail_3}
we find that for every $\varepsilon>0$ and
$m\in \mathbb{N}$, there exists
a positive integer
$n_m$ depending on $m, \varepsilon
$ and $ \mathbf{u}_0 $  such that
for all $t\ge 0$,
 \be\label{ave_1 p2}
 \mathrm{E}
\left (
\int_{|x| \ge {\frac 12} n_m}
(|\mathbf{u} (t, \mathbf{u}_0) (x)|^2
+|\nabla \mathbf{u} (t, \mathbf{u}_0) (x)|^2)
dx
\right )
<2^{-4m} \varepsilon.
\ee
Let $\phi_m$ be the cut-off function
used in the proof of Lemma \ref{tail_2}, and
$$
\mathbf{v}_m (t)=\phi_{n_m} \mathbf{u} (t, \mathbf{u}_0),
\quad
 \mathbf{w} _m (t)= (1-\phi_{n_m} ) \mathbf{u} (t, \mathbf{u}_0).
$$
Then by \eqref{ave_1 p2}  we have for all $t\ge 0$,
\be\label{ave_1 p3a}
 \mathrm{E}
\left (
\|\mathbf{v}_m (t)\|^2_{H^1(\mathbb{R})}
\right )
<2^{-4m} \varepsilon.
\ee
By \eqref{ave_1 p1} we find that
 there exists a constant $c_2>0$ depending only
on
$ \mathbf{u}_0 $ such that for all $n\in \mathbb{N}$,
\be\label{ave_1 p3}
{\frac 1n}
\int_0^n
\mathrm{E}
\left (\|\mathbf{w} _m (t ) \|^2_{H^2(\mathbb{R})}
\right ) dt \le c_2  .
\ee
Define the sets:
 $$\mathscr{W}_m^\varepsilon:=\left\{\mathbf{u}
 \in
 H^2(\mathbb{R})
 :
 \mathbf{u} =0
 \ \text{for } |x| \ge n_m
 \ \text{ and }
 \|\mathbf{u} \|_{ H^2(\mathbb{R})}\leq \frac{2^m\sqrt{c_2}}{\sqrt{\varepsilon}}\right\},
 $$
  and
$$\mathscr{Y}_m^\varepsilon:
=\left\{\mathbf{u}\in
H^1(\mathbb{R})
:
\| \mathbf{u} -\mathbf{v}\|_{H^1(\mathbb{R})}
\le 2^{-m}
\ \ \text{for some } \mathbf{v} \in \mathscr{W}_1^\varepsilon
\right \} ,
$$
and
$$\mathscr{Y}^\varepsilon=\bigcap_{m=1}^\infty
\mathscr{Y}_m^\varepsilon.
$$
Then $\mathscr{Y}^\varepsilon$
is compact in  $H^1(\mathbb{R})$.
In addition, by \eqref{ave_1 p3a} and \eqref{ave_1 p3}
one can verify that
$\mu_n (\mathscr{Y}^\varepsilon)> 1-\varepsilon $
for all $n\in \mathbb{N}$,
and hence $\{\mu_n\}_{n=1}^\infty$ is tight
in $H^1(\mathbb{R})$.
 \end{proof}

 Based on Lemma \ref{ave_1}, by
  the Krylov-Bogoliubov theory, we also obtain
  the existence of invariant measures
  of \eqref{equ1.1} by the
  Feller property (Lemma \ref{fp})
  with respect to  the norm topology
  instead of the weak topology in
  $H^1(\mathbb{R})$.

\subsection{Limits of invariant measures} In this subsection, we consider the limits  of invariant measures
of \eqref{1.3*}
with respect to $\varepsilon$.
Given   $\varepsilon
\in [0,1]$,
 denote by $\mathcal{I}^\varepsilon$
 the set  of all   invariant measures of equation \eqref{1.3*} corresponding to $\varepsilon$.
 We will  prove
 the union $\bigcup_{\varepsilon\in [0,1]}\mathcal{I}^\varepsilon$ is tight in $H^1(\mathbb{R})$.
 To that end,
 we recall
 that all the uniform estimates established
 in  the  previous sections are valid
 for the solutions of \eqref{1.3*}
 which are actually uniform for all
 $\varepsilon \in [0,1]$.
 In particular,
Lemmas \ref{ues1a} and \ref{tail_3a}
 apply to the solutions
 of \eqref{1.3*} which are presented below.

 \begin{lemma} \label{ues1aa}
If \eqref{1.1}  holds,
then for every
$R>0$, there exists
$T=T(R)>0$ such that
for all $t\ge T$ and $\varepsilon
\in [0,1]$,
 the solution $\mathbf{u}^\varepsilon
(t,  \mathbf{u}_0)$ of \eqref{1.3*}
with $\|\mathbf{u}_0 \|_{ H^1(\mathbb{R})}
  \le R$
 satisfies
 $$
\mathrm{E}
\left (
 \|
 \mathbf{u}^\varepsilon (t,  \mathbf{u}_0) \|^2_{H^1(\mathbb{R})}
 \right )
 +
  \int_0^t e^{s-t}  \mathrm{E}
\left ( \|
 \mathbf{u}^\varepsilon (s,  \mathbf{u}_0) \|^2_{H^2(\mathbb{R})}
 +
 \int_{\mathbb{R}}
 |\mathbf{u}^\varepsilon (s,  \mathbf{u}_0)|^2
 |\nabla \mathbf{u}^\varepsilon (s,  \mathbf{u}_0)|^2  dx
 \right )
 ds
 \le
   C,
 $$
 and
 $$
   {\frac 1t} \int_0^t  \mathrm{E}
\left (
  \|
 \mathbf{u}^\varepsilon (s, \mathbf{u}_0) \|^2_{H^2(\mathbb{R})}
 +
 \int_{\mathbb{R}}
  |\mathbf{u}^\varepsilon (s,\mathbf{u}_0)|^2
 |\nabla \mathbf{u}^\varepsilon (s,\mathbf{u}_0)|^2  dx
 \right )  ds
\le
C,
$$
where $C>0$ is a constant  depending
  on $\{\mathbf{f}_k\}_{k=1}^\infty$
  but not on $\mathbf{u}_0$ or
  $\varepsilon$.
\end{lemma}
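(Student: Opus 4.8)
The plan is to observe that \eqref{1.3*} differs from \eqref{equ1.1} only through the noise intensity $\varepsilon$, and that this $\varepsilon$ enters the energy identities exclusively through terms carrying a factor $\varepsilon$ or $\varepsilon^2$; since $\varepsilon\in[0,1]$ these factors are bounded by $1$, so the constants produced in the proof of Lemma \ref{ues1} remain valid uniformly in $\varepsilon$. First I would rewrite the Stratonovich equation \eqref{1.3*} in its equivalent It\^{o} form
$$
d\mathbf{u}^\varepsilon = \left(\Delta\mathbf{u}^\varepsilon + \mathbf{u}^\varepsilon\times\Delta\mathbf{u}^\varepsilon - (1+|\mathbf{u}^\varepsilon|^2)\mathbf{u}^\varepsilon + \frac{\varepsilon^2}{2}\sum_{k=1}^\infty(\mathbf{u}^\varepsilon\times\mathbf{f}_k)\times\mathbf{f}_k\right)dt + \varepsilon\sum_{k=1}^\infty(\mathbf{u}^\varepsilon\times\mathbf{f}_k+\mathbf{f}_k)\,dW_k ,
$$
in which the It\^{o} correction drift scales like $\varepsilon^2$ and the diffusion like $\varepsilon$.

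Next I would repeat the two It\^{o}-formula computations of Lemma \ref{ues1}, first for $\|\mathbf{u}^\varepsilon(t)\|^2_{L^2(\mathbb{R})}$ and then for $\|\nabla\mathbf{u}^\varepsilon(t)\|^2_{L^2(\mathbb{R})}$. The structural cancellation $(a\times b,a)=0$ is insensitive to scalar prefactors, so the multiplicative part of the noise cancels exactly as in \eqref{ues1 p1a}: the $\varepsilon^2$ It\^{o} correction and the $\varepsilon^2$ quadratic variation combine, leaving only $\varepsilon^2\sum_{k=1}^\infty\|\mathbf{f}_k\|^2_{L^2(\mathbb{R})}\le\sum_{k=1}^\infty\|\mathbf{f}_k\|^2_{L^2(\mathbb{R})}$ on the right-hand side. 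In the gradient estimate the cross terms analogous to \eqref{ues1 p3}--\eqref{ues1 p4} each carry a factor $\varepsilon$ or $\varepsilon^2$; bounding these by $1$ and applying the same Cauchy--Schwarz and Young steps reproduces the counterpart of \eqref{ues1 p4a} with constants $c_1,c_2$ depending only on $\{\mathbf{f}_k\}_{k=1}^\infty$ and not on $\varepsilon$. Gronwall's inequality then yields, uniformly in $\varepsilon\in[0,1]$, the exact analogues of \eqref{ues1 1} and \eqref{ues1 2} for $\mathbf{u}^\varepsilon$.

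Finally I would deduce Lemma \ref{ues1aa} from this $\varepsilon$-uniform version of Lemma \ref{ues1} in the same way that Lemma \ref{ues1a} follows from Lemma \ref{ues1}. For $\|\mathbf{u}_0\|_{H^1(\mathbb{R})}\le R$ the term $Ce^{-t}\|\mathbf{u}_0\|^2_{H^1(\mathbb{R})}\le Ce^{-t}R^2$ in the analogue of \eqref{ues1 1} is absorbed into the additive constant once $t\ge T(R)$, where $T(R)$ is chosen so that $e^{-T(R)}R^2\le 1$; this gives the first bound, and dividing the analogue of \eqref{ues1 2} by $t$ (with $R^2/T(R)\le 1$) gives the second. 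The only point requiring care is the bookkeeping of the second step: one must confirm that the cancellation of the multiplicative noise survives the $\varepsilon$-scaling in both the $L^2$ and the $H^1$ energy identities, and that no resulting term carries a negative power of $\varepsilon$. Since every noise contribution is a monomial in $\varepsilon$ of degree one or two, uniformity of all constants on $[0,1]$ is immediate and no term degenerates as $\varepsilon\to 0$.
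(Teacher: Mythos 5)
Your proposal is correct and matches the paper's intent exactly: the paper gives no separate proof of Lemma \ref{ues1aa}, simply asserting that the estimates of Lemmas \ref{ues1} and \ref{ues1a} carry over to \eqref{1.3*} uniformly in $\varepsilon\in[0,1]$ because every noise contribution in the It\^{o}-form energy identities enters with a factor $\varepsilon$ or $\varepsilon^2$ bounded by $1$. Your write-up supplies precisely the bookkeeping the paper leaves implicit (in particular the cancellation $\varepsilon^2\|\mathbf{u}^\varepsilon\times\mathbf{f}_k\|^2$ between the It\^{o} correction and the quadratic variation, leaving only $\varepsilon^2\sum_k\|\mathbf{f}_k\|^2_{L^2(\mathbb{R})}\le\sum_k\|\mathbf{f}_k\|^2_{L^2(\mathbb{R})}$), and the passage from the $\varepsilon$-uniform analogue of Lemma \ref{ues1} to the statement via the choice of $T(R)$ is the same as the paper's derivation of Lemma \ref{ues1a}.
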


\begin{lemma}\label{tail_3aa}
If \eqref{1.1} holds,
then for every   $\delta>0$
and $R>0$,
there exist $T=T(\delta, R)>0$
and
  $M=M(\delta) \ge 1$
 such that
 for all
 $t\ge T$,
 $m\ge M$ and $\varepsilon
 \in [0,1]$,
 the solution
 ${\mathbf{ u}}^\varepsilon  (t,
\mathbf{u}_0 )$
of \eqref{1.3*}
with
  $\|\mathbf{u}_0\|_{
 H^1
 (\mathbb{R})  } \le R$
 satisfies
$$
 \mathrm{E}
\left (
 \int_{|x|>m}
 \left ( |
 {\mathbf{ u}}^\varepsilon  (t,
\mathbf{u}_0 )(x) |^2
+
|\nabla
 {\mathbf{ u}}^\varepsilon  (t,
\mathbf{u}_0 )(x) |^2
\right )
dx
\right )  <\delta.
$$
\end{lemma}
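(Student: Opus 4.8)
The plan is to repeat the two–stage argument that produced Lemma \ref{tail_2a} and Lemma \ref{tail_3a}, the only additional task being to track the dependence on the noise intensity $\varepsilon$ and to verify that every constant, as well as the thresholds $T$ and $M$, can be chosen independently of $\varepsilon\in[0,1]$. The decisive structural observation is that in \eqref{1.3*} the stochastic forcing enters with the prefactor $\varepsilon$, so that upon passing to the equivalent It\^{o} form the martingale term carries a factor $\varepsilon$ and the Stratonovich--It\^{o} correction $\tfrac12\sum_k(\mathbf{u}^\varepsilon\times\mathbf{f}_k)\times\mathbf{f}_k$ carries a factor $\varepsilon^2$; likewise the quadratic--variation contribution in It\^{o}'s formula for $L^2$-type functionals carries $\varepsilon^2$. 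Since $0\le\varepsilon\le1$ we have $\varepsilon,\varepsilon^2\le1$, and therefore each bound obtained when estimating those terms is dominated by the corresponding bound in the case $\varepsilon=1$, which is precisely the case treated in Lemmas \ref{tail_2a} and \ref{tail_3a}.

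First I would establish the $\varepsilon$-uniform $L^2$ tail estimate, i.e. the analogue of Lemma \ref{tail_2a}. Applying It\^{o}'s formula to $\|\phi_m\mathbf{u}^\varepsilon(t)\|^2_{L^2(\mathbb{R})}$ as in \eqref{tail_2 p1}, the cross--product identity $(a\times b,a)=0$ again annihilates the localized gyromagnetic term and the multiplicative part of the noise. The cancellation \eqref{tail_2 p4}, in which the quadratic--variation term and the It\^{o} correction combine to leave only the additive piece $\sum_k\|\phi_m\mathbf{f}_k\|^2_{L^2(\mathbb{R})}$, persists after both are scaled by $\varepsilon^2$, so what remains is $\varepsilon^2\sum_k\|\phi_m\mathbf{f}_k\|^2_{L^2(\mathbb{R})}\le\sum_k\|\phi_m\mathbf{f}_k\|^2_{L^2(\mathbb{R})}$, controlled uniformly in $\varepsilon$. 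This yields the differential inequality \eqref{tail_2 p5} with an $\varepsilon$-free constant, whose decay rate comes from the $\varepsilon$-independent damping $-\mathbf{u}^\varepsilon$; Gronwall together with the uniform energy bound of Lemma \ref{ues1aa} then gives an $L^2$ tail estimate of the form \eqref{tail_2 p7} with thresholds independent of $\varepsilon$.

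Next I would differentiate $\|\phi_m\nabla\mathbf{u}^\varepsilon(t)\|^2_{L^2(\mathbb{R})}$ and follow \eqref{tail_3 p1}--\eqref{tail_3 p8}. The first three terms on the right-hand side of \eqref{tail_3 p1} are purely deterministic and identical to the $\varepsilon=1$ case, so the bounds \eqref{tail_3 p2}--\eqref{tail_3 p4} carry over unchanged; in particular the localized gyromagnetic term is handled exactly as in \eqref{tail_3 p3} by integrating by parts and absorbing the result into $\int_{\mathbb{R}}|\mathbf{u}^\varepsilon|^2|\nabla\mathbf{u}^\varepsilon|^2\,dx$ plus $\|\Delta\mathbf{u}^\varepsilon\|^2_{L^2(\mathbb{R})}$. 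The last two terms, coming from the noise and its correction, now acquire the factors $\varepsilon$ and $\varepsilon^2$, so the estimates \eqref{tail_3 p5a}--\eqref{tail_3 p6} hold with constants independent of $\varepsilon$. Combining these with the uniform gradient--energy bound of Lemma \ref{ues1aa} and the $L^2$ tail estimate from the previous step, and solving the resulting linear differential inequality as in \eqref{tail_3 p7}--\eqref{tail_3 p8}, produces a bound for $\mathbb{E}\|\phi_m\nabla\mathbf{u}^\varepsilon(t)\|^2_{L^2(\mathbb{R})}$ with $\varepsilon$-free constants and decay rate. Choosing $m$ large so that $\int_{|x|>\frac12 m}(|\mathbf{u}_0|^2+|\nabla\mathbf{u}_0|^2)\,dx$ and $\int_{|x|>\frac12 m}\sum_k(|\mathbf{f}_k|^2+|\nabla\mathbf{f}_k|^2)\,dx$ are small by \eqref{1.1}, and then $t$ large to absorb the $e^{-t}$-decaying initial-data contribution, delivers the claim with $T=T(\delta,R)$ and $M=M(\delta)$ both independent of $\varepsilon$.

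The main obstacle is not the $\varepsilon$-bookkeeping, which is routine once $\varepsilon\le1$ is invoked, but rather the same one-dimensional structural difficulty already faced in Lemma \ref{tail_3}: the localized gyromagnetic term generates, after the integration by parts in \eqref{tail_3 p3}, a contribution controlled by $\int_{\mathbb{R}}|\mathbf{u}^\varepsilon|^2|\nabla\mathbf{u}^\varepsilon|^2\,dx$, whose time-integrated expectation must be bounded uniformly in $\varepsilon$. This is exactly what Lemma \ref{ues1aa} supplies in dimension one, relying on the embedding $H^1(\mathbb{R})\hookrightarrow L^\infty(\mathbb{R})$; the scheme would break down for $d=2$ at precisely the point where the $H^1$ energy identity fails, as explained in the introduction. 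Thus the whole argument hinges on invoking the $\varepsilon$-uniform dissipation estimate of Lemma \ref{ues1aa} at each stage in order to close the Gronwall loop with thresholds independent of the noise intensity.
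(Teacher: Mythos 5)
Your proposal is correct and follows essentially the same route as the paper, which gives no separate proof of Lemma \ref{tail_3aa} but simply asserts that the arguments of Lemmas \ref{tail_2}--\ref{tail_3a} carry over to \eqref{1.3*} with constants uniform in $\varepsilon\in[0,1]$ once the $\varepsilon$-uniform dissipation estimate of Lemma \ref{ues1aa} is in hand. Your tracking of the $\varepsilon$ and $\varepsilon^2$ prefactors (in particular that the cancellation in \eqref{tail_2 p4} survives the common $\varepsilon^2$ scaling, and that the remaining noise contributions are monotone in $\varepsilon\le 1$) is exactly the verification the paper leaves implicit.
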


 Next, We   prove
 the union $\bigcup_{\varepsilon\in [0,1]}\mathcal{I}^\varepsilon$ is tight in $H^1(\mathbb{R})$.

\begin{proposition}\label{tigu1}
If \eqref{1.1} holds, then
the  set $\bigcup_{\varepsilon\in [0,1]}\mathcal{I}^\varepsilon$ is tight in $H^1(\mathbb{R})$.
\end{proposition}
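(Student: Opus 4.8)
The plan is to show that the family $\{\mu^\varepsilon\}$ has (i) a uniformly bounded second moment in $H^1(\mathbb{R})$, (ii) a uniform $H^2$ bound, and (iii) uniform tail-ends estimates in $H^1(\mathbb{R})$, and then to assemble a single compact set absorbing all $\mu^\varepsilon$ exactly as in the proof of Lemma \ref{ave_1}. Throughout I exploit the defining property of an invariant measure: if $\mathbf{u}_0$ is distributed according to $\mu^\varepsilon \in \mathcal{I}^\varepsilon$, then $\mathbf{u}^\varepsilon(t, \mathbf{u}_0)$ is again distributed according to $\mu^\varepsilon$ for every $t \ge 0$, so that for any nonnegative Borel functional $\Phi$ on $H^1(\mathbb{R})$ one has $\int_{H^1(\mathbb{R})} \Phi \, d\mu^\varepsilon = \int_{H^1(\mathbb{R})} \mathbb{E}(\Phi(\mathbf{u}^\varepsilon(t,\mathbf{u}_0))) \, d\mu^\varepsilon(\mathbf{u}_0)$.

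First I would establish the uniform second moment bound. The $\varepsilon$-uniform version of \eqref{ues1 1} gives, uniformly in $\varepsilon \in [0,1]$, an estimate of the form $\mathbb{E}(\|\mathbf{u}^\varepsilon(t,\mathbf{u}_0)\|^2_{H^1(\mathbb{R})}) \le C e^{-t}\|\mathbf{u}_0\|^2_{H^1(\mathbb{R})} + C$ with $C$ depending only on $\{\mathbf{f}_k\}$. Integrating this over $\mu^\varepsilon$ and using invariance to replace the left-hand average by $\int \|\mathbf{u}\|^2_{H^1(\mathbb{R})} \, d\mu^\varepsilon$, then choosing $t$ so large that $Ce^{-t} \le \tfrac12$, yields $\int \|\mathbf{u}\|^2_{H^1(\mathbb{R})} \, d\mu^\varepsilon \le \tfrac12 \int \|\mathbf{u}\|^2_{H^1(\mathbb{R})}\, d\mu^\varepsilon + C$, hence $\int \|\mathbf{u}\|^2_{H^1(\mathbb{R})}\, d\mu^\varepsilon \le 2C =: C_0$ for all $\varepsilon$. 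The finiteness needed to absorb the term on the right must be justified first by truncating $\|\cdot\|^2_{H^1(\mathbb{R})}$ at level $N$ and letting $N \to \infty$ via monotone convergence; this is the one genuinely delicate point and is the main obstacle.

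Next, a uniform $H^2$ bound. Integrating the $\varepsilon$-uniform version of the time-averaged estimate \eqref{ues1 2} against $\mu^\varepsilon$ and again invoking invariance, the left-hand side becomes $\int\|\mathbf{u}\|^2_{H^2(\mathbb{R})}\, d\mu^\varepsilon$ (a time-independent quantity equal to its own time average), while the right-hand side is bounded by $t^{-1}C\,C_0 + C$; letting $t \to \infty$ gives $\int\|\mathbf{u}\|^2_{H^2(\mathbb{R})}\, d\mu^\varepsilon \le C$ uniformly in $\varepsilon$. For the tail control I would use Lemma \ref{tail_3aa}: fixing $\delta>0$, I choose $R$ with $C_0/R^2$ small, so that both $\mu^\varepsilon(\{\|\mathbf{u}_0\|_{H^1(\mathbb{R})} > R\})$ and $\int_{\{\|\mathbf{u}_0\|_{H^1(\mathbb{R})}>R\}} \mathbb{E}(\|\mathbf{u}^\varepsilon(t)\|^2_{H^1(\mathbb{R})})\, d\mu^\varepsilon$ (estimated through the pointwise bound of the first step) are uniformly small; then I take $t \ge T(\delta,R)$ and $m \ge M(\delta)$ and split the integral $\int \mathbb{E}\big(\int_{|x|>m}(|\mathbf{u}^\varepsilon(t)|^2 + |\nabla\mathbf{u}^\varepsilon(t)|^2)\,dx\big)\, d\mu^\varepsilon$ over $\{\|\mathbf{u}_0\|_{H^1(\mathbb{R})}\le R\}$ and its complement. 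By invariance the left-hand side equals $\int\int_{|x|>m}(|\mathbf{u}|^2+|\nabla\mathbf{u}|^2)\,dx\,d\mu^\varepsilon$, while the first piece is controlled by Lemma \ref{tail_3aa} and the second by the choice of $R$; this gives the desired uniform $H^1$-tail smallness.

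With the uniform $H^2$ bound and the uniform $H^1$-tail estimate in hand, I would build the compact set exactly as in Lemma \ref{ave_1}: for each $m$ choose $N_m$ so that the $\mu^\varepsilon$-mass of $\{\int_{|x|>N_m}(|\mathbf{u}|^2+|\nabla\mathbf{u}|^2)\,dx > 2^{-m}\}$ is at most $\eta 2^{-m}$ uniformly in $\varepsilon$, intersect these cylinders with the $H^2$-ball of the appropriate radius, and use the compactness of the embedding $H^2(\mathscr{D}_{N_m}) \hookrightarrow H^1(\mathscr{D}_{N_m})$ to conclude that the resulting set $K$ is compact in $H^1(\mathbb{R})$ with $\mu^\varepsilon(K) > 1 - \eta$ for every $\varepsilon \in [0,1]$. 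This proves the tightness of $\bigcup_{\varepsilon\in[0,1]}\mathcal{I}^\varepsilon$. Every step other than the first is a matter of integrating already-established uniform-in-$\varepsilon$ pathwise estimates against $\mu^\varepsilon$ and exploiting its invariance; the crux is securing the a priori finiteness and uniformity of the second moment in the absorbing inequality.
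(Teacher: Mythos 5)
Your argument is essentially correct, but it reaches the compact set by a genuinely different route than the paper. The paper never establishes moment bounds on the invariant measures themselves: it writes $\mu^\varepsilon(H^1\setminus\mathscr{W}_n^{\delta})$ as the $\mu^\varepsilon$-integral of the time-averaged exit probability $\frac1t\int_0^t \mathrm{P}(\mathbf{u}^\varepsilon(s,\mathbf{u}_0)\notin\cdot)\,ds$, pushes $\limsup_{t\to\infty}$ inside via Fatou, and then for each \emph{fixed} $\mathbf{u}_0$ the transient terms $e^{-t}\|\mathbf{u}_0\|^2_{H^1}$ and $t^{-1}\|\mathbf{u}_0\|^2_{H^1}$ in Lemmas \ref{ues1aa} and \ref{tail_3aa} are washed out by the time average before any integration against $\mu^\varepsilon$ takes place (see \eqref{tigu1 p6}); Chebyshev is applied to the time-averaged expectations, not to $\mu^\varepsilon$. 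You instead first prove the uniform moment bounds $\int\|\mathbf{u}\|^2_{H^1}\,d\mu^\varepsilon\le C_0$, $\int\|\mathbf{u}\|^2_{H^2}\,d\mu^\varepsilon\le C$ and the uniform $H^1$ tail-integral bound for $\mu^\varepsilon$, and then run Chebyshev directly on $\mu^\varepsilon$. Your route yields strictly more information (uniform $H^2$-moments of the invariant measures, which the paper's proof never asserts), at the price of the a priori finiteness issue in the first step. On that step, one caveat: the order of limits you sketch does not quite close. After truncating at level $N$, the invariance identity gives $\int\min(\|\mathbf{u}\|^2_{H^1},N)\,d\mu^\varepsilon\le\int\min\bigl(N,\,Ce^{-t}\|\mathbf{u}_0\|^2_{H^1}+C\bigr)\,d\mu^\varepsilon$, and the inequality $\min(N,\tfrac12 a+C)\le\tfrac12\min(N,a)+C$ fails when $a>N>2C$, so the absorption at a fixed large $t$ does not survive the truncation. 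The fix is standard: for fixed $N$ let $t\to\infty$ first (the integrand decreases to $\min(N,C)\le C$, dominated by $N$), obtaining $\int\min(\|\mathbf{u}\|^2_{H^1},N)\,d\mu^\varepsilon\le C$ uniformly in $N$, and only then let $N\to\infty$ by monotone convergence. With that reordering, all subsequent steps (the $H^2$ moment via \eqref{ues1 2} and invariance, the tail estimate via Lemma \ref{tail_3aa} split over $\{\|\mathbf{u}_0\|_{H^1}\le R\}$ and its complement, and the compact-set construction as in Lemma \ref{ave_1}) go through as you describe.
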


\begin{proof}
By Lemma \ref{ues1aa} we find that
for every
$ \mathbf{u}_0 \in H^1(\mathbb{R})$,
there exists $T_1=T_1(\mathbf{u}_0)>0$
such that for all $t\ge T_1$
and $\varepsilon\in [0,1]$,
the solution
$\mathbf{u}^\varepsilon
 (t, \mathbf{u}_0)$
 of \eqref{1.3*} satisfies
 \be\label{tigu1 p1}
{\frac 1t}
\int_0^t
\mathrm{E}
\left (\|\mathbf{u}^\varepsilon  (s, \mathbf{u}_0) \|^2_{H^2(\mathbb{R})}
\right ) ds \le c_1  ,
\ee
where $c_1>0$ is a constant  depending
  on $\{\mathbf{f}_k\}_{k=1}^\infty$
  but not on $\mathbf{u}_0$ or
  $\varepsilon$.
By Lemma \ref{tail_3aa}
 we infer that for every
$ \mathbf{u}_0 \in H^1(\mathbb{R})$,
 $\delta>0$ and
$m\in \mathbb{N}$, there exists
$T_2=T_2(\delta, m, \mathbf{u}_0)>T_1$
and
$n_m=n_m (\delta, m)\in \mathbb{N}$
   such that
for all $t\ge T_2 $ and $\varepsilon\in [0,1]$,
 \be\label{tigu1 p2}
 \mathrm{E}
\left (
\int_{|x| \ge {\frac 12} n_m}
(|\mathbf{u}^\varepsilon (t, \mathbf{u}_0) (x)|^2
+|\nabla \mathbf{u}^\varepsilon (t, \mathbf{u}_0) (x)|^2)
dx
\right )
<2^{-4m} \delta.
\ee
Let $\phi_m$ be the cut-off function
as in the proof of Lemma \ref{tail_2}, and
$$
\mathbf{v}_m ^\varepsilon(t)=\phi_{n_m}
 \mathbf{u}^\varepsilon (t, \mathbf{u}_0),
\quad
 \mathbf{w} ^\varepsilon_m (t)= (1-\phi_{n_m} ) \mathbf{u}^\varepsilon (t, \mathbf{u}_0).
$$
By \eqref{tigu1 p2}  we have
for all $t\ge T_2 $ and $\varepsilon\in [0,1]$,
 \be\label{tigu1 p3}
 \mathrm{E}
\left (
\|\mathbf{v}_m^\varepsilon  (t)\|^2_{H^1(\mathbb{R})}
\right )
<2^{-4m} \delta.
\ee
By \eqref{tigu1 p1} we
infer  that
 there exists a constant $c_2>0$ depending only
on    $\{\mathbf{f}_k\}_{k=1}^\infty$
  but not on $\mathbf{u}_0$ or
  $\varepsilon$   such that for all
   $t\ge T_2 $,
$m\in \mathbb{N}$
 and $\varepsilon\in [0,1]$,
 \be\label{tigu1 p4}{\frac 1t}
\int_0^t
\mathrm{E}
\left (\|\mathbf{w}^\varepsilon
 _m (s ) \|^2_{H^2(\mathbb{R})}
\right ) ds \le c_2.
\ee
Denote by
 $$
  \mathscr{W}_1^{m, \delta}  =\left\{\mathbf{u}
 \in
 H^2(\mathbb{R})
 :
 \mathbf{u} =0
 \ \text{for } |x| \ge n_m
 \ \text{ and }
 \|\mathbf{u} \|_{ H^2(\mathbb{R})}\leq \frac{2^m\sqrt{c_2}}{\sqrt{\delta}}\right\},
 $$
  and
$$ \mathscr{W}_2^{m, \delta}
=\left\{\mathbf{u}\in
H^1(\mathbb{R})
:
\| \mathbf{u} -\mathbf{v}\|_{H^1(\mathbb{R})}
\le 2^{-m}
\ \ \text{for some } \mathbf{v} \in
\mathscr{W}_1^{m, \delta}
\right \} ,
$$
and
$$
\mathscr{W}^{\delta}
=\bigcap_{m=1}^\infty
 \mathscr{W}_2^{m, \delta}.
$$
Then $ \mathscr{W}^{\delta}$
is compact in  $H^1(\mathbb{R})$,
and for every
$\mu^\varepsilon \in \mathcal{I}^
\varepsilon$,
  \be\label{tigu1 p5}\mu^\varepsilon(\mathscr{W}^{\delta})=\lim_{n\rightarrow\infty}\mu^\varepsilon(\mathscr{W}_n^{\delta}),
  \ee
 where $\mathscr{W}_n^{\delta}=\bigcap_{m=1}^n \mathscr{W}_2^{m, \delta}$.
   For every $n\in\mathbb{N}$,
by  the invariance of $\mu^\varepsilon$,
Fubini's theorem and Fatou's lemma, we have
\begin{align}\label{tigu1 p6}
&\mu^\varepsilon(
H^1(\mathbb{R})\setminus
\mathscr{W}_n^{\delta} )\nonumber\\&
 =   \int_{H^1
(\mathbb{R})}
\mathrm{P}(\mathbf{u}^\varepsilon
(s, \mathbf{u}_0) \notin  \mathscr{W}_n^{\delta} )  d\mu^\varepsilon
(\mathbf{u}_0)
 \nonumber\\
&= \limsup_{t\rightarrow\infty}
\int_{H^1(\mathbb{R})
} \left (
\frac{1}{t}
\int_{0}^{t}\mathrm{P}(\mathbf{u}^\varepsilon
(s, \mathbf{u}_0) \notin  \mathscr{W}_n^{\delta} )ds
\right )
d\mu^\varepsilon
(\mathbf{u}_0)\nonumber\\
&\leq\int_{H^1(\mathbb{R})}\limsup_{t\rightarrow\infty}
\left ({\frac{1}{t}}
 \int_{0}^{t}\sum_{m=1}^n
 \mathrm{P}(\mathbf{u}^\varepsilon(s, \mathbf{u}_0) \notin  \mathscr{W}_2^{m,\delta} )ds
 \right )  d\mu^\varepsilon
 (\mathbf{u}_0)
 \nonumber\\
&\leq \int_{H^1(\mathbb{R}) }\limsup_{t\rightarrow\infty}
\left (
\frac{1}{t}\int_{0}^{t}\sum_{m=1}^n\mathrm{P}(\mathbf{w}_m^\varepsilon
(s)
\notin  \mathscr{W}_1^{m,\delta} )ds
\right ) d\mu^\varepsilon
(\mathbf{u}_0)
\nonumber\\
&\quad+\int_{H^1(\mathbb{R}) }\limsup_{t\rightarrow\infty}
\left (
\frac{1}{t}\int_{0}^{t}\sum_{m=1}^n\mathrm{P}\left(\mathbf{w}_m ^{\varepsilon} (s) \in \mathscr{W}_1^{m,\delta}~ {\rm and}~ \|\mathbf{v}_m ^{\varepsilon }(s)\|_{H^1
(\mathbb{R}) }>  \frac{1}{2^m}\right)ds
\right ) d\mu^\varepsilon
(\mathbf{u}_0)
\nonumber\\
&\leq \sum_{m=1}^n\frac{\delta}{2^{2m }c_2}\int_{H^1(\mathbb{R}) }\limsup_{t\rightarrow\infty}
\left (
\frac{1}{t}\int_{0}^{t}\mathrm{E}\|\mathbf{w}_m^{\varepsilon }(s)\|^2_{H^2(\mathbb{R})}ds
\right ) d\mu^\varepsilon
(\mathbf{u}_0)
\nonumber\\
&\quad+\sum_{m=1}^n
2^{2m}\int_{H^1(\mathbb{R}) }\limsup_{t\rightarrow\infty}
\left (
\frac{1}{t}\int_{0}^{t}\mathrm{E}\|\mathbf{v}_m^{\varepsilon}(s)\|^2_{H^1(\mathbb{R}) }ds
\right ) d\mu^\varepsilon(\mathbf{u}_0).
\end{align}
For the first term on the right-hand
side of \eqref{tigu1 p6},
  by  \eqref{tigu1 p4}  we have
\begin{align}\label{tigu1 p7}
& \sum_{m=1}^n\frac{\delta}{2^{2m }c_2}\int_{H^1(\mathbb{R}) }\limsup_{t\rightarrow\infty}
\left (
\frac{1}{t}\int_{0}^{t}\mathrm{E}\|\mathbf{w}_m^{\varepsilon }(s)\|^2_{H^2(\mathbb{R})}ds
\right ) d\mu^\varepsilon
(\mathbf{u}_0) \nonumber\\
  &\leq  \sum_{m=1}^n \frac{\delta}{2^{2m }}<  \frac{\delta}{3}.
\end{align}
For the second  term on the right-hand
side of \eqref{tigu1 p6},
  by  \eqref{tigu1 p3}  we have
  \be\label{tigu1 p8}
\sum_{m=1}^n
2^{2m}\int_{H^1(\mathbb{R}) }\limsup_{t\rightarrow\infty}
\left (
\frac{1}{t}\int_{0}^{t}\mathrm{E}\|\mathbf{v}_m^{\varepsilon}(s)\|^2_{H^1(\mathbb{R}) }ds
\right ) d\mu^\varepsilon(\mathbf{u}_0)
   \leq
\sum_{m=1}^n\frac{\delta}{2^{2m }}
<  \frac{\delta}{3}.
\ee
Then, from
\eqref{tigu1 p6}-\eqref{tigu1 p8}, we obtain for every $n\in \mathbb{N}$,
\begin{align}\label{tigu1 p9}\mu^\varepsilon(\mathscr{W}_n^{\delta})>
1- \delta.
\end{align}
By \eqref{tigu1 p5}
and \eqref{tigu1 p9} we get
for every
$\mu^\varepsilon \in \mathcal{I}^
\varepsilon$,
 $$ \mu^\varepsilon(\mathscr{W}^{\delta}) \ge 1-\delta,
$$
which completes the proof.
 \end{proof}

We now  show the
uniform  convergence in probability of solutions
of \eqref{1.3*}.

\begin{lemma}\label{ucsp}
If \eqref{1.1}
holds, then for
every $L>0$,
$T>0$,
  $\delta>0$
  and $\varepsilon_0\in [0,1]$,
  the solution
  $\mathbf{u}^\varepsilon(t, \mathbf{u}_0)$ of \eqref{1.3*} satisfies
\be\label{ucsp 1}\lim_{\varepsilon\rightarrow \varepsilon_0}\
\sup_{\|\mathbf{u}_0
\|_{H^1(\mathbb{R})}
\le L}
\
\mathrm{P}\left(\sup_{t\in [0,T]}\|\mathbf{u}^\varepsilon(t, \mathbf{u}_0)-\mathbf{u}^{\varepsilon_0}(t, \mathbf{u}_0)\|_{H^1(\mathbb{R}) }\geq \delta\right)=0.
\ee
\end{lemma}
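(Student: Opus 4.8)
The plan is to mirror the stopping-time localization of Lemma~\ref{fp}, but now to drive the smallness of the difference by the parameter gap $|\varepsilon-\varepsilon_0|$ rather than by the gap between initial data, and to carry the whole argument out uniformly over $\|\mathbf{u}_0\|_{H^1(\mathbb{R})}\le L$. Write $\mathbf{w}^\varepsilon=\mathbf{u}^\varepsilon-\mathbf{u}^{\varepsilon_0}$, so $\mathbf{w}^\varepsilon(0)=0$, and derive from the It\^o form of \eqref{1.3*} the equation for $\mathbf{w}^\varepsilon$: the cross-product difference splits as $\mathbf{w}^\varepsilon\times\Delta\mathbf{u}^\varepsilon+\mathbf{u}^{\varepsilon_0}\times\Delta\mathbf{w}^\varepsilon$, the It\^o correction yields $\tfrac{\varepsilon^2}{2}\sum_k(\mathbf{w}^\varepsilon\times\mathbf{f}_k)\times\mathbf{f}_k$ together with a source term $\tfrac{\varepsilon^2-\varepsilon_0^2}{2}\sum_k(\mathbf{u}^{\varepsilon_0}\times\mathbf{f}_k)\times\mathbf{f}_k$, and the stochastic integral yields $\varepsilon\sum_k(\mathbf{w}^\varepsilon\times\mathbf{f}_k)\,dW_k$ plus the source $(\varepsilon-\varepsilon_0)\sum_k(\mathbf{u}^{\varepsilon_0}\times\mathbf{f}_k+\mathbf{f}_k)\,dW_k$. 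By Lemma~\ref{ues1aa} and the $\varepsilon$-uniform analogues of the Section~2 estimates, $\Xi^\varepsilon:=\sup_{t\in[0,T]}\|\mathbf{u}^\varepsilon(t,\mathbf{u}_0)\|^2_{H^1(\mathbb{R})}+\int_0^T\|\mathbf{u}^\varepsilon(s,\mathbf{u}_0)\|^2_{H^2(\mathbb{R})}\,ds$ has expectation bounded by a constant $c=c(L,T)$ uniformly in $\varepsilon\in[0,1]$ and in $\|\mathbf{u}_0\|_{H^1}\le L$; hence, given an auxiliary $\eta>0$, Chebyshev's inequality furnishes $R=R(\eta,L,T)$ with $\mathrm{P}(\Xi^\varepsilon>R)\le\eta/4$ for all such $\varepsilon,\mathbf{u}_0$. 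I then set $\tau^\varepsilon=\inf\{t\ge0:\|\mathbf{u}^\varepsilon(t)\|^2_{H^1}+\int_0^t\|\mathbf{u}^\varepsilon(s)\|^2_{H^2}ds>R\}$, likewise $\tau^{\varepsilon_0}$, and $\tau=\tau^\varepsilon\wedge\tau^{\varepsilon_0}$, so that $\mathrm{P}(\tau<T)\le\eta/2$.

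First I would obtain the $L^2$ estimate of $\mathbf{w}^\varepsilon$ exactly as in Lemma~\ref{conin}: applying It\^o's formula to $\|\mathbf{w}^\varepsilon\|^2_{L^2}$, the term $(\mathbf{w}^\varepsilon,\mathbf{w}^\varepsilon\times\Delta\mathbf{u}^\varepsilon)$ vanishes pointwise, the term $\mathbf{u}^{\varepsilon_0}\times\Delta\mathbf{w}^\varepsilon$ is handled after integration by parts via \eqref{conin p2a}--\eqref{conin p3}, and the cubic term is monotone. The only novelty is the two source terms, whose contributions are $O(|\varepsilon-\varepsilon_0|^2)$ after Young's and the BDG inequality, using the uniform $H^1$-bound on $\mathbf{u}^{\varepsilon_0}$ on $[0,\tau]$ and \eqref{1.1}. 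Gronwall's inequality on $[0,t\wedge\tau]$ then yields
\[
\mathrm{E}\Big(\sup_{s\le T\wedge\tau}\|\mathbf{w}^\varepsilon(s)\|^2_{L^2(\mathbb{R})}\Big)\le C(R,T)\,|\varepsilon-\varepsilon_0|^2 .
\]

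Next, and this is the genuine difficulty, I would run the $H^1$ energy estimate for $\mathbf{w}^\varepsilon$ on $[0,t\wedge\tau]$, following \eqref{fp p6}--\eqref{fp p20}. Testing against $(I-\Delta)\mathbf{w}^\varepsilon$ produces the top-order terms: the contribution of $\mathbf{u}^{\varepsilon_0}\times\Delta\mathbf{w}^\varepsilon$ to $(\Delta\mathbf{w}^\varepsilon,\cdot)$ vanishes because $a\times b\perp b$, whereas the dangerous term $(\Delta\mathbf{w}^\varepsilon,\mathbf{w}^\varepsilon\times\Delta\mathbf{u}^\varepsilon)$ is controlled, as in \eqref{fp p8a}--\eqref{fp p9}, by the one-dimensional embedding \eqref{fp p8} $\|\mathbf{w}^\varepsilon\|_{L^\infty}\le c\|\mathbf{w}^\varepsilon\|_{L^2}^{1/2}\|\nabla\mathbf{w}^\varepsilon\|_{L^2}^{1/2}$, absorbing $\tfrac14\|\Delta\mathbf{w}^\varepsilon\|^2_{L^2}$ into the dissipation and bounding the remainder by $C\,R^{3/2}\sup_{s\le T\wedge\tau}\|\mathbf{w}^\varepsilon(s)\|_{L^2}$ after invoking $\int_0^{t\wedge\tau}\|\Delta\mathbf{u}^\varepsilon\|^2_{L^2}\,ds\le R$. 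The cubic and noise terms are treated as in \eqref{fp p10}--\eqref{fp p11}, the two source terms again contribute $O(|\varepsilon-\varepsilon_0|^2)$, and the stochastic source is handled by BDG so as to retain the supremum in time. Combining with the $L^2$ bound of the previous step (which gives $\mathrm{E}\,\sup_s\|\mathbf{w}^\varepsilon\|_{L^2}\le C|\varepsilon-\varepsilon_0|$ by Cauchy--Schwarz) and applying Gronwall produces
\[
\mathrm{E}\Big(\sup_{t\le T\wedge\tau}\|\mathbf{w}^\varepsilon(t)\|^2_{H^1(\mathbb{R})}\Big)\le C(R,T)\big(|\varepsilon-\varepsilon_0|^2+|\varepsilon-\varepsilon_0|\big),
\]
with $C(R,T)$ independent of $\mathbf{u}_0$ and $\varepsilon$.

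Finally I would conclude as in \eqref{fp p22}--\eqref{fp p23}: splitting over $\{\tau<T\}$ and $\{\tau\ge T\}$ and using Chebyshev,
\[
\sup_{\|\mathbf{u}_0\|_{H^1}\le L}\mathrm{P}\Big(\sup_{t\le T}\|\mathbf{w}^\varepsilon(t)\|_{H^1(\mathbb{R})}\ge\delta\Big)\le \frac{\eta}{2}+\delta^{-2}C(R,T)\big(|\varepsilon-\varepsilon_0|^2+|\varepsilon-\varepsilon_0|\big).
\]
Letting $\varepsilon\to\varepsilon_0$ annihilates the second term (as $R$ is now fixed), so the $\limsup$ is at most $\eta/2$; since $\eta>0$ was arbitrary, \eqref{ucsp 1} follows. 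The main obstacle throughout is the high-order coupling $(\Delta\mathbf{w}^\varepsilon,\mathbf{w}^\varepsilon\times\Delta\mathbf{u}^\varepsilon)$, whose closure relies essentially on the $d=1$ embedding $H^1\hookrightarrow L^\infty$ together with the stopping-time control of $\int\|\mathbf{u}^\varepsilon\|^2_{H^2}$ and the separately-derived $L^2$ smallness of $\mathbf{w}^\varepsilon$; the uniformity in $\mathbf{u}_0$ is inherited from the fact that every constant above depends only on $R$, $T$ and $\{\mathbf{f}_k\}$.
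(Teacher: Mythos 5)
Your proposal is correct and follows essentially the same route as the paper: uniform-in-$\varepsilon$ and uniform-in-$\mathbf{u}_0$ bounds give the stopping time $\tau_\varepsilon$ with $\mathrm{P}(\tau_\varepsilon<T)\le\eta$, the difference equation is decomposed exactly as in \eqref{ucsp p4}, the critical term $(\Delta\mathbf{w}^\varepsilon,\mathbf{w}^\varepsilon\times\Delta\mathbf{u}^\varepsilon)$ is closed using the $d=1$ embedding and absorbed into the dissipation, and the conclusion follows from Gronwall, BDG and Chebyshev by letting $\varepsilon\to\varepsilon_0$ and then $\eta\to 0$. The only (harmless) deviation is that you run a two-tier $L^2$-then-$H^1$ estimate in the style of Lemma \ref{fp}, yielding a bound of order $|\varepsilon-\varepsilon_0|^2+|\varepsilon-\varepsilon_0|$, whereas the paper performs a single $H^1$ It\^o computation with the pathwise Gronwall weight \eqref{ucsp p10} and obtains $O((\varepsilon-\varepsilon_0)^2)$ directly; both suffice for \eqref{ucsp 1}.
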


\begin{proof}
     Note that
     there exists $c_1=c_1(L,T)>0$
     such that
     for all
     $\varepsilon\in [0,1]$
     and
      $ \mathbf{u}_0
\in {H^1(\mathbb{R})}$
with   $\|\mathbf{u}_0
\|_{H^1(\mathbb{R})}
\le L$, the
solutions of \eqref{1.3*}
satisfy
  \be\label{ucsp p1}
    \textrm{E}
    \left (
    \sup_{0\le t\le T}
    \|\mathbf{u}^\varepsilon
     (t, \mathbf{u}_{0})
    \|^2_{H^1(\mathbb{R})}
    +\int_0^{T}
    \| \mathbf{u}^\varepsilon (s,
  \mathbf{u}_{0})\|_{H^2(\mathbb{R})}^2 ds
     \right )
      \le c_1 .
     \ee
      It follows from
     \eqref{ucsp p1}    that
     for every $\eta>0$,
     there exists
         $R
         =R(\eta, L, T)>0$ such that
      for all
     $\varepsilon\in [0,1]$
     and
      $ \mathbf{u}_0
\in {H^1(\mathbb{R})}$
with   $\|\mathbf{u}_0
\|_{H^1(\mathbb{R})}
\le L$,
      \be\label{ucsp p2}
   \mathrm{P}
     \left (
    \sup_{0\le t\le T}
    \|\mathbf{u}^\varepsilon
     (t, \mathbf{u}_{0})
    \|_{H^1(\mathbb{R})} ^2
     +\int_0^{T}
    \| \mathbf{u}^\varepsilon (s,
  \mathbf{u}_{0})\|_{H^2(\mathbb{R})}^2 ds
    >R
     \right )
       \le  {\frac 12} \eta.
       \ee
         Define the stopping
    times by
    $$
    \tau_{1,\varepsilon}
  =\inf\left\{t \ge 0;  \|\mathbf{u}
  ^\varepsilon  (t,
  \mathbf{u}_{0})
   \|^2_{H^1(\mathbb{R}) } +
   \int_{0}^{t}\| \mathbf{u}^\varepsilon
    (s,
  \mathbf{u}_{0})\|_{H^2(\mathbb{R})}^2 ds
  >R   \right\},$$
and
 $$
    \tau_{1, \varepsilon_0}
  =\inf\left\{t \ge 0;  \|\mathbf{u}^
  {\varepsilon_0}  (t,
  \mathbf{u}_{0})
   \|^2_{H^1(\mathbb{R}) } +
   \int_{0}^{t}\| \mathbf{u}^{\varepsilon_0} (s,
  \mathbf{u}_{0})\|_{H^2(\mathbb{R})}^2 ds
  >R     \right\}.$$
  As usual, $\inf \emptyset =+\infty$.
  Let
  $\tau_\varepsilon =
  \tau_{1, \varepsilon}
  \wedge \tau_{1, \varepsilon_0}$.
  Then by
  \eqref{ucsp p2}
  we find that
     \be\label{ucsp p3}
   \mathrm{P }\left (
   \tau_\varepsilon <T
   \right )
   \le \eta.
   \ee

By \eqref{1.3*} we have
for all $t\in [0, T]$, $\mathrm{P}$-almost surely,
$$
d
(\mathbf{u}^{\varepsilon}(t)
-\mathbf{u}^{\varepsilon_0}(t)
)
=\Delta (\mathbf{u}^{\varepsilon}(t)
-\mathbf{u}^{\varepsilon_0} (t)) dt
+ (\mathbf{u}^{\varepsilon}(t)
-\mathbf{u}^{\varepsilon_0}(t)
) \times \Delta  \mathbf{u}^{\varepsilon}(t)
 dt
 $$
 $$
 +
 \mathbf{u}^{\varepsilon_0}
 \times
 \Delta (\mathbf{u}^{\varepsilon}(t)
-\mathbf{u}^{\varepsilon_0}(t)
) dt
-(\mathbf{u}^{\varepsilon}(t)
-\mathbf{u}^{\varepsilon_0}(t)
) dt
-\left (
|\mathbf{u}^{\varepsilon}(t)|^2
 \mathbf{u}^{\varepsilon}(t)
 -
 |\mathbf{u}^{\varepsilon_0}(t)|^2
 \mathbf{u}^{\varepsilon_0}(t)
\right ) dt
 $$
$$
 +{\frac 12}
 (\varepsilon^2
 -\varepsilon_0^2)
 \sum_{k=1}^\infty
  (\mathbf{u}^{\varepsilon}(t)
 \times
 \mathbf{f}_k
 +
 \mathbf{f}_k
  ) dt
  +{\frac 12}
  \varepsilon^2_0
  \sum_{k=1}^\infty
   \left ( (\mathbf{u}^{\varepsilon}(t)
-\mathbf{u}^{\varepsilon_0}(t)
)\times  \mathbf{f}_k
\right )dt
$$
   \be\label{ucsp p4}
 +
 (\varepsilon
 -\varepsilon_0)
 \sum_{k=1}^\infty
 (\mathbf{u}^{\varepsilon}(t)
 \times
 \mathbf{f}_k
 +
 \mathbf{f}_k
  )dW_k
  +
  \varepsilon_0
  \sum_{k=1}^\infty \left (
  (\mathbf{u}^{\varepsilon}(t)
-\mathbf{u}^{\varepsilon_0}(t)
)\times  \mathbf{f}_k
\right )  dW_k.
\ee

By \eqref{ucsp p4} and
It\^{o}'s formula we get
for all $t\in [0, T]$, $\mathrm{P}$-almost surely,
  $$
\|\mathbf{u}^{\varepsilon}(t)
-\mathbf{u}^{\varepsilon_0} (t)
    \|^2_{H^1(\mathbb{R})}
    $$
    $$
    + 2
    \int_0^{t}
    \left(\|\mathbf{u}^{\varepsilon}(s)
-\mathbf{u}^{\varepsilon_0}(s)
    \|^2_{H^1(\mathbb{R})}
    +
    \|\nabla \mathbf{u}^{\varepsilon}(s)
-\nabla \mathbf{u}^{\varepsilon_0}(s)
    \|^2_{L^2(\mathbb{R})}
    +
    \|\Delta \mathbf{u}^{\varepsilon}(s)
-\Delta \mathbf{u}^{\varepsilon_0}(s)
    \|^2_{L^2(\mathbb{R})}
\right) ds
    $$
    $$
    =
    -2\int_0^ {t}
    \left (
    \Delta (\mathbf{u}^{\varepsilon}(s)
-\mathbf{u}^{\varepsilon_0}(s)
), \ (\mathbf{u}^{\varepsilon}(s)
-\mathbf{u}^{\varepsilon_0}(s)
)\times
\Delta \mathbf{u}^{\varepsilon}(s)
\right )_{L^2(\mathbb{R})} ds
    $$
    $$
    +2\int_0^ {t}
    \left (
       \mathbf{u}^{\varepsilon}(s)
-\mathbf{u}^{\varepsilon_0}(s)
 , \  \mathbf{u}^{\varepsilon_0}(s)
 \times  \Delta
  (\mathbf{u}^{\varepsilon}(s)
-\mathbf{u}^{\varepsilon_0}(s)
)
\right )_{L^2(\mathbb{R})} ds
    $$
    $$
    -2\int_0^ {t}
   \left (
   (I- \Delta)
  (\mathbf{u}^{\varepsilon}(s)
-\mathbf{u}^{\varepsilon_0}(s)
) , \
|\mathbf{u}^{\varepsilon}(s)|^2
 \mathbf{u}^{\varepsilon}(s)
 -
 |\mathbf{u}^{\varepsilon_0}(s)|^2
 \mathbf{u}^{\varepsilon_0}(s)
  \right )_{L^2(\mathbb{R})} ds
    $$
   $$
 +
 (\varepsilon^2
 -\varepsilon_0^2)
 \sum_{k=1}^\infty
   \int_0^ {t}
   \left (
   (I- \Delta)
  (\mathbf{u}^{\varepsilon}(s)
-\mathbf{u}^{\varepsilon_0}(s)
) , \
   \mathbf{u}^{\varepsilon}(s)
 \times
 \mathbf{f}_k
 +
 \mathbf{f}_k
  \right )_{L^2(\mathbb{R})} ds
  $$
  $$
  +
  \varepsilon^2_0
  \sum_{k=1}^\infty
   \int_0^ {t}
   \left (
   (I- \Delta)
  (\mathbf{u}^{\varepsilon}(s)
-\mathbf{u}^{\varepsilon_0}(s)
) , \
     (\mathbf{u}^{\varepsilon}(s)
-\mathbf{u}^{\varepsilon_0}(s)
)\times  \mathbf{f}_k
\right )_{L^2(\mathbb{R})} ds
$$
 $$
 +\sum_{k=1}^\infty\int_0^ {t}
 \|
 (\varepsilon
 -\varepsilon_0)
  (\mathbf{u}^{\varepsilon}(s)
 \times
 \mathbf{f}_k
 +
 \mathbf{f}_k
  )
  +
  \varepsilon_0
  (\mathbf{u}^{\varepsilon}(s)
-\mathbf{u}^{\varepsilon_0}(s)
)\times  \mathbf{f}_k
\|^2_{H^1(\mathbb{R})} ds
 $$
 $$
 +
 2(\varepsilon
 -\varepsilon_0)
 \sum_{k=1}^\infty
 \int_0^ {t}
   \left (
   (I- \Delta)
  (\mathbf{u}^{\varepsilon}(s)
-\mathbf{u}^{\varepsilon_0}(s)
) , \
   \mathbf{u}^{\varepsilon}(s)
 \times
 \mathbf{f}_k
 +
 \mathbf{f}_k
  \right )_{L^2(\mathbb{R})}dW_k
  $$
 \be\label{ucsp p5}
  +
  2 \varepsilon_0
  \sum_{k=1}^\infty
  \int_0^ {t}
   \left (
   (I- \Delta)
  (\mathbf{u}^{\varepsilon}(s)
-\mathbf{u}^{\varepsilon_0}(s)
) , \
    (\mathbf{u}^{\varepsilon}(s)
-\mathbf{u}^{\varepsilon_0}(s)
)\times  \mathbf{f}_k
\right )_{L^2(\mathbb{R})}  dW_k.
\ee
 For the first term on the right-hand side
 of  \eqref{ucsp p5},
 by Young's inequality we have
  $$
    -2\int_0^ {t}
    \left (
    \Delta (\mathbf{u}^{\varepsilon}(s)
-\mathbf{u}^{\varepsilon_0}(s)
), \ (\mathbf{u}^{\varepsilon}(s)
-\mathbf{u}^{\varepsilon_0}(s)
)\times
\Delta \mathbf{u}^{\varepsilon}(s)
\right )_{L^2(\mathbb{R})} ds
    $$
 $$
  \le 2\int_0^ {t}
    \|
    \Delta (\mathbf{u}^{\varepsilon}(s)
-\mathbf{u}^{\varepsilon_0}(s))
\|_{L^2(\mathbb{R})} \|\mathbf{u}^{\varepsilon}(s)
-\mathbf{u}^{\varepsilon_0}(s)
\|_{L^\infty(\mathbb{R})}
\|
\Delta \mathbf{u}^{\varepsilon}(s)
\|_{L^2(\mathbb{R})} ds
    $$
\be\label{ucsp p6}
   \le  \int_0^ {t}
    \|
    \Delta (\mathbf{u}^{\varepsilon}(s)
-\mathbf{u}^{\varepsilon_0}(s))
\|_{L^2(\mathbb{R})}^2 ds
+ \int_0^t
 \|\mathbf{u}^{\varepsilon}(s)
-\mathbf{u}^{\varepsilon_0}(s)
\|^2_{H^1(\mathbb{R})}
\|
\Delta \mathbf{u}^{\varepsilon}(s)
\|^2_{L^2(\mathbb{R})} ds.
 \ee
 Similar to \eqref{fp p10}-\eqref{fp p11} and \eqref{ucsp p6},
 using  Young's inequality
 to estimate all the other terms
 on the right-hand side
 of \eqref{ucsp p5}, we obtain
 for all
 $\varepsilon, \varepsilon_0 \in [0,1]$ and
 $0\le t \le t_1\le T$, $\mathrm{P}$-almost surely,
$$
\|\mathbf{u}^{\varepsilon}(t)
-\mathbf{u}^{\varepsilon_0} (t)
    \|^2_{H^1(\mathbb{R})}
    \le
  c_2 (\varepsilon -\varepsilon_0)^2
 \int_0^t
 \left(1 + \|\mathbf{u}^{\varepsilon}(s)\|^2_{H^1(\mathbb{R})}
 \right)ds
 $$
     $$
  +
     c_2\int_0^t
     \|\mathbf{u}^{\varepsilon}(s)
-\mathbf{u}^{\varepsilon_0}(s)
\|^2_{H^1(\mathbb{R})}
\left (1+
\|\mathbf{u}^{\varepsilon}(s)\|^2
_{H^2(\mathbb{R})}
+\|\mathbf{u}^{\varepsilon}(s)\|^4
_{H^1(\mathbb{R})}
+ \|\mathbf{u}^{\varepsilon_0}(s)\|^4
_{H^1(\mathbb{R})}
\right )ds
     $$
  $$
 +
 2(\varepsilon
 -\varepsilon_0)
 \sum_{k=1}^\infty
 \int_0^ {t}
   \left (
   (I- \Delta)
  (\mathbf{u}^{\varepsilon}(s)
-\mathbf{u}^{\varepsilon_0}(s)
) , \
   \mathbf{u}^{\varepsilon}(s)
 \times
 \mathbf{f}_k
 +
 \mathbf{f}_k
  \right )_{L^2(\mathbb{R})}dW_k
  $$
 \be\label{ucsp p7}
  +
  2 \varepsilon_0
  \sum_{k=1}^\infty
  \int_0^ {t}
   \left ( \nabla
  (\mathbf{u}^{\varepsilon}(s)
-\mathbf{u}^{\varepsilon_0}(s)
) , \
    (\mathbf{u}^{\varepsilon}(s)
-\mathbf{u}^{\varepsilon_0}(s)
)\times  \nabla \mathbf{f}_k
\right )_{L^2(\mathbb{R})}  dW_k,
\ee
where $c_2>0$ depends
only on $\{\mathbf{f}_k \}_{k=1}^\infty
$.
By \eqref{ucsp p7}, we obtain
 for all
 $\varepsilon, \varepsilon_0 \in [0,1]$ and
 $0\le t \le t_1\le T$,
  $\mathrm{P}$-almost surely,
$$
\sup_{r\in [0,t]}
\|\mathbf{u}^{\varepsilon}(r\wedge \tau_\varepsilon)
-\mathbf{u}^{\varepsilon_0}
( r\wedge \tau_\varepsilon)
    \|^2_{H^1(\mathbb{R})}
    \le
  c_2 (\varepsilon -\varepsilon_0)^2
 \int_0^{t\wedge \tau_\varepsilon}
 \left(1 + \|\mathbf{u}^{\varepsilon}(s)\|^2_{H^1(\mathbb{R})}
 \right)ds
 $$
     $$
    +
     c_2\int_0^ {t\wedge \tau_\varepsilon}
     \|\mathbf{u}^{\varepsilon}(s)
-\mathbf{u}^{\varepsilon_0}(s)
\|^2_{H^1(\mathbb{R})}
\left (1+
\|\mathbf{u}^{\varepsilon}(s)\|^2
_{H^2(\mathbb{R})}
+\|\mathbf{u}^{\varepsilon}(s)\|^4
_{H^1(\mathbb{R})}
+ \|\mathbf{u}^{\varepsilon_0}(s)\|^4
_{H^1(\mathbb{R})}
\right )ds
     $$
  $$
 +
 2|\varepsilon
 -\varepsilon_0|
 \left(\sup_{r\in [0,t]}
 \left |
 \sum_{k=1}^\infty
 \int_0^ {r\wedge \tau_\varepsilon}
   \left (
   (I- \Delta)
  (\mathbf{u}^{\varepsilon}(s)
-\mathbf{u}^{\varepsilon_0}(s)
) , \
   \mathbf{u}^{\varepsilon}(s)
 \times
 \mathbf{f}_k
 +
 \mathbf{f}_k
  \right )_{L^2(\mathbb{R})}dW_k \right |\right)
  $$
$$
  +
  2 \varepsilon_0
  \left(\sup_{r\in [0,t]}
  \left |
  \sum_{k=1}^\infty
  \int_0^  {r\wedge \tau_\varepsilon}
  \left ( \nabla
  (\mathbf{u}^{\varepsilon}(s)
-\mathbf{u}^{\varepsilon_0}(s)
) , \
    (\mathbf{u}^{\varepsilon}(s)
-\mathbf{u}^{\varepsilon_0}(s)
)\times  \nabla \mathbf{f}_k
\right )_{L^2(\mathbb{R})}   dW_k \right |\right)
$$
$$
 \le
  c_2 (\varepsilon -\varepsilon_0)^2
 \int_0^{ t_1\wedge \tau_\varepsilon}
 \left(1 + \|\mathbf{u}^{\varepsilon}(s)\|^2_{H^1(\mathbb{R})}
 \right)ds
 $$
     $$
    +
     c_2\int_0^ {t }
     \sup_{r\in [0,s]}
     \|\mathbf{u}^{\varepsilon}( r\wedge \tau_\varepsilon)
-\mathbf{u}^{\varepsilon_0}(r\wedge \tau_\varepsilon)
\|^2_{H^1(\mathbb{R})}
1_{(0,\tau_\varepsilon)} (s)
\left (1+
\|\mathbf{u}^{\varepsilon}(s)\|^2
_{H^2(\mathbb{R})}
+\|\mathbf{u}^{\varepsilon}(s)\|^4
_{H^1(\mathbb{R})}
+ \|\mathbf{u}^{\varepsilon_0}(s)\|^4
_{H^1(\mathbb{R})}
\right )ds
     $$
  $$
 +
 2|\varepsilon
 -\varepsilon_0|
 \left(\sup_{r\in [0, t_1]}
 \left |
 \sum_{k=1}^\infty
 \int_0^ {r\wedge \tau_\varepsilon}
   \left (
   (I- \Delta)
  (\mathbf{u}^{\varepsilon}(s)
-\mathbf{u}^{\varepsilon_0}(s)
) , \
   \mathbf{u}^{\varepsilon}(s)
 \times
 \mathbf{f}_k
 +
 \mathbf{f}_k
  \right )_{L^2(\mathbb{R})}dW_k \right |\right)
  $$
 \be\label{ucsp p8}
  +
  2 \varepsilon_0
  \left(\sup_{r\in [0,t_1]}
  \left |
  \sum_{k=1}^\infty
  \int_0^  {r\wedge \tau_\varepsilon}
  \left ( \nabla
  (\mathbf{u}^{\varepsilon}(s)
-\mathbf{u}^{\varepsilon_0}(s)
) , \
    (\mathbf{u}^{\varepsilon}(s)
-\mathbf{u}^{\varepsilon_0}(s)
)\times  \nabla \mathbf{f}_k
\right )_{L^2(\mathbb{R})}
    dW_k \right |\right).
\ee

 Now for a fixed
 $t_1\in [0,T]$,
applying
 Gronwall's lemma
 to  \eqref{ucsp p8}
 for $t\in [0,t_1]$, we obtain
 for all
 $\varepsilon, \varepsilon_0 \in [0,1]$ and
 $t\in [0, t_1]$, $\mathrm{P}$-almost surely,
$$
\sup_{r\in [0,t]}
\|\mathbf{u}^{\varepsilon}(r\wedge \tau_\varepsilon)
-\mathbf{u}^{\varepsilon_0}
( r\wedge \tau_\varepsilon)
    \|^2_{H^1(\mathbb{R})}
    $$
    $$
 \le
  c_2 (\varepsilon -\varepsilon_0)^2
  \varphi_\varepsilon (t)
 \int_0^{t_1 \wedge \tau_\varepsilon}
 \left(1 + \|\mathbf{u}^{\varepsilon}(s)\|_{H^1(\mathbb{R})}^2
 \right)ds
 $$
   $$
 +
 2|\varepsilon
 -\varepsilon_0| \varphi_\varepsilon (t)
 \left(\sup_{r\in [0, t_1]}
 \left |
 \sum_{k=1}^\infty
 \int_0^ {r\wedge \tau_\varepsilon}
   \left (
   (I- \Delta)
  (\mathbf{u}^{\varepsilon}(s)
-\mathbf{u}^{\varepsilon_0}(s)
) , \
   \mathbf{u}^{\varepsilon}(s)
 \times
 \mathbf{f}_k
 +
 \mathbf{f}_k
  \right )_{L^2(\mathbb{R})}dW_k \right |\right)
  $$
 \be\label{ucsp p9}
  +
  2 \varepsilon_0
  \varphi _\varepsilon (t)
  \left(\sup_{r\in [0, t_1]}
  \left |
  \sum_{k=1}^\infty
  \int_0^  {r\wedge \tau_\varepsilon}
  \left ( \nabla
  (\mathbf{u}^{\varepsilon}(s)
-\mathbf{u}^{\varepsilon_0}(s)
) , \
    (\mathbf{u}^{\varepsilon}(s)
-\mathbf{u}^{\varepsilon_0}(s)
)\times  \nabla \mathbf{f}_k
\right )_{L^2(\mathbb{R})}
    dW_k \right |\right),
\ee
 where for all $t\in [0, t_1]$,
  $$
 \varphi_\varepsilon (t)
 =
 e^{\int_0^{t\wedge \tau_\varepsilon}
    c_2
    \left(1+
\|\mathbf{u}^{\varepsilon}(s)\|^2
_{H^2(\mathbb{R})}
+\|\mathbf{u}^{\varepsilon}(s)\|^4
_{H^1(\mathbb{R})}
+ \|\mathbf{u}^{\varepsilon_0}(s)\|^4
_{H^1(\mathbb{R})}
  \right) ds
  }
  $$
 \be \label{ucsp p10}
  \le
  e^{\int_0^{t\wedge \tau_\varepsilon}
    c_2
    \left(1+ 2R^2+
\|\mathbf{u}^{\varepsilon}(s)\|^2
_{H^2(\mathbb{R})}
 \right ) ds
  }
  \le
  e^{
    c_2
    (1+ 2R^2)T + c_2R
  }.
  \ee
 Let $c_3 =  e^{
    c_2
    (1+ 2R^2)T + c_2R
  }.$ Then by \eqref{ucsp p9}-\eqref{ucsp p10}
  we get
  for all
 $\varepsilon, \varepsilon_0 \in [0,1]$
 and $t_1\in [0,T]$,
  $\mathrm{P}$-almost surely,
$$
\sup_{r\in [0,t_1]}
\|\mathbf{u}^{\varepsilon}(r\wedge \tau_\varepsilon)
-\mathbf{u}^{\varepsilon_0}
( r\wedge \tau_\varepsilon)
    \|^2_{H^1(\mathbb{R})}
  \le
  c_2c_3 (\varepsilon -\varepsilon_0)^2
 (1 + R)t_1
  $$
   $$
 +
 2|\varepsilon
 -\varepsilon_0|c_3
 \left(\sup_{r\in [0, t_1]}
 \left |
 \sum_{k=1}^\infty
 \int_0^ {r\wedge \tau_\varepsilon}
   \left (
   (I- \Delta)
  (\mathbf{u}^{\varepsilon}(s)
-\mathbf{u}^{\varepsilon_0}(s)
) , \
   \mathbf{u}^{\varepsilon}(s)
 \times
 \mathbf{f}_k
 +
 \mathbf{f}_k
  \right )_{L^2(\mathbb{R})}dW_k \right |\right)
  $$
 \be\label{ucsp p11}
  +
  2 \varepsilon_0 c_3
  \left(\sup_{r\in [0, t_1]}
  \left |
  \sum_{k=1}^\infty
  \int_0^  {r\wedge \tau_\varepsilon}
  \left ( \nabla
  (\mathbf{u}^{\varepsilon}(s)
-\mathbf{u}^{\varepsilon_0}(s)
) , \
    (\mathbf{u}^{\varepsilon}(s)
-\mathbf{u}^{\varepsilon_0}(s)
)\times  \nabla \mathbf{f}_k
\right )_{L^2(\mathbb{R})}
     dW_k \right |\right).
\ee

 Taking the expectation
 of
 \eqref{ucsp p11}
 to obtain
 for all
 $\varepsilon, \varepsilon_0 \in [0,1]$
 and $t_1\in [0,T]$,
$$
\textrm{E}
\left (
 \sup_{r\in [0, t_1]}
\|\mathbf{u}^{\varepsilon}(r\wedge \tau_\varepsilon)
-\mathbf{u}^{\varepsilon_0}
( r\wedge \tau_\varepsilon)
    \|^2_{H^1(\mathbb{R})}
    \right )
  \le
  c_2c_3 (\varepsilon -\varepsilon_0)^2
 (1 + R)T
  $$
   $$
 +
 2|\varepsilon
 -\varepsilon_0|c_3
 \textrm{E}
\left (
 \sup_{r\in [0, t_1]}
 \left |
 \sum_{k=1}^\infty
 \int_0^ {r\wedge \tau_\varepsilon}
   \left (
   (I- \Delta)
  (\mathbf{u}^{\varepsilon}(s)
-\mathbf{u}^{\varepsilon_0}(s)
) , \
   \mathbf{u}^{\varepsilon}(s)
 \times
 \mathbf{f}_k
 +
 \mathbf{f}_k
  \right )_{L^2(\mathbb{R})}dW_k \right |
  \right )
  $$
 \be\label{ucsp p12}
  +
  2 \varepsilon_0 c_3
  \textrm{E}
\left (
  \sup_{r\in [0, t_1]}
  \left |
  \sum_{k=1}^\infty
  \int_0^  {r\wedge \tau_\varepsilon}
  \left ( \nabla
  (\mathbf{u}^{\varepsilon}(s)
-\mathbf{u}^{\varepsilon_0}(s)
) , \
    (\mathbf{u}^{\varepsilon}(s)
-\mathbf{u}^{\varepsilon_0}(s)
)\times  \nabla \mathbf{f}_k
\right )_{L^2(\mathbb{R})}
     dW_k \right |
\right ).
\ee

Using the BDG inequality,   Young's inequality and \eqref{1.1}, we  get
 $$
 2|\varepsilon
 -\varepsilon_0|c_3
 \textrm{E}
\left (
 \sup_{r\in [0, t_1]}
 \left |
 \sum_{k=1}^\infty
 \int_0^ {r\wedge \tau_\varepsilon}
   \left (
   (I- \Delta)
  (\mathbf{u}^{\varepsilon}(s)
-\mathbf{u}^{\varepsilon_0}(s)
) , \
   \mathbf{u}^{\varepsilon}(s)
 \times
 \mathbf{f}_k
 +
 \mathbf{f}_k
  \right )_{L^2(\mathbb{R})}dW_k \right |
  \right )
  $$
 $$
  +
  2 \varepsilon_0 c_3
  \textrm{E}
\left (
  \sup_{r\in [0, t_1]}
  \left |
  \sum_{k=1}^\infty
  \int_0^  {r\wedge \tau_\varepsilon}
  \left ( \nabla
  (\mathbf{u}^{\varepsilon}(s)
-\mathbf{u}^{\varepsilon_0}(s)
) , \
    (\mathbf{u}^{\varepsilon}(s)
-\mathbf{u}^{\varepsilon_0}(s)
)\times  \nabla \mathbf{f}_k
\right )_{L^2(\mathbb{R})}
   dW_k \right |
\right )
$$
$$
\le
 2|\varepsilon
 -\varepsilon_0|c_3c_4
 \textrm{E}
\left (
 \sum_{k=1}^\infty
 \int_0^ {t_1 \wedge \tau_\varepsilon}
   \left |\left (
   (I- \Delta)
  (\mathbf{u}^{\varepsilon}(s)
-\mathbf{u}^{\varepsilon_0}(s)
) , \
   \mathbf{u}^{\varepsilon}(s)
 \times
 \mathbf{f}_k
 +
 \mathbf{f}_k
  \right )  \right |^2ds
  \right )^{\frac 12}
  $$
 $$
  +
  2 \varepsilon_0 c_3c_4
  \textrm{E}
\left (
  \sum_{k=1}^\infty
  \int_0^  {t_1\wedge \tau_\varepsilon}
   \left |
   \left ( \nabla
  (\mathbf{u}^{\varepsilon}(s)
-\mathbf{u}^{\varepsilon_0}(s)
) , \
    (\mathbf{u}^{\varepsilon}(s)
-\mathbf{u}^{\varepsilon_0}(s)
)\times  \nabla \mathbf{f}_k
\right )    \right |^2ds
\right )^{\frac 12}
$$
 $$
\le
 2|\varepsilon
 -\varepsilon_0|c_3c_4
 \textrm{E}
\left (
 \sum_{k=1}^\infty
 \int_0^ {t_1 \wedge \tau_\varepsilon}
    \|
   \mathbf{u}^{\varepsilon}(s)
-\mathbf{u}^{\varepsilon_0}(s)
\|^2_{H^1(\mathbb{R})}
  \| \mathbf{u}^{\varepsilon}(s)
 \times
 \mathbf{f}_k
 +
 \mathbf{f}_k\|^2_{H^1(\mathbb{R})}
  ds
  \right )^{\frac 12}
  $$
 $$
  +
  2 \varepsilon_0 c_3c_4
  \textrm{E}
\left (
  \int_0^  {t_1\wedge \tau_\varepsilon}
   \|\nabla (
   \mathbf{u}^{\varepsilon}(s)
-\mathbf{u}^{\varepsilon_0}(s))
\|_{L^2(\mathbb{R})}^2
 \|  \mathbf{u}^{\varepsilon}(s)
-\mathbf{u}^{\varepsilon_0}(s)
 \|_{L^2(\mathbb{R})}^2
  \sum_{k=1}^\infty
   \|\nabla \mathbf{f}_k\|^2_{L^\infty
   (\mathbb{R})}
ds
\right )^{\frac 12}
$$
$$
\le
 2|\varepsilon
 -\varepsilon_0|c_5
 \textrm{E}
\left (
 \int_0^ {t_1 \wedge \tau_\varepsilon}
    \|
   \mathbf{u}^{\varepsilon}(s)
-\mathbf{u}^{\varepsilon_0}(s)
\|^2_{H^1(\mathbb{R})}
  \left (1+ \| \mathbf{u}^{\varepsilon}(s)
  \|^2_{H^1(\mathbb{R})} \right )
  ds
  \right )^{\frac 12}
  $$
 $$
  +
  2 \varepsilon_0 c_6
  \textrm{E}
\left (
  \int_0^  {t_1\wedge \tau_\varepsilon}
   \|
  \mathbf{u}^{\varepsilon}(s)
-\mathbf{u}^{\varepsilon_0}(s)
\|^2_{H^1(\mathbb{R})}
\|
   \mathbf{u}^{\varepsilon}(s)
-\mathbf{u}^{\varepsilon_0}(s)
\|_{L^2(\mathbb{R})}^2
 ds
\right )^{\frac 12}
$$
 $$
\le
 2|\varepsilon
 -\varepsilon_0|c_5
 (1+R)^{\frac 12}
 \textrm{E}
\left (
 \int_0^ {t_1 \wedge \tau_\varepsilon}
    \|
   \mathbf{u}^{\varepsilon}(s)
-\mathbf{u}^{\varepsilon_0}(s)
\|^2_{H^1(\mathbb{R})}
  ds
  \right )^{\frac 12}
  $$
  $$
  +
  2 \varepsilon_0 c_6
  \textrm{E} \left (
  \sup_{r\in [0, t_1]}
   \|
  \mathbf{u}^{\varepsilon}(r\wedge \tau_\varepsilon)
-\mathbf{u}^{\varepsilon_0}( r\wedge \tau_\varepsilon)
\| _{H^1(\mathbb{R})}
\left (
  \int_0^  {t_1}
 \|
   \mathbf{u}^{\varepsilon}(s\wedge \tau_\varepsilon)
-\mathbf{u}^{\varepsilon_0}(s\wedge \tau_\varepsilon)
\|_{L^2(\mathbb{R})}^2
 ds
\right )^{\frac 12}
\right )
$$
 $$
\le
  |\varepsilon
 -\varepsilon_0|^2c_5^2
 (1+R)
 +
 \textrm{E}
\left (
 \int_0^ {t_1  }
    \|
   \mathbf{u}^{\varepsilon}(s\wedge \tau_\varepsilon)
-\mathbf{u}^{\varepsilon_0}(s\wedge \tau_\varepsilon)
\|^2_{H^1(\mathbb{R})}
  ds
  \right )
  $$
 $$
  +
  {\frac 12}
   \textrm{E} \left (
  \sup_{r\in [0, t_1]}
   \|
  \mathbf{u}^{\varepsilon}(r\wedge \tau_\varepsilon)
-\mathbf{u}^{\varepsilon_0}( r\wedge \tau_\varepsilon)
\| ^2_{H^1(\mathbb{R})}
\right )
+
 2 \varepsilon_0^2 c_6^2
  \textrm{E}
\left (
  \int_0^  {t_1 }
   \|
  \mathbf{u}^{\varepsilon}(s\wedge \tau_\varepsilon)
-\mathbf{u}^{\varepsilon_0}(s\wedge \tau_\varepsilon)
\|^2_{L^2(\mathbb{R})}
 ds
\right )
$$
$$
\le
  |\varepsilon
 -\varepsilon_0|^2c_5^2
 (1+R)
 + (1+ 2\varepsilon_0^2 c_6^2)
 \int_0^ {t_1  }
 \textrm{E}
\left (
    \sup_{r\in [0,s]}
    \|
   \mathbf{u}^{\varepsilon}(r\wedge \tau_\varepsilon)
-\mathbf{u}^{\varepsilon_0}(r\wedge \tau_\varepsilon)
\|^2_{H^1(\mathbb{R})}
  \right )  ds
  $$
\be\label{ucsp p13}
  +
  {\frac 12}
   \textrm{E} \left (
  \sup_{r\in [0, t_1]}
   \|
  \mathbf{u}^{\varepsilon}(r\wedge \tau_\varepsilon)
-\mathbf{u}^{\varepsilon_0}( r\wedge \tau_\varepsilon)
\| ^2_{H^1(\mathbb{R})}
\right ) ,
\ee
where all the positive  constants
$c_4 $, $c_5 $ and $c_6 $
depend only on
$\{\mathbf{f}_k\}_{k=1}^\infty$.

It follows from \eqref{ucsp p12}-\eqref{ucsp p13} that
for all
 $\varepsilon, \varepsilon_0 \in [0,1]$
 and $t_1\in [0,T]$,
$$
\textrm{E}
\left (
 \sup_{r\in [0, t_1]}
\|\mathbf{u}^{\varepsilon}(r\wedge \tau_\varepsilon)
-\mathbf{u}^{\varepsilon_0}
( r\wedge \tau_\varepsilon)
    \|^2_{H^1(\mathbb{R})}
    \right )
  \le
  2c_2c_3 (\varepsilon -\varepsilon_0)^2
 (1 + R)T
  $$
   \be\label{ucsp p14}
   +
   2|\varepsilon
 -\varepsilon_0|^2c_5^2
 (1+R)
 + 2(1+ 2\varepsilon_0^2 c_6^2)
 \int_0^ {t_1  }
 \textrm{E}
\left (
    \sup_{r\in [0,s]}
    \|
   \mathbf{u}^{\varepsilon}(r\wedge \tau_\varepsilon)
-\mathbf{u}^{\varepsilon_0}(r\wedge \tau_\varepsilon)
\|^2_{H^1(\mathbb{R})}
  \right )  ds .
  \ee
 Applying
 Gronwall's inequality
 to \eqref{ucsp p14}
 for $t_1 \in [0,T]$, we get
 for all
 $\varepsilon, \varepsilon_0 \in [0,1]$
 and $t_1\in [0,T]$,
$$
\textrm{E}
\left (
 \sup_{r\in [0, t_1]}
\|\mathbf{u}^{\varepsilon}(r\wedge \tau_\varepsilon)
-\mathbf{u}^{\varepsilon_0}
( r\wedge \tau_\varepsilon)
    \|^2_{H^1(\mathbb{R})}
    \right )
    $$
    $$
  \le
  \left (
  2c_2c_3 (\varepsilon -\varepsilon_0)^2
 (1 + R)T
   +
   2|\varepsilon
 -\varepsilon_0|^2c_5^2
 (1+R)
 \right ) e^{
 2(1+ 2\varepsilon_0^2 c_6^2)t_1
 },
   $$
and hence
 for all
 $\varepsilon, \varepsilon_0 \in [0,1]$
 and $t_1=T$, we obtain
 $$
\textrm{E}
\left (
 \sup_{r\in [0, T]}
\|\mathbf{u}^{\varepsilon}(r\wedge \tau_\varepsilon)
-\mathbf{u}^{\varepsilon_0}
( r\wedge \tau_\varepsilon)
    \|^2_{H^1(\mathbb{R})}
    \right )
    $$
   \be\label{ucsp p15}
  \le   2(\varepsilon -\varepsilon_0)^2
  (1 + R)
  \left (
   c_2c_3
 T
   +
    c_5^2
 \right ) e^{
 2(1+ 2\varepsilon_0^2 c_6^2)T
 }.
  \ee

By \eqref{ucsp p3} and
  Chebyshev's inequality, we
  get
  for all
 $\varepsilon, \varepsilon_0 \in [0,1]$
 and
 $\mathbf{u}_0
 \in H^1(\mathbb{R})$
 with
 $\|\mathbf{u}_0\|_{
   H^1(\mathbb{R})} \le L$,
 \begin{align}\label{ucsp p16}
&\mathrm{P}\left(\sup_{t\in [0,T]}\|\mathbf{u}^\varepsilon(t, \mathbf{u}_0)-\mathbf{u}^{\varepsilon_0}(t, \mathbf{u}_0)\|_{ H^1(\mathbb{R})}\geq \delta\right)\nonumber\\
&\leq\mathrm{P}\left(\left\{\sup_{t\in [0,T]}\|\mathbf{u}^\varepsilon(t, \mathbf{u}_0)-\mathbf{u}^{\varepsilon_0}(t, \mathbf{u}_0)\|_{ H^1(\mathbb{R})}\geq \delta\right\}\bigcap \left\{ \tau_\varepsilon \ge T\right\}\right)\nonumber\\
&\quad+\mathrm{P}\left(\left\{\sup_{t\in [0,T]}\|\mathbf{u}^\varepsilon(t, \mathbf{u}_0)-\mathbf{u}^{\varepsilon_0}(t, \mathbf{u}_0)\|_{ H^1(\mathbb{R})}\geq \delta\right\}\bigcap \left\{\tau_\varepsilon
< T\right\}\right)\nonumber\\
&\leq \mathrm{P}\left(\sup_{t\in [0,  T]}\|\mathbf{u}^\varepsilon(t
\wedge \tau_\varepsilon , \mathbf{u}_0)-\mathbf{u}^{\varepsilon_0}(t
\wedge
\tau_\varepsilon
, \mathbf{u}_0)\|_{
H^1(\mathbb{R})
}\geq \delta\right)+\mathrm{P}\left( \tau_\varepsilon
<  T\right)\nonumber\\
&\leq \frac{1}{\delta^2}\mathrm{E}\left(\sup_{t\in [0, T]}\|\mathbf{u}^\varepsilon(t
\wedge \tau_\varepsilon
, \mathbf{u}_0)-\mathbf{u}^{\varepsilon_0}(t
\wedge \tau_\varepsilon , \mathbf{u}_0)\|^2_{ H^1(\mathbb{R})}\right)
+\eta.
\end{align}

By \eqref{ucsp p15}-\eqref{ucsp p16}
we find that
 for all
 $\varepsilon, \varepsilon_0 \in [0,1]$,
$$
\sup_{
   \|\mathbf{u}_0\|_{
   H^1(\mathbb{R})} \le L}
 \mathrm{P}\left(\sup_{t\in [0,T]}\|\mathbf{u}^\varepsilon(t, \mathbf{u}_0)-\mathbf{u}^{\varepsilon_0}(t, \mathbf{u}_0)\|_{ H^1(\mathbb{R})}\geq \delta\right)
 $$
$$
  \le   2(\varepsilon -\varepsilon_0)^2
  (1 + R)\delta^{-2}
  \left (
   c_2c_3
 T
   +
    c_5^2
 \right ) e^{
 2(1+ 2\varepsilon_0^2 c_6^2)T
 } +\eta .
 $$
 First letting $\varepsilon\to
 \varepsilon_0$ and then
 $\eta \to 0$, we obtain
 \eqref{ucsp 1},
 which completes the proof.
   \end{proof}

We are now in a position
to prove  Theorem  \ref{th1}.

$\mathbf{Proof~ of ~Theorem~ \ref{th1}}$.
Note that Theorem \ref{th1} (i)
follows from
 Theorem \ref{eim_wk}.
It remains to show Theorem \ref{th1} (ii).

By Proposition \ref{tigu1} we
know  the set $\bigcup_{\varepsilon\in (0,1]}\mathcal{I}^\varepsilon$ is tight
in $H^1(\mathbb{R})$.
Then   for any sequence $\{\mu^{\varepsilon_n}\}_{n=1}^\infty \subseteq \bigcup_{\varepsilon\in (0,1]}\mathcal{I}^\varepsilon$
with $\varepsilon_n
\to \varepsilon_0
\in [0,1]$, there exists a subsequence (still denoted by $\{\mu^{\varepsilon_n}\}_{n=1}^\infty$) and
a probability measure
$\mu^{\varepsilon_0}$
in $H^1(\mathbb{R})$
 such that $\mu^{\varepsilon_n}\to  \mu^{\varepsilon_0}$
 weakly.
 By  Lemma \ref{ucsp}, we
 find from
 \cite{li2022}
 (or \cite{D1} for $\varepsilon_0 =0$)  that   $\mu^{\varepsilon_0}$ is an invariant measure of the limiting equation \eqref{1.3*} with $\varepsilon=\varepsilon_0$,
 which concludes the proof.

\section*{Acknowledgments}
D. Huang is supported by the National Natural Science Foundation of China (Grant No. 12471228). Z. Qiu is supported by the National Natural Science Foundation of China
(Grant No. 12401305), the National Science Foundation for Colleges and Universities in
Jiangsu Province (Grant No. 24KJB110011) and the National Science Foundation of Jiangsu
Province (Grant No. BK20240721).



\smallskip

\bigskip

\end{document}